\documentclass[oneside,english]{amsart}

\usepackage[top=2.7cm, bottom=3.7cm, right=2.2cm, left=2.2cm]{geometry}

\usepackage{color}

\usepackage{cite}

\usepackage[T1]{fontenc}
\usepackage[latin9]{inputenc}
\usepackage{amsbsy}
\usepackage{amstext}
\usepackage{amsthm}
\usepackage{amssymb,enumerate}
\usepackage[colorlinks=true]{hyperref}
\hypersetup{
	colorlinks=blue,%
	citecolor=red,%
	filecolor=black,%
	linkcolor=blue,%
	urlcolor=blue
}

\makeatletter
\newtheorem{thm}{Theorem}[section]
\newtheorem{prop}{Proposition}[section]
\newtheorem{lem}{Lemma}[section]
\newtheorem{cor}{Corollary}[section]

\newtheorem{remark}{\textbf{Remark}}[section]

\providecommand{\definitionname}{Definition}
\theoremstyle{plain}

\newcommand{\eq}[1]{\begin{equation}\allowdisplaybreaks\begin{alignedat}{2} #1 \end{alignedat}\end{equation}}

 \def\ve{\varepsilon}
 \def \Rn {\mathbb{R}^n}
 
  \newcommand{\R}{\mathbb{R}}

\newcommand{\Sn}{{\mathbb{S}^n}}

 \def\s{\sigma}
 \def\n{{\nabla}}
 
\def\ss{\mathbb{S}}

\def\a{\alpha}

\def\s{\sigma}

\def\C{\mathcal{C}}

\def\<{\langle}
\def\>{\rangle}
\def\div{{\rm div}}
\def\n{\nabla}

\def\R{{\mathbb R}}

\def \ds{\displaystyle}

\def \intsn {\int_{\mathbb{S}^n}}
\def \gSn{g_{\mathbb{S}^n}}

\def\triangle {\Delta}

\def\circledwedge{\setbox0=\hbox{$\bigcirc$}\relax \mathbin {\hbox
to0pt{\raise.5pt\hbox to\wd0{\hfil $\wedge$\hfil}\hss}\box0 }}
 
\makeatother

\usepackage{babel}

\allowdisplaybreaks

\numberwithin{equation} {section}

\begin{document}
\title[A new Yamabe Problem]{A Yamabe problem for the quotient between the $Q$ curvature and the scalar curvature}
\author{Yuxin Ge}
\address{Institut de Math\'ematiques de Toulouse,\\
Universit\'e Paul Sabatier,\\
118, route de Narbonne,\\
31062 Toulouse Cedex, France}
\email{yge@math.univ-toulouse.fr}

\author{Guofang Wang}
\address{Albert-Ludwigs-Universit\"at Freiburg,
Mathematisches Institut,
Eckerstr. 1,
D-79104 Freiburg, Germany}
\email{guofang.wang@math.uni-freiburg.de}

\author{Wei Wei}
\address{Nanjing University, School of Mathematics, Nanjing 210093, P.R. China}
\email{wei\_wei@nju.edu.cn}

\begin{abstract}
In this paper we introduce the following Yamabe problem for
 the quotient between the $Q$ curvature and the scalar curvature $R$: {\it Find a conformal metric $g$ in a given conformal class $[g_0]$ with}
 \[
 Q_g\slash R_g=const.
 \]
When the dimension $n\ge 5$, we first prove a new  Sobolev inequality between the total $Q$-curvature and the total scalar curvature on $\ss^n$ ($n\ge 5$), namely
		\[
		\frac {{\intsn} Q_g dv_g}{ ({\intsn} R_g dv_g)^{\frac {n-4}{n-2} }} \ge 
		\frac {{\intsn} Q_{g_{\mathbb{S}^n}}  dv(\gSn )} {({\intsn} R_{\gSn}  dv(\gSn) )^{\frac {n-4}{n-2} }},
		\]
        for any $g$ in the conformal class of the round metric $\gSn$ with positive scalar curvature, 
        with equality if and only if $g$ is also a metric with constant sectional curvature. With this inequality we introduce a new Yamabe constant $Y_{4,2}(M,[g_0])$ and prove the existence of the above problem provided that $Y_{4,2}(M,[g_0]) <Y_{4,2} (\ss^n, [g_{\ss^n}]).$ This strict inequality is proved if $(M,g)$ is not conformally equivalent to the round  sphere. This follows from  a crucial relation between $Y_{4,2}$ and the ordinary Yamabe constant $Y(M,[g_0])$, $Y_{4,2} (M, [g_0]) \le c(n) Y(M, [g_0])^{\frac  n{n-2}}$  with equality if and only if  $(M, g_0)$ is conformally equivalent to an Einstein manifold.
Finally, we prove that
on a closed $n$-dimensional  Riemannian manifold $(M,g_{0})$ with semi-positive $Q$-curvature
and non-negative scalar curvature, the above Yamabe problem is solvable, thanks to the maximum principle of Gursky-Malchiodi \cite{GM}. The proof for $n=3$ and $n=4$ follows closely the methods developed by Hang-Yang in \cite{HYCPAM3-dim},  
 Gursky-Malchiodi in \cite{GM}, and Chang-Yang in \cite{ChangYang1995}. 
\end{abstract}

\keywords{$Q$ curvature, $\sigma_k$ scalar curvature, optimal Sobolev inequality, Yamabe problem, rigidity}  
\subjclass{53C21, 53C18, 58J05}
\maketitle

\vspace{-3mm}
\tableofcontents

\section{Introduction}

Let $(M,g_{0})$ be an $n$-dimensional closed manifold, i.e, a compact manifold without boundary. The celebrated Yamabe problem asks if there is a metric $g$ in the conformal class $[g_0]$ of $g_0$  with constant scalar curvature, i.e., 
\eq{\label{Y} R_g=const.}
The Yamabe probem has been affirmatively solved through the work of Yamabe, Trudinger, Aubin and Schoen \cite{Yamabe,Trudinger, Aubin, Schoen}. See the nice survey of Lee-Parker \cite{Lee-Parker}. This problem plays a very  important role in differential geometry and geometric analysis.  More importantly,
analytical methods developed to attack this problem have been widely used in other problems.

There are several natural generalizations of the Yamabe problem in conformal geometry. In this paper we are interested in two classes of Yamabe problems: one is the $\sigma_k$-Yamabe problem, a fully nonlinear counterpart,  and the other is the Yamabe problem for the $Q$ curvature, a higher order generalization, which will also be called $Q$-Yamabe problem in the paper. The $\sigma_k$-Yamabe problem was first studied by Viaclovsky \cite{ViacDuke}. Since then there have been a lot of work 
by Chang-Gursky-Yang \cite{CGYAnn}, Guan-Wang \cite{GWCrelle,GWDuke}, Gursky-Viaclovsky \cite{Gursky-Viaclovsky}, Li-Li \cite{LLActa,LLCPAM}, Sheng-Wang-Trudinger\cite{STWJDG}, Wang-Trudinger \cite{Wang-Trudinger}, Ge-Wang \cite{GWAdv,GWCAG,GWEcole}, and Li-Nguyen \cite{LNJFA2014} and others. For the definition of $\sigma_k$ curvature and more on the $\sigma_k$-Yamabe problem, see the references mentioned above or Subsection \ref{subsection:s_k Yamabe problem} and \ref{s2 and s1} below. The $Q$ curvature was introduced by Branson \cite{Branson}, as a higher order generalization of the scalar curvature. The corresponding Yamabe problem has been studied by many mathematicians, just to mention a few, Van der Vorst \cite{Van}, Djadli-Hebey-Ledoux \cite{DHL}, Esposito-Robert \cite{ER}, Chang-Yang \cite{ChangYang1995}, Xu-Yang \cite{Xu-Yang}, Qing-Raske \cite{QR}, Gursky-Malchiodi \cite{GM},  Hang-Yang \cite{HY,HYCPAM3-dim, HYIMRN2015} and  Gursky-Hang-Lin \cite{GHL}. One of the breakthroughs in the study of the $Q$ curvature Yamabe problem is the maximum principle discovered by Gursky-Malchiodi \cite {GM}, which will also play a crucial role in this paper. See also a recent survey of Malchiodi
\cite{Malchiodi} and references therein.  For the definition of $Q$ curvature and its generalization, see Subsection \ref{Q Yamabe} below.

In this paper we consider the following Yamabe problem.

\

\noindent{\bf New Yamabe Problem}: {\it Let $(M, g_0)$ be a closed manifold. Is there a metric $g$ in $[g_0]$ such that its $Q$ curvature $Q_g$ is proportional to its scalar curvature $R_g$, i.e.,
\eq{\label{eq_QR}
Q_g\slash R_g=c,}
for a constant $c$?}

\

We give an affirmative answer if $g_0$ has positive scalar curvature and semi-positive $Q$ curvature.

\begin{thm}\label{mainthm} Let $n\ge 3$ and $(M, g_0)$ be a closed n-dimensional manifold such that $Q_{g_{0}}$ is semi-positive, namely $Q_{g_0}$ is nonnegative and somewhere positive,    and $R_{g_{0}}$ is non-negative.
Then there exists a conformal metric 
$g\in [g_0]$
with
a constant quotient $Q_{g}/R_{g}$.
\end{thm}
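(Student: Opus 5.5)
A variational approach: minimize the quotient of the total $Q$-curvature by a power of the total scalar curvature inside the conformal class, in the spirit of the abstract. For $n\ge5$ write $g=u^{4/(n-4)}g_0$ and set
\[
F(g)=\frac{\int_M Q_g\,dv_g}{\bigl(\int_M R_g\,dv_g\bigr)^{\frac{n-4}{n-2}}},
\]
so that $Y_{4,2}(M,[g_0])=\inf\{F(g): g\in[g_0],\ R_g>0\}$. Both numerator and denominator scale the same way, so $F$ is scale invariant. A critical point satisfies $P_g$-type equations whose Euler--Lagrange form is exactly $Q_g=c\,R_g$. This uses the conformal variation $\frac{d}{dt}\int Q\,dv=\frac{n-4}{2}\int Q\,\phi\,dv$ and the analogous identity for $\int R\,dv$ with factor $\frac{n-2}{2}$; the exponent $\frac{n-4}{n-2}$ is chosen precisely so that these combine.

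\textbf{Step 1: the admissible class is nonempty and preserved.} The hypotheses ($Q_{g_0}$ semi-positive, $R_{g_0}\ge0$) are exactly the setting of the Gursky--Malchiodi maximum principle. It gives that the Paneitz operator $P_{g_0}$ has a positive Green's function. It also gives that any $u>0$ with $P_{g_0}u\ge0$ produces a metric with $R_g>0$. In particular $R_{g_0}$ can be conformally improved to be positive, and the infimum is taken over a cone-invariant class. I would work in the class $\{u>0: P u\ge 0\}$, or minimize over $u$ with $R>0$, so that the constraint is not active at the minimum.

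\textbf{Step 2: compactness.} I would show $Y_{4,2}(M,[g_0])$ is attained when $Y_{4,2}(M,[g_0])<Y_{4,2}(\mathbb S^n)$. This is a concentration-compactness argument. A minimizing sequence, normalized by $\int R\,dv=1$, is bounded in $W^{2,2}$ because $\int Q\,dv$ controls $\int (\Delta u)^2$ up to lower order terms. If it concentrates, blowing up produces a conformal metric on $\mathbb S^n$ whose quotient is at most the limit value. The new sharp Sobolev inequality on $\mathbb S^n$ (stated in the abstract) then contradicts the strict inequality.

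\textbf{Step 3: the strict inequality.} I would use $Y_{4,2}(M)\le c(n)Y(M)^{n/(n-2)}$ with equality only for conformally Einstein manifolds. If $M$ is not conformally the round sphere, Schoen's strict inequality $Y(M)<Y(\mathbb S^n)$ gives the strict inequality, since equality holds on the sphere. If $M$ is conformally the round sphere, the round metric itself solves the problem.

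\textbf{Step 4: regularity and low dimensions.} A minimizer satisfies a fourth order equation. It is smooth and positive by the Green's function positivity. For $n=3,4$ the functional degenerates: in $n=4$, $\int Q$ is conformally invariant, and in $n=3$ the exponent becomes negative. There I would follow Hang--Yang for $n=3$ and Chang--Yang/Gursky--Malchiodi for $n=4$, using a normalized analogue of the functional.

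I expect the main obstacle to be Step 2. One must rule out loss of positivity of $R$ along the minimizing sequence and control both integrals under blow-up. This rests on the sphere inequality and the Gursky--Malchiodi positivity.
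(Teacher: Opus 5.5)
Your outline for $n\ge5$ has the same architecture as the paper: the sharp inequality on $\mathbb{S}^n$, the comparison $Y_{4,2}\le c(n)Y^{\frac{n}{n-2}}$, Aubin--Schoen, and compactness below the sphere threshold. But Step 2, the heart of the existence proof, does not work as written.

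A minimizing sequence satisfies no equation. Both $\int(\Delta u)^2$ and $\int|\nabla u^{\frac{n-2}{n-4}}|^2$ are only lower semicontinuous, so the weak limit of a rescaled sequence has an uncontrolled quotient. Your claim that ``blowing up produces a conformal metric on $\mathbb{S}^n$ whose quotient is at most the limit value'' therefore has no justification. A Lions-type energy splitting would need the sharp inequality for the remainder $u_i-u$ or for cut-off pieces $\phi u_i$. However, the sphere inequality holds only in the cone $R_g>0$, since it is derived from the $\sigma_2/\sigma_1$ inequality in $\Gamma_1^+$. Localization destroys this cone: on $\mathbb{R}^n$ the condition says that $v^{\frac{n-2}{n-4}}$ is superharmonic, which no nonzero compactly supported function is. The unconstrained Euclidean inequality is left open in the paper, and the authors believe the positivity requirement is necessary.

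Likewise, ``the constraint is not active at the minimum'' is an assertion, not an argument. The set $\{R_g>0\}$ is open, and a minimizing sequence may degenerate to $R\ge0$ with zeros. In that case you only get a variational inequality, not the Euler--Lagrange equation.

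The missing idea is never to blow up a minimizing sequence. The paper instead minimizes a subcritical quotient $I_\varepsilon$, whose denominator $\int R_{g_u}u^{-\frac{2(n-2)}{n-4}\varepsilon}dv_{g_u}$ is compact under weak $W^{2,2}$ convergence. It obtains minimizers by running a Gursky--Malchiodi-type nonlocal flow from each element of a minimizing sequence. This flow decreases $I_\varepsilon$ and preserves $R>0$, using the maximum principle together with $L_{g_0}(u^{\frac{2}{n-4}}v)\ge0$ for two positive-scalar-curvature factors. The subcritical minimizers therefore solve the Euler--Lagrange equation with positive right-hand side and lie in $\mathcal{C}_Q$.

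One then blows up these \emph{solutions} as $\varepsilon\to0$, which gives an entire solution $v_\infty$ with $R\ge0$. Superharmonicity gives $v_\infty\ge C|x|^{4-n}$, so the stereographic lift is bounded below. The pole is then removable, via the appendix's $\varepsilon$-regularity, and the sphere inequality applies. Testing the equation against $v_\infty$ yields $\int(\Delta v_\infty)^2=\lambda\frac{4(n-1)}{n-2}\int|\nabla v_\infty^{\frac{n-2}{n-4}}|^2$, with $\lambda\le Y_{4,2}(M)$ and $\int R_{g_{v_\infty}}dv\le1$. This identity replaces energy splitting and contradicts $Y_{4,2}(M)<Y_{4,2}(\mathbb{S}^n)$.

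Your Step 3 inequality is itself one of the paper's main new results and needs a proof. The paper takes a unit-volume Yamabe metric $g_v$ and uses $Q\le-\frac{1}{2(n-1)}\Delta R+d(n)R^2$ with $d(n)=\frac{n^2-4}{8n(n-1)^2}$. It solves $P_{g_0}u=\frac{n-4}{2}d(n)R_{g_v}^2v^{\frac{n+4}{n-4}}$ and gets $u\ge v$ and $g_u\in\mathcal{C}_Q$ from the maximum principle. It then concludes with H\"older and the self-adjointness of $L_{g_0}$. The equality case is not needed for the strict inequality.

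Your plan for $n=3,4$ is only a deferral. For $n=3$ the paper uses degree theory with the homotopy $P_{g_0}u=-(1-t)u^{-3}-\frac t2u^{-7}R_{g_u}$, not a functional. For $n=4$ it uses a new nonlocal flow whose coercivity rests on Gursky's bound $\int Q_{g_0}<16\pi^2$ and the Adams--Fontana inequality.
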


The problem is  variational: a conformal metric with a constant quotient between the $Q$ curvature and the scalar curvature $R$. i.e., $Q\slash R=const$, is a critical point of
\[I(g) :=\frac{\int _M Q_g dv_g}{(\int_M R_g dv_g)^{\frac {n-4}{n-2}}}.
\]
As in the ordinary Yamabe problem, we first define
a Yamabe type constant for $n\ge 5$
\[
Y_{4,2} (M, [g_0]) =\frac{n-4}{2}\inf_{R_g>0, Q_g\ge 0}  \frac {\int_M Q_g dv_g} {(\int _M R_g dv_g)^{\frac {n-4}{n-2}} }
\]
where as a convention in this paper we use $Q\ge 0$ to denote $Q$ semi-positive, namely $Q$ is nonnegative and somewhere positive.

We will show a new optimal Sobolev inequality in Theorem \ref{thm1.1} in Section \ref{section:sobolev inequality} below, which gives
\eq{\label{eq2.22}
Y_{4,2}(\Sn) =  \frac{(n-4)(n+2)}{n(n-2)^{\frac{2}{n-2}}(4(n-1))^{\frac{n-4}{n-2}}} (Y(\Sn))^{\frac{n}{n-2}}=\frac{n-4}{16}(n^2-4)n^{\frac{2}{n-2}}(n-1)^{\frac{4-n}{n-2}}\omega_n^{\frac{2}{n-2}}
.
}
All minimizers are classified in Theorem \ref{thm1.1} below, with which one can show that
\[
Y_{4,2} (M, [g_0])\le Y_{4,2}(\Sn)
\]
for any conformal class $[g_0]$ with a positive Yamabe constant $Y(M,[g_0])$ using a gluing argument. However, we need not this result and hence omit its proof. Instead we will show in Theorem \ref{thm_Yamabe_Constants}
an inequality between $Y_{4,2}$ and $Y(M,[g_0])$ 
\eq{ \label{eq2.23}
  Y_{4,2}(M,[g_{0}])\le \frac{(n-4)(n+2)}{n(n-2)^{\frac{2}{n-2}}(4(n-1))^{\frac{n-4}{n-2}}}   (Y(M,[g_0]))^{\frac{n}{n-2}},
}
provided that $(M,g_{0})$  is an $n$-dimensional  closed
manifold with nonnegative $R_{g_0}$ and semi-positive $Q_{g_{0}}$. We remark that in the proof of \eqref{eq2.23} the maximum principle of Gursky-Malchiodi plays a role. Formulas \eqref{eq2.22} and  \eqref{eq2.23}, together with \eqref{Y1_ineq}, imply clearly
 \eq{\label{eq_2.24}
 Y_{4,2}([g_0]) < Y_{4,2}(\Sn),
 }
 provided that $(M,g_0)$ has non-negative scalar curvature $R_{g_0}$ and semi-positive $Q_{g_0}$ curvature and $(M, g_0)$ is not conformally equivalent to $\Sn$. This is the second step in the usual approach to Yamabe type problems.
Now we remain to show that minimizers of \eqref{eq_functional} exist if  \eqref{eq_2.24} holds. This will be done in Section \ref{section:5+dim} below using a nonlocal flow inspired by the work of Gursky-Malchiodi \cite{GM}
and the blow-up analysis

The proof of Theorem \ref{mainthm} relies on a lot of previous work of many mathematicians in  the $Q$-Yamabe problem, as well as on the work in the $\s_k$-Yamabe problem, which we will review in Section \ref{section:various yamabe problems}.

\

\noindent{\it The  rest of the paper is organized as follows.} In Section \ref{section:various yamabe problems}, we review the  Yamabe problem and its generalizations: the $\sigma_k$-Yamabe problem, the Yamabe problem for the $Q$ curvature, and their corresponding equations, conformal operators and their properties. The new Yamabe problem is introduced in Subsection \ref{new problem}, together with  a sketch of ideas of the proof. 
In Section \ref{section:sobolev inequality}  we establish a new  optimal Sobolev inequality in $\ss^n$ between the total $Q$ curvature and the total scalar curvature, together with its optimizers,  which gives the precise value of 
$Y_{4,2} (\ss^n) $, the $Q\slash R$ Yamabe constant for the standard sphere $\ss^n$.  In fact, it is this optimal inequality that 
leads us to consider the new Yamabe problem. 
Moreover we obtain an Obata type result for Equation \eqref{eq_QR}. In Section \ref{Various Yamabe constants} we establish relationship between various
Yamabe type constants and prove that $Y_{4,2} (M,[g_0])<Y_{4,2}(\Sn,[g_{\Sn}])$, if $(M, g_0)$ is not conformally equivalent to $\Sn$.
Especially we also prove optimal inequalities between  all previous Yamabe constants for the $Q$-Yamabe problem in \cite{GM} and in \cite{HY} and the ordinary Yamabe constant $Y$. Hence, the desired strict inequality $Y_{4,2}(M,[g]) < Y_{4,2}(\mathbb{S}^n)$ 
follows from the resolution of the celebrated Yamabe problem \eqref{Y}. Section \ref{section:3-dim} and Section \ref{section:dim4} devote to the proof of Theorem \ref{mainthm} in 3 dimensions and 4 dimensions respectively. In Section \ref{section:5+dim}, we prove Theorem \ref{mainthm} for $n\ge 5.$ In Appendix we provide a proof of the regularity of $W^{2,2}$ weak solutions.

\

\section{Various Yamabe problems}\label{section:various yamabe problems}
Let $n\ge 3$ and
 $(M, g_0)$ be a closed $n$-dimensional manifold and $[g_0]$ the conformal class of $g_0$. For any Riemannian metric $g$, let $R_g$ be the scalar curvature of $g$. The relation between two scalar curvature $R_g$ and $R_{g_0}$ with $g=u^{\frac 4 {n-2}}g_0$ is given by
 \eq{\label{eq_scalar}
 \frac {n-2}{4(n-1)} R_g u^{\frac {n+2}{n-2}}=-{\Delta_{g_0}} u+ \frac {n-2}{4(n-1)} R_{g_0} u.
 }
 The Yamabe problem is to ask if there is a conformal metric $g=u^{\frac 4{n-2}}g_0\in[g_0]$ with a constant scalar curvature $R_g$,
 which is equivalent to solving the following equation
\begin{equation}
    \label{eq1.1}
    L_{g_0} u=cu^{\frac{n+2}{n-2}}, \quad u>0,
\end{equation}
and
$L_{g_0}$ is the so-called conformal Laplacian (or the Yamabe operator) w.r.t. $g_0$
\begin{equation}
\label{eq1.2}
    L_{g_0} u=-{\Delta_{g_0}} u+\frac{n-2}{4(n-1)}R_{g_{0}}u.
\end{equation}
Equation \eqref{eq1.1} is a critical semilinear equation. The  conformal Laplacian is conformally invariant in the following sense.
For any given $\varphi\in C^{\infty}(M),$ we have 
\begin{equation}\label{conformal laplacian transform}
L_{u^{\frac{4}{n-2}}g_0}\varphi=u^{-\frac{n+2}{n-2}}L_{g_0}(u\varphi).
\end{equation}
The following Yamabe constant
\begin{equation}
    \label{Y_2}
    Y(M, [g_0]) :=\frac{n-2}{4(n-1)}\inf_{g\in [g_0]} \frac {\int_MR_g dv_g}{vol(g)^{\frac {n-2}n}}= \inf_{u>0}\frac{\int_M uL_{g_0}u  dv_{g_0}
    }{(\int_M u^{\frac {2n}{n-2}} dv_{g_0})^{\frac {n-2}n}} 
\end{equation}
and the optimal Sobolev inequality
\begin{equation}
    \label{Sobolev1}
    Y(\mathbb{S}^n)=  \inf_{u>0}\frac{\int_\Sn  |\n u|^2 + \frac 14  n(n-2) u^2
    }{(\int_\Sn u^{\frac {2n}{n-2}})^{\frac {n-2}n}} =\frac{n(n-2)}{4}\omega^{\frac 2n}
\end{equation}
play an important role in the resolution of the Yamabe problem. Here $\omega_n$ is the area of the unit sphere $\Sn$. See the survey \cite{Lee-Parker}.
The Yamabe problem was completely solved by  \cite{Yamabe, Trudinger, Aubin,Schoen}
through
two steps: 
\begin{enumerate}
    \item  $Y(M, [g_0])\le Y(\Sn)$ for any given $[g_0]$, and if
    $Y(M,[g_0])< Y(\Sn)$, then $Y(M, [g_0])$ is achieved.
    \item   
    if $(M, g)$ is not conformally equivalent to $\Sn$, then we have  \begin{equation}\label{Y1_ineq}
        Y(M, [g_0])< Y(\Sn).
        \end{equation}
\end{enumerate}
The second step is more delicate and proved by Aubin \cite{Aubin} and Schoen \cite{Schoen}.

\subsection{\texorpdfstring{$\s_k$}{sk}-Yamabe problem}\label{subsection:s_k Yamabe problem}

 For any Riemannian metric $g$, let
\[
A_g=\frac 1{n-2} \left( Ric_g-\frac {R_g}{2(n-1)} g\right)
\]
be the Schouten tensor of $g$. Here $R_g$ and $Ric_g$
are the scalar curvature and the Ricci curvature of $g$ respectively. The $\sigma_k$ scalar curvature, introduced by Viaclovsky in \cite{ViacDuke}, is defined
by
\[\sigma_k(g) =\sigma_k(g^{-1}\cdot A_g),
\]
where $\sigma_k$ is the $k$-th elementary symmetric function. It is easy to see that $$\s_1(g)=\frac 1{2(n-1)} R_g. $$ Therefore,  $\sigma_k$ scalar curvature is a generalization of the scalar curvature $R_g$. The $\s_k$-Yamabe problem is to ask if there is a conformal metric $g\in[g_0]$ with a constant $\s_k(g)$. If we set $g=e^{-2u} g_0$, then the corresponding equation is
\begin{equation}\label{eq1.4}
    \s_k\left(\n^2 u +\n u \circledwedge \n u -\frac {|\n u|^2} 2 g_0+A_{g_0}\right) = ce^{-2ku}.
\end{equation}
When $k=1$, it is clear that it is equivalent to the ordinary Yamabe equation, which is a semilinear critical equation.
When $k\ge 2$, \eqref{eq1.4} becomes  a fully nonlinear equation. Hence for the $\s_k$-Yamabe problem with $k\ge 2$ one needs certain assumptions to guarantee the ellipticity of \eqref{eq1.4}. A common one in the field of fully nonlinear equations is   an assumption on the given conformal class that
\begin{equation} \label{elliptic}
\mathcal{C}_k[g_0]:=\{g\in [g_0] | g^{-1}\cdot A_g \in \Gamma_k^+\} \not = \emptyset,
\end{equation}
where 
\[
\Gamma_k^+ : =\{\Lambda \in \Rn | \s_j(\Lambda)>0, \forall j\le k\} 
\]
is the famous Garding cone. Condition \eqref{elliptic} guarantees that \eqref{eq1.4} is elliptic.
This condition has been assumed in all previous work on the $\s_k$-Yamabe problem mentioned in the Introduction. In our recent work \cite{GWW} we studied \eqref {eq1.4} in a larger cone 
$\mathcal{C}_{k-1} [g_0] \not =\emptyset$. See the end of this subsection.

When $k=2$ or $M$ is locally conformally flat, like the ordinary Yamabe problem, the $\s_k$-Yamabe problem is a variational problem of the following functional
\[
\mathcal{F}_k(g)=\int_M \s_k(g) dv_g
\]
by \cite{ViacDuke}.
This paper is related to the case $k=2$.
In this case, the following $\s_2$-Yamabe constant for $n\ge 5$ plays an important role in solving the $\s_2$-Yamabe problem in \cite{GWEcole} and \cite{STWJDG}:
\begin{equation}
    \label{Yamabe_s}
    Y_{\s_2}(M, [g_0]) :=\inf _{g\in \mathcal{C}_2[g_0]} \frac{\int_M \s_2(g) dv_g}{vol(g)^{\frac{n-4}n}}.
\end{equation} 

The corresponding Sobolev inequality for $n\ge 5$ is
\[
Y_{\s_2}(\Sn) = \inf _{g\in \mathcal{C}_2[\Sn]} \frac{\int_\Sn \s_2(g) dv_g}{vol(g)^{\frac{n-4}n}}= \frac {n(n-1)} 8 \omega_n^{\frac{4}n},
\]
which was proved by Guan-Viaclovsky-Wang \cite{GVW03}.
 By the local estimates proved  by Guan-Wang \cite{GWIMRN} and the classification of the entire solutions by Li-Li \cite{LLActa}, it was proved in  \cite{GWEcole, STWJDG} that the infimum of $Y_{\s_2}(M, [g_0])$ is achieved 
 if 
\begin{equation} \label{Y2_ineq}
 Y_{\s_2}(M, [g_0])<Y_{\s_2}(\Sn).\end{equation}
 Hence to solve the $\s_2$-Yamabe problem, as the classical Yamabe problem one only needs to show that 
 \eqref{Y2_ineq} holds for any manifold, which is not conformally equivalent to the standard sphere. This was proved by Ge-Wang in \cite{GWEcole} for non locally conformally flat manifolds  of dimension $n>8$ and by Sheng-Trudinger-Wang in \cite{STWJDG} for a general manifold of dimensional $n\ge 5$, and hence the $\s_2$-Yamabe problem was solved. The locally conformally flat case was solved previously by Guan-Wang \cite{GWCrelle}   and Li-Li \cite{LLCPAM}. 
 {For $n=3$ and $n=4$, the $\s_2$-Yamabe problem was solved by Gursky-Viaclovsky \cite{Gursky-Viaclovsky,Gursky-ViaclovskyAdv} and  Chang-Gursky-Yang \cite{CGYAnn, CGYAnalysis} respectively. Also see \cite{Gursky-ViaclovskyJDG, Wang-Trudinger,LNJFA2014}}. In \cite{STWJDG} Sheng-Trudinger-Wang used a clever trick to  derive   \eqref{Y2_ineq} from \eqref{Y1_ineq} by proving
 \begin{equation}
 Y_{\s_2}(M, [g_0]) \le \frac{2(n-1)} {n(n-2)^2}  (Y(M, [g_0]))^2.
 \end{equation} 
In fact,  inequality \eqref{Y2_ineq} follows from the above inequality and \eqref{Y1_ineq} clearly
 \[
 Y_{\s_2}(M, [g_0]) \le \frac{2(n-1)} {n(n-2)^2} (Y(M, [g_0]))^2 <  \frac{2(n-1)} {n(n-2)^2} 
 (Y(\mathbb{S}^n))^2 = Y_{\s_2} (\mathbb{S}^n),
 \]
 if $(M, g_0)$ is not conformally equivalent to the standard sphere. We will show  that this trick works for the $Q$-Yamabe problem studied in \cite{GM}, which is new to our best knowledge. 
 Actually it works also for our new Yamabe problem. See Section \ref{Various Yamabe constants} below.

 In our recent work \cite{GWW}, we  revisited the $\s_2$-Yamabe problem and proved that 
 the infimum
 \begin{equation}
  \inf _{g\in \mathcal{C}_1[g_0]} \frac{\int_M \s_2(g) dv_g}{Vol(g)^{\frac{n-4}n}}
\end{equation}
is achieved in a larger cone ${\mathcal C}_1[g_0]:=\{g\in [g_0]\,|\, R_g >0\},$
 provided that the infimum is positive. Namely the $\s_2$-Yamabe problem is solvable  in the larger cone
 $\mathcal{C}_1[g_0]$. This answers a recent question asked by J. Case.
 
\subsection{Yamabe problem for the quotient between \texorpdfstring{$\s_2$}{} and \texorpdfstring{$\s_1$}{} scalar curvatures}\label{s2 and s1}

 Besides $\sigma_2$-Yamabe problem, the  quotient  Yamabe 
 between $\s_2$ and $\s_1$
 also plays an essential role in 
 the study of the $\s_k$ scalar curvature, especially in establishing optimal geometric inequalities, see for example \cite{GWDuke}.
For $n\ge 5$, its corresponding Yamabe constant is defined  in \cite{GWIMRN}
 $$Y_{\sigma_2/\sigma_1}(M, [g_0]):=\inf _{g\in \mathcal{C}_2[g_0]} \frac{\int \sigma_2 dv_g}{(\int \sigma_1 dv_g)^{\frac{n-4}{n-2}}}\le Y_{\sigma_2/\sigma_1}(\Sn).$$
 As the  ordinary Yamabe constant, strict inequality holds if and  only if $(M,g_0)$ is not conformal to sphere.
In \cite{GWDuke}, the corresponding Sobolev inequality was proved
$$Y_{\sigma_2/\sigma_1}(\Sn)=\frac{n(n-1)}{8}\frac{\omega_n^{\frac{2}{n-2}}}{(\frac 1 2 n)^{\frac{n-4}{n-2}}}.$$
It holds  even in a larger cone $\C_1[g_0]$ by \cite{GWCAG}, i.e.,
\begin{equation}
    \label{Sobolev_21}
    \int_\Sn \s_2 (g) dv_g \ge  \frac{n(n-1)}{8}\frac{\omega_n^{\frac{2}{n-2}}}{(\frac 1 2 n)^{\frac{n-4}{n-2}}} 
    \big(\int_\Sn \s_1(g) dv_g\big)^{\frac{n-4}{n-2}}, \quad \forall g\in \C_1[g_0].\end{equation}
    Inequality \eqref{Sobolev_21} is crucial for a new Sobolev inequality in Section 3 below.
    
\subsection{\texorpdfstring{$Q$}{}-Yamabe problem}\label{Q Yamabe}
The $Q$ curvature of Branson \cite{Branson} is defined by
\eq{\label{Definition of Q}
Q_g &=& -\Delta \s_1 +4 \s_2(g) +\frac {n-4} 2 \s_1(g)^2\\ 
}
and the associated operator is called Paneitz operator, which was introduced by Paneitz in 1983 \cite{Paneitz}, 
\[
P_{g_{0}}u=\Delta^{2}u+ {\rm div}_{g_{0}}\{(4A_{g_{0}}-(n-2)\sigma_{1}(g_{0})g_{0})(\nabla u,\cdot)\}+\frac{n-4}{2}Q_{g_{0}}u.
\]
The Paneitz operator changes also conformally:  given $\varphi\in C^{\infty}(M),$ 
\begin{equation}
P_{u^{\frac{4}{n-4}}g_0}\varphi=u^{-\frac{n+4}{n-4}}P_{g_0}(u\varphi).\label{eq:conformal P transformation}
\end{equation}

The  Yamabe problem for $Q$ curvature (or the $Q$-Yamabe problem) is to ask if there is a conformal metric $g\in[g_0]$ with a constant $Q_g$. If we let $g_{u}=u^{\frac{4}{n-4}}g_{0}$ for $n\neq 4$, then 
\begin{equation}
\frac{n-4}{2}Q_{g_{u}}=u^{-\frac{n+4}{n-4}}P_{g_{0}}u.\label{eq:Q-transformation}
\end{equation}

The main new challenge of the $Q$-Yamabe problem is the lack of the maximum principle, since it is a 4-$th$ order equation.   There is a lot of work to deal with this problem. See the references mentioned in the Introduction. Recently
 Gursky-Malchiodi \cite{GM} made a breakthrough to show that it does have the maximum principle, if the underlying manifold has non-negative scalar curvature and semi-positive $Q$ curvature. Since it is also important for the paper, we will recall the maximum principle in Subsection \ref{subsec2.5} below. Then they introduced
 a conformal flow for $n\ge 5$
 \begin{equation}
     \label{non-local-flow}
u_{t}=-u+\frac{n-4}{2}\mu P_{g_{0}}^{-1}\big (u^{\frac{n+4}{n-4}}\big), \qquad \mu =\frac {\int  uP_{g_0} u }{\int  u^{\frac {2n}{n-4}} dv_{g_0}} \end{equation}
 which preserves the positivity of the conformal factor $u$
and satisfies suitable monotonicity properties. The last step to solve the $Q$-Yamabe problem in \cite{GM} is to show that there exists a suitable test function, which is equivalent to show 
\begin{eqnarray}
    \label{Y_Q}
    Y_Q (M, [g_0]) < Y_Q (\Sn),
\end{eqnarray}
if $(M, g_0)$ is not conformally equivalent to $\Sn$.
Here the Yamabe type constant $Y_Q$ for $n\ge 5$ is defined by
$$Y_{Q}(M,[g_0])=\frac{n-4}{2}\inf_{\mathcal C_{Q}[g_0]} \frac{\int_M Q_{g} dv_{g}}{Vol(g)^{\frac{n-4}{n}}}$$
where $$\mathcal C_{Q}[g_0]=\{g\in [g_0]|R_g>0\,   \& \, Q_{g} \hbox{ is semi-positive} \}.$$ 
In Section \ref{Various Yamabe constants} below we will provide a simpler proof of \eqref{Y_Q}.

 After the work of Gursky-Malchiodi \cite{GM},  conformally invariant
extensions of the above results have been obtained by 
 Hang-Yang  in \cite{HYIMRN2015, HYCPAM3-dim, HYLecture} and Gursky-Hang-Lin in \cite{GHL}. See also \cite{HYLecture} and \cite{Malchiodi}. See also a recent work of Gong-Lee-Wei \cite{GongLiWei} on the global convergence of flow \eqref{non-local-flow}.

In the above we mainly focus on dimension $n\ge 5$. There is a lot
of work for the 4-dimensional  case, in which the corresponding 
equation is a higher-order Liouville-type equation. See \cite{ChangYang1995, ChangGurskyYangAJM1999,BCY, BairdFR,Brendle-Annofmath2003,Chen-Xu2011,Chen-Xu2012, DM,BrendleAdv, MalchiodiStruwe2006JDG}.
The
 compactness of solutions to the $Q$ -Yamabe problem has been also intensively studied. See \cite{HebeyRobert2004,QingRaskeCV2006,LiXiong2019, LiPacific2019, WeiZhao, GongKimWei2025}. {The phenomenon of the 3-dimensional case is different from the higher-dimensional case, and it is expected that the scalar curvature and the $Q$ curvature play a more dominant role for the geometry of the conformal class in dimension 3.  See \cite{HYCPAM3-dim,HYLecture,HYIMRN2015,Hang-Yang2004,Xu-Adv2005,Xu-YangDCDS, Xu-Yang,Yang-Zhu}.}

\subsection{A new Yamabe problem}\label{new problem} 

Motivated by previous Yamabe problems and a new optimal Sobolev inequality on $\Sn$ proved in Section \ref{section:sobolev inequality} below, we are interested in the following new Yamabe  problem:

\

\noindent{\bf Problem.} {\it Find a conformal metric $g$ in a given conformal class $[g_0]$ with}
 \eq{\label{eq_newY}
 Q_g\slash R_g=const.
}

\

\noindent This problem is also a variational one: the critical points of the following functional
 \eq{\label{eq_functional}
  \frac{n-4}{2}\frac {\int_M Q_g dv_g} {(\int _M R_g dv_g) ^{\frac {n-4}{n-2}} }
}
are solutions of \eqref{eq_newY}.
Like the previous Yamabe problems it is natural to define  a Yamabe type constant by 
\[Y_{4,2} (M, [g_0]):= \inf_{g\in {\mathcal C}_Q[g_0]} \frac{n-4}{2}\frac {\int_M Q_g dv_g} {(\int _M R_g dv_g)^{\frac {n-4}{n-2}} }.\]
We will prove in Sections \ref{section:3-dim}, \ref{section:dim4} and \ref{section:5+dim} that the minimum is achieved.
 

\subsection{Maximum Principle and related constants on \texorpdfstring{$\mathbb{S}^n$}{}} \label{subsec2.5}
For the convenience of the reader,
we recall the maximum principle theorem for $P_{g_0}$ in \cite{GM}, which plays a crucial role in the whole paper.
\begin{thm}[Lemma 2.1 and Theorem 2.2 in \cite{GM}]
\label{thm:maximum principle}Let $\left(M^{n},g_0\right)$
be a closed Riemannian manifold of dimension $n\geq5$. Assume (i)
$Q_{g_0}$ is semi-positive, (ii) $R_{g_0}\geq0$. We have the following 
\begin{itemize}
    \item 
$R_{g_0}>0$. 

\item If $u\in C^{4}$ satisfies
$P_{g_0}u\geq0,$ then either $u>0$ or $u\equiv0$ on $M^{n}$. Moreover,
if $u>0$, then $u^{4/(n-4)}g_0$ is a metric with non-negative Q-curvature
and positive scalar curvature.
\end{itemize}
\end{thm}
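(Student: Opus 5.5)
The plan follows Gursky-Malchiodi. First I show $R_{g_0}>0$. Then I obtain the maximum principle by a continuity argument along the linear path from the constant function $1$ to $u$, using the positivity of the scalar curvature as the quantity that is propagated.

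\emph{Step 1: $R_{g_0}>0$.} Since $\s_2=\tfrac12(\s_1^2-|A|^2)$, definition \eqref{Definition of Q} gives
\[
Q_{g_0}=-\Delta\s_1+\frac n2\s_1^2-2|A_{g_0}|^2 .
\]
The hypothesis $Q_{g_0}\ge0$ then yields
\[
-\Delta\s_1+\tfrac n2\s_1\cdot\s_1\ge 2|A_{g_0}|^2\ge0 .
\]
Thus $\s_1\ge0$ is a supersolution of the linear operator $-\Delta+c(x)$, with bounded nonnegative coefficient $c=\tfrac n2\s_1$. The strong maximum principle gives either $\s_1>0$ or $\s_1\equiv0$. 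In the second case $Q_{g_0}=-2|A_{g_0}|^2\le0$, which contradicts the assumption that $Q_{g_0}$ is somewhere positive. Hence $R_{g_0}=2(n-1)\s_1>0$.

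\emph{Step 2: continuity path.} Let $P_{g_0}u\ge0$ with $u\not\equiv0$, and set $u_t=(1-t)+tu$ for $t\in[0,1]$. Then
\[
P_{g_0}u_t=(1-t)\tfrac{n-4}2Q_{g_0}+tP_{g_0}u\ge0 ,
\]
and this is somewhere positive for $t<1$. Let $t^*$ be the first time at which $\min u_t=0$, and put $g_t=u_t^{4/(n-4)}g_0$ for $t<t^*$. By \eqref{eq:Q-transformation}, $Q_{g_t}$ is semi-positive. Consider the set of $t\in[0,t^*)$ with $R_{g_t}>0$. It contains $0$ and is open. It is also closed: at a limit time $\bar t<t^*$ we get $R_{g_{\bar t}}\ge0$ with $Q_{g_{\bar t}}$ semi-positive, so Step 1 applied to $g_{\bar t}$ gives $R_{g_{\bar t}}>0$. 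Hence $R_{g_t}>0$ on all of $[0,t^*)$.

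\emph{Step 3: excluding a touching zero.} Write $g_t=v_t^{4/(n-2)}g_0$ with $v_t=u_t^{(n-2)/(n-4)}$. By \eqref{eq_scalar}, positivity of $R_{g_t}$ means $L_{g_0}v_t>0$ for $t<t^*$. Passing to the limit gives $v_{t^*}\ge0$ and $L_{g_0}v_{t^*}\ge0$ in the weak (distributional) sense, so the strong maximum principle for $-\Delta+\tfrac{n-2}{4(n-1)}R_{g_0}$ yields either $v_{t^*}>0$ or $v_{t^*}\equiv0$. The first alternative contradicts the definition of $t^*$ whenever $t^*\le1$ is actually reached.

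The second alternative can only occur if $t^*=1$ and $u\equiv0$, because for $t^*<1$ the function $u_{t^*}$ cannot vanish identically: that would force $u$ to be a negative constant, contradicting $P_{g_0}u\ge0$ with $Q_{g_0}$ semi-positive. Therefore, if $u\not\equiv0$, no zero ever appears, and $u=u_1>0$. Applying Step 2 at $t=1$ then gives that $u^{4/(n-4)}g_0$ has nonnegative $Q$ and positive scalar curvature.

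\emph{Main obstacle.} I expect the hardest part to be Step 3: the limit at $t^*$ is degenerate, and $v=u^{(n-2)/(n-4)}$ is only $C^1$ near a zero of $u$. I would handle this by using the inequality $L_{g_0}v_t>0$ uniformly in weak form, pass to the limit in $W^{1,2}$, and invoke the weak Harnack inequality for nonnegative supersolutions instead of a pointwise argument. If that fails, the fallback is to keep track of the lower bound of $R_{g_t}u_t^{\ldots}$ quantitatively up to $t^*$.
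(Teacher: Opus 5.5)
Your argument is essentially the Gursky--Malchiodi proof that the paper cites; the paper gives no proof of its own. Step 1 is their Lemma 2.1: the strong maximum principle applied to the scalar curvature via $-\Delta\sigma_1+\frac n2\sigma_1^2=Q_{g_0}+2|A_{g_0}|^2$. Steps 2--3 are their homotopy argument for Theorem 2.2, using $u_t=(1-t)+tu$.

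Your proof of the dichotomy ``$u>0$ or $u\equiv0$'' is correct, but the final sentence has a small gap. Step 1 requires $Q$ to be \emph{semi-positive}. At $t=1$ you only know $Q_{g_1}=\frac{2}{n-4}u^{-\frac{n+4}{n-4}}P_{g_0}u\ge 0$. You have not excluded $P_{g_0}u\equiv0$, so the closedness step of Step 2 cannot be run at $\bar t=1$.

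Relatedly, Step 2 claims that $Q_{g_t}$ is semi-positive on all of $[0,t^*)$. When $u>0$ one has $t^*>1$, and for $t>1$ the term $(1-t)\frac{n-4}{2}Q_{g_0}$ is nonpositive and somewhere negative. So $P_{g_0}u_t$ need not be nonnegative there, and the continuity argument should be run on $[0,\min\{t^*,1\}]$.

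The repair is one line. If $u>0$ and $P_{g_0}u\equiv0$, then also $P_{g_0}(-u)\ge0$. The dichotomy you already proved, applied to $-u$, then forces $-u>0$ or $-u\equiv0$, both absurd. Hence $P_{g_0}u$ is semi-positive. By continuity from $t<1$ you get $R_{g_1}\ge0$, and Step 1 then gives $R_{g_1}>0$.

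The obstacle you anticipate in Step 3 disappears if you work with $u_t$ instead of $v_t$. Write
\[
R_{g_t}=\frac{4(n-1)}{n-4}\,u_t^{-\frac{n}{n-4}}\Big(-\Delta u_t-\frac{2}{n-4}\frac{|\nabla u_t|^2}{u_t}+\frac{n-4}{4(n-1)}R_{g_0}u_t\Big).
\]
Positivity of $R_{g_t}$ then gives $-\Delta u_t+\frac{n-4}{4(n-1)}R_{g_0}u_t>0$ pointwise for $t<t^*$. This expression is affine in $t$, so it passes to $t=t^*$ pointwise as a nonstrict inequality. The classical strong maximum principle then applies directly to the $C^4$ function $u_{t^*}\ge0$. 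Your weak-formulation route through $v_t=u_t^{(n-2)/(n-4)}$ also works, since $v_t\to v_{t^*}$ in $C^1$, but it is unnecessary.
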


We also need the result for $n=3$ in \cite{Hang-Yang2017} and for $n=4$ in \cite{Vetois}.
\begin{thm}[\cite{HYCPAM3-dim,Vetois}]
\label{thm:positive scalar curvature}Let $(M,g_{0})$ be a smooth,
closed Riemannian manifold of dimension $n\geq 3$ with positive scalar
curvature and non-negative $Q$-curvature. Let $g$ be a conformal
metric to $g_{0}$ with non-negative $Q$-curvature. Then the scalar
curvature of $g$ is positive.
\end{thm}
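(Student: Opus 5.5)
The plan is a continuity argument. I will join $g_0$ to $g$ by a path of conformal metrics along which the $Q$-curvature stays non-negative. I will then show that the scalar curvature cannot lose positivity along the path, because of a pointwise differential inequality satisfied by $\sigma_1$. The path is chosen by exploiting the linearity of the Paneitz operator in the conformal factor, separately in each dimension.

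\emph{Step 1 (the path).} For $n\ge5$ write $g=u^{\frac4{n-4}}g_0$ with $u>0$ and set $u_t=(1-t)+tu$ for $t\in[0,1]$. Then $u_t>0$, and by \eqref{eq:Q-transformation}
\[
P_{g_0}u_t=\frac{n-4}{2}\Big((1-t)Q_{g_0}+t\,Q_g u^{\frac{n+4}{n-4}}\Big)\ge0,
\]
so $g_t=u_t^{\frac4{n-4}}g_0$ satisfies $Q_{g_t}\ge0$.

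For $n=3$ write $g=u^{-4}g_0$. Then $P_{g_0}u=-\frac12 Q_g u^{-7}$, and the same convex combination $u_t=(1-t)+tu$ gives
\[
Q_{g_t}=-2u_t^{7}P_{g_0}u_t=u_t^{7}\big((1-t)Q_{g_0}+tQ_gu^{-7}\big)\ge0 .
\]

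For $n=4$ write $g=e^{2w}g_0$, so that $P_{g_0}w+Q_{g_0}=Q_ge^{4w}$. Take $w_t=tw$. Then
\[
P_{g_0}w_t+Q_{g_0}=(1-t)Q_{g_0}+tQ_ge^{4w}\ge0,
\]
which equals $Q_{g_t}e^{4w_t}$, so again $Q_{g_t}\ge0$.

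In every dimension $g_t$ depends smoothly on $t$, with $g_t=g_0$ at $t=0$ and $g_t=g$ at $t=1$.

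\emph{Step 2 (the key inequality).} From \eqref{Definition of Q} and $4\sigma_2=2\sigma_1^2-2|A|^2$ we get, for any metric,
\[
Q=-\Delta\sigma_1-2|A|^2+\tfrac n2\sigma_1^2 .
\]
Hence whenever $Q_{g_t}\ge0$ we have
\[
-\Delta_{g_t}\sigma_1(g_t)+\tfrac n2\sigma_1(g_t)\,\sigma_1(g_t)\ge 2|A_{g_t}|^2\ge0 .
\]
So $\sigma_1(g_t)$ is a supersolution of a linear equation $-\Delta v+c\,v\ge0$ whose coefficient $c=\frac n2\sigma_1(g_t)$ is bounded.

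\emph{Step 3 (continuity).} Let $T=\sup\{s\in[0,1]:R_{g_t}>0 \text{ for all } t\in[0,s]\}$. Then $T>0$ by openness, since $R_{g_0}>0$ and $R_{g_t}$ is continuous in $t$ on the compact manifold $M$. By continuity $R_{g_T}\ge0$.

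If $R_{g_T}$ vanished somewhere, the strong maximum principle applied to the inequality of Step 2 (a non-negative supersolution attaining the value $0$) would force $R_{g_T}\equiv0$. But $g_T$ is conformal to $g_0$, which has positive scalar curvature, so the Yamabe constant $Y(M,[g_0])$ is positive. A metric in this class with $R\equiv0$ would give $Y(M,[g_0])\le0$, a contradiction. Hence $R_{g_T}>0$. By openness again, this forces $T=1$ and $R_{g_t}>0$ at $t=1$, i.e.\ $R_g>0$.

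The only delicate point is Step 3, the closedness part. It rests on recognizing that the zeroth-order term has a good sign, namely $+\frac n2\sigma_1^2$ together with $-2|A|^2\le0$, so that the strong maximum principle applies. I expect this step to be the main obstacle, though once the identity $Q=-\Delta\sigma_1-2|A|^2+\tfrac n2\sigma_1^2$ is written down it goes through without further difficulty. Regularity along the path is routine, since the solutions are smooth.
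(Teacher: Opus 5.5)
Your proof is correct. The paper gives no proof of this statement; it only cites Hang--Yang and V\'etois, and your argument is essentially the standard one behind that citation: the Gursky--Malchiodi path $u_t=(1-t)+tu$ (or $w_t=tw$ when $n=4$) keeps $Q_{g_t}\ge 0$ by linearity of the Paneitz operator, and the strong maximum principle for $-\Delta\sigma_1+\frac n2\sigma_1^2\ge 2|A|^2$ at the first time positivity of $R_{g_t}$ could fail forces $R\equiv 0$, contradicting $Y(M,[g_0])>0$.
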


Since  there are several scalar curvatures and various constants
on Sobolev inequalities, we list them for the convenience of the reader.
Let $\mathbb{S}^n$ be the standard metric $g_0$ of constant sectional curvature $1$. Hence its Schouten tensor is
\[
A_0=\frac 12 g_0.
\]
Hence its $\s_1$ and $\s_2$-scalar curvature are $\frac n2$ and $\frac 1 8 n(n-1)$ resp., while its scalar curvature is $n(n-1)$.
Its  Yamabe constant and $\s_2$ Yamabe constant are
{\[
Y(\mathbb{S}^n, [g_{\mathbb S^n}])=\frac{n(n-2)}{4}\omega _n^ {2\slash n}, \qquad 
Y_{\sigma_2}(\mathbb{S}^n, [g_{\mathbb S^n}])= \frac 18 n(n-1) \omega_n^ {4\slash n}.
\]
Its 
$Q$ curvature  is $\frac 1 8 n(n^2-4)$ 
and its quotient between $Q$ and $R$ is
\[
Q\slash R=\frac{n^2-4}{8(n-1)}. 
\]

\newpage
\section{Sobolev inequality and Rigidity}\label{section:sobolev inequality}

In this section we introduce a new type Sobolev inequality and  prove an Obata-type result for Equation \eqref{eq_QR}.

\subsection{A new Sobolev inequality}
We prove in this section the following Sobolev inequality

\begin{thm} \label{thm1.1} Let $n\ge 5$. 
We have
\eq{\label{new_ineq}
\int _\Sn Q_g dv_g \ge  \frac{1}{8}(n^2-4)n^{\frac{2}{n-4}}(n-1)^{\frac{4-n}{n-2}}\omega_n^{\frac{2}{n-2}}\left( \int_\Sn R_g dv_g\right)^{\frac {n-4}{n-2}}, \qquad \forall g\in \mathcal{C}_1[\gSn].
}
Equivalently, 
for  any smooth positive function $u$ with a property that $g= u^{\frac 4{n-4}}g_{\mathbb{S}^n}$ has a positive scalar curvature, i.e., $L_{g_{\Sn}} u^{\frac{n-2}{n-4}}>0$,
		it holds
			\begin{equation} \label{inq1.11}
			\int_{\Sn} u P_{g_{\Sn}} u \ge  
		(\frac n 2-2 )(\frac n2 +1) \{(\frac n2 )(\frac n 2-1)\}^{\frac 2 {n-2}} \omega_n^{\frac 2 {n-2}}
			\left( \int_{\Sn} u ^{\frac{n-2}{n-4}} L_{g_{\Sn}}  u^{\frac{n-2}{n-4}} \right) ^{\frac {n-4} {n-2} },			\end{equation}
			with equality if and only if
			\begin{equation}\label{eq:3.4}
			u(\xi) =a (1+ b\cdot \xi ) ^{-\frac {n-4}2},  \quad \hbox{ for } {a\in (0,+\infty)},   b\in \mathbb B^{n+1}.
			\end{equation}
	\end{thm}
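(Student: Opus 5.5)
The plan is to integrate the definition of the $Q$ curvature and split $\int Q_g$ into a $\s_2$-part and a $\s_1^2$-part. Each part will be bounded separately by a power of $\int \s_1(g)\,dv_g$ with the same exponent $\frac{n-4}{n-2}$, and each bound will be sharp on the round metric. By \eqref{Definition of Q}, integrating over the closed manifold $\Sn$ kills the divergence term $-\Delta \s_1$, so for every $g\in[\gSn]$
\[
\int_\Sn Q_g\, dv_g = 4\int_\Sn \s_2(g)\, dv_g + \frac{n-4}{2}\int_\Sn \s_1(g)^2\, dv_g .
\]
We also have $\s_1(g)=\frac{1}{2(n-1)}R_g$, so the right-hand side of \eqref{new_ineq} is a constant multiple of $\left(\int \s_1\right)^{\frac{n-4}{n-2}}$.

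For the first term I invoke \eqref{Sobolev_21}, which holds on the larger cone $\C_1[\gSn]$ (this is exactly why the result is stated for $g\in\C_1[\gSn]$ and not only for the $\Gamma_2^+$ cone). It gives
\[
4\int \s_2\, dv_g \ge 4\cdot\frac{n(n-1)}{8}\,\frac{\omega_n^{\frac 2{n-2}}}{(\frac n2)^{\frac{n-4}{n-2}}}\Big(\int\s_1\, dv_g\Big)^{\frac{n-4}{n-2}}.
\]

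For the second term I combine Cauchy--Schwarz with the Yamabe--Sobolev inequality \eqref{Sobolev1}. Write $V=\mathrm{vol}(g)$. Cauchy--Schwarz gives $\int\s_1^2\ge (\int\s_1)^2/V$. Since $\s_1>0$ on $\C_1$, \eqref{Y_2} and \eqref{Sobolev1} give $\int\s_1\, dv_g \ge \frac{n}{2}\,\omega_n^{2/n}\,V^{\frac{n-2}{n}}$, i.e. an upper bound $V\le \big(\frac{2}{n}\omega_n^{-2/n}\int\s_1\big)^{\frac n{n-2}}$. Inserting this yields
\[
\int\s_1^2 \ge c_n\Big(\int\s_1\Big)^{2-\frac n{n-2}}=c_n\Big(\int\s_1\Big)^{\frac{n-4}{n-2}},
\]
which has the same homogeneity as the first bound. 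Adding the two bounds gives \eqref{new_ineq} with some constant. Both bounds are equalities for $\gSn$, so the sum of the two constants must equal the value of the quotient at the round metric. A direct check with $Q=\frac18 n(n^2-4)$, $R=n(n-1)$ and $\mathrm{vol}=\omega_n$ then reproduces the stated constant; this check is routine and I will not carry it out here.

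For the equivalent form \eqref{inq1.11}, I set $g=u^{\frac4{n-4}}\gSn = w^{\frac4{n-2}}\gSn$ with $w=u^{\frac{n-2}{n-4}}$. Then \eqref{eq:Q-transformation} gives $\frac{n-4}2\int Q_g=\int uP u$, and \eqref{eq_scalar} gives $\frac{n-2}{4(n-1)}\int R_g=\int wLw$. Substituting these into \eqref{new_ineq} and rescaling the constant gives \eqref{inq1.11}.

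For the equality case, equality forces equality in each of the three steps. Equality in \eqref{Sobolev1} already forces $g$ to be round, i.e. $u$ of the form \eqref{eq:3.4} (the Obata/Aubin--Talenti classification), and conversely such metrics have constant curvature and give equality. The main obstacle is the $\s_2$ inequality on the non-elliptic cone $\C_1$. Since this is \eqref{Sobolev_21}, taken from \cite{GWCAG}, I will rely on it as given; the remaining work is the constant bookkeeping.
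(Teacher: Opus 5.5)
Your proof is correct, and it takes a different route from the paper. The paper does not split $\int Q_g$. It uses the pointwise Newton inequality $\sigma_1^2\ge \frac{2n}{n-1}\sigma_2$ to absorb $\frac{n-4}{2}\sigma_1^2$ into $\sigma_2$, obtaining $Q_g\ge -\Delta\sigma_1+\frac{n^2-4}{n-1}\sigma_2(g)$. It then applies \eqref{Sobolev_21} once to the whole of $\int\sigma_2$, and reads off the equality case from equality in Newton's inequality: the Schouten tensor is pure trace, so $g$ is Einstein and hence has constant sectional curvature.

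You keep $\frac{n-4}{2}\int\sigma_1^2$ separate and control it by Cauchy--Schwarz plus the classical Sobolev inequality \eqref{Sobolev1}. As a result, \eqref{Sobolev_21} is needed only with weight $4$ instead of $\frac{n^2-4}{n-1}$. Because each of your two pieces is separately sharp at $g_{\mathbb{S}^n}$, the constants add to exactly the paper's constant:
$4\cdot\frac{n(n-1)}{8}(\frac n2)^{-\frac{n-4}{n-2}}+\frac{n-4}{2}(\frac n2)^{\frac{n}{n-2}}=\frac{n(n^2-4)}{8}(\frac n2)^{-\frac{n-4}{n-2}}$, times $\omega_n^{\frac{2}{n-2}}$.

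What each route buys:
\begin{itemize}
\item The paper's route is a one-line corollary of the $\sigma_2/\sigma_1$ inequality.
\item Your route gets rigidity directly from the classical Aubin--Talenti/Obata classification of extremals of \eqref{Sobolev1}. It needs neither the equality case of \eqref{Sobolev_21} nor the passage through the Einstein condition.
\item In both arguments the cone condition $g\in\mathcal{C}_1[g_{\mathbb{S}^n}]$ enters only through \eqref{Sobolev_21}.
\end{itemize}

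You skipped the final constant check. Done in terms of $\int R_g=2(n-1)\int\sigma_1$, it gives $\frac18(n^2-4)n^{\frac{2}{n-2}}(n-1)^{\frac{4-n}{n-2}}\omega_n^{\frac{2}{n-2}}$, consistent with Corollary \ref{Y4,2Sphere}. So the exponent $\frac{2}{n-4}$ printed in \eqref{new_ineq} is a misprint, and your argument proves the corrected constant.
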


\begin{proof} By the definition of $Q$ and the Newton inequality we have
\begin{equation}
Q_g= -\Delta \s_1 +4 \s_2(g) +\frac {n-4} 2 \s_1(g)^2 \ge -\Delta \s_1 +\frac {n^2 -4}{n-1} \s_2(g). 
\end{equation}
It follows from \eqref{Sobolev_21} that
\eq{
\int_\Sn Q & \ge \frac {n^2 -4}{n-1}  \frac{n(n-1)}{8}\frac{\omega_n^{\frac{2}{n-2}}}{(\frac 1 2 n)^{\frac{n-4}{n-2}}}  \big(\int_\Sn \s_1(g) dv_g\big)^{\frac{n-4}{n-2}}\\
&=
\frac{n(n^2-4)} 8 \left( \frac 1{n(n-1)}\right)^{\frac{n-4}{n-2}} \omega_n^{\frac 2{n-2} }
\left(\int _\Sn R dv_g \right)^{\frac {n-4}{n-2}}.
}

It is clear that equality holds if and only if $g$ is Einstein and hence has constant sectional curvature. Then it is easy to check that $u$ has form  \eqref{eq:3.4}.
\end{proof}

\begin{cor} \label{Y4,2Sphere}
We have
    \[ Y_{4,2}(\mathbb{S}^n)=Y_{4,2}(\mathbb{S}^n, [g_{\mathbb{S}^n}])=\frac{n-4}{2}\frac {\int_{\mathbb{S}^n} Q_{g_{\mathbb{S}^n}} dv_{g_{\mathbb{S}^n}}} {(\int _{\mathbb{S}^n} R_{g_{\mathbb{S}^n}}dv_{g_{\mathbb{S}^n}})^{\frac {n-4}{n-2}} }=\frac{n-4}{16}(n^2-4)n^{\frac{2}{n-2}}(n-1)^{\frac{4-n}{n-2}}\omega_n^{\frac{2}{n-2}}. \]
\end{cor}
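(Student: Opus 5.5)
The plan is to sandwich $Y_{4,2}(\mathbb{S}^n,[g_{\mathbb{S}^n}])$ between a lower bound from Theorem \ref{thm1.1} and an upper bound from evaluating the functional at the round metric, and then compute the common value explicitly.

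For the lower bound, recall that $\mathcal{C}_Q[g_{\mathbb{S}^n}]=\{g\in[g_{\mathbb{S}^n}]\,|\,R_g>0,\ Q_g \hbox{ semi-positive}\}$, which is contained in $\mathcal{C}_1[g_{\mathbb{S}^n}]$. For every $g\in\mathcal{C}_Q[g_{\mathbb{S}^n}]$ we have $\int R_g\,dv_g>0$, so we may divide \eqref{new_ineq} by $(\int R_g\,dv_g)^{\frac{n-4}{n-2}}$ and multiply by $\frac{n-4}{2}>0$. Taking the infimum shows that $Y_{4,2}(\mathbb{S}^n)$ is at least the constant of Theorem \ref{thm1.1} times $\frac{n-4}{2}$. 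I will take that constant in the form actually produced at the end of its proof, namely
\[
\frac{n(n^2-4)}{8}\big(n(n-1)\big)^{-\frac{n-4}{n-2}}\omega_n^{\frac{2}{n-2}} .
\]
For the upper bound, note that $g_{\mathbb{S}^n}$ itself lies in $\mathcal{C}_Q[g_{\mathbb{S}^n}]$, since $R=n(n-1)>0$ and $Q=\frac18 n(n^2-4)>0$. Hence the infimum is at most the value of the functional at $g_{\mathbb{S}^n}$. By the equality case of Theorem \ref{thm1.1} (round metrics are Einstein, and all Newton inequalities become equalities there), this value coincides with the lower bound. This gives the middle equality in the corollary, and the infimum is attained.

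It remains to compute the value. Using $Q_{g_{\mathbb{S}^n}}=\frac18 n(n^2-4)$, $R_{g_{\mathbb{S}^n}}=n(n-1)$ and $\mathrm{vol}=\omega_n$, we get
\[
\frac{n-4}{2}\cdot\frac{\frac18 n(n^2-4)\,\omega_n}{\big(n(n-1)\,\omega_n\big)^{\frac{n-4}{n-2}}}
=\frac{n-4}{16}(n^2-4)\,n^{1-\frac{n-4}{n-2}}(n-1)^{\frac{4-n}{n-2}}\omega_n^{1-\frac{n-4}{n-2}} .
\]
Since $1-\frac{n-4}{n-2}=\frac{2}{n-2}$, this is exactly the claimed expression.

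The only delicate point is bookkeeping of the constant. The exponent of $n$ printed in \eqref{new_ineq} reads $\frac{2}{n-4}$, whereas the computation in the proof of Theorem \ref{thm1.1}, and the direct evaluation above, give $\frac{2}{n-2}$. I will therefore rely on the constant as derived in that proof, i.e.\ on the sharpness of the inequality at the round metric, rather than on the printed coefficient.
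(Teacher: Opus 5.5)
Your proof is correct and follows the paper's argument. The corollary is read off from Theorem \ref{thm1.1}, which holds on $\mathcal{C}_1[g_{\mathbb{S}^n}]\supset\mathcal{C}_Q[g_{\mathbb{S}^n}]$, together with the facts that the round metric lies in $\mathcal{C}_Q[g_{\mathbb{S}^n}]$ and attains equality, and your explicit evaluation then gives the value. You are also right that the exponent $\frac{2}{n-4}$ of $n$ printed in \eqref{new_ineq} is a typo for $\frac{2}{n-2}$, as the last display in the proof of Theorem \ref{thm1.1} confirms.
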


For $n=3$ and $n=4$, we have the corresponding reverse Sobolev inequality
and Moser-Trudinger inequality. Since we will not use these inequalities for the existence in these two cases, we will leave them in a forthcoming  paper  \cite{GWW26b}.

Before we end this section, we propose the following conjecture related with the $k$-th GJMS operator  on  $\mathbb{S}^n$,
which is defined by
\begin{equation}
L_{2k} := \Pi_{j=1}^k \left(-\Delta +\frac {(n-2j)(n+2j-2)} 4\right).
\end{equation}
Actually, $L_2$ is the Yamabe operator and $L_{4}$ is the Paneitz operator for the unit $n$-sphere.

\

\noindent {\bf Conjecture 1.}  {\it  Let $0<l<k<n/2$. 
The following inequality 
	 \begin{equation}
	 \intsn u L_{2k} u \ge   
	 \frac {\Gamma (n/2 +k)}  {\Gamma (n/2 -k)}  \left( \frac{\Gamma (n/2+l)}{\Gamma (n/2-l)}\right)^{-\frac {n-2k}{n-2l}}\omega_n^{\frac {2(k-l)} {n-2l}}
	 \left( \intsn u ^{\frac{n-2l}{n-2k}} L_{2l}  u^{\frac{n-2l}{n-2k}} \right) ^{\frac {n-2k} {n-2l} }
	 \end{equation}
     holds for any smooth positive function $u$
	 	 with 
	 	 a property that $g= u^{\frac {4}{n-2k}}g_{\mathbb{S}^n}$ has a positive $l$-th $Q$ curvature. Moreover,
	  equality holds if and only if
	 \begin{equation}
	 u(\xi) =a (1+ b\cdot \xi ) ^{-\frac {n-2k}2},  \quad \hbox{ for } a\in (0,+\infty),  b\in {\mathbb {B}}^{n+1}.
	 \end{equation}
 }

\

The integral of the left-hand side is the total $k$-th $Q$ curvature of $g=u^{\frac 4{n-2k}}\gSn$, while the integral in the right-hand side is the total $l$-th $Q$ curvature. It is an isoperimetric type inequality. We have also verified  the conjecture for $k=3$ and $l=1$ in \cite{GWW26b}. 
We also conjecture the  corresponding inequality in $\R^n$ holds, namely
\eq{\label{Sobolev_R}
\int_{\R^n} u (-\Delta)^k u 
\ge \frac {\Gamma (n/2 +k)}  {\Gamma (n/2 -k)}  \left( \frac{\Gamma (n/2+l)}{\Gamma (n/2-l)}\right)^{-\frac {n-2k}{n-2l}}\omega_n^{\frac {2(k-l)} {n-2l}}
	 \left( \int_{\R^n} u ^{\frac{n-2l}{n-2k}} (-\Delta)^{l}  u^{\frac{n-2l}{n-2k}} \right) ^{\frac {n-2k} {n-2l} },
} for $u$ with similar positive conditions as above. Due to the requirement that $R_g>0$, which we believe is necessary, 
inequality  \eqref{new_ineq} on $\Sn$ does not imply \eqref{Sobolev_R} on $\R^n$.
It would be also interesting to ask if certain positivity on curvatures is necessary. 

The classification of optimizers and critical points of \eqref{Sobolev_R} is also very interesting.
Theorem \ref{thm1.1} implies the classification of optimizers in $\Sn$. We classify all critical points in $\Sn$ by using  Obata's method in the next subsection,
but leave the classification of
positive  entire solutions of
\begin{eqnarray}
    \label{entire-sol}
    (-\Delta)^2 u = u^{\frac 2 {n-4}} (-\Delta) u^\frac{n-2}{n-4}, \qquad \hbox{ in } \Rn
\end{eqnarray}
still open. We will pursue these questions in a forthcoming paper.
\medskip

\subsection{Rigidity and  Obata Theorem}

As mentioned in the Introduction, we prove the rigidity by using the original idea of Obata \cite{Obata1971}, which will be used in the proof of Theorem \ref{thm_Yamabe_Constants}. Our
proof follows closely the recently developed methods in
\cite{Vetois}, \cite{Case2024} and especially in \cite{Li-Wei2025}.

Actually  we consider a slightly generalized equation
\begin{equation} \label{eq4.1}
Q_{g}/R_{g}^{\alpha}=const, 
\end{equation}
for  any given $\alpha\in (0,1].$

\begin{thm}\label{classification}
	Let $(M, g_0)$ be a compact Einstein manifold of dimension $n\ge 3$ with scalar curvature $R_{g_0}>0$ and $\alpha\in(0,1]$ be given. Assume that a conformal metric $g = u^2g_0$ $(u > 0)$ satisfies  \eqref{eq4.1} and $R_g>0$.  
	If $(M, g_0)$ is not conformally equivalent to the round sphere, then $u$ must be a
positive constant. If $(M, g_0)$ is the standard round sphere ${\mathbb S}^n
$, then
$u$ has a form
\begin{equation}
u(\xi)= a (1+ b\cdot \xi)^{-1},  \quad \hbox{ for } a\in (0,+\infty), b\in \mathbb B^{n+1}.
\end{equation}

\end{thm}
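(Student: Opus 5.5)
The plan is Obata's argument: show that the conformal metric $g$ is itself Einstein, then invoke the classical classification of Einstein metrics conformal to an Einstein metric. Set $v=u^{-1}$, so $g_0=v^2 g$. Denote by $E_h$ the traceless Ricci tensor of a metric $h$ and by $\mathring{\nabla}^2$ the traceless Hessian. The conformal change formula gives $E_{g_0}=E_g+(n-2)v^{-1}\mathring{\nabla}^2_g v$. Since $g_0$ is Einstein, this reduces to the key identity
\[
E_g=-(n-2)\,v^{-1}\,\mathring{\nabla}^2_g v .
\]
Combining it with the contracted Bianchi identity $\operatorname{div}_g E_g=\frac{n-2}{2n}\nabla R_g$, for any positive weight $\phi$ I expect
\[
\int_M \phi\, v\,|E_g|^2\,dv_g=(n-2)\int_M \langle E_g,\nabla(\phi)\otimes\nabla v\rangle\,dv_g+\frac{(n-2)^2}{2n}\int_M \phi\,\langle\nabla R_g,\nabla v\rangle\,dv_g .
\]
The terms on the right must then be controlled using the equation \eqref{eq4.1}.

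Next I rewrite $Q_g$ in terms of $R_g$ and $|E_g|^2$. Using $\sigma_2=\frac12(\sigma_1^2-|A|^2)$ and $|A_g|^2=\frac{|E_g|^2}{(n-2)^2}+\frac{R_g^2}{4n(n-1)^2}$, one gets
\[
Q_g=-\frac{1}{2(n-1)}\Delta_g R_g+a_n R_g^2-\frac{2}{(n-2)^2}|E_g|^2, \qquad a_n>0 .
\]
The equation $Q_g=c\,R_g^{\alpha}$ then becomes a second order equation for $R_g$ containing the term $|E_g|^2$ with a favourable (negative) sign. I multiply this equation by a test quantity of the form $R_g^{\beta}v^{\gamma}$, or by $\langle\nabla R_g,\nabla v\rangle$-type expressions as in \cite{Li-Wei2025,Case2024}, and integrate by parts. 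The aim is to produce a second copy of $\int |E_g|^2$ with a weight. Subtracting this from the Obata identity above (with $\phi$ a suitable power of $R_g$) should leave an inequality of the form
\[
\int_M w\,|E_g|^2\,dv_g+\int_M(\text{nonnegative terms})\le 0, \qquad w>0 .
\]
Here positivity of $R_g$ makes the powers $R_g^{\beta}$ legitimate, and $\alpha\in(0,1]$ makes the coefficients of the remaining terms, such as $|\nabla R_g|^2R_g^{\beta-1}$, have the right sign.

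This choice of exponents $\beta,\gamma$ is the main obstacle. The cross terms $\langle E_g,\nabla R_g\otimes\nabla v\rangle$ must either cancel exactly or be absorbed by Cauchy--Schwarz into $|E_g|^2$ and $|\nabla R_g|^2$ terms. The restriction $\alpha\le1$ should appear exactly as the condition that a resulting quadratic form is nonnegative. I would first try $\phi=R_g^{\alpha-1}$, since this makes $\phi\,Q_g$ comparable to $R_g$-homogeneous quantities. I would then adjust the powers, following the computation in \cite{Li-Wei2025}, so that the $\Delta R_g$ term integrates against $\phi v$ into pure gradient terms.

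Once the inequality is established, it forces $E_g\equiv0$, so $g$ is Einstein (for $n\ge3$, using Schur's lemma). Then $\mathring{\nabla}^2_g v=0$ with $g_0=v^2g$ both Einstein. By the classical results of Obata/Yano–Nagano, if $v$ is nonconstant then $(M,g)$ is isometric to a round sphere, and hence so is $(M,g_0)$ up to conformal equivalence. Thus if $(M,g_0)$ is not conformal to the sphere, $u$ is constant. In the case $(M,g_0)=\Sn$, $u^2\gSn$ is Einstein with positive scalar curvature, hence of constant sectional curvature. Conformal round metrics on $\Sn$ are exactly pullbacks by Möbius transformations, which gives $u=a(1+b\cdot\xi)^{-1}$ with $a>0$ and $b\in\mathbb B^{n+1}$.
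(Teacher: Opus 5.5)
\textbf{There is a genuine gap.} Your overall strategy is the one the paper follows: an Obata-type integral identity built from the traceless-Hessian formula for $E_g$, the contracted Bianchi identity, and
$Q_g=-\frac{1}{2(n-1)}\Delta_gR_g+\frac{n^2-4}{8n(n-1)^2}R_g^2-\frac{2}{(n-2)^2}|E_g|^2$.
But the proposal stops exactly where the proof begins. You never say which identity to combine with the equation, nor why the resulting quadratic form is nonnegative for $\alpha\in(0,1]$; this is left as ``I expect'' and ``the main obstacle''. Your weighted $|E_g|^2$ identity is only the last auxiliary step of the paper's argument.

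The missing idea is a second identity, obtained by computing $\int u\langle E_g,\nabla^2R_g\rangle R_g^{-\alpha}\,dv_g$ in two ways.
\begin{itemize}
\item First, integrate by parts using $\operatorname{div}E_g=\frac{n-2}{2n}\nabla R_g$.
\item Second, substitute $uE_g=-(n-2)\mathring{\nabla}^2_gu$ and use the Ricci identity. This produces the term $\frac{(n-2)(1-n)}{n}\int\Delta u\,\Delta R_g\,R_g^{-\alpha}$.
\end{itemize}
The equation enters only through this term. That is, $-\Delta R_g=2(n-1)Q_g+\alpha_1|E_g|^2-\alpha_2R_g^2$ is tested against $\Delta u\,R_g^{-\alpha}$, not against $R_g^\beta v^\gamma$.

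Next, $\Delta u$ must be replaced using the conformal scalar-curvature relation $\Delta u=\frac n2u^{-1}|\nabla u|^2-\frac{R_gu}{2(n-1)}+\frac{R_{g_0}}{2(n-1)u}$. This step generates the good terms $u^{-1}|\nabla u|^2\bigl(\alpha_1|E_g|^2R_g^{-\alpha}+\alpha|\nabla R_g|^2R_g^{-\alpha-1}\bigr)$ and the terms weighted by $u^{-1}R_{g_0}$. Combined with the Bianchi-side term, it also gives $\frac{1-\alpha}{2n}\int u|\nabla R_g|^2R_g^{-\alpha}$. This is where the hypothesis $R_{g_0}>0$ is used; your plan never uses it.

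The $\alpha_2R_g^{2-\alpha}\Delta u$ term leaves, after integration by parts, the unsigned quantity $\int R_g^{1-\alpha}\langle\nabla u,\nabla R_g\rangle$. Your identity with $\phi=R_g^{1-\alpha}$ rewrites it as a positive multiple of $\int u|E_g|^2R_g^{1-\alpha}$ plus a cross term. Your first guess $\phi=R_g^{\alpha-1}$ does not match this quantity.

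What survives is a sum of nonnegative terms plus $C_\alpha\int E_g(\nabla u,\nabla R_g)R_g^{-\alpha}$, where $C_\alpha$ has no definite sign. You still have to split this term by Cauchy--Schwarz between two pairs of good terms. You must then verify the explicit polynomial inequality $C_\alpha^2\le\frac{(1-\alpha)\alpha_1(n-1)}{n}+2\alpha(n-1)B_\alpha$, which is where $\alpha\le 1$ is really used. Only then does one get $\nabla R_g\equiv0$; the paper concludes constant scalar curvature rather than $E_g\equiv0$. After that, the classical Obata theorem finishes as you describe.

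\textbf{A smaller but real slip.} With $g_0=v^2g=u^{-2}g$, the clean conformal formula is in terms of $u$, not $v$:
$E_{g_0}=E_g+(n-2)u^{-1}\mathring{\nabla}^2_gu$.
Written in terms of $v=u^{-1}$ it reads
$E_{g_0}=E_g-(n-2)v^{-1}\mathring{\nabla}^2_gv+2(n-2)v^{-2}\bigl(dv\otimes dv-\frac{|dv|_g^2}{n}g\bigr)$.
So your key identity, your weighted $|E_g|^2$ identity, and your final step are all correct only after replacing $v$ by $u$ throughout. In particular, $E_g=0$ gives $\mathring{\nabla}^2_gu=0$, not $\mathring{\nabla}^2_gv=0$.
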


Recall that the 
$Q$-curvature is defined (equivalently) by 
\[
Q_{g}:=-\frac{1}{2(n-1)}\Delta_{g}R_{g}-\frac{2}{(n-2)^{2}}\left|E_{g}\right|_{g}^{2}+\frac{n^{2}-4}{8n(n-1)^{2}}R_{g}^{2},
\]
where $E_g$ is the trace-free Ricci tensor of $g$, i.e., $E_g=Ric_g-\frac 1 n R_g g$.
Hence \eqref{eq4.1} is written as 
\begin{equation}
-\frac{ \Delta_{g}R }{R_g^\alpha }-\alpha_{1}\frac {\left|E_{g}\right|_{g}^{2}}{R_g^\alpha}+\alpha_{2}R_{g}^{2-\alpha}=\frac{2(n-1)Q_g}{R_g^{\alpha }},\label{eq:equation}
\end{equation}
with
\begin{equation}\label{definition of alpha1 and alpha2}
\alpha_{1}=\frac{4(n-1)}{(n-2)^{2}},\quad\alpha_{2}=\frac{n^{2}-4}{4(n-1)n}.
\end{equation}
Since all the computations below are with respect to $g$, we omit the subscript
$g$ if there is no confusion.
For example we use $R$ and $E$ to denote $R_g$ and $E_g$. It is well-known  from the conformal transformation of  the Ricci tensor and the assumption that $g_0$ is Einstein, that 
\begin{equation}
u E_{ij}=-(n-2)\left(\nabla_{ij}^{2}u-\frac{\Delta u}{n}g_{ij}\right)\label{eq:expression of E}
\end{equation}
and 
\begin{equation} \label{eq4.4}
\Delta  u=\frac{n}{2}u^{-1}\left|\nabla u\right|^{2}-\frac{R u}{2(n-1)}+\frac{u^{-1}R_{g_0}}{2(n-1)}.
\end{equation}

To distinguish $R_{ij}$ and the second derivatives of $R$, from now on we use $S_{g}$ to denote the scalar curvature, and $S_i=\n_i S $, $S_{ij}=\nabla_{ij}^{2}S$ to avoid any confusion in this section. 
We first need a Lemma.

\begin{lem} For any $\alpha>0$ we have
\begin{align}
\frac{(n-2)(1-n)}{n}\int \frac{\Delta  u\Delta  S}{S^{\alpha}}dv_{g}+(n-2)\frac{n-1}{n}\alpha\int\frac{\Delta u|\nabla S|^{2}}{S^{\alpha+1}}\label{eq4.7}\\
+(n-1)\int\frac{E(\nabla u,\nabla S)}{S^{\alpha}}+\frac{n-2}{2n}\int_{S^{n}}\frac{u|\nabla S|^{2}}{S^{\alpha}}+\frac{n-2}{n}\int S^{1-\alpha}\<\nabla u,\nabla S\> & =0.\nonumber 
\end{align}
\end{lem}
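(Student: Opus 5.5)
This is an integral identity that holds for every $\alpha>0$, with no curvature equation needed. The only inputs are the two conformal relations \eqref{eq:expression of E} and \eqref{eq4.4}, which hold because $g_0$ is Einstein, together with the contracted Bianchi identity for $g$. The plan is to start from the divergence-free quantity $\nabla^j E_{ij}=\frac{n-2}{2n}S_i$. I contract it against the vector field $u\,S_i/S^\alpha$, or more precisely against $\nabla^i u/S^{\alpha}$ and $u\nabla^i S/S^\alpha$ separately, and integrate by parts. On the closed manifold $M$ the boundary terms vanish.

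Concretely, I first compute $\int \frac{E(\nabla u,\nabla S)}{S^\alpha}$ in two ways. On one side I substitute \eqref{eq:expression of E}, which gives
\[
uE_{ij}=-(n-2)\Big(u_{ij}-\tfrac{\Delta u}{n}g_{ij}\Big).
\]
Rather than multiplying by $u$, I use the identity $\nabla^j\big(u_{ij}-\frac{\Delta u}{n}g_{ij}\big)=\frac{n-1}{n}\nabla_i\Delta u+Ric(\nabla u)_i$. This is the Ricci identity for the trace-free Hessian. I then integrate $\int\frac{(u_{ij}-\frac{\Delta u}{n}g_{ij})S^{i}\,\cdot}{S^\alpha}$ by parts, moving the derivative from the Hessian onto $S_j/S^\alpha$. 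This step produces four kinds of terms:
\begin{itemize}
\item $\int \frac{\Delta u\,\Delta S}{S^\alpha}$, coming from $\nabla_i\Delta u$ after a second integration by parts;
\item $\alpha\int\frac{\Delta u|\nabla S|^2}{S^{\alpha+1}}$, coming from differentiating $S^{-\alpha}$;
\item a Ricci term $\int \frac{Ric(\nabla u,\nabla S)}{S^\alpha}$;
\item a term containing $\nabla^2 S(\nabla u,\cdot)$, which, when contracted with the trace-free Hessian, must be returned to $E$ again using \eqref{eq:expression of E}.
\end{itemize}
Next I split the Ricci term as $Ric=E+\frac{S}{n}g$. This produces the $E(\nabla u,\nabla S)/S^\alpha$ term with coefficient $n-1$ after collecting, and the term $\frac{n-2}{n}\int S^{1-\alpha}\langle\nabla u,\nabla S\rangle$.

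The remaining term $\frac{n-2}{2n}\int \frac{u|\nabla S|^2}{S^\alpha}$ comes from the Bianchi identity. When the divergence falls on $uE_{ij}$, the piece $u\nabla^jE_{ij}=\frac{n-2}{2n}uS_i$ appears, and contracting it with $S^i/S^\alpha$ gives exactly this term. So the organized computation is:
\[
\int \frac{\nabla^j(uE_{ij})S^i}{S^\alpha}\quad\text{computed via Bianchi,}
\]
and, separately,
\[
-(n-2)\int\frac{\nabla^j\big(u_{ij}-\tfrac{\Delta u}{n}g_{ij}\big)S^i}{S^\alpha}\quad\text{computed via the Ricci commutation}.
\]
Equating these two expressions and integrating $\int\frac{\langle\nabla\Delta u,\nabla S\rangle}{S^\alpha}$ by parts once more gives
\[
-\int\frac{\Delta u\,\Delta S}{S^\alpha}+\alpha\int\frac{\Delta u|\nabla S|^2}{S^{\alpha+1}}.
\]
Multiplying through by $\frac{n-1}{n}(n-2)$ then produces exactly the first two terms of \eqref{eq4.7}.

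The main obstacle is bookkeeping. In particular I need to make sure that no Hessian-of-$S$ term survives: by writing everything as divergences contracted against $\nabla S/S^\alpha$, I never differentiate $\nabla S$ itself. I also need to get the coefficients $n-1$ and $\frac{n-2}{n}$ right when splitting $Ric$. Relation \eqref{eq4.4} should not be needed for this lemma; it will enter only later, when the lemma is combined with \eqref{eq:equation}.
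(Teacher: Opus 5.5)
Your ``organized computation'' is correct, and it is a cleaner route than the paper's. Take the divergence of the pointwise relation \eqref{eq:expression of E}, using $\nabla^jE_{ij}=\frac{n-2}{2n}S_i$ on the left, $\nabla^ju_{ij}=\nabla_i\Delta u+R_{ij}u_j$ on the right, and $R_{ij}=E_{ij}+\frac{S}{n}g_{ij}$. This gives the pointwise identity
\[
(n-1)E_{ij}u_j+\frac{n-2}{2n}\,u\,S_i+\frac{n-2}{n}\,S\,u_i+\frac{(n-1)(n-2)}{n}\nabla_i\Delta u=0 .
\]
Contract it with $S_i/S^{\alpha}$ and integrate by parts once, using $\int S^{-\alpha}\langle\nabla\Delta u,\nabla S\rangle=-\int S^{-\alpha}\Delta u\,\Delta S+\alpha\int S^{-\alpha-1}\Delta u|\nabla S|^2$. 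This yields \eqref{eq4.7} with exactly the stated coefficients.

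The paper instead pairs \eqref{eq:expression of E} with $\nabla^2S/S^{\alpha}$ and integrates by parts on both sides, so the derivative also falls on the weight. This produces the extra terms $\alpha\int uE(\nabla S,\nabla S)S^{-\alpha-1}$ and $-(n-2)\alpha\int\nabla^2u(\nabla S,\nabla S)S^{-\alpha-1}$. The paper then has to recombine them through a second use of \eqref{eq:expression of E}. Written with the trace-free Hessian, these two weight terms are equal by \eqref{eq:expression of E} and drop out, which is why your route never sees them.

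Your version shows that the lemma is just an integrated form of a pointwise consequence of the contracted Bianchi identity and the Ricci identity, with $S^{-\alpha}$ entering only in one integration by parts. The paper's version is longer and gains nothing extra here. You are also right that \eqref{eq4.4} is not needed; the paper does not use it for this lemma either.

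On presentation, three fixes. Your bullet list (a term containing $\nabla^2S(\nabla u,\cdot)$ that ``must be returned to $E$'', and ``moving the derivative from the Hessian onto $S_j/S^\alpha$'') describes a different, half-formed computation that does not occur in your organized argument, so drop it. There is no ``multiplying through by $\frac{n-1}{n}(n-2)$'': that coefficient already comes from $-(n-2)\cdot\frac{n-1}{n}$. Finally, $\nabla^jE_{ij}$ is not divergence-free; you mean the contracted Bianchi identity.
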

\begin{proof}
Equation (\ref{eq:expression of E}) gives 
\begin{eqnarray}
\int u\<E,\frac{\nabla^{2}S}{S^{\alpha}}\>dv_{g} & = & \int-(n-2)\<\nabla_{ij}^{2}u-\frac{\Delta u}{n}g_{ij},\frac{S_{ij}}{S^{\alpha}}\>dv_{g}\nonumber \\
 & = & \int-(n-2)\<\nabla^{2}u,\frac{\nabla^{2}S}{S^{\alpha}}\>dv_{g}+\frac{n-2}{n}\int\frac{\Delta u\Delta S}{S^{\alpha}}dv_{g}.\label{eq:formula  1}
\end{eqnarray}
We compute the first term in the left-hand side of the above formula by using integration
of parts and $E_{ij,j}=\frac{n-2}{2n}S_{i}$ 
\begin{align}
\int u  \<E,\frac{\nabla^{2}S}{S^{\alpha}}\>dv_{g}\nonumber 
 & =-\int E  (\nabla  (\frac{u}{S^{\alpha}}),\nabla S)-\int E_{ij,j}S_{i}\frac{u}{S^{\alpha}}\nonumber \\
 & =-\int \frac{E  (\nabla  u,\nabla S)}{S^{\alpha}}+\alpha\int u\frac{E  (\nabla S,\nabla S)}{S^{\alpha+1}}-\frac{n-2}{2n}\int \frac{u|\nabla S|^{2}}{S^{\alpha}}.\label{eq:formula 2}
\end{align}
For the first term in the right-hand side of \eqref{eq:formula  1}, we have
\begin{align*}
  \int\langle \nabla^{2}u,\frac{\nabla^{2}S}{S^{\alpha}}\rangle dv_{g}
= & \int-\left(\frac{u_{ij}}{S^{\alpha}}\right)_{i}\cdot S_{j}\\
= & -\int\frac{(\nabla S,\nabla\Delta u)}{S^{\alpha}}-\int\frac{Ric(\nabla u,\nabla S)}{S^{\alpha}}+\alpha\int\frac{\nabla^{2}u(\nabla S,\nabla S)}{S^{\alpha+1}}\\
= & \int\frac{\Delta u\Delta S}{S^{\alpha}}-\alpha\int\frac{\Delta u|\nabla S|^{2}}{S^{\alpha+1}}-\int\frac{E(\nabla u,\nabla S)}{S^{\alpha}}-\int\frac{S\<\nabla u,\nabla S\>}{nS^{\alpha}}+\alpha\int\frac{\nabla^{2}u(\nabla S,\nabla S)}{S^{\alpha+1}},
\end{align*}
where we have used the Ricci identity
\[
u_{iji}=u_{iij}+R_{ij} u_i.
\]
Hence the right hand side of \eqref{eq:formula  1} is 
\begin{align*}
& \frac{n-2}{n}\int\frac{\Delta  u\Delta  S}{S^{\alpha}}dv_{g}\\
 & -(n-2)\big(\int\frac{\Delta u\Delta S}{S^{\alpha}}-\alpha\int\frac{\Delta u|\nabla S|^{2}}{S^{\alpha+1}}-\int\frac{E(\nabla u,\nabla S)}{S^{\alpha}}-\int\frac{S\<\nabla u,\nabla S\>}{nS^{\alpha}}+\alpha\int\frac{\nabla^{2}u(\nabla S,\nabla S)}{S^{\alpha+1}}\big)\\
= & \frac{(n-2)(1-n)}{n}\int\frac{\Delta  u\Delta  S}{S^{\alpha}}dv_{g}+(n-2)(\alpha\int\frac{\Delta u|\nabla S|^{2}}{S^{\alpha+1}}-\alpha\int\frac{\nabla^{2}u(\nabla S,\nabla S)}{S^{\alpha+1}})\\
 & +(n-2)\int\frac{E(\nabla u,\nabla S)}{S^{\alpha}}+\frac{n-2}{n}\int S^{1-\alpha}\<\nabla u,\nabla S\>.
\end{align*}

It follows, together with (\ref{eq:formula 2}) and \eqref{eq:formula  1}, 
\begin{align*}
 & \frac{(n-2)(1-n)}{n}\int \frac{\Delta  u\Delta  S}{S^{\alpha}}dv_{g}+(n-2)(\alpha\int\frac{\Delta u|\nabla S|^{2}}{S^{\alpha+1}}-\alpha\int\frac{\nabla^{2}u(\nabla S,\nabla S)}{S^{\alpha+1}})\\
 & +(n-1)\int\frac{E(\nabla u,\nabla S)}{S^{\alpha}}+\frac{n-2}{n}\int S^{1-\alpha}\<\nabla u,\nabla S\>-\alpha\int u\frac{E  (\nabla S,\nabla S)}{S^{\alpha+1}}+\frac{n-2}{2n}\int \frac{u|\nabla S|^{2}}{S^{\alpha}}=0.
\end{align*}
The summation of the 2nd, 3rd, and 5th terms in the previous equation can
be simplified by (\ref{eq:expression of E}) 
\begin{align*}
 & (n-2)\alpha(\int\frac{\Delta u|\nabla S|^{2}}{S^{\alpha+1}}-\int\frac{\nabla^{2}u(\nabla S,\nabla S)}{S^{\alpha+1}})-\alpha\int u\frac{E  (\nabla S,\nabla S)}{S^{\alpha+1}}\\
= & (n-2)\alpha(\int\frac{\Delta u|\nabla S|^{2}}{S^{\alpha+1}}-\int\frac{\nabla^{2}u(\nabla S,\nabla S)}{S^{\alpha+1}})+\int(n-2)\alpha\left(\nabla^{2}u-\frac{\Delta  u}{n}g\right)\frac{(\nabla S,\nabla S)}{S^{\alpha+1}}\\
= & (n-2)\frac{n-1}{n}\alpha\int\frac{\Delta u|\nabla S|^{2}}{S^{\alpha+1}}.
\end{align*}
Therefore we have 
\begin{align*}
\frac{(n-2)(1-n)}{n}\int\frac{\Delta  u\Delta  S}{S^{\alpha}}dv_{g}+(n-2)\frac{n-1}{n}\alpha\int\frac{\Delta u|\nabla S|^{2}}{S^{\alpha+1}}\\
+(n-1)\int\frac{E(\nabla u,\nabla S)}{S^{\alpha}}+\frac{n-2}{2n}\int \frac{u|\nabla S|^{2}}{S^{\alpha}}+\frac{n-2}{n}\int S^{1-\alpha}\<\nabla u,\nabla S\> & =0,\nonumber 
\end{align*}
the formula \eqref{eq4.7}.
\end{proof}

\begin{prop}
For any $\alpha>0$, 
\begin{align}
&\frac{(n-1)\alpha_{1}}{2}\int\frac{\left|E  \right|_{g}^{2}}{S^{\alpha}}u^{-1}\left|\nabla  u\right|^{2}dv_{g}+\frac{1}{2n}\int u^{-1}R_{g_{0}}\big(\frac{\alpha_{1}\left|E  \right|_{g}^{2}}{S^{\alpha}}+\alpha\frac{|\nabla S|^{2}}{S^{\alpha+1}}\big)\label{main equality}+\alpha\frac{(n-1)}{2}\int\frac{|\nabla S|^{2}}{S^{\alpha+1}}u^{-1}\left|\nabla  u\right|^{2} \\ &+\frac{1}{2n}(1-\alpha)\int \frac{u|\nabla S|^{2}}{S^{\alpha}}\nonumber+B_{\alpha}\int u|E  |^{2}S^{1-\alpha}+C_{\alpha}\int\frac{E(\nabla u,\nabla S)}{S^{\alpha}} =\frac{2(n-1)^2}{n}\int \langle \nabla u, \nabla \frac{Q_g}{S^{\alpha}}\rangle,\nonumber
\end{align}
where
\[
B_{\alpha}:=\frac{2n}{(n-2)^{2}}(\frac{1}{n}+\alpha_{2}(2-\alpha)\frac{(n-1)}{n})-\frac{\alpha_{1}}{2n}>0, {\mbox{ provided } \alpha\in (0,1]}
\]
and
\[
C_{\alpha} :=\frac{(n-1)}{n-2}-\frac{2(1-\alpha)}{(n-2)}\left(1+\alpha_{2}(2-\alpha)(n-1)\right).\]

\end{prop}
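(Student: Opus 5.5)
The plan is to start from the right-hand side, $\frac{2(n-1)^2}{n}\int\langle\nabla u,\nabla (Q_g/S^{\alpha})\rangle$, and convert it into the left-hand side. First I would integrate by parts to move the derivative onto $u$. This gives $-\frac{2(n-1)^2}{n}\int \Delta u\, Q_g S^{-\alpha}$. Then I substitute \eqref{eq:equation}, namely $2(n-1)Q_g S^{-\alpha}=-\Delta S\,S^{-\alpha}-\alpha_1|E|^2S^{-\alpha}+\alpha_2 S^{2-\alpha}$. The right-hand side thereby splits into three pieces:
\[
\frac{n-1}{n}\int \frac{\Delta u\,\Delta S}{S^{\alpha}},\qquad \frac{(n-1)\alpha_1}{n}\int\frac{\Delta u\,|E|^2}{S^{\alpha}},\qquad -\frac{(n-1)\alpha_2}{n}\int \Delta u\, S^{2-\alpha}.
\]

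Each piece is then handled separately.

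\begin{enumerate}
\item For the $\Delta u\,\Delta S$ term, I would use \eqref{eq4.7} to trade $\int \Delta u\,\Delta S/S^{\alpha}$ for the terms $\alpha\int \Delta u|\nabla S|^2/S^{\alpha+1}$, $\int E(\nabla u,\nabla S)/S^\alpha$, $\int u|\nabla S|^2/S^\alpha$ and $\int S^{1-\alpha}\langle\nabla u,\nabla S\rangle$.
\item For the terms $\Delta u\,|E|^2$ and $\Delta u\,|\nabla S|^2/S^{\alpha+1}$, I substitute \eqref{eq4.4}, i.e. $\Delta u=\frac n2u^{-1}|\nabla u|^2-\frac{Su}{2(n-1)}+\frac{u^{-1}R_{g_0}}{2(n-1)}$. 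This directly produces the terms $\int\frac{|E|^2}{S^\alpha}u^{-1}|\nabla u|^2$, $\int u^{-1}R_{g_0}(\cdots)$, $\int \frac{|\nabla S|^2}{S^{\alpha+1}}u^{-1}|\nabla u|^2$ and $\int u|E|^2S^{1-\alpha}$ on the left-hand side. The term $\int u|\nabla S|^2/S^{\alpha}$ also arises here; combined with the one from \eqref{eq4.7}, it yields the coefficient $\frac{1-\alpha}{2n}$.
\item For the term $\int \Delta u\,S^{2-\alpha}$, I integrate by parts to get $(2-\alpha)\int S^{1-\alpha}\langle\nabla u,\nabla S\rangle$. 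This merges with the corresponding term from \eqref{eq4.7}.
\item The remaining $\int S^{1-\alpha}\langle\nabla u,\nabla S\rangle$ is not of the required form, so it has to be eliminated. Here I would use the contracted Bianchi identity $E_{ij,j}=\frac{n-2}{2n}S_i$ together with \eqref{eq:expression of E}. The step is to write $\int S^{1-\alpha}\langle\nabla u,\nabla S\rangle=\frac{2n}{n-2}\int S^{1-\alpha}E_{ij,j}u_i$. Integrating by parts then gives $-\frac{2n}{n-2}\int\big(S^{1-\alpha}\langle E,\nabla^2u\rangle+(1-\alpha)S^{-\alpha}E(\nabla u,\nabla S)\big)$. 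Since $E$ is trace-free, $\langle E,\nabla^2 u\rangle=-\frac{u|E|^2}{n-2}$. So this term becomes $\frac{2n}{(n-2)^2}\int u|E|^2S^{1-\alpha}$ plus a multiple of $(1-\alpha)\int E(\nabla u,\nabla S)/S^\alpha$. This is exactly the mechanism that generates the factors $\frac{2n}{(n-2)^2}(\cdots)$ in $B_\alpha$ and $\frac{2(1-\alpha)}{n-2}(\cdots)$ in $C_\alpha$.
\end{enumerate}

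The main obstacle is purely bookkeeping. One must collect all coefficients of $\int u|E|^2S^{1-\alpha}$ and $\int E(\nabla u,\nabla S)S^{-\alpha}$ correctly, with signs, and confirm that they combine into the stated $B_\alpha$ and $C_\alpha$. One must also confirm that the $\Delta u\,\Delta S$ terms cancel completely. Positivity of $B_\alpha$ for $\alpha\in(0,1]$ is then an elementary check: $2-\alpha\ge1$ makes the first bracket at least $\frac1n+\alpha_2\frac{n-1}n$, and substituting the explicit $\alpha_1,\alpha_2$ from \eqref{definition of alpha1 and alpha2} shows that this exceeds $\frac{\alpha_1}{2n}$. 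I would verify the overall normalization in the case $\alpha=1$ first, where the $(1-\alpha)$ terms vanish, before tracking the general $\alpha$.
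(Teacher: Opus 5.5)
Your proposal is correct and is essentially the paper's proof read in the opposite direction. The paper starts from \eqref{eq4.7}, substitutes $\Delta S$ via \eqref{eq:equation}, integrates the $\alpha_2$ term by parts, and replaces $\Delta u$ using \eqref{eq4.4}. It then eliminates $\int S^{1-\alpha}\langle\nabla u,\nabla S\rangle$ with exactly your step-4 identity, $\int u|E|^2S^{1-\alpha}=(n-2)(1-\alpha)\int S^{-\alpha}E(\nabla u,\nabla S)+\frac{(n-2)^2}{2n}\int S^{1-\alpha}\langle\nabla u,\nabla S\rangle$, which produces $B_\alpha$ and $C_\alpha$ as you describe. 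There is only one $\Delta u\,\Delta S$ term in your bookkeeping, and it is replaced wholesale by \eqref{eq4.7}, so no separate cancellation needs to be checked.
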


\begin{proof}
    
From our equation (\ref{eq:equation}), i.e.,
\[
-\frac{\Delta  S}{S^{\alpha}}=\frac{2(n-1)Q_g}{S^{\alpha}}+\frac{\alpha_{1}\left|E  \right|_{g}^{2}}{S^{\alpha}}-\alpha_{2}S^{2-\alpha},
\]
we have
\begin{align*}
-\int\frac{\Delta  u\Delta  S}{S^{\alpha}}dv_{g} & =\int\Delta  u \left(\frac{2(n-1)Q_g}{S^{\alpha}}+\frac{\alpha_{1}\left|E  \right|_{g}^{2}}{S^{\alpha}}-\alpha_{2}S^{2-\alpha}\right).
\end{align*}
Inserting it in (\ref{eq4.7}) gives
\begin{align*}
\frac{(n-1)}{n}\int\Delta  u(\frac{2(n-1)Q_g}{S^\alpha}+\frac{\alpha_{1}\left|E  \right|_{g}^{2}}{S^{\alpha}}-\alpha_{2}S^{2-\alpha})+\frac{n-1}{n}\alpha\int\frac{\Delta u|\nabla S|^{2}}{S^{\alpha+1}}\\
+\frac{(n-1)}{n-2}\int\frac{E(\nabla u,\nabla S)}{S^{\alpha}}+\frac{1}{2n}\int\frac{u|\nabla S|^{2}}{S^{\alpha}}+\frac{1}{n}\int S^{1-\alpha}\<\nabla u,\nabla S\> & =0.
\end{align*}
First integrating by parts  the integral involving with $\a_2$ and then using \eqref{eq4.4}, i.e., $\Delta u=\frac{n}{2}u^{-1}\left|\nabla  u\right|^{2}-\frac{Su}{2(n-1)}+\frac{u^{-1}S_{g_{0}}}{2(n-1)}$, to replace $\Delta u$,  we have
\begin{align*}
&\frac{(n-1)}{n}\int(\frac{\alpha_{1}\left|E  \right|_{g}^{2}}{S^{\alpha}}+\alpha\frac{|\nabla S|^{2}}{S^{\alpha+1}})(\frac{n}{2}u^{-1}\left|\nabla_{g}u\right|^{2}-\frac{Su}{2(n-1)}+\frac{u^{-1}S_{g_{0}}}{2(n-1)})+\frac{(n-1)}{n-2}\int\frac{E(\nabla u,\nabla S)}{S^{\alpha}}\\
&+\frac{1}{2n}\int\frac{u|\nabla S|^{2}}{S^{\alpha}}+(\frac{1}{n}+\alpha_{2}(2-\alpha)\frac{(n-1)}{n})\int S^{1-\alpha}\<\nabla u,\nabla S\> =\frac{2(n-1)^2}{n}\int \langle \nabla u, \nabla \frac{Q}{S^\alpha}\rangle,
\end{align*}
which implies
\begin{align*}
\frac{(n-1)}{n}\int\frac{\alpha_{1}\left|E  \right|_{g}^{2}}{S^{\alpha}}(\frac{n}{2}u^{-1}\left|\nabla  u\right|^{2}-\frac{Su}{2(n-1)}+\frac{u^{-1}S_{g_{0}}}{2(n-1)})\\
+\frac{1}{2n}(1-\alpha)\int\frac{u|\nabla S|^{2}}{S^{\alpha}}+\alpha\frac{(n-1)}{n}\int\frac{|\nabla S|^{2}}{S^{\alpha+1}}(\frac{n}{2}u^{-1}\left|\nabla_{g}u\right|^{2}+\frac{u^{-1}S_{g_{0}}}{2(n-1)})\\
+\frac{(n-1)}{n-2}\int\frac{E(\nabla u,\nabla S)}{S^{\alpha}}+(\frac{1}{n}+\alpha_{2}(2-\alpha)\frac{(n-1)}{n})\int S^{1-\alpha}\<\nabla u,\nabla S\> & =\frac{2(n-1)^2}{n}\int \langle \nabla u, \nabla \frac{Q}{S^\alpha}\rangle.
\end{align*}
Then 
\begin{align*}
\frac{(n-1)}{n}\int\frac{\alpha_{1}\left|E  \right|_{g}^{2}}{S^{\alpha}}(\frac{n}{2}u^{-1}\left|\nabla_{g}u\right|^{2}+\frac{u^{-1}S_{g_{0}}}{2(n-1)})-\int\frac{\alpha_{1}}{2n}S^{1-\alpha}\left|E  \right|_{g}^{2}u\\
+\frac{1}{2n}(1-\alpha)\int\frac{u|\nabla S|^{2}}{S^{\alpha}}+\alpha\frac{(n-1)}{n}\int\frac{|\nabla S|^{2}}{S^{\alpha+1}}(\frac{n}{2}u^{-1}\left|\nabla  u\right|^{2}+\frac{u^{-1}S_{g_{0}}}{2(n-1)})\\
+\frac{(n-1)}{n-2}\int\frac{E(\nabla u,\nabla S)}{S^{\alpha}}+(\frac{1}{n}+\alpha_{2}(2-\alpha)\frac{(n-1)}{n})\int S^{1-\alpha}\<\nabla u,\nabla S\> & =\frac{2(n-1)^2}{n}\int \langle \nabla u, \nabla \frac{Q}{S^\alpha}\rangle.
\end{align*}
Again from (\ref{eq:expression of E}) we have 
\begin{align*}
\int u|E  |^{2}S^{1-\alpha} & =-(n-2)\int S^{1-\alpha}\<E  ,\nabla^{2}u\>\\
 & =(n-2)(1-\alpha)\int S^{-\alpha}E(\nabla u,\nabla S)+\frac{(n-2)^{2}}{2n}\int S^{1-\alpha}\<\nabla u,\nabla S\>.
\end{align*}
Inserting it into the previous one, {we infer}
\begin{align*}
&\frac{(n-1)}{n}\int\frac{\alpha_{1}\left|E  \right|_{g}^{2}}{S^{\alpha}}(\frac{n}{2}u^{-1}\left|\nabla  u\right|^{2}+\frac{u^{-1}S_{g_{0}}}{2(n-1)})+\alpha\frac{(n-1)}{n}\int\frac{|\nabla S|^{2}}{S^{\alpha+1}}(\frac{n}{2}u^{-1}\left|\nabla  u\right|^{2}+\frac{u^{-1}S_{g_{0}}}{2(n-1)})\\
&+\frac{1}{2n}(1-\alpha)\int \frac{u|\nabla S|^{2}}{S^{\alpha}}+B_\alpha\int u|E  |^{2}S^{1-\alpha}
+C_\alpha\int\frac{E(\nabla u,\nabla S)}{S^{\alpha}}  =\frac{2(n-1)^2}{n}\int \langle \nabla u, \nabla \frac{Q}{S^\alpha}\rangle.
\end{align*}

Putting two terms with $S_{g_0}$ together we get \eqref{main equality}. \end{proof}

{It is easy to see that $\frac{n-3-4\alpha_{2}(n-1)}{n-2}=C_{0}\le C_{\alpha}\le C_{1}$
and $B_{0}\ge B_{\alpha}\ge B_{1}$, provided $\alpha\in (0,1]$. It is clear that we only need to handle the last term 
$C_\alpha\int\frac{E(\nabla u,\nabla S)}{S^{\alpha}},$}  when
$Q\slash S^\alpha$ is constant.


\medskip

\begin{proof}[Proof of Theorem \ref{classification}]
Since $\frac{Q_g}{S^\alpha}$ is constant, the right hand side of \eqref{main equality} is zero. All terms in the left hand side are non-negative except the last one.
We control the  last term ${C_\alpha}\int\frac{E(\nabla u,\nabla S)}{S^{\alpha}}$  by $\frac{(n-1)\alpha_{1}}{2}\int\frac{\left|E  \right|_{g}^{2}}{S^{\alpha}}u^{-1}\left|\nabla  u\right|^{2}dv_{g}$
and $\frac{1}{2n}(1-\alpha)\int\frac{u|\nabla S|^{2}}{S^{\alpha}}$
or by $B_{\alpha}\int u|E  |^{2}S^{1-\alpha}$ and $\alpha\frac{(n-1)}{2}\int\frac{|\nabla S|^{2}}{S^{\alpha+1}}u^{-1}\left|\nabla  u\right|^{2}$ as follows.
For any positive constant $A_{\alpha}$,  
\begin{align*}
A_{\alpha}\big|\int\frac{E(\nabla u,\nabla S)}{S^{\alpha}}\big| 
 \le \frac{(n-1)\alpha_{1}}{2}\int\frac{\left|E  \right|_{g}^{2}}{S^{\alpha}}u^{-1}\left|\nabla  u\right|^{2}dv_{g}+\frac{A_{\alpha}^{2}}{2(n-1)\alpha_{1}}\int\frac{u|\nabla S|^{2}}{S^{\alpha}},
\end{align*}
and  for any $A_{\alpha}\le|C_{\alpha}| $, 
\begin{align*}
(|C_{\alpha}|-A_{\alpha})\big|\int\frac{E(\nabla u,\nabla S)}{S^{\alpha}}\big| 
\le \alpha\frac{(n-1)}{2}\int\frac{|\nabla S|^{2}}{S^{1+\alpha}}\frac{\left|\nabla  u\right|^{2}}{u}+\frac{(|C_{\alpha}|-A_{\alpha})^{2}}{2\alpha(n-1)}\int u|E  |^{2}S^{1-\alpha}.
\end{align*}
 If 
\begin{equation}\label{condition 1}
   \frac{A_{\alpha}^{2}}{2(n-1)\alpha_{1}}\le\frac{1}{2n}(1-\alpha) 
\end{equation}
 and 
\begin{equation}\label{condition 2}
    \frac{(|C_{\alpha}|-A_{\alpha})^{2}}{2\alpha(n-1)}\le B_{\alpha},
\end{equation}
then together with \eqref{main equality}, we obtain $$\alpha\frac{(n-1)}{n}\int\frac{|\nabla S|^{2}}{S^{\alpha+1}}(\frac{n}{2}u^{-1}\left|\nabla  u\right|^{2}+\frac{u^{-1}R_{g_{0}}}{2(n-1)})=0$$ 
and 
it is easy to finish the proof of the theorem by Obata theorem for scalar curvature.

It remains to show \eqref{condition 1} and \eqref{condition 2}.

Once we prove $C_{\alpha}^{2}\le\frac{(1-\alpha)\alpha_{1}(n-1)}{n}+2\alpha(n-1)B_{\alpha}:=I_{\alpha}$, we can choose $A_{\alpha}\ge 0$ such that
\eqref{condition 1} and \eqref{condition 2} hold and  the theorem is proved.
Actually, 
for $\text{\ensuremath{\alpha_{1},\alpha_{2}}}$ defined as (\ref{definition of alpha1 and alpha2})
\[
C_{\alpha}=\frac{n-1}{n-2}-\frac{2(1-\alpha)(4n+(2-\alpha)n^{2}-4(2-\alpha))}{4n(n-2)}
\]
and 
\[
I_{\alpha}=\frac{4(n-1)^{2}}{n(n-2)^{2}}(1-2\alpha)+\frac{4\alpha(n-1)}{(n-2)^{2}}(1+\frac{n^{2}-4}{4n}(2-\alpha)).
\]

Note that $$I_{\alpha}-C_{\alpha}^{2}=I_{1}-C_{1}^{2}+\frac{(1-\alpha)E(\alpha)}{4n^2(n-2)^2}.$$
Since $\alpha^2<\alpha$ for $0<\alpha\le 1$, we have
\begin{align*}
E(\alpha)&=(n^2-4)^2 \alpha^3-(n-2)(n+2)\left(5 n^2+8 n-20\right)\alpha^2+8\left(n^2+n-4\right)\left(n^2+2 n-4\right)\alpha+4(n-2)^2(3 n-4)\\
&\ge (n^2-4)^2 \alpha^3+\left(n^2+4 n-4\right)\left(3 n^2+4 n-12\right)\alpha+4(n-2)^2(3 n-4)\ge 0.
\end{align*}
Hence $C_{\alpha}^{2}<I_{\alpha}$ for $0\le\alpha\le1$,
where $C_{0}=\frac{-3n+4}{n(n-2)}$ and $I_{0}=\frac{4(n-1)^{2}}{n(n-2)^{2}}$,
$C_{1}=\frac{n-1}{n-2}$ and $I_{1}=\frac{n(n-1)}{(n-2)^{2}}.$ 
\end{proof}

\begin{thm}\label{subcritical noe xistence on sphere} On $\Sn$, if a conformal metric $g = u^2g_{\mathbb{S}^n}$ $(u > 0)$ of positive scaler curvature satisfies  $\frac{Q_g}{R_g^{\alpha}}=u^{-\varepsilon}$ for  a fixed  $\varepsilon\in (0, \infty)$ and a constant $\alpha\in (0,1]$, then $u$ is constant and  hence $g=cg_{\mathbb{S}^n}$ for some positive constant $c>0$. 
\end{thm}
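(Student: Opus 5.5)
We will run the same Obata-type integral identity \eqref{main equality}, with $g_0=g_{\mathbb{S}^n}$ (which is Einstein with $R_{g_0}=n(n-1)>0$), but now the right-hand side no longer vanishes. Since $Q_g/S^\alpha=u^{-\varepsilon}$, we have
\[
\frac{2(n-1)^2}{n}\int \langle \nabla u,\nabla (u^{-\varepsilon})\rangle\, dv_g=-\frac{2(n-1)^2}{n}\,\varepsilon\int u^{-\varepsilon-1}|\nabla u|^2\, dv_g\le 0 .
\]
So \eqref{main equality} reads: (sum of the left-hand terms) $=$ a non-positive quantity. This sign is exactly what we need.

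The key steps are as follows. First, we verify that the Lemma and the Proposition leading to \eqref{main equality} used only that $g_0$ is Einstein and the expressions \eqref{eq:expression of E} and \eqref{eq4.4}. They did not use the constancy of $Q/S^\alpha$, so the identity holds verbatim here. Second, we repeat the absorption argument from the proof of Theorem \ref{classification}. Using $C_\alpha^2<I_\alpha$ for $\alpha\in(0,1]$, we choose $A_\alpha\ge0$ satisfying \eqref{condition 1} and \eqref{condition 2}. Then the term $C_\alpha\int E(\nabla u,\nabla S)/S^\alpha$ is absorbed by the other non-negative terms, and the left-hand side is bounded below by the non-negative term
\[
\frac{1}{2n}\int u^{-1}R_{g_0}\,\alpha\frac{|\nabla S|^2}{S^{\alpha+1}}.
\]
Combining, we get
\[
0\le \frac{\alpha}{2n}\int u^{-1}R_{g_0}\frac{|\nabla S|^2}{S^{\alpha+1}} + \frac{2(n-1)^2}{n}\varepsilon\int u^{-\varepsilon-1}|\nabla u|^2\le 0,
\]
with both terms non-negative. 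Hence $\nabla u\equiv 0$ directly, since $\varepsilon>0$. So $u$ is constant, and $g=c\,g_{\mathbb{S}^n}$.

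We expect the main obstacle to be bookkeeping rather than a new idea. We must check that, after absorption, every remaining term on the left is genuinely non-negative and that no strict inequality is lost. We also note that here we do not even need the Obata step: the $\varepsilon$-term alone forces $\nabla u=0$. An alternative route is to use the $|\nabla S|^2$ term to get $S$ constant, then $Q$ constant with $Q=S^\alpha u^{-\varepsilon}$, which again gives $u$ constant. Either way the subcritical exponent $\varepsilon>0$ rules out the nontrivial bubbles $a(1+b\cdot\xi)^{-1}$.
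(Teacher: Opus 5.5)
Your proposal is correct and matches the paper's proof. The paper also applies \eqref{main equality} (valid for any conformal metric, since it uses only that $g_0$ is Einstein) with right-hand side $-\frac{2(n-1)^2}{n}\varepsilon\int u^{-\varepsilon-1}|\nabla u|^2\,dv_g\le 0$, absorbs the $C_\alpha$ term exactly as in Theorem \ref{classification}, and concludes that both sides vanish, so $\nabla u\equiv 0$.

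One small correction to your side remark: $S$ constant does not by itself make $Q$ constant. You would first need $E=0$, for instance via Obata. Your main route avoids this step entirely.
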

\begin{proof}
Since  the right hand side of \eqref{main equality} is $\int \langle \nabla u, \nabla \frac{Q_g}{R_g^{\alpha}}\rangle =-\varepsilon \int u^{-\varepsilon-1}|\nabla u|^2\le 0$ and the left side is non-negative, 
the proof of this theorem is similar to that of Theorem \ref{classification}. \end{proof}

For further developments of Obata's 
method on $\mathbb{R}^n$ we refer to the works of Xinan Ma et al. \cite{Ma-Wu-Wu2025,Ma-Wu-Zhou2025} and references therein.

\section{Various Yamabe constants}\label{Various Yamabe constants}
In this section we discuss relations between various  Yamabe constants and the ordinary Yamabe constant $Y$ and prove the important inequality
\begin{eqnarray}
    Y_{4,2} (M,[g_0]) < Y_{4,2}(\Sn),
\end{eqnarray}
if $(M, g)$ is not conformally equivalent to $\Sn$.

\subsection{\texorpdfstring{$Y_{4,2}$}{} vs 
\texorpdfstring
{$Y$}{}
}

We first  recall
$$
Y_{4,2}(M, [g_0])=\frac{n-4}{2}\inf_{g\in \mathcal C_Q[g_0]} \frac{\int_M Q_{g} dv_{g}}{\left(\int_M R_{g}  dv_{g}\right)^{\frac{n-4}{n-2}}}.
$$

In this section we prove that $Y_{4,2}(M,[g_{0}])<Y_{4,2}(\mathbb{S}^{n},[g_{\mathbb{S}^{n}}])$
when the manifolds are not locally conformally flat. The main result in this section is 
the following.
\begin{thm}\label{thm_Yamabe_Constants}
    Let $(M,g_{0})$ be an $n$-dimensional  closed 
manifold with $n\ge 5$. Assume that $Q_{g_{0}}$ is semi-positive
and $R_{g_{0}}$ is non-negative. We have
\begin{enumerate}
    \item [(i)]
the following inequality between $Y_{4,2}$ and $Y$
\eq{\label{Y42} Y_{4,2}(M,[g_{0}])\le \frac{(n-4)(n+2)}{n(n-2)^{\frac{2}{n-2}}(4(n-1))^{\frac{n-4}{n-2}}}   (Y(M,[g_0]))^{\frac{n}{n-2}}.
}
\item [(ii)]
Equality holds if and only if $(M,g_{0})$ is 
conformally equivalent to an Einstein metric with positive scalar curvature. 

\end{enumerate}
\end{thm}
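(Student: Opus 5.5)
The plan follows the Sheng--Trudinger--Wang trick described in Subsection \ref{subsection:s_k Yamabe problem}: test $Y_{4,2}$ on a Yamabe metric of $[g_0]$, and use the pointwise Newton-type inequality from the proof of Theorem \ref{thm1.1} to compare $Q$ with $R$.

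\textbf{Step 1: the Yamabe metric is admissible.} Since $R_{g_0}\ge 0$ and $Q_{g_0}$ is semi-positive, Theorem \ref{thm:maximum principle} gives $R_{g_0}>0$, so $Y(M,[g_0])>0$. By the resolution of the Yamabe problem there is $g_1\in[g_0]$ with constant scalar curvature $R_{g_1}=\frac{4(n-1)}{n-2}Y(M,[g_0])\,\mathrm{vol}(g_1)^{-2/n}$ attaining the infimum in \eqref{Y_2}. We must check $g_1\in\mathcal C_Q[g_0]$, i.e. that $Q_{g_1}$ is semi-positive. Since $R_{g_1}$ is constant, \eqref{Definition of Q} reduces to $Q_{g_1}=4\s_2(g_1)+\frac{n-4}2\s_1(g_1)^2$. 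This is not automatically nonnegative.

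\textbf{Step 2: positivity of $Q_{g_1}$.} I would write $g_1=u^{4/(n-4)}g_0$ and argue through the Paneitz operator with the Gursky--Malchiodi maximum principle: the constant scalar curvature metric satisfies a second-order equation, and one shows $P_{g_0}u\ge 0$ in a suitable sense. If this direct route fails, the fallback is to minimize over metrics of $\mathcal C_Q[g_0]$ and use the nonnegativity of $\int_M Q$ only in integrated form; only the total integral enters the functional. This is the step I expect to be the main obstacle.

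\textbf{Step 3: the integral estimate.} For the constant scalar curvature metric,
\[
\int_M Q_{g_1}dv=\int_M\Big(4\s_2+\tfrac{n-4}{2}\s_1^2\Big)dv=\int_M\Big(\tfrac{n^2-4}{2n}\s_1^2-\tfrac{2}{(n-2)^2}|E|^2\Big)dv\le \tfrac{n^2-4}{2n}\s_1^2\,\mathrm{vol},
\]
with $\s_1=R/(2(n-1))$. Dividing by $(R\,\mathrm{vol})^{\frac{n-4}{n-2}}$ gives a constant times $R^{\frac{n}{n-2}}\mathrm{vol}^{\frac{2}{n-2}}=\big(R\,\mathrm{vol}^{\frac{n-2}{n}}\big)^{\frac n{n-2}}$, which equals $\big(\frac{4(n-1)}{n-2}Y\big)^{\frac n{n-2}}$. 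Bookkeeping of constants, checked against the sphere value \eqref{eq2.22}, then yields \eqref{Y42}.

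\textbf{Step 4: equality case.} If $g_0$ is conformal to an Einstein metric, that metric has $E=0$, and it is a Yamabe minimizer by Obata's theorem (or it is the sphere), so the inequality in Step 3 is attained by the test metric. Theorem \ref{classification} with $\alpha=1$, together with a minimizing argument, then shows equality in \eqref{Y42}. Conversely, suppose equality holds in \eqref{Y42}. Then the Yamabe metric $g_1$ realizes $Y_{4,2}$ and has $\int_M|E|^2=0$, so it is Einstein. In this direction one has to know that equality forces equality in Step 3 for $g_1$. That holds because $Y_{4,2}\le I(g_1)\le c\,Y^{n/(n-2)}$, so equality in \eqref{Y42} pins down both inequalities.
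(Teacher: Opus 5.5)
Your Step 2 is a genuine gap, and both part (i) and the forward direction of (ii) depend on it. The Yamabe metric $g_1$ need not lie in $\mathcal C_Q[g_0]$. Since $R_{g_1}$ is constant, $Q_{g_1}=d(n)R_{g_1}^2-\frac{2}{(n-2)^2}|E_{g_1}|^2$ with $d(n)=\frac{n^2-4}{8n(n-1)^2}$. This is negative wherever the traceless Ricci tensor of $g_1$ is large, and nothing in the hypotheses (which concern $g_0$) prevents that.

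The maximum principle cannot rescue your direct route. If $g_1=v^{4/(n-4)}g_0$, then $P_{g_0}v\ge 0$ is literally the statement $Q_{g_1}\ge 0$. Theorem~\ref{thm:maximum principle} only passes from $P_{g_0}w\ge 0$ to positivity of $w$ and of the curvatures of $g_w$, not conversely.

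Your fallback fails for a structural reason. $Y_{4,2}$ is an infimum over $\mathcal C_Q[g_0]$ only, so evaluating the functional at $g_1\in\mathcal C_1[g_0]\setminus\mathcal C_Q[g_0]$ bounds the infimum over the larger cone $\mathcal C_1[g_0]$. That infimum is a priori smaller than $Y_{4,2}$, which is the wrong direction. That the two infima agree is Theorem~\ref{thm:existence in 1 cone}, but it is deduced from Corollary~\ref{main_cor}, i.e.\ from the very inequality you are proving.

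The missing idea is to test $Y_{4,2}$ not on $g_1$ but on a metric whose $Q$-curvature is the pointwise upper bound of $Q_{g_1}$. Normalize the Yamabe metric $g_v=v^{4/(n-4)}g_0$ so that $\mathrm{vol}(g_v)=1$, and let $u$ solve $P_{g_0}u=\frac{n-4}{2}d(n)R_{g_v}^2v^{\frac{n+4}{n-4}}$. The right-hand side is positive, so the Gursky--Malchiodi maximum principle gives $u>0$, $Q_{g_u}>0$ and $R_{g_u}>0$, i.e.\ $g_u\in\mathcal C_Q[g_0]$. Moreover $Q_{g_v}\le d(n)R_{g_v}^2$ means $P_{g_0}u\ge P_{g_0}v$, so the same principle gives $u\ge v$. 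Then
\begin{align*}
\int_M Q_{g_u}\,dv_{g_u}&=\frac{2}{n-4}\int_M uP_{g_0}u=d(n)R_{g_v}^2\int_M v^{\frac{n+4}{n-4}}u\le d(n)R_{g_v}^{\frac{n}{n-2}}\Big(\int_M R_{g_v}v^{\frac{n+2}{n-4}}u^{\frac{n-2}{n-4}}\Big)^{\frac{n-4}{n-2}}\\
&=d(n)R_{g_v}^{\frac{n}{n-2}}\Big(\int_M R_{g_u}u^{\frac{n+2}{n-4}}v^{\frac{n-2}{n-4}}\Big)^{\frac{n-4}{n-2}}\le d(n)R_{g_v}^{\frac{n}{n-2}}\Big(\int_M R_{g_u}\,dv_{g_u}\Big)^{\frac{n-4}{n-2}}.
\end{align*}
The steps are: H\"older with $\int_M v^{\frac{2n}{n-4}}=1$; the identity $R_{g_v}v^{\frac{n+2}{n-4}}=\frac{4(n-1)}{n-2}L_{g_0}v^{\frac{n-2}{n-4}}$; self-adjointness of $L_{g_0}$ to move the operator onto $u^{\frac{n-2}{n-4}}$; and finally $v\le u$ together with $R_{g_u}>0$. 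This is your Step 3 bound, now for an admissible metric.

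The same construction repairs the forward direction of (ii). Equality in \eqref{Y42} forces equality in the last step above, hence $u=v$, hence $Q_{g_v}=d(n)R_{g_v}^2$ and $E_{g_v}\equiv 0$. Your version instead assumed $Y_{4,2}\le I(g_1)$, which is the unproven Step 2.

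For the converse of (ii), your ``minimizing argument'' has to be the existence of a minimizer of $Y_{4,2}$. This is Theorem~\ref{mainthm}, available from part (i) through Corollary~\ref{main_cor}, and Theorem~\ref{classification} then identifies that minimizer with the Einstein metric.

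A minor correction: in Step 3 the quantity is $(R\,\mathrm{vol}^{2/n})^{\frac{n}{n-2}}$, not $(R\,\mathrm{vol}^{\frac{n-2}{n}})^{\frac{n}{n-2}}$.
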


\begin{proof} (i). 
Recall $Y(M,[g_0])$ and $Y(\mathbb{S}^n,[g_{\mathbb{S}^n}])$ the classical Yamabe invariant of $(M,[g_0])$ and $(\mathbb{S}^n, [g_{\mathbb{S}^{n}}])$ respectively.
By the assumption given in the Theorem and the maximum principle we have in fact that $R_{g_0}>0$ and hence $Y(M, [g_0])>0$.
 It is easy to see that the  $Q$ curvature has the following equivalent expressions
\begin{align}
Q&=-\frac{1}{2(n-1)}\Delta R+\frac{n^3-4n^2+16n-16}{8(n-1)^2(n-2)^2}R^2-\frac{2}{(n-2)^2}|Ric|^2\nonumber\\
&=-\frac{1}{2(n-1)}\Delta R+d(n)R^2-\frac{2}{(n-2)^2}|E|^2,\label{Einstein term}
\end{align}
where $E$ is the traceless  Ricci tensor and
\[d(n)=\frac{n^2-4}{8n(n-1)^2}. \]
Hence we have 
\begin{align*}
Q \le -\frac{1}{2(n-1)}\Delta R+d(n)R^2,
\end{align*}
with equality if and only if $E=0$.

Let $g_v=v^{4\slash {(n-4)}}g_0$ be a Yamabe metric in $[g_0]$. We may assume
$0< \frac{n-2}{4(n-1)}R_{g_v}=Y(M,[g_0])\le Y(\mathbb{S}^n,[g_{\mathbb{S}^{n}}])$ and $vol(g_v)=1$. Thus we have 
\begin{eqnarray} \label{eq_aa1}
\int_M v^{\frac{2n}{n-4}}dv_{g_0}&=&1, \\ 
R_{g_v}&=&\frac{4(n-1)}{n-2}v^{-\frac{n+2}{n-4}}L_{g_0}v^{\frac{n-2}{n-4}},\\
\frac{2}{n-4}v^{-\frac{n+4}{n-4}}P_{g_0}v&=&Q_{g_v} \le d(n)R^2_{g_v}. \label{eq_aa3}
\end{eqnarray}
We consider the following equation
\eq{ \label{eq_aa4}
P_{g_0}u=\frac {n-4}2 d(n) R^2_{g_v} v^{\frac{n+4}{n-4}},
}
which has a unique solution, since $P_{g_0}$ is invertible.
\eqref{eq_aa3} and \eqref{eq_aa4} imply
$$
P_{g_0}u \ge  P_{g_0}v.
$$
Using Theorem \ref{thm:maximum principle}, we have $u \ge v$. Moreover, since  
$P_{g_0}u>0$, 
\eqref{eq:Q-transformation} gives
$
  Q_{g_u}\ge 0.$ Hence $  R_{g_u}> 0$ from the maximum principle, Theorem \ref{thm:positive scalar curvature}.

Now we arrive at the following crucial estimate
\begin{align}\label{eq6.1}
\frac{2}{n-4}\int_M uP_{g_0}u&=d(n)R^2_{g_v}\int_M v^{\frac{n+4}{n-4}} u\\
&\le d(n)R^2_{g_v}(\left(\int_M v^{\frac{n+2}{n-4}} u^{\frac{n-2}{n-4}}\right)^{\frac{n-4}{n-2}}\nonumber\\
&= d(n)R^{\frac{n}{n-2}}_{g_v}\left(\int_M R_{g_v} v^{\frac{n+2}{n-4}} u^{\frac{n-2}{n-4}}\right)^{\frac{n-4}{n-2}}\nonumber\\
&= d(n)R^{\frac{n}{n-2}}_{g_v}\left(\int_M  \frac{4(n-1)}{n-2}
(L_{g_0}v^{\frac{n-2}{n-4}}) u^{\frac{n-2}{n-4}}\right)^{\frac{n-4}{n-2}}
\nonumber\\
&= d(n)R^{\frac{n}{n-2}}_{g_v}\left(\int_M \frac{4(n-1)}{n-2}(L_{g_0}u^{\frac{n-2}{n-4}} )v^{\frac{n-2}{n-4}}\right)^{\frac{n-4}{n-2}}
\nonumber\\
&= d(n)R^{\frac{n}{n-2}}_{g_v}\left(\int_M R_{g_u} u^{\frac{n+2}{n-4}} v^{\frac{n-2}{n-4}}\right)^{\frac{n-4}{n-2}}
\nonumber
\\
&\le  d(n)R^{\frac{n}{n-2}}_{g_v}\left(\int_M R_{g_u}  u^{\frac{2n}{n-4}}\right)^{\frac{n-4}{n-2}},\nonumber
\end{align}
where in the first equality we have used \eqref{eq_aa4}, in the second one the H\"older inequality and \eqref{eq_aa1}, in the fifth equality the self-adjointness of $L_{g_0}$ and in the last inequality $v\le u$.
It follows
\begin{align*}
\int_M Q_{g_u} dv_{g_u}&
\le  
d(n)R^{\frac{n}{n-2}}_{g_v}\left(\int_M R_{g_u}  dv_{g_u}\right)^{\frac{n-4}{n-2}}. 
\end{align*}
Hence we have
\begin{align}
\label{eq_aa5}
\frac{\int_M Q_{g_u} dv_{g_u}}{\left(\int_M R_{g_u}  dv_{g_u}\right)^{\frac{n-4}{n-2}}}&\le  d(n)R^{\frac{n}{n-2}}_{g_v}
 = d(n)  (\frac{4(n-1)}{n-2}Y(M,[g_0]))^{\frac{n}{n-2}},
\end{align}
because $g_v$ is a Yamabe metric.

It follows
\[
Y_{4,2}(M,[g_0])\le \frac{n-4}{2}d(n)\left(\frac{4(n-1)}{n-2}\right)^{\frac{n}{n-2}} (Y(M,[g_0]))^{\frac{n}{n-2}}.
\]

(ii). 
Now we discuss  equality in \eqref{Y42}. If equality in \eqref{Y42} holds, then equality in \eqref{eq_aa5} also holds, which implies that the Yamabe metric $g_v$ is Einstein. Conversely, assume $g_0$ is conformally  equivalent to an Einstein metric, wlog we may assume that $g_0$ is Einstein and $(M, g_0)$ is not conformally equivalent to $\Sn$. { Otherwise, the result is clear since $Y_{4,2}(\Sn,[g_{\Sn}])= \frac{(n-4)(n+2)}{n(n-2)^{\frac{2}{n-2}}(4(n-1))^{\frac{n-4}{n-2}}}   (Y(\Sn,[g_{\Sn}]))^{\frac{n}{n-2}}$}. By our main result, Theorem \ref{mainthm}, there exists a conformal metric $g\in [g_0]$ with constant quotient $Q\slash R$ and achieves the infimum $Y_{4,2}([g_0])$. By the Obata theorem, Theorem \ref{classification}, for metrics with constant quotient $Q\slash R$, we know that $g$ is Einstein and in fact $g=g_0$ up to a  multiple constant.  By the ordinary Obata theorem, $g$ is also a Yamabe metric. It is now clear that equality in \eqref{Y42} holds.
\end{proof}

We remark that the first statement in the previous Theorem is one of the main steps in the proof of  Theorem \ref{mainthm}, while the second one is its consequence.
As a consequence of (i) in Theorem \ref{thm_Yamabe_Constants} we have

\begin{cor}  \label{main_cor}  Let $(M,g_{0})$ be an $n$-dimensional closed manifolds  with $n\ge 5$. Assume that $Q_{g_{0}}$ is semi-positive
and $R_{g_{0}}$ is non-negative. Then $Y_{4,2}(M,[g_{0}])\le Y_{4,2}(\mathbb{S}^{n},[g_{\mathbb{S}^{n}}])$. Moreover,  equality holds if and only if $(M,g_{0})$ is 
conformally equivalent to the standard sphere. 
\end{cor}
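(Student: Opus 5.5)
The corollary follows by combining Theorem \ref{thm_Yamabe_Constants}(i) with the resolution of the ordinary Yamabe problem and with the value of $Y_{4,2}$ on the sphere from Corollary \ref{Y4,2Sphere}. Write
\[
c(n)=\frac{(n-4)(n+2)}{n(n-2)^{\frac{2}{n-2}}(4(n-1))^{\frac{n-4}{n-2}}}.
\]
By \eqref{eq2.22}, which restates Corollary \ref{Y4,2Sphere}, we have $Y_{4,2}(\Sn)=c(n)\,Y(\Sn)^{\frac n{n-2}}$. The first step is to verify this identity directly by plugging $Y(\Sn)=\frac{n(n-2)}4\omega_n^{2/n}$ into the right-hand side and comparing with $\frac{n-4}{16}(n^2-4)n^{\frac2{n-2}}(n-1)^{\frac{4-n}{n-2}}\omega_n^{\frac2{n-2}}$. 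This is a routine check of exponents and constants.

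Next, the hypotheses give $R_{g_0}>0$ by Theorem \ref{thm:maximum principle}, so $Y(M,[g_0])>0$. The Aubin inequality $Y(M,[g_0])\le Y(\Sn)$ then holds. Since $t\mapsto t^{\frac n{n-2}}$ is strictly increasing on $(0,\infty)$, Theorem \ref{thm_Yamabe_Constants}(i) yields
\[
Y_{4,2}(M,[g_0])\le c(n)\,Y(M,[g_0])^{\frac n{n-2}}\le c(n)\,Y(\Sn)^{\frac n{n-2}}=Y_{4,2}(\Sn).
\]

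For the equality case, suppose $(M,g_0)$ is not conformally equivalent to $\Sn$. Then \eqref{Y1_ineq}, the Aubin–Schoen theorem, gives $Y(M,[g_0])<Y(\Sn)$. Strict monotonicity of the power makes the second inequality above strict, so $Y_{4,2}(M,[g_0])<Y_{4,2}(\Sn)$. Conversely, if $(M,g_0)$ is conformally equivalent to the round sphere, then $Y_{4,2}$, being a conformal invariant defined over $\mathcal C_Q[g_0]$, coincides with $Y_{4,2}(\Sn)$ by Corollary \ref{Y4,2Sphere}, and equality holds.

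There is no genuine obstacle. The only points needing care are the constant bookkeeping in the sphere identity and the observation that $Y(M,[g_0])>0$, which is what allows the strict monotonicity of $t\mapsto t^{\frac n{n-2}}$ to be applied. Part (ii) of Theorem \ref{thm_Yamabe_Constants} is not needed.
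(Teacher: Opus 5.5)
Your proposal is correct and follows the paper's proof: chain Theorem \ref{thm_Yamabe_Constants}(i) with the Aubin--Schoen strict inequality $Y(M,[g_0])<Y(\Sn)$ and the identity $Y_{4,2}(\Sn)=c(n)\,Y(\Sn)^{\frac{n}{n-2}}$ coming from the sharp inequality \eqref{new_ineq}. You also spell out the constant check and the easy converse (equality on the sphere), which the paper leaves implicit, and you are right that part (ii) of the theorem is not needed.
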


\begin{proof}  
 The corollary follows from (i) in  Theorem \ref{thm_Yamabe_Constants}, the results of Aubin and Schoen for the Yamabe problem, as well as the new Yamabe inequality \eqref{new_ineq}  
 \begin{eqnarray*}
     Y_{4,2}(M,[g_{0}])&\le & \frac{(n-4)(n+2)}{n(n-2)^{\frac{2}{n-2}}(4(n-1))^{\frac{n-4}{n-2}}}  Y(M,[g_0])^{\frac{n}{n-2}}\\
 &<& \frac{(n-4)(n+2)}{n(n-2)^{\frac{2}{n-2}}(4(n-1))^{\frac{n-4}{n-2}}} Y(\Sn)^{\frac{n}{n-2}}= Y_{4,2} (\Sn),
 \end{eqnarray*}
 if $(M, g_0)$ is not conformally equivalent to $\Sn$.
\end{proof}

\begin{proof}[Proof of  Theorem \ref{mainthm} for $n\ge 5$]
Theorem \ref{mainthm} follows from  Theorem \ref{thm:existence under Yamabe restriction} below and Corollary \ref{main_cor}.
\end{proof}

Let $(M, g_0)$ be a closed manifold with positive Yamabe constant $Y(M) >0$.
As in \cite{GHL} we define {for $n\ge 5$} $$Y_4^*(M, [g_0])=\frac{n-4}{2} \inf _{\substack{{g} \in[g_0] \\ {R_g}>0}} \frac{\int_M {Q} dv_g}{Vol(g)^{\frac{n-4}{n}}}.$$
By \cite{GHL}, under the assumption of    $Y(M, [g_0])>0$, $Y_4^*(M, [g_0])>0$ for $n\ge 6$, there exists a metric $g\in [g_0]$ satisfying $R_g>0$ and $Q_g>0$, and hence we have the following corollary.
\begin{cor}\label{cor4.2}
    Let $(M,g_{0})$ be an $n$-dimensional closed manifolds  for $n\ge 6$. Assume $Y(M, [g_0])>0$,  $Y_4^*(M, [g_0])>0$. Then there exists a conformal metric $g=u^{\frac{4}{n-4}}g_{0}$ with
constant positive $Q_{g}/R_{g}$ curvature.
\end{cor}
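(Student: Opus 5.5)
The plan is to reduce Corollary \ref{cor4.2} to the case $n\ge 5$ of Theorem \ref{mainthm}, which has already been established from Corollary \ref{main_cor} and the existence result Theorem \ref{thm:existence under Yamabe restriction}. The only thing to produce is a background metric in $[g_0]$ satisfying the hypotheses of Theorem \ref{mainthm}: semi-positive $Q$-curvature and non-negative scalar curvature. The cited result of Gursky--Hang--Lin \cite{GHL} supplies exactly this for $n\ge 6$.

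The first step is to invoke \cite{GHL}. When $Y(M,[g_0])>0$ and $Y_4^*(M,[g_0])>0$ with $n\ge 6$, it gives a metric $\tilde g\in[g_0]$ with $R_{\tilde g}>0$ and $Q_{\tilde g}>0$. I would check the normalization here: in \cite{GHL} the invariant $Y_4^*$ is defined with the same factor $\frac{n-4}{2}$ and the same class $\{R_g>0\}$, and the positivity of the infimum combined with their existence theory for the $Q$-Yamabe problem yields a conformal metric with positive constant $Q$ and positive scalar curvature. The restriction $n\ge 6$ is inherited from \cite{GHL}.

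The second step is to apply Theorem \ref{mainthm} with $g_0$ replaced by $\tilde g$. Since the conformal class is unchanged, $[\tilde g]=[g_0]$. Because $Q_{\tilde g}>0$, it is in particular semi-positive, and $R_{\tilde g}>0\ge 0$. Theorem \ref{mainthm}, through Theorem \ref{thm:existence under Yamabe restriction} and the strict inequality $Y_{4,2}<Y_{4,2}(\Sn)$ from Corollary \ref{main_cor} (with the sphere case being trivial, since the round metric already works), then produces $g=u^{\frac4{n-4}}g_0\in[g_0]$ with $Q_g/R_g$ constant. Writing the metric in terms of $g_0$ rather than $\tilde g$ is just a change of conformal factor.

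The third step is to show that the constant is positive. The metric $g$ is obtained as a minimizer in $\mathcal C_Q[g_0]$, or as a limit of a flow that preserves $R>0$ and $Q\ge0$ by the maximum principle Theorem \ref{thm:maximum principle}. So $R_g>0$ and $Q_g$ is semi-positive, and a constant ratio that is somewhere positive is positive everywhere. This gives $Q_g/R_g=c>0$.

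The main obstacle is only bookkeeping: confirming that the \cite{GHL} statement really provides $Q_{\tilde g}$ semi-positive and not just $\int Q>0$, and that it holds in exactly the range $n\ge 6$. If \cite{GHL} only gave positivity of the invariant, I would run their $Q$-Yamabe existence theorem to get a constant-$Q$ metric, and then use their result that $R>0$ is preserved to conclude.
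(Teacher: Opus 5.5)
Your proposal is correct and is exactly the paper's argument: the paper invokes \cite{GHL} to obtain a metric in $[g_0]$ with $R>0$ and $Q>0$ for $n\ge 6$, and then applies Theorem~\ref{mainthm} (via Corollary~\ref{main_cor} and Theorem~\ref{thm:existence under Yamabe restriction}) with that metric as the background. Your positivity argument for the constant is fine, and it also follows directly from $\lambda_0>0$ in Theorem~\ref{thm:existence under Yamabe restriction}.
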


In Corollary \ref{cor4.2} the assumptions are given on conformal invariants. It remains open if the result in \cite{GHL} mentioned above is true for $n=5$.

\subsection{\texorpdfstring{$Y^*$}{} and \texorpdfstring{$Y_4^{++}$}{} vs 
 \texorpdfstring{$ Y$}{}}

It is interesting to see that
the proof of Theorem \ref{thm_Yamabe_Constants} also works for the $Q$ curvature Yamabe constant and the $\Theta_4$ Yamabe constant. We will discuss $\Theta_4$ in Subsection 4.3 below.

 Let $(M,g_{0})$ be an $n$-dimensional closed manifolds  with $n\ge 5$. Assume that $Q_{g_{0}}$ is semi-positive
and $R_{g_{0}}$ is non-negative.
We define $$Y_4^{++}(M, [g_0])=\frac{n-4}{2} \inf _{\substack{{g} \in[g_0] \\ {R_g}>0, Q_g\ge 0}} \frac{\int_M {Q} dv_g}{Vol(g)^{\frac{n-4}{n}}}.$$
As above, here by $Q\ge 0$ we  means that $Q$ is semi-positive definite. {$Y^{++}_4$ is defined and used in \cite{GM}, where it is denoted by  $Y_Q$. It is trivial to see 
\[
Y^*_4 (M, [g_0])  \le  Y^{++}_4 (M, [g_0]).
\]

\begin{thm} \label{remark6.1}  Let $(M,g_{0})$ be an $n$-dimensional closed manifolds  with $n\ge 5$ {and positive Yamabe constant $Y(M,[g_0])>0$}. Assume that $Q_{g_{0}}$ is semi-positive and $R_{g_{0}}$ is non-negative. Then we have 
\begin{itemize}
    \item [(i)]
\eq{\label{eq:aa6}
    Y_4^* (M, [g_0])  \le Y^{++}_4(M, [g_0]) \le \frac{(n+2)(n-4)}{n(n-2)} Y(M, [g_0])^2.
}
Equality in the second inequality holds if and only if 
$(M,g_0)$ is conformal to an Einstein manifold.
\item[(ii)]  %
\begin{equation}\label{eq:aa7}
        Y_4^{++} (M, [g_0] )< Y_4^{++}(\mathbb{S}^{n},[g_{\mathbb{S}^{n}}])
    \end{equation}
    if and only if $(M,g_0)$ is not conformally equivalent to the standard sphere $\mathbb{S}^n$.
\end{itemize}
    
\end{thm}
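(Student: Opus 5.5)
The plan is to copy the proof of Theorem \ref{thm_Yamabe_Constants}, swapping the total scalar curvature in the denominator for the volume. The first inequality in \eqref{eq:aa6} holds trivially, because $\mathcal C_Q[g_0]$ sits inside $\{R_g>0\}$. For the second one, let $g_v=v^{4/(n-4)}g_0$ be a Yamabe metric normalized by $vol(g_v)=1$, so that $\frac{n-2}{4(n-1)}R_{g_v}=Y(M,[g_0])>0$ is constant.

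\textbf{Step 1: the comparison function.} By \eqref{Einstein term}, $Q_{g_v}\le d(n)R_{g_v}^2$, and equality holds exactly when $g_v$ is Einstein. Let $u$ solve
\[
P_{g_0}u=\tfrac{n-4}{2}\,d(n)R_{g_v}^2\,v^{\frac{n+4}{n-4}}.
\]
Then $P_{g_0}u\ge P_{g_0}v$, and Theorem \ref{thm:maximum principle} gives $u\ge v>0$. Moreover $Q_{g_u}$ is positive, and Theorem \ref{thm:positive scalar curvature} (or the second part of Theorem \ref{thm:maximum principle}) gives $R_{g_u}>0$. Hence $g_u$ is admissible for $Y_4^{++}$.

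\textbf{Step 2: the estimate.} We have
\[
\tfrac{2}{n-4}\int_M uP_{g_0}u=d(n)R_{g_v}^2\int_M v^{\frac{n+4}{n-4}}u\le d(n)R_{g_v}^2\Big(\int_M v^{\frac{2n}{n-4}}\Big)^{\frac4n}\Big(\int_M u^{\frac{2n}{n-4}}\Big)^{\frac{n-4}{2n}}\Big(\int_M v^{\frac{n+4}{n-4}}u\Big)^{\frac12}\cdots
\]
A cleaner route is to apply Hölder directly. Use $\int v^{\frac{n+4}{n-4}}u\le (\int v^{\frac{2n}{n-4}})^{\frac{n+4}{2n}}(\int u^{\frac{2n}{n-4}})^{\frac{n-4}{2n}}$ and $vol(g_v)=1$. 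This gives
\[
\int_M Q_{g_u}dv_{g_u}\le d(n)R_{g_v}^2\,vol(g_u)^{\frac{n-4}{2n}}.
\]
Since $u\ge v$, we also have $vol(g_u)\ge 1$. Then $vol(g_u)^{\frac{n-4}{2n}}\le vol(g_u)^{\frac{n-4}{n}}$, so
\[
\frac{\int_M Q_{g_u}dv_{g_u}}{vol(g_u)^{\frac{n-4}{n}}}\le d(n)R_{g_v}^2.
\]
Multiplying by $\frac{n-4}{2}$ and substituting $R_{g_v}=\frac{4(n-1)}{n-2}Y$, the constant is $\frac{n-4}{2}\cdot\frac{n^2-4}{8n(n-1)^2}\cdot\frac{16(n-1)^2}{(n-2)^2}=\frac{(n+2)(n-4)}{n(n-2)}$. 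This is the second inequality in \eqref{eq:aa6}.

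\textbf{Step 3: equality in (i).} If equality holds, every inequality above is an equality. In particular $Q_{g_v}=d(n)R_{g_v}^2$, so $E_{g_v}=0$ and $g_v$ is Einstein.

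For the converse, take $g_0$ Einstein with $R>0$; the case of the round sphere is checked directly. Here I expect the main obstacle. Unlike in Theorem \ref{thm_Yamabe_Constants}, we cannot invoke our own existence result. Instead we need that $Y_4^{++}$ is attained, which holds by \cite{GM} when $Y_4^{++}<Y_4^{++}(\Sn)$, together with an Obata-type rigidity for constant $Q$ on Einstein manifolds (as in \cite{Vetois,Case2024}). Alternatively, one can run the computation of Theorem \ref{classification} with the denominator replaced by the volume. Rigidity forces the minimizer to be $g_0$ up to scaling. Evaluating at $g_0$ then gives $\frac{n-4}{2}d(n)R^2\,vol^{4/n}$, which equals the right-hand side, since an Einstein metric is a Yamabe metric by Obata.

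\textbf{Part (ii).} Combining \eqref{eq:aa6} with the Aubin–Schoen inequality \eqref{Y1_ineq} gives
\[
Y_4^{++}(M)\le c\,Y(M)^2<c\,Y(\Sn)^2=Y_4^{++}(\Sn)
\]
whenever $(M,g_0)$ is not conformal to $\Sn$. Here $c=\frac{(n+2)(n-4)}{n(n-2)}$, and the last equality uses the equality case on the round sphere, which is Einstein. Conversely, if $(M,g_0)$ is conformal to $\Sn$, equality holds trivially.
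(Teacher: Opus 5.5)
Your proposal is correct and follows the paper's proof. The shared steps are:
\begin{itemize}
\item the comparison function $u$ built from the normalized Yamabe metric $g_v$, using $Q_{g_v}\le d(n)R_{g_v}^2$ and the Gursky--Malchiodi maximum principle;
\item a H\"older estimate exploiting $u\ge v$ (you use conjugate exponents and then $vol(g_u)\ge 1$, while the paper pairs $(vu)^{\frac{n}{n-4}}$ with $v^{\frac{2n}{n-4}}$, which is a cosmetic difference);
\item for the equality cases and (ii), a minimizer from \cite{GM}, Vetois's Obata theorem \cite{Vetois}, Aubin--Schoen, and the sharp Paneitz--Sobolev inequality on $\Sn$.
\end{itemize}
One step deserves a line of justification: why equality forces $Q_{g_v}=d(n)R_{g_v}^2$. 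In your chain, equality gives $vol(g_u)=1$, so $u\equiv v$ since $u\ge v$, so $P_{g_0}u=P_{g_0}v$, which is exactly $Q_{g_v}=d(n)R_{g_v}^2$, i.e.\ $E_{g_v}=0$.
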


\begin{proof}
    
Using the same proof of the previous theorem till \eqref{eq6.1}, a different H\"older inequality and $u\ge v$ again we have
\begin{align*}
\frac{2}{n-4}\int_M uP_{g_0}u&= d(n)R^2_{g_v}\int_M v^{\frac{n+4}{n-4}} u
\\
 & \le d(n)R^2_{g_v}(\int_M (vu)^{\frac {n}{n-4}})^{\frac {n-4}n}
 (\int _M v^{\frac {2n}{n-4}})^{\frac 4 n}\\
 &\le 
d(n)R^2_{g_v}(\int_M u^{\frac {2n}{n-4}})^{\frac {n-4}n}.
\end{align*}
It follows that
\[
\frac {\int_M Q_{g_u} dv_{g_u}}
{(\int_M u^{\frac {2n}{n-4}})^{\frac {n-4}n}}\le 
d(n)R^2_{g_v}.
\]
Notice that $g_u$ has positive scalar curvature and semi-positive $Q$ curvature. Hence \eqref{eq:aa6} follows. As in the proof of Corollary \ref{main_cor}, \eqref{eq:aa7} follows from \eqref{eq:aa6}, the resolution of the Yamabe problem and the optimal Sobolev inequality for the $Q$ curvature on $\Sn$.

As in the proof of Theorem \ref{thm_Yamabe_Constants}, it is easy to see that equality in the second inequality implies that $(M,g_v)$
is Einstein. To show that $(M,g_0)$ is conformal to an Einstein metric implies equality, we use the same proof but using the Obata  result for $Q$ curvature proved by Vetois \cite{Vetois}.
\end{proof}

\begin{remark} 
\begin{itemize}
\item[1.] Inequality \eqref{eq:aa7} was proved in \cite{GM} by choosing suitable test functions and using the expansion of the Green function for the Paneitz operator $P$. 
This is one of the crucial steps to solve the $Q$-Yamabe problem in \cite{GM}. 
  \item[2.] The inequality
    \[
    Y^*_4 (M,[g_0]) \le  \frac{(n+2)(n-4)}{n(n-2)} Y(M, [g_0])^2
    \]
    follows from a proof in \cite{WangFang} by Wang-Zhou, which implies that
    $Y^*_4(M,[g_0])< Y^*_4(\Sn)$, if $(M,g_0)$ is not conformally equivalent to $\Sn$.  However  unlike $Y^{++}_4$ it is unable (at least very difficult)  to show that this inequality does imply the achievement of $Y^*_4(M,[g_0])$, due to the lack of the maximum principle in the class of $R_g>0$.
  \end{itemize}  
\end{remark}

\subsection{\texorpdfstring{$\Theta_4$}{} vs 
 \texorpdfstring{$ Y$}{}}

It is interesting that we can also give an alternative proof for the result of Hang-Yang on $\Theta_4$.
In \cite{HY} Hang and Yang introduced
\begin{eqnarray}
    \Theta _4 (M, [g_0]) =\sup _{f\in L^{\frac {2n}{n+2}}(M)\slash \{0\} }\frac{ \int G_p f \cdot f}{\|f\|^2_{L^{\frac {2n}{n+4}}}},
\end{eqnarray}
where $G_pf(x)=\int G(x,y)f(y) dv_{g_0}$ and $G$ is the Green function of $P_{g_0}$, i.e. its inverse of the Paneitz operator $P_{g_0}$.
It was proved  in Lemma 2.1 in \cite{HY} that it is equivalent to 
\begin{equation}
    \Theta _4 (M, [g_0])=\sup \{\frac{ \int P_{g_0}u  \cdot u}{\|P_{g_0}u\|^2_{L^{\frac {2n}{n+4}}}} \,|\, u\in L^{\frac {2n}{n+2}}(M)\slash \{0\}, u>0, P_{g_0}u>0 \}.
\end{equation}

\begin{thm} 
\label{Thm4.3}
Let $(M,g_{0})$ be an $n$-dimensional  closed manifolds  with $n\ge 5$. Assume that  $Y(M, [g_0])>0$, $Q_{g_0} \geq 0$ and not identically zero. Then 
$$\Theta _4 (M, [g_0]) \ge \frac 2 {(n-4)d(n)}\left(\frac{n-2}{4(n-1)}\right)^2Y^{-2} (M, [g_0]). $$
Moreover, equality holds if and only if $(M,g_{0})$ is 
conformally equivalent to an Einstein metric with positive scalar curvature. As a consequence,
\eq{\label{eq:HY}
\Theta _4 (M, [g_0]) > \Theta _4 (\Sn), 
}
if $(M,g_0)$ is not conformally equivalent to $\Sn$.
\end{thm}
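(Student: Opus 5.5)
We prove the inequality by inserting one test function into the second characterization of $\Theta_4$. The test function is the same $u$ built in the proof of Theorem \ref{thm_Yamabe_Constants}. Since $Y(M,[g_0])>0$ and $Q_{g_0}$ is semi-positive, we may pick a conformal metric with $R\ge 0$, so Theorem \ref{thm:maximum principle} applies and $P_{g_0}$ is invertible with positive Green function.

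Let $g_v=v^{4/(n-4)}g_0$ be a Yamabe metric, normalized by $\int_M v^{\frac{2n}{n-4}}=1$ and $\frac{n-2}{4(n-1)}R_{g_v}=Y(M,[g_0])$. Let $u$ solve \eqref{eq_aa4}, that is,
\[
P_{g_0}u=\tfrac{n-4}{2}d(n)R_{g_v}^2 v^{\frac{n+4}{n-4}}.
\]
The right-hand side is positive, so the maximum principle gives $u>0$, and $u$ is admissible in the supremum defining $\Theta_4$.

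Two quantities need to be computed. First, the $L^{\frac{2n}{n+4}}$ norm is explicit:
\[
\|P_{g_0}u\|_{L^{\frac{2n}{n+4}}}=\tfrac{n-4}{2}d(n)R_{g_v}^2\Big(\int_M v^{\frac{2n}{n-4}}\Big)^{\frac{n+4}{2n}}=\tfrac{n-4}{2}d(n)R_{g_v}^2 .
\]
Second, we need a lower bound for $\int_M uP_{g_0}u=\tfrac{n-4}{2}d(n)R_{g_v}^2\int_M v^{\frac{n+4}{n-4}}u$. Here we use $u\ge v$. By \eqref{eq_aa3}, $P_{g_0}v=\frac{n-4}{2}Q_{g_v}v^{\frac{n+4}{n-4}}\le P_{g_0}u$, and the maximum principle then gives $u\ge v$. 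Hence
\[
\int_M v^{\frac{n+4}{n-4}}u\ge \int_M v^{\frac{2n}{n-4}}=1 .
\]
Dividing the two quantities,
\[
\Theta_4\ge \frac{\frac{n-4}{2}d(n)R_{g_v}^2}{\big(\frac{n-4}{2}d(n)R_{g_v}^2\big)^2}=\frac{2}{(n-4)d(n)}R_{g_v}^{-2}.
\]
Substituting $R_{g_v}=\frac{4(n-1)}{n-2}Y$ gives exactly the claimed constant.

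For the equality case, suppose equality holds. Then $\int_M v^{\frac{n+4}{n-4}}(u-v)=0$, so $u\equiv v$. Applying $P_{g_0}$ gives $Q_{g_v}=d(n)R_{g_v}^2$, and by \eqref{Einstein term} this forces $E_{g_v}=0$, so $g_v$ is Einstein.

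The converse is the main obstacle, since it needs the actual value of the supremum. Assume $g_0$ is Einstein. Then $P_{g_0}$ has constant coefficients, $P_{g_0}=(-\Delta+a)(-\Delta+b)$ with $a,b>0$ determined by $R_{g_0}$. The supremum defining $\Theta_4$ is achieved; this is known from Hang--Yang for $\Theta_4>\Theta_4(\Sn)$, and otherwise $M$ is conformal to $\Sn$ and the claim is direct. The maximizer is a positive $u$ satisfying $P_{g_0}u=c\,(P_{g_0}u)^{\frac{n+4}{n-4}\cdot\frac{n-4}{n+4}}$-type Euler--Lagrange equation, which amounts to constant $Q$ for $u^{4/(n-4)}g_0$. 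Vetois' Obata theorem for $Q$ then forces $u$ to be constant, unless $M$ is the sphere, where the standard bubbles appear. Evaluating at the constant shows equality.

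Finally, \eqref{eq:HY} follows from the equality computation on $\Sn$ together with the strict Aubin--Schoen inequality $Y(M,[g_0])<Y(\Sn)$, since the lower bound is decreasing in $Y$.
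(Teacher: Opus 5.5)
Your argument follows the paper's proof. It uses the same test function $u$ with $P_{g_0}u=\frac{n-4}{2}d(n)R_{g_v}^2v^{\frac{n+4}{n-4}}$, the same lower bound $\int_M v^{\frac{n+4}{n-4}}u\ge 1$ coming from $u\ge v$, and the same equality analysis via Hang--Yang's existence of an extremal plus Obata-type rigidity. However, one step is justified incorrectly as written: the invertibility of $P_{g_0}$ and the positivity of its Green function, on which both $u>0$ and $u\ge v$ rest.

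Theorem \ref{thm:maximum principle} needs a \emph{single} metric in the class that has both $R\ge 0$ and semi-positive $Q$. Here you only know $Y(M,[g_0])>0$ and $Q_{g_0}\ge 0$, $Q_{g_0}\not\equiv 0$. Passing to a conformal metric with $R\ge 0$ gives no control on the sign of that metric's $Q$-curvature, so Gursky--Malchiodi cannot be applied this way. The conclusion you need is true, but it is exactly Hang--Yang's theorem: under $Y>0$ and $Q_{g_0}$ semi-positive, $\ker P_{g_0}=0$ and the Green function of $P_{g_0}$ is positive. This is what the paper invokes from \cite{HY}. Replace your sentence by that citation; everything downstream then goes through.

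There are two smaller points.

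First, your Euler--Lagrange equation is garbled: as written it reads $P_{g_0}u=c\,P_{g_0}u$, which is an identity. The maximizer $f$ satisfies $G_Pf=c\,f^{\frac{n-4}{n+4}}$, so $u=G_Pf$ solves $P_{g_0}u=c'\,u^{\frac{n+4}{n-4}}$. That is the constant-$Q$ equation you want.

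Second, in the converse, evaluating the quotient at a constant gives the stated value only because an Einstein $g_0$ not conformal to $\Sn$ is itself a Yamabe metric. This identifies $Y(M,[g_0])=\frac{n-2}{4(n-1)}R_{g_0}\,\mathrm{vol}(g_0)^{2/n}$, and it is Obata's theorem for scalar curvature, which the paper uses explicitly. You should state it.
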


\begin{proof}
By Hang-Yang's result \cite{HY}, the assumption implies that ${\rm Ker}\, P_{g_0}= \{0\}$ and its  Green function $G$ is positive. 
 Let $g_v=v^{4\slash {(n-4)}}g_0$ be the Yamabe metric in $[g_0]$ such that 
$\frac{n-2}{4(n-1)}R_{g_v}=Y(M,[g_0])>0$ and $vol(g_v)=1$. Thus
\[
\int_M v^{\frac{2n}{n-4}}dv_{g_0}=1,
\]
\[
R_{g_v}=\frac{4(n-1)}{n-2}v^{-\frac{n+2}{n-4}}L_{g_0}v^{\frac{n-2}{n-4}},
\]
\[
\frac{2}{n-4}v^{-\frac{n+4}{n-4}}P_{g_0}v=Q_{g_v} \le d(n)R^2_{g_v}.
\]
The last one implies 
\[
\frac{2}{n-4} v=\int_M G(x,y) Q_{g_v} v^{\frac{n+4}{n-4} } \le \int_M G(x,y)   d(n)R^2_{g_v} v^{\frac{n+4}{n-4}}
.
\]
Now
define $u$ by
\[
\frac 2 {n-4} u:= \int_M G(x,y) d(n) R^2 _{g_v} v^ {\frac{n+4}{n-4}}.
\]
It is trivial to see that $u \ge v$ and $\frac 2 {n-4} P_{g_0}u=  d(n) R^2 _{g_v} v^ {\frac{n+4}{n-4}}$.
 In view of $vol(g_v)=1$, we have
\begin{align*}
    \left(\int |P_{g_0}u|^{\frac{2n}{n+4}}\right)^{\frac{n+4}{n}}
    = (\frac{n-4}{2})^2 d(n)^2 R_{g_v}^4
\end{align*}
and \begin{align*}
\frac{2}{n-4}\int_M uP_{g_0}u&= d(n)R^2_{g_v}\int_M v^{\frac{n+4}{n-4}} u
\ge d(n)R^2_{g_v}\int_M v^{\frac{2n}{n-4}} 
\\
&= d(n)R^2_{g_v},
\end{align*}
where we have used $u\ge v$.
Therefore, we have 
\begin{align*}
\Theta _4 (M, [g_0]) &\ge \frac{ \int P_{g_0}u  \cdot u}{\|Pu\|^2_{L^{\frac {2n}{n+4}}}}\ge \frac{2}{n-4}d(n)^{-1}R^{-2}_{g_v}\\
&=\frac{2}{n-4}d(n)^{-1} \left(\frac{n-2}{4(n-1)}\right)^2Y(M, [g_0])^{-2}. 
\end{align*}
Hence 
\begin{align*}
\Theta _4 (M, [g_0]) \ge \frac{2}{n-4}d(n)^{-1} \left(\frac{n-2}{4(n-1)}\right)^2Y(M, [g_0])^{-2}\ge\frac{2}{n-4}d(n)^{-1} \left(\frac{n-2}{4(n-1)}\right)^2Y(\mathbb{S}^n, [g_{\mathbb{S}^n}])^{-2}=\Theta_4(\Sn). 
\end{align*}

It is easy to see that equality implies that $(M, g_0)$ is conformal to an Einstein metric. If $g_0$ is conformal to an Einstein metric, as above we may assume that $(M,g_0)$ is Einstein and not conformally equivalent to $\Sn$. Then we use 
the existence result of Hang-Yang \cite{HYIMRN2015} to obtain
an optimizer $g$ of $\Theta_4(M,[g_0])$, which
has a constant $Q$ curvature
and then use the Obata theorem for $Q$ and for $R$ to show that 
$g$ is Einstein and is also a Yamabe metric. Then the equality follows.
\end{proof}

Inequality \eqref{eq:HY} was   used in \cite{HY} for the solution of $Q$ curvature Yamabe problem and
 was proved by using a finer expansion of the Green function.
 
 \subsection{\texorpdfstring{$\Theta_4$}{} vs 
 \texorpdfstring{$\widetilde Y_4$}{}}

There are other Yamabe type constants defined by Hang-Yang \cite{HY}
\eq{\nonumber 
Y_4 (M, [g_0]) = \inf _{u\in H^2(M)\backslash\{0\}}\frac{
\int_M u P_{g_0} u \, dv(g_0)}
{\|u\|^2 _{L^{\frac {2n}{n-4}}}}, \quad
Y_4 ^+ (M, [g_0]) = \inf _{0<u\in H^2(M)\backslash\{0\}}\frac{
\int_M u P_{g_0} u dv(g_0)}
{\|u\|^2 _{L^{\frac {2n}{n-4}}}}.
}
It is clear that 
$Y_4 \le Y^+_4$. 
$Y_4$ is the smallest constant related to 
the $Q$-curvature. 
Moreover, Hang-Yang proved that
\eq{
Y^+_4(M,[g_0]) \Theta _4(M,[g_0]) \le 1,
}
if $Y(M,[g_0]) >0$ and $g_0$ has semi-positive $Q$ curvature. It follows that
$\Theta_4 (M, [g_0])> \Theta_4 (\Sn)$ implies that $Y^+_4 (M, [g_0] )  < Y^+_4(\Sn)$ and $Y_4 (M, [g_0] )  < Y_4(\Sn)$.

For the completeness of the picture about these Yamabe constants, we   introduce one more 
\begin{equation}
    \widetilde Y_4(M,[g_0])= \inf_{g\in [g_0], Q_g > 0} \frac {{\frac{n-4}{2}}\int Q_g dv_g}{vol(g)^{\frac {n-4} n}}=\inf_{Pu> 0, u>0}
    \frac{\int uP_{g_0}u dv_{g_0}}{\|u\|^2_{\frac{2n}{n-4}}}.
\end{equation}
It is clear by definition that  $Y^+_4(M, [g_0]) \le \widetilde Y_4 (M, [g_0])\le Y ^{++}_4(M, [g_0])$.

Now due to duality, one can show that 
\begin{lem}  Under the same assumptions as in Theorem \ref{Thm4.3}, we have
\begin{equation}
    \tilde Y_4(M,[g_0])\cdot \Theta _4(M,[g_0])=1.
\end{equation}
\end{lem}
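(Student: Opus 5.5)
We prove the two inequalities $\widetilde Y_4\,\Theta_4\ge 1$ and $\widetilde Y_4\,\Theta_4\le 1$ separately, using the second (variational) characterization of $\Theta_4$ from Lemma 2.1 of \cite{HY} quoted above. Under the assumptions of Theorem \ref{Thm4.3}, Hang-Yang's result gives $\mathrm{Ker}\,P_{g_0}=\{0\}$ and a positive Green function $G$. Hence $P_{g_0}$ is a bijection between positive functions $u$ with $P_{g_0}u>0$ and positive functions $f=P_{g_0}u$; moreover $f>0$ forces $u=G_pf>0$. Set $p=\frac{2n}{n-4}$ and $p'=\frac{2n}{n+4}$, which are conjugate exponents.

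\textbf{Step 1: $\widetilde Y_4\,\Theta_4\ge 1$.} For any admissible $u$ (i.e.\ $u>0$, $P_{g_0}u>0$), Hölder's inequality gives
\[
\int uP_{g_0}u\le \|u\|_{p}\,\|P_{g_0}u\|_{p'}.
\]
Therefore
\[
\frac{(\int uP_{g_0}u)^2}{\|u\|_p^2\,\|P_{g_0}u\|_{p'}^2}\le \frac{\int uP_{g_0}u}{\|u\|_p^2}.
\]
The left side equals $\frac{\int uP_{g_0}u}{\|P_{g_0}u\|_{p'}^2}\cdot\frac{\int uP_{g_0}u}{\|u\|_p^2}$, so dividing by $\frac{\int uP_{g_0}u}{\|u\|_p^2}>0$ we get
\[
1\le \Theta_4\cdot \frac{\int uP_{g_0}u}{\|u\|_p^2}.
\]
Taking the infimum over admissible $u$ yields $\widetilde Y_4\,\Theta_4\ge1$.

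\textbf{Step 2: $\widetilde Y_4\,\Theta_4\le1$.} This is the step where equality in Hölder's inequality must be realized. Take $f>0$ with $\frac{\int G_pf\cdot f}{\|f\|_{p'}^2}$ close to $\Theta_4$. By the Euler–Lagrange equation of the supremum, the optimizer satisfies $u:=G_pf=c\,f^{p'-1}$, equivalently $f=c'u^{\frac{n+4}{n-4}}$. For such $u$ Hölder's inequality is an equality, since $P_{g_0}u$ is proportional to $u^{p-1}$, and the computation of Step 1 reverses to give $\frac{\int uP_{g_0}u}{\|u\|_p^2}=\Theta_4^{-1}$. The optimizer is also admissible: $u>0$ and $P_{g_0}u=c'u^{\frac{n+4}{n-4}}>0$.

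The existence of this optimizer is the main obstacle. We plan to invoke Hang-Yang \cite{HYIMRN2015}, which shows that $\Theta_4$ is attained by a positive smooth function under these hypotheses (the same fact used in Theorem \ref{Thm4.3}).

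If we prefer to avoid existence, we can argue with approximate maximizers instead. Given $f>0$ near-maximizing, set $u=G_pf$. Applying Hölder's inequality to $\int f\,G_p(\cdot)$ via duality, $\Theta_4$ equals the squared operator norm of $G_p^{1/2}$ restricted to the positive cone. We would then compare with the test function $w=G_p(u^{p-1})$. This step is more delicate, so the first attempt will simply be to cite attainment of $\Theta_4$.

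Combining Steps 1 and 2 gives $\widetilde Y_4\,\Theta_4=1$.
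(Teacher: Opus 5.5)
There is a genuine gap: the inequality $\widetilde Y_4\,\Theta_4\ge 1$ is never proved. Write $A(u)=\int uP_{g_0}u/\|P_{g_0}u\|_{p'}^2$ and $B(u)=\int uP_{g_0}u/\|u\|_p^2$. For each admissible $u$, H\"older gives $A(u)B(u)\le 1$. Combined with $A(u)\le\Theta_4$ and $B(u)\ge\widetilde Y_4$, this yields only $A(u)\le 1/B(u)\le 1/\widetilde Y_4$, i.e.\ $\widetilde Y_4\,\Theta_4\le 1$. That is exactly the paper's one-line proof of the upper bound.

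Your Step 1 runs H\"older in the wrong direction. The displayed inequality $\frac{(\int uP_{g_0}u)^2}{\|u\|_p^2\|P_{g_0}u\|_{p'}^2}\le\frac{\int uP_{g_0}u}{\|u\|_p^2}$ is equivalent to $A(u)\le 1$, which is not H\"older's inequality. Dividing it by $B(u)$ returns $A(u)\le 1$, not $1\le\Theta_4B(u)$. In fact no use of H\"older on a single $u$ can give the lower bound, because H\"older bounds $A(u)B(u)$ from above.

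Your Step 2 is correct once attainment of $\Theta_4$ is granted. But it proves $\widetilde Y_4\,\Theta_4\le 1$ again, the direction H\"older already gives for free, so the Hang--Yang existence theorem adds nothing there.

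The missing idea is the duality argument the paper uses for $\widetilde Y_4\ge\Theta_4^{-1}$. Given an admissible $v$, test $\Theta_4$ with a \emph{different} function: the dual element $f=v^{\frac{n+4}{n-4}}>0$, for which $\int fv=\|f\|_{p'}\|v\|_p$. Since $P_{g_0}$ is positive, Cauchy--Schwarz in the $P_{g_0}$-inner product applies; the paper writes it as $\int(P_{g_0}^{-1/2}f-P_{g_0}^{1/2}v)^2\ge 0$. This gives
\[
\|f\|_{p'}\|v\|_p=\int fv\le\Bigl(\int fP_{g_0}^{-1}f\Bigr)^{1/2}\Bigl(\int vP_{g_0}v\Bigr)^{1/2}\le\Theta_4^{1/2}\,\|f\|_{p'}\Bigl(\int vP_{g_0}v\Bigr)^{1/2},
\]
so $\|v\|_p^2\le\Theta_4\int vP_{g_0}v$ for every admissible $v$, i.e.\ $\widetilde Y_4\,\Theta_4\ge 1$.

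The test function $G_p(u^{p-1})$ in your last paragraph is essentially this choice, but it belongs to the lower bound, not to Step 2. What makes it work is the Cauchy--Schwarz step in the $P_{g_0}$-inner product, not H\"older or attainment.
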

\begin{proof}
    It follows from the dualities between $L^{\frac {2n}{n-4}}$ and $L^{\frac {2n}{n+4}}$, and between $H^{2}$ and $H^{-2}$.
For a general theory see \cite{Carlen17}. For the convenience of the reader we provide the proof.

By the definition of $\Theta_4$ we have
\eq{\label{eq4.16} 2\int_M uv - \int_M u P^{-1}_{g_{0}} u \ge 
2 \int_M uv - \Theta_4(M, [g_0]) \|u\|^2_{L^{\frac {2n}{n+4}}}.
}

We claim  
\eq{
\int vP_{g_0}v \ge \Theta_4(M, [g_0])^{-1} \|v\|^2_{L^{\frac {2n}{n-4}}},
}
which implies
\eq{
\widetilde Y_4 \ge \Theta_4^{-1}.}
In order to prove the claim we only need to show that 
the supremum of the left hand side 
\eqref{eq4.16} is $\int_M v P_{g_0} v$. Since 
\eq{
\int_M uP^{-1}_{g_0} u +\int_M v P_{g_0} v -2\int uv 
= \int_M(P_{g_0}^{-\frac 12}u -P_{g_0}^{\frac 12 } v)^2,
}
we have
\eq{2\int_M uv - \int_M u P^{-1}_{g_{0}} u \le 
\int_M vPv,
}
with equality if and only if
$P_{g_0}^{-\frac 12} u=P_{g_0}^{\frac 12} v$. Thus, 
$$
\int_M vPv \ge 
2 \int_M uv -  \Theta_4(M, [g_0])\|u\|_{L^{\frac {2n}{n+4}}}^2
$$
which implies by duality
$$
\int_M vPv \ge 
2 \|u\|_{L^{\frac {2n}{n+4}}}\|v\|_{L^{\frac {2n}{n-4}}} -  \Theta_4(M, [g_0]) \|u\|_{L^{\frac {2n}{n+4}}}^2
$$
Taking $ \Theta_4(M, [g_0])\| u\|_{L^{\frac {2n}{n+4}}}=\|v\|_{L^{\frac {2n}{n-4}}}$, the desired claim yields.

For the other inequality, we use 
H\"older's inequality 
$$
\tilde Y_4(M,[g_0]) \frac{\ds\int_M u P_{g_0} u}{\|P_{g_0}u\|^2_{\frac{2n}{n+4}}}\le \frac{\left(\ds\int_M u P_{g_0} u\right)^2}{\|u\|^2_{\frac{2n}{n-4}}\|P_{g_0}u\|^2_{\frac{2n}{n+4}}}\le 1.
$$
Therefore, by taking the supremum, we prove that $\Theta_4 \le \widetilde  Y_4^{-1}$ and obtain the conclusion.
\end{proof}

\section{Existence for dimension \texorpdfstring{$n=3$}{}}\label{section:3-dim}

We start from $n=3$
and will prove  in this section the existence of metrics with constant quotient $Q_{g}/R_{g}$ curvature by a priori estimates and the degree theory, following closely the method of  Hang-Yang in \cite{HYCPAM3-dim}.

In 3-dimensional manifolds, we have 
\[
P_{g_0}\varphi=\Delta^{2}_{g_0}\varphi+4{\rm div}\left[Ric_{g_0}\left(\nabla\varphi,e_{i}\right)e_{i}\right]-\frac{5}{4}{\rm div}(R_{g_0}\nabla\varphi)-\frac{1}{2}Q_{g_0}\varphi,
\]
and hence
\begin{equation}
\begin{aligned}
\int_{M}P_{g_0}u\cdot vdv_{g_{0}} =\int_{M}\left[\Delta u\Delta v-4Ric_{g_0}(\nabla u,\nabla v)+\frac{5}{4}R_{g_0}\nabla u\cdot\nabla v-\frac{1}{2}Q_{g_0}uv\right]dv_{g_{0}}.
\end{aligned}
\label{eq:inner product for 3-dim}
\end{equation}
{In this section, if there is no confusion, the connection $\nabla$ and the Laplacian operator $\triangle $ are associated with the metric $g_0$.} 
For $u\in C^{\infty}(M)$ and $u>0,$
\[
-\frac{1}{2}Q_{g_{u}}=u^{7}P_{g_0}u.
\]
The Green function $G_{p}$ is defined by
\[
P_{g_0}G_{p}=\delta_{p}.
\]
By Corollary 2.2, Proposition 2.3 and Proposition 2.4 in \cite{HYCPAM3-dim},
we know the following theorem. 
\begin{thm}[Hang-Yang]\label{thm:HY3-dim}Assume that $(M,g_{0})$ is not
conformally equivalent to the standard sphere and $R_{g_0}>0$, $Q_{g_0}\ge0$
and $Q_{g_0}$ is not identically $0$, then $\ker P_{g_0}=0$ and the Green function
$G(p,q)<0$ for all $p,q\in M$.
\end{thm}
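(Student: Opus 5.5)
The statement collects three results of Hang-Yang (Corollary 2.2, Propositions 2.3 and 2.4 of \cite{HYCPAM3-dim}). I would reconstruct it in three steps, ordered as they are there.

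\textbf{Step 1: $\ker P_{g_0}=0$.} I would argue by contradiction. Suppose $P_{g_0}\varphi=0$ with $\varphi\not\equiv 0$. In dimension 3 the sign conventions are reversed: $-\frac12 Q_{g_u}=u^7P_{g_0}u$, so $Q\ge0$ corresponds to $P_{g_0}u\le 0$. The key tool is Hang-Yang's positivity principle for $n=3$, whose consequence is recorded as Theorem \ref{thm:positive scalar curvature}. Here $g_0$ itself has $R_{g_0}>0$ and semi-positive $Q_{g_0}$. Hence $P_{g_0}1=-\frac12 Q_{g_0}\le0$, and it is strictly negative somewhere.

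\textbf{Step 2: a sign principle for $P_{g_0}$.} Using a continuity path, I would prove that $P_{g_0}w\le 0$ with $w\not\equiv0$ forces $w>0$. The path is $w_t=(1-t)+tw$, or alternatively the family of operators $P_t$ attached to $g_t$, interpolating between $1$ and $w$. As long as $w_t>0$, the metric $w_t^{-4}g_0$ has $Q\ge0$, so by Theorem \ref{thm:positive scalar curvature} it has positive scalar curvature. The positive scalar curvature then gives a quantitative lower bound that prevents $w_t$ from touching zero at a first time. In Hang-Yang this is done by writing the scalar curvature equation for $g_{w_t}$: at a touching point, the Hessian structure together with $R>0$ yields a contradiction. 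Applying this principle to $\pm\varphi$ with $P_{g_0}\varphi=0$, both $\varphi$ and $-\varphi$ would have to be positive, which is impossible. This gives Step 1.

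\textbf{Step 3: the Green function.} Once $P_{g_0}$ is invertible, the Green function $G_p$ exists and behaves like $-\frac{1}{8\pi}|x-p|$ plus smoother terms near $p$. In dimension 3 the fundamental solution of $\Delta^2$ is $-\frac{1}{8\pi}r$, so $G_p$ is bounded and continuous. Approximating $\delta_p$ by nonnegative smooth $f_k$, the solutions of $P_{g_0}w_k=-f_k$ satisfy $w_k>0$ by Step 2. Passing to the limit gives $G_p\le 0$. For strict negativity, suppose $G(p,q)=0$ at some $q$. Then $-G_p\ge 0$ attains an interior minimum $0$ away from the singularity, or at $q=p$. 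I would rule this out with the strict version of the principle from Step 2, applied to $-G_p+\varepsilon$ perturbations. Alternatively I would use Hang-Yang's argument: $G_p<0$ follows from the representation $u=\int G f$ combined with the strict conclusion $w>0$ for all $f\ge0$, $f\not\equiv0$.

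\textbf{Main obstacle.} I expect the hardest step to be Step 2, the 3-dimensional maximum principle. It relies on the fact that a metric $w^{-4}g_0$ with $Q\ge0$ keeps $R>0$, that is, on Theorem \ref{thm:positive scalar curvature}. Turning this into "no first touching point" is the delicate part. The hypothesis that $(M,g_0)$ is not conformal to $\Sn$ enters here as well: on the round sphere, with its conformal symmetries, the limiting degenerate profiles show exactly where strictness fails.
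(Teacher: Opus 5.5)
\textbf{A gap: the case $q=p$ in Step~3.} The paper gives no proof of this statement; it only cites Corollary~2.2 and Propositions~2.3--2.4 of Hang--Yang. Your Steps~1--2 reproduce the maximum-principle part of that argument: the path $w_t=(1-t)+tw$, then $\ker P_{g_0}=0$ by applying the principle to $\pm\varphi$. So does the inequality $G\le 0$ in Step~3. What fails is the strict inequality on the diagonal, and with it the role you give to the non-sphere hypothesis.

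In dimension $3$, $G_p$ is continuous, with $-G_p(x)=\frac{1}{8\pi}d(x,p)-G(p,p)+o(d(x,p))$, and the statement asserts $G(p,p)<0$. This cannot follow from the maximum principle alone. On the round $\mathbb{S}^3$ one has $R>0$, $Q>0$ and $\ker P=0$, and your Step~2 holds verbatim. Yet there $G(\xi,\eta)=-\frac{1}{8\pi}|\xi-\eta|$ (chordal distance), so $G(p,p)=0$. Consequently neither of your two routes can give $G(p,p)<0$:
\begin{itemize}
\item the perturbation $-G_p+\varepsilon$ only returns $-G_p+\varepsilon>0$, which you already know;
\item the representation $u=\int Gf$ with $u>0$ for $f\ge 0$, $f\not\equiv 0$, only rules out $G_p\equiv 0$ on open sets.
\end{itemize}
Also, the non-sphere hypothesis does not enter Step~2 at all. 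This is not a side issue: the paper needs $G<0$ everywhere to get, by compactness, $K=-G\ge C_0>0$ in its degree argument.

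\textbf{Why the diagonal is different.} Making Step~2 precise shows this. The contradiction is not really a pointwise Hessian computation, which is delicate at degenerate zeros; it is an integral estimate. Suppose $w>0$ and $Q_{w^{-4}g_0}\ge 0$. Then $\phi=w^{-1}$ satisfies $-\Delta\phi+\frac18 R_{g_0}\phi>0$ by Theorem~\ref{thm:positive scalar curvature}. The positive Green function of this operator gives $\phi\ge c\int_M\phi$, hence $\int_M\phi\le C/\max w$. This in turn bounds $\|\phi\|_{L^q}$ for every $q<3$. These bounds are uniform along your path, and along the approximations $w_k\to -G_p$. Fatou then gives $w_{t_0}^{-1}\in L^q$, respectively $(-G_p)^{-1}\in L^q$, for all $q<3$.

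At a zero where the limit function is smooth, it vanishes quadratically, so its inverse is at least $c\,d^{-2}\notin L^{3/2}$. This excludes zeros of $w_{t_0}$, and zeros of $-G_p$ at points $q\neq p$. At $q=p$, however, if $G(p,p)=0$ then $-G_p\sim \frac{1}{8\pi}d(\cdot,p)$ vanishes only linearly. Since $d^{-1}\in L^q$ for all $q<3$, no contradiction arises, consistent with $\mathbb{S}^3$.

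\textbf{The missing ingredient.} You need a positive-mass-type theorem for $P_{g_0}$ in dimension $3$: $G(p,p)\le 0$, with equality only if $(M,g_0)$ is conformal to the round $\mathbb{S}^3$. This is part of what the paper imports from the cited Hang--Yang results. As I recall Hang--Yang's argument (I have not re-checked the construction), they conformally blow up $M$ at $p$ to an asymptotically flat metric and apply the positive mass theorem, whose rigidity case identifies the sphere. This is the only place the non-sphere assumption is used.
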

Since $G$ is countinous, $G$ has a
negative upper bound. 
We recall  also Proposition 2.1 in \cite{HYCPAM3-dim}:
\begin{prop}[Hang-Yang]\label{prop:Hang-Yang}Let $(M,g_{0})$ be a smooth, compact, Riemannian
3-manifold with $R_{g_0}>0,Q_{g_0}\geq0$. If $u\in C^{\infty}(M),u\neq$ constant,
and $P_{g_{0}}u\leq0$, then $u>0$ and $R_{u^{-4}g_0}>0$.
\end{prop}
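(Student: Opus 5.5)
The plan is a continuity argument along the segment joining the constant function $1$ to $u$. For $t\in[0,1]$ set $u_t=(1-t)+tu$. Since $P_{g_0}1=-\frac12 Q_{g_0}\le 0$ and $P_{g_0}u\le 0$, linearity gives $P_{g_0}u_t\le 0$ for every $t$. Because $u$ is nonconstant, $u_t$ is nonconstant for $t>0$. At $t=0$ the metric $u_0^{-4}g_0=g_0$ has $R>0$. Let $T$ be the set of $t\in[0,1]$ such that $u_s>0$ and $R_{u_s^{-4}g_0}>0$ for all $s\in[0,t]$. The set $T$ is open in $[0,1]$ by continuity, so it suffices to show that the supremum $t^*$ of $T$ belongs to $T$. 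At $t^*$ we know by continuity that $u_{t^*}\ge 0$ and $R_{g_{t^*}}\ge 0$ wherever $u_{t^*}>0$. Two things can fail there: $R$ acquires a zero while $u_{t^*}>0$, or $u_{t^*}$ acquires a zero.

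\emph{Case 1: $u_{t^*}>0$ and $R_{g_{t^*}}\ge 0$ with a zero.} Put $g=g_{t^*}=u_{t^*}^{-4}g_0$. From $-\frac12 Q_g=u_{t^*}^7P_{g_0}u_{t^*}\ge0$ we get $Q_g\ge 0$. In dimension $3$ the expression \eqref{Einstein term} reads
\[
Q_g=-\tfrac14\Delta_gR_g+\tfrac{5}{96}R_g^2-2|E_g|^2 .
\]
Hence $-\Delta_g R_g+\tfrac{5}{24}R_g\cdot R_g\ge 8|E_g|^2\ge0$, so $R_g$ is a nonnegative supersolution of a linear equation with bounded coefficient. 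By the strong maximum principle, either $R_g>0$ (a contradiction) or $R_g\equiv0$. If $R_g\equiv 0$, the display forces $E_g\equiv0$ and then $Q_g\equiv0$, i.e.\ $P_{g_0}u_{t^*}\equiv0$. This gives $(1-t^*)Q_{g_0}=2t^*P_{g_0}u$. Both sides have opposite signs ($Q_{g_0}\ge0$, $P_{g_0}u\le 0$), so both vanish. Since $Q_{g_0}$ is not identically zero, this forces $t^*=1$ and $P_{g_0}u\equiv0$. Then $g=u^{-4}g_0$ is Ricci flat on a closed manifold conformal to $g_0$ with $R_{g_0}>0$, which contradicts the conformal invariance of the sign of the Yamabe constant. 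So Case 1 cannot occur.

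\emph{Case 2: $u_{t^*}\ge0$ vanishes at some point $p$.} This is the step I expect to be the main obstacle. The information available is that for $t<t^*$ the metrics $g_t=u_t^{-4}g_0$ satisfy $R_{g_t}>0$ and $Q_{g_t}\ge0$, while $P_{g_0}u_{t^*}$ stays smooth and bounded. I would first rewrite $R_{g_t}>0$ in terms of $u_t$. Using $R_g\,\phi^5=-8\Delta\phi+R_{g_0}\phi$ with $\phi=u_t^{-1}$, this gives
\[
u_t\Delta u_t\ \ge\ 2|\nabla u_t|^2-\tfrac18R_{g_0}u_t^2,
\]
which passes to the limit $t\to t^*$ where $u_{t^*}>0$. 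Next I would expand $u_{t^*}$ near the minimum point $p$, where $u_{t^*}(p)=0$ and $\nabla u_{t^*}(p)=0$, so $u_{t^*}$ is at least quadratic. A useful reformulation of the inequality is that $u^{1/2}$ is subharmonic up to a lower order term: indeed $\Delta u^{a}\ge a(a+1)u^{a-2}|\nabla u|^2-\tfrac a8R_{g_0}u^a$ for $a>0$.

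The goal is to show that a nonnegative smooth $u$ with this differential inequality cannot have an interior zero unless it vanishes identically. My first attempt would be to test the inequality against the quadratic Taylor polynomial at $p$ along rays, aiming to show $u$ vanishes to infinite order at $p$. I would then invoke unique continuation for the fourth order operator $P_{g_0}$, using that $P_{g_0}u_{t^*}\le0$ is controlled. If this fails, the fallback is to express $u^7P_{g_0}u=-\frac12Q_g$ through $\Delta_g R_g$ written in $g_0$-quantities. Near $p$ the right-hand side carries negative powers of $u$ with a definite sign, and these must be incompatible with the boundedness of $P_{g_0}u_{t^*}$. The case $u_{t^*}\equiv0$ is excluded, since $u_{t^*}$ is nonconstant or equals $1$.

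Once both cases are excluded, $t^*=1\in T$. This gives $u>0$ and $R_{u^{-4}g_0}>0$, which proves the proposition.
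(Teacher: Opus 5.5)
The paper does not prove this proposition; it quotes it from Hang--Yang \cite{HYCPAM3-dim}. Your continuity set-up along $u_t=(1-t)+tu$ and your Case~1 are sound. In Case~1 you do not even need $Q_{g_0}\not\equiv 0$: a scalar-flat metric $u_{t^*}^{-4}g_0$ in the conformal class of $g_0$ already contradicts $R_{g_0}>0$.

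The genuine gap is Case~2, which you leave open, and the routes you sketch would not close it. Even granting that $u_{t^*}$ vanishes to infinite order at $p$, no unique continuation theorem can use the one-sided information $P_{g_0}u_{t^*}\le 0$. Unique continuation requires $|P_{g_0}v|$ to be dominated by lower-order derivatives of $v$. Here $P_{g_0}u_{t^*}$ is just some smooth nonpositive function, and it is flat at $p$ whenever $u_{t^*}$ is. Your fallback is circular: $u^7P_{g_0}u=-\frac12 Q_g$ is an identity, and its only sign content, $Q_g\ge 0$, has already been used.

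The missing idea is to exploit $R_{g_t}>0$ for $t<t^*$ globally, through the functions $w_t=u_t^{-1}$ and the operator $L=-8\Delta_{g_0}+R_{g_0}$. Since $R_{u_t^{-4}g_0}=w_t^{-5}Lw_t$, the function $f_t:=Lw_t$ is positive on all of $M$.

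Let $G>0$ be the Green function of $L$. Then $G(x,y)\le C\,d(x,y)^{-1}$ and $c_q:=\inf_{y\neq q}G(q,y)>0$. Choose $q$ with $u_{t^*}(q)>0$; this is possible because $u_{t^*}\ge 0$ is nonconstant.

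The bound $w_t(q)=\int_M G(q,y)f_t(y)\,dv_{g_0}\ge c_q\|f_t\|_{L^1}$ controls $\|f_t\|_{L^1}$ uniformly as $t\uparrow t^*$. Minkowski's inequality then gives $\|w_t\|_{L^2}\le \sup_y\|G(\cdot,y)\|_{L^2}\,\|f_t\|_{L^1}\le C$. By Fatou, $\int_M u_{t^*}^{-2}\,dv_{g_0}<\infty$.

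But $u_{t^*}\ge 0$ is smooth with $u_{t^*}(p)=0$, so $u_{t^*}\le C\,d(\cdot,p)^2$. Hence $u_{t^*}^{-2}\ge c\,d(\cdot,p)^{-4}$, which is not integrable in dimension three, a contradiction.

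This is exactly where $n=3$ enters. Positive supersolutions of $L$ lie in $L^s$ for $s<n/(n-2)=3$, while $d^{-2}\notin L^s$ for $s\ge n/2=3/2$. Equivalently, you can apply the weak Harnack inequality to $w_t$ on a small ball around $p$ with exponent $2$. This forces $u_{t^*}\equiv 0$ near $p$, so the zero set is open and closed, contradicting nonconstancy. Quadratic vanishing at $p$ is all you need; infinite-order vanishing is a detour.
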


Now if $Pu\leq0$, then $Pu=-\nu$ for some nonnegative measure $\nu$.
It follows that
\[
u(p)=-\int_{M}G_{p}(q)d\nu(q).
\]
For convenience denote $K(p,q)=-G(p,q)$. The solution $u$ to $Q_{g_{u}}/R_{g_{u}}=1$
is equivalent to 
\begin{equation}
P_{g_{0}}u=-\frac{1}{2}u^{-7}R_{g_{u}}.\label{eq:3-dim equation}
\end{equation}

\begin{proof}[Proof of Theorem \ref{mainthm} for $n=3$]
We only consider the case that  $(M, g_0)$ is not conformally equivalent to the unit sphere, otherwise we have done.
Let \[
\Omega=\{u\in C^{4,\alpha}(M,g_{0})|u>0,\,R_{g_{u}}>0,\,|u|_{C^{4,\alpha}(M,g_{0})}\le C\},
\]
for a suitable constant $C>0$.
We use the degree theory and
consider solutions in $\Omega$ to the following equation  for $0\le t\le1$, 
\begin{align}
P_{g_0}u & =-(1-t)u^{-3}-\frac{1}{2}tu^{-7}R_{g_{u}}\label{eq:path equation}\\
 & =-(1-t)u^{-3}-4tu^{-4}(\Delta u-2u^{-1}|\nabla u|^{2}+\frac{1}{8}R_{g_{0}}u),\label{eq:total expression of path equation}
\end{align}
where $u^{-7}R_{g_{u}}=8u^{-4}(\Delta u-2u^{-1}|\nabla u|^{2}+\frac{1}{8}R_{g_{0}}u).$
From  $R_{g_{u}}>0$, we have
\begin{equation}
\Delta u+\frac{1}{8}R_{g_{0}}u>0.\label{eq:Delta u+Ru positive}
\end{equation}
With the help of the Green function, we rewrite \eqref{eq:path equation} as 
\[
u(x)=\int_{M}K(x,y)((1-t)u^{-3}+\frac{1}{2}tu^{-7}R_{g_{u}})(y)dv_{g_{0}}.
\]
By Theorem \ref{thm:HY3-dim}, we know that $0<C_0\le K(x,y)\le C_1$ and hence
\[
C_{0}\int_{M}((1-t)u^{-3}+\frac{1}{2}tu^{-7}R_{g_{u}})dv_{g_{0}}\le u(x)\le C_{1}\int_{M}((1-t)u^{-3}+\frac{1}{2}tu^{-7}R_{g_{u}})dv_{g_{0}}.
\]
It follows 
\begin{equation}
C_{0}F\le u(x)\le C_{1}F,\label{eq:equvalence of u and F}
\end{equation}
where we denote 
\begin{align*}
F &\colon =\int_{M}((1-t)u^{-3}+\frac{1}{2}tu^{-7}R_{g_{u}})dv_{g_{0}}>0.
\end{align*}
Hence, we only need to estimate $F$, which can also be written as 
\begin{equation}
F=\int_{M}\left((1-t)u^{-3}+4tu^{-4}(\Delta u-2u^{-1}|\nabla u|^{2}+\frac{1}{8}R_{g_{0}}u)\right)dv_{g_{0}}.\label{eq:expression of F 1}
\end{equation}
On one side, by the Sobolev inequality $\int_{M}R_{g_{u}}dv_{g_{u}}\ge C_{s}(\int_{M}dv_{g_{u}})^{\frac{1}{3}}$ and \eqref{eq:equvalence of u and F}, we have
\begin{align*}
F & =\int_{M}(1-t)u^{-3}dv_{g_{0}}+\int_{M}\frac{1}{2}tu^{-1}R_{g_{u}}dv_{g_{u}}\\
 & \ge(1-t)Vol(g_{0})C_{1}^{-3}F^{-3}+\frac{t}{2}C_{1}^{-1}F^{-1}\int_{M}R_{g_{u}}dv_{g_{u}}\\
 & \ge(1-t)Vol(g_{0})C_{1}^{-3}F^{-3}+\frac{t}{2}C_{1}^{-1}F^{-1}C_{s}(\int_{M}dv_{g_{u}})^{\frac{1}{3}}\\
 & =(1-t)Vol(g_{0})C_{1}^{-3}F^{-3}+\frac{t}{2}C_{1}^{-1}F^{-1}C_{s}(\int_{M}u^{-6}dv_{g_{0}})^{\frac{1}{3}}\\
 & \ge(1-t)Vol(g_{0})C_{1}^{-3}F^{-3}+\frac{t}{2}C_{1}^{-3}F^{-3}C_{s}Vol(g_{0})^{\frac{1}{3}},
\end{align*}
from which 
\begin{equation}
F\ge C(g_{0},C_{1}).\label{eq:lower bound of F and u}
\end{equation}
On the other side, by (\ref{eq:Delta u+Ru positive}) and (\ref{eq:expression of F 1}),
we obtain 
\begin{align*}
F & \le\int_{M}((1-t)u^{-3}+4tu^{-4}(\Delta u+\frac{1}{8}R_{g_{0}}u))dv_{g_{0}}\\
 & \le(1-t)(C_{0}F)^{-3}Vol(g_{0})+4t(C_{0}F)^{-4}\int_{M}\Delta u+\frac{1}{8}R_{g_{0}}udv_{g_{0}}\\
 &  =(1-t)(C_{0}F)^{-3}Vol(g_{0})+4t(C_{0}F)^{-4}\int_{M}\frac{1}{8}R_{g_{0}}udv_{g_{0}}\\
 & \le(1-t)(C_{0}F)^{-3}Vol(g_{0})+\frac{1}{2}t(C_{0}F)^{-4}C_{1}F\max R_{g_{0}}Vol(g_{0}),
\end{align*}
and then 
\begin{equation}
F\le C(g_{0},C_{0},C_{1}).\label{eq:upper bound of F and u}
\end{equation}
Therefore, by (\ref{eq:equvalence of u and F})(\ref{eq:lower bound of F and u})(\ref{eq:upper bound of F and u}),
we obtain 
\begin{equation}\label{upperlowerbound of u}
 0<c_{0}\le u\le C_{0}.   
\end{equation}

From (\ref{eq:inner product for 3-dim}) and (\ref{eq:path equation}) we have
\begin{align*}
 & \int_{M}\left[\Delta u\Delta u-4Ric_{g_0}(\nabla u,\nabla u)+\frac{5}{4}R_{g_0}|\nabla u|^{2}-\frac{1}{2}Q_{g_0}u^{2}\right]dv_{g_{0}}\\
= & \int_{M}u(-(1-t)u^{-3}-\frac{1}{2}tu^{-7}R_{g_{u}})dv_{g_{0}}\\
= & -(1-t)\int_{M}u^{-2}dv_{g_{0}}-\frac{1}{2}t\int_{M}8u^{-3}(\Delta u-2u^{-1}|\nabla u|^{2}+\frac{1}{8}R_{g_{0}}u)dv_{g_{0}},
\end{align*}
 which, together with (\ref{eq:Delta u+Ru positive}) and \eqref{upperlowerbound of u}, implies  
\begin{align}
  \int_{M}(\Delta u)^{2}-4Ric_{g_0}(\nabla u,\nabla u)+\frac{5}{4}R_{g_0}|\nabla u|^{2}\nonumber
\le & \int_{M}\frac{1}{2}Q_{g_0}u^{2}+8t\int_{M}u^{-4}|\nabla u|^{2}dv_{g_{0}}\nonumber\\
\le & \frac{C_{0}^{2}}{2}\max Q_{g_{0}}Vol(g_{0})+8tc_{0}^{-4}\int_{M}|\nabla u|^{2}dv_{g_{0}}\label{upperofDEltau}.
\end{align}
Since $u$ is bounded, it is easy to see that
\[
\int_M |\nabla u|^2=\int_M (-\triangle u) u\le C\left(\int_M(\triangle u)^2\right)^{\frac{1}{2}}.
\]
Together with (\ref{upperofDEltau}) and H\"older's inequality, we obtain
\[
\int_{M}(\Delta u)^{2}\le C,
\]
from which, since $n=3$, by the Sobolev inequality we get 
\begin{equation}
\int_{M}|\nabla u|^{6}\le C.\label{eq:initial nabla^p1}
\end{equation}

By (\ref{eq:total expression of path equation}), we have $P_{g_{0}}u\in L^{2}(M,g_{0})$
and then by $W^{4,p}$ theory, $u\in W^{4,2}(M,g_{0})$ and $u\in C^{2,\alpha}(M,g_{0})$ for any $0<\alpha<\frac{1}{2}$
by the Sobolev embedding. By regularity theory, we obtain $|u|_{C^{4,\alpha}(M,g_{0})}\le C_{2}$,
where $C_{2}$ depends only on $g_{0}.$ 

Recall 
\[
\Omega=\{u\in C^{4,\alpha}(M,g_{0})|u>0,\,R_{g_{u}}>0,\,|u|_{C^{4,\alpha}(M,g_{0})}\le C_{2}\}.
\]
We have 
\[
\deg\left(I-T_{1},\Omega,0\right)=\deg\left(I-T_{0},\Omega,0\right)=1,
\]
where 
$$T_t(u)=\int_{M}K(x,y)((1-t)u^{-3}+\frac{1}{2}tu^{-7}R_{g_{u}})(y)dv_{g_{0}}.$$
The $\deg\left(I-T_{0},\Omega,0\right)=1$ can be obtained as Hang-Yang \cite{HYCPAM3-dim} by replacing  $u^{-7}$ there with $u^{-3}$, see the proof of Theorem 1.2 in \cite{HYCPAM3-dim}. 
Therefore, we obtain the existence. 
\end{proof}

From the proof it is clear that the existence in this case is based crucially  on the work of Hang-Yang in \cite{HYCPAM3-dim}.
The main difference to \cite{HYCPAM3-dim} is the choice of 
the power $u^{-1} $ in \eqref{eq:path equation}.  The main reason is 
 to keep the power of $u$  in the two terms in the right-hand side same, which is convenient to obtain the positive lower and upper bound of the solution.

\section{Existence for dimension \texorpdfstring{$ n=4$}{}}\label{section:dim4}
In this section we will show the existence of a metric $g$ with positive scalar curvature satisfying  $Q_{g}=cR_{g}$ in four-dimensional manifolds. 
 Regard to   $Q$-curvature flows in four-dimensional manifolds, there exist at least two types, one is introduced by Brendle \cite{Brendle-Annofmath2003} and another is a non-local flow proposed by Baird-Fardoun-Regbaoui \cite{BairdFR}. Under both flows,  we are unable to show that the positivity of the  scalar curvature is preserved.
  Inspired by the higher-dimensional non-local flow introduced by  Gursky-Malchiodi \cite{GM}, we introduce a non-local flow, which preserves the positivity of the scalar curvature.

On $(M^{4},g_{0})$ with $R_{g_0}\ge 0$, $g=e^{2u}g_{0}$, \[P_{g_0} \varphi=\Delta_{g_0}^2 \varphi-\div\left(\frac{2}{3} R_{g_0} I-2 {Ric_{g_0}}\right) d \varphi\]
and 
\begin{equation}\label{eq4.1_a}
Q_{g}=e^{-4u}(P_{g_{0}}u+Q_{g_{0}}). \end{equation}
The scalar curvature of $R_g$ is given by
\[
\frac{1}{6}R_{g}=e^{-2u}(L_{g_0}u+\frac{1}{6}R_{g_{0}}),
\]
where
\[
L_{g_0} \varphi = -\Delta \varphi -|\n \varphi|^2\] and $\Delta$ and $\nabla$ are the Laplacian and the covariant derivative w.r.t. to $g_0$.

We introduce the following functional
\[
F[u]=\langle P_{g_0} u, u\rangle+2\int Q_{g_0}udv_{g_{0}}-\int Q_{g_{0}}dv_{g_{0}}\log\frac{\int R_{g_{u}}dv_{g_{u}}}{\int R_{g_{0}}dv_{g_{0}}},
\]
where 
\[\langle P_{g_0} u, u\rangle\colon=\int(\triangle u)^2 d v_{g_{0}}+\frac{2}{3} \int R_{g_0}|\nabla u|^2 d v_{g_{0}}-2 \int R_{i j} u_i u_j d v_{g_{0}}\]
and 
\begin{align*}
\int R_{g_{u}}dv_{g_{u}} & =6\int(-\Delta u-|\nabla u|^{2}+\frac{1}{6}R_{g_{0}})e^{2u}dv_{g_{0}}\\
 & =6\int|\nabla u|^{2}e^{2u}dv_{g_{0}}+\int R_{g_{0}}e^{2u}dv_{g_{0}}.
\end{align*}
The critical point of $F[u]$ satisfies
\begin{equation}
P_{g_{0}}u+Q_{g_{0}}-\lambda R_{g_{u}}e^{4u}=0,\label{eq:critical point}
\end{equation}
with
\[ R_{g_u} = e^{-2u} (6L_{g_0} u +R_{g_0}), \qquad\lambda=\frac{\int Q_{g_{0}}dv_{g_{0}}}{\int R_{g_{u}}dv_{g_{u}}}.\]
It is clear that \eqref{eq:critical point} is our equation in the dimension 4 case.
Now we  rewrite the functional $F$ 
\begin{align*}
F[u]
= & \left\langle P_{g_{0}}u,u\right\rangle+2\int Q_{g_{0}}(u-\bar{u})dv_{g_{0}}+\int Q_{g_{0}}dv_{g_{0}}\log\int R_{g_{0}}dv_{g_{0}}\\
 & -\int Q_{g_{0}}dv_{g_{0}}\log(6\int|\nabla u|^{2}e^{2(u-\bar{u})}dv_{g_{0}}+\int R_{g_{0}}e^{2(u-\bar{u})}dv_{g_{0}}).
\end{align*}
where $\bar u$ is the average of $u$  given by
\[
\bar u:=\frac{\int u dv_{g_0}}{Vol(g_0)}.
\]

Gursky \cite{Gursky1999} proved that if $\int Q_{g_0}dv_{g_0}\ge 0$  and $Y(M,[g_0])\ge 0$, then $P_{g_0}$ is nonnegative operator and 
$\operatorname{Ker} P_{g_0}=\{\text{Constants}\}.$ Moreover if $Y(M,[g_0])\ge 0$, then $\int Q_{g_0}dv_{g_0}\le  16\pi^2$ and equality holds if and only if $(M, g_0)$ is conformally equivalent to the round sphere. 
{In view of $\int Q_{g_0}dv_{g_0}=\int \sigma_2(g_0)dv_{g_0}$ in any four-dimensional manifold,
Corollary 4.5 in \cite{CGYAnn} implies  that if  $\int Q_{g_0}dv_{g_0}> 0$  and $Y(M,[g_0])> 0$, then  there exists a metric $g\in [g_0]$ with positive scalar curvature and positive $Q$-curvature. To see this,  one can take $\delta=\frac 2 3$ in \cite[Corollary 4.5]{CGYAnn}, which corresponds to the $Q$ curvature.} 
We remark that $\int Q_{g} dv_g$ is a conformal invariant in $4$-dimensions.

Thanks  the result of Gursky, to  prove Theorem \ref{mainthm} in the case $n=4$, we only need to prove the following theorem.

\begin{thm}\label{4-dim}
    Let $(M^4, g_0)$ be a closed manifold such that  $R_{g_{0}}$ is positive and $Q_{g_0}$ is non-negative. Suppose $0< \int Q_{g_0}dv_{g_0}<16\pi^2$.
Then there exists a conformal metric  
$g\in [g_0]$
 with
a constant quotient $Q_{g}/R_{g}$ and $R_{g}$ is positive.
\end{thm}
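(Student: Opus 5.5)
We plan to find a minimizer of the functional $F$ introduced above over a suitable admissible class. Critical points of $F$ solve \eqref{eq:critical point}, which is exactly $Q_g = \lambda R_g$ for $g = e^{2u}g_0$. Because a direct minimization in the class $\{R_{g_u}>0\}$ lacks compactness and there is no obvious way to keep positivity, we instead run a non-local flow modeled on the Gursky--Malchiodi flow \eqref{non-local-flow}:
\[
u_t = -u + P_{g_0}^{-1}\big(\lambda(t) R_{g_u} e^{4u} - Q_{g_0}\big) + c(t),
\]
with the following ingredients. The inverse $P_{g_0}^{-1}$ is taken on mean-zero functions; this is legitimate because, by Gursky's result, $P_{g_0}\ge 0$ with kernel the constants. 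The multiplier $\lambda(t)=\int Q_{g_0}/\int R_{g_u}dv_{g_u}$ is chosen so that the right-hand side has zero mean. The constant $c(t)$ is a normalization, for instance fixing $\bar u$.

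\textbf{Step 1 (preserving $R>0$).} Write $w=P_{g_0}^{-1}(\lambda R_{g_u}e^{4u}-Q_{g_0})$, so that $P_{g_0}w+Q_{g_0}=\lambda R_{g_u}e^{4u}\ge 0$, i.e. the metric $e^{2w}g_0$ has nonnegative $Q$-curvature. By Theorem \ref{thm:positive scalar curvature} (Vetois in dimension $4$), $e^{2w}g_0$ then has positive scalar curvature. The solution $u(t)$ is a convex-type combination $e^{-t}u_0+\int_0^t e^{s-t}w(s)\,ds$ plus constants. Since $6L_{g_0}\varphi+R_{g_0}=-6\Delta\varphi-6|\nabla\varphi|^2+R_{g_0}$ is concave in $\varphi$, positivity of $6L_{g_0}\varphi+R_{g_0}$ is preserved under convex combinations, and hence $R_{g_{u(t)}}>0$ for all $t$. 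This is the point where the structure of the Gursky--Malchiodi flow is used.

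\textbf{Step 2 (monotonicity and coercivity).} We compute $\frac{d}{dt}F[u(t)]$ and show it equals minus a nonnegative quantity, for instance $-\langle P_{g_0}(u_t),u_t\rangle$-type terms, by testing with $u_t$ and using $\langle P_{g_0}w,\cdot\rangle$. For a lower bound on $F$, we use Adams/Moser--Trudinger-type inequalities. Beckner's inequality (Chang--Yang \cite{ChangYang1995}, Gursky) gives
\[
\log\int e^{4(u-\bar u)}\le \frac{1}{8\pi^2}\langle Pu,u\rangle+C .
\]
We also need to bound the term $\log\big(6\int|\nabla u|^2e^{2(u-\bar u)}+\int R_{g_0}e^{2(u-\bar u)}\big)$. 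By Hölder, $\int R_{g_0}e^{2(u-\bar u)}\lesssim(\int e^{4(u-\bar u)})^{1/2}$. For the gradient term, Hölder with Sobolev embedding $W^{2,2}\subset W^{1,4}$ should give control by $\frac{1}{2}\log\int e^{4(u-\bar u)}$ plus $C\log\langle Pu,u\rangle$. Combined with the coefficient $\int Q_{g_0}<16\pi^2$, this should yield
\[
F[u]\ge \Big(1-\tfrac{\int Q_{g_0}}{16\pi^2}\Big)\langle Pu,u\rangle - C\log(1+\langle Pu,u\rangle)-C .
\]
This makes $F$ coercive on $W^{2,2}$ modulo constants, as in Chang--Yang's solution of the constant $Q$ problem.

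\textbf{Step 3 (convergence).} From monotonicity and coercivity we obtain uniform $W^{2,2}$ bounds along the flow and $\int_0^\infty\|u_t\|^2<\infty$. Choosing a sequence $t_k$ with $u_t(t_k)\to0$ gives a weak $W^{2,2}$ limit $u_\infty$. We then need to show that $u_\infty$ is a weak solution of \eqref{eq:critical point} with $R>0$ and is smooth, using the $W^{2,2}$ regularity from the Appendix.

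\textbf{Main obstacle.} The hardest step is passing to the limit in the nonlinear term $R_{g_u}e^{4u}=e^{2u}(6L_{g_0}u+R_{g_0})$, which contains $|\nabla u|^2e^{2u}$, together with keeping $\lambda(t)$ bounded away from $0$ and $\infty$, which amounts to bounding $\int R_{g_u}dv_{g_u}$ from below. For the lower bound, the positivity and concavity from Step 1 give uniform control along the flow. For compactness, we would first try to upgrade the $W^{2,2}$ bound to $W^{2,p}$ bounds for some $p>2$. The idea is to use $e^{ku}\in L^1$ for all $k$, which follows from the strict inequality $\int Q<16\pi^2$ via Adams, and then apply elliptic estimates to $w$, bootstrapping from $P_{g_0}w\in L^{1+\epsilon}$ to get strong convergence.
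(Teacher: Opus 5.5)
Your plan is essentially the paper's proof. It uses the same non-local flow on mean-zero functions with multiplier $\int Q_{g_0}/\int R_{g_u}dv_{g_u}$, positivity of $R_{g_u}$ via Vetois's theorem applied to $w=P_{g_0}^{-1}(\cdots)$, the identity $\frac{d}{dt}F=-2\langle P_{g_0}u_t,u_t\rangle$, and Adams-type coercivity at the threshold $16\pi^2$. Your Duhamel/concavity version of Step 1 is the integrated form of the paper's inequality $\frac{d}{dt}\bigl(-\Delta u-|\nabla u|^2+\frac16R_{g_0}\bigr)\ge-\bigl(-\Delta u-|\nabla u|^2+\frac16R_{g_0}\bigr)$. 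Because your right-hand side contains $R_{g_u}$ rather than $|R_{g_u}|$ (compare \eqref{eq:4-dim flow}), you must run it as a continuity argument in $t$. Picking times $t_k$ with $\|u_t(t_k)\|_{W^{2,2}}\to0$ from $\int_0^\infty\langle P_{g_0}u_t,u_t\rangle\,dt<\infty$ is enough, so you can skip the paper's decay estimate for $\int u_tP_{g_0}u_t$.

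Several steps are misjustified or missing.
\begin{itemize}
\item Step 1 only gives $6L_{g_0}u+R_{g_0}\ge e^{-t}(6L_{g_0}u_0+R_{g_0})$, which degenerates as $t\to\infty$. So it does not give a uniform lower bound on $\int R_{g_u}dv_{g_u}$. The paper gets this bound from $F[u(t)]\le F[u_0]$ together with $\langle P_{g_0}u,u\rangle\le C$ and $\int Q_{g_0}>0$. More simply, with $\bar u=0$ you have $\int R_{g_u}dv_{g_u}=6\int|\nabla u|^2e^{2u}+\int R_{g_0}e^{2u}\ge\min R_{g_0}\,\mathrm{Vol}(g_0)$ by Jensen.
\item You never prove long-time existence, which is the paper's Step 2. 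Your Duhamel representation yields it from the uniform $W^{2,2}$ bound. Indeed $\lambda R_{g_u}e^{4u}=\lambda e^{2u}(-6\Delta u-6|\nabla u|^2+R_{g_0})\in L^p$ for every $p<2$, so $w\in W^{4,p}\subset W^{2,r}$ for every $r<\infty$. Since $u(t)$ is a convex combination of $u_0$ and the $w(s)$, the same bounds pass to $u$, and you can bootstrap to uniform $C^{3,\alpha}$ bounds.
\item What you call the main obstacle is not one. Weak $W^{2,2}$ convergence in dimension four gives $\nabla u_k\to\nabla u_\infty$ in $L^p$ for $p<4$. The uniform bounds on $\int e^{\alpha u}$ give $e^{2u_k}\to e^{2u_\infty}$ in every $L^p$. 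This suffices to pass to the limit in $e^{2u}(-6\Delta u-6|\nabla u|^2+R_{g_0})$ and to get $R_{g_{u_\infty}}\ge0$, hence $R_{g_{u_\infty}}>0$ by Vetois.
\item The Appendix regularity theorem concerns the power nonlinearity for $n\ge5$ and does not apply here. Smoothness of $u_\infty$ comes instead from the bootstrap you describe, using $e^{2u_\infty}\in\bigcap_{p<\infty}L^p$.
\end{itemize}
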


Now we modify the flow   introduced by Gursky-Malchiodi for higher dimensional manifolds $n\ge 5$. 
Since $\operatorname{Ker} P_{g_0}=\{\text{Constants}\}$, we can define the inverse operator of $P_{g_{0}}$ in ${X}:=\{u\in C^{\infty}(M,g_0)|\int udv_{g_{0}}=0\}$
as 
\[
P_{g_{0}}^{-1}:{X}\rightarrow{X}.
\]

We introduce the following flow with the smooth initial date $u(x,0)=u_0 \in X$ 
\begin{equation}
u_{t}=-u+P_{g_{0}}^{-1}(r(t)|R_{g_{u}}|e^{4u}-Q_{g_0}), 
\label{eq:4-dim flow}
\end{equation}
where $r(t)=\frac{\int Q_{g_{0}}dv_{g_{0}}}{\int|R_{g_{u}}|e^{4u}dv_{g_{0}}}$ 
and $g_{u_{0}}=e^{2u_{0}}g_{0}$  has positive scalar curvature. It is easy to see that $\int udv_{g_0}=0 $ is preserved. 
Let $g_{u}=e^{2u(x,t)}g_{0}$ and recall 
\[
\frac{1}{6}R_{g_{u}}=e^{-2u}(-\Delta u-|\nabla u|^{2}+\frac{1}{6}R_{g_{0}}).
\]

\begin{lem} \label{Lemma6.2}Flow \eqref{eq:4-dim flow} preserves the positivity of the scalar curvature
along the flow, i.e.,
\[
R_{g_{u}}>0.
\]
\end{lem}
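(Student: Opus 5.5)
We want to show that if $R_{g_{u_0}}>0$ then $R_{g_{u(t)}}>0$ for all $t$ in the existence interval. The idea is to mimic the Gursky--Malchiodi argument: write the flow as an ODE in time whose solution is a convex combination of the initial datum and metrics with nonnegative $Q$-curvature, and then use positivity of scalar curvature for those metrics.

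Set
\[
w(t):=P_{g_0}^{-1}\big(r(t)|R_{g_u}|e^{4u}-Q_{g_0}\big)\in X .
\]
Then $P_{g_0}w+Q_{g_0}=r|R_{g_u}|e^{4u}\ge 0$. The right-hand side integrates to $\int Q_{g_0}$, so $w$ is well defined in $X$. By \eqref{eq4.1_a}, the metric $e^{2w}g_0$ therefore has $Q$-curvature $e^{-4w}r|R_{g_u}|e^{4u}\ge0$. This is nonnegative, and it is positive somewhere as long as $R_{g_u}\not\equiv0$, which holds near a time where $R_{g_u}>0$.

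By the $4$-dimensional version of the positivity result, Theorem \ref{thm:positive scalar curvature} (Vetois), applied with background $g_0$ (which has $R_{g_0}>0$ and $Q_{g_0}\ge0$), the metric $e^{2w}g_0$ has positive scalar curvature. That is,
\[
6L_{g_0}w+R_{g_0}=-6\Delta w-6|\nabla w|^2+R_{g_0}>0 .
\]
If the theorem is only stated for smooth metrics, I will use the regularity of $w$ coming from the smoothness of $u$ along the flow.

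Integrating the linear ODE $u_t=-u+w$ gives
\[
u(t)=e^{-t}u_0+\int_0^t e^{s-t}w(s)\,ds ,
\]
which is a convex combination, with weights $e^{-t}$ and $e^{s-t}ds$ summing to $1$, of $u_0$ and the $w(s)$.

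The key observation is that the map $\varphi\mapsto 6L_{g_0}\varphi+R_{g_0}=-6\Delta\varphi-6|\nabla\varphi|^2+R_{g_0}$ is concave in $\varphi$, since $-|\nabla\varphi|^2$ is concave. Hence, for a convex combination $\varphi=\sum\lambda_i\varphi_i$ (or its integral version),
\[
6L_{g_0}\varphi+R_{g_0}\ge\sum\lambda_i\big(6L_{g_0}\varphi_i+R_{g_0}\big).
\]
Applied to $u(t)$, this yields
\[
e^{2u}R_{g_u}\ge e^{-t}e^{2u_0}R_{g_{u_0}}+\int_0^te^{s-t}\big(6L_{g_0}w(s)+R_{g_0}\big)ds>0 .
\]
For the integral form, I justify the inequality pointwise via Jensen, using $|\nabla\int\cdot|^2\le\int|\nabla\cdot|^2$ for probability weights.

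To avoid circularity I run a continuity argument. Let $T^*$ be the supremum of times up to which $R_{g_u}>0$. On $[0,T^*)$ every $w(s)$ has semi-positive $Q$-curvature, so the estimate above shows $R_{g_u}>0$ on $[0,T^*]$ with a positive lower bound at $T^*$. By continuity of the flow, positivity then persists beyond $T^*$, a contradiction unless $T^*$ is the maximal existence time.

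The main obstacle is the application of the positivity theorem to $e^{2w}g_0$. It requires $Q\ge0$ and somewhere positive, together with enough smoothness of $w$. Both hold because $R_{g_u}>0$ on $[0,T^*)$ and the flow is smooth.
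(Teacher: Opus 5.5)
Your proof is correct and is essentially the paper's argument. The paper uses the same auxiliary function $v=P_{g_0}^{-1}(r|R_{g_u}|e^{4u}-Q_{g_0})$, applies Theorem \ref{thm:positive scalar curvature} to $e^{2v}g_0$, and then uses the concavity of $\varphi\mapsto -\Delta\varphi-|\nabla\varphi|^2+\frac16R_{g_0}$ in differential form: completing the square $|\nabla u-\nabla v|^2$ gives $\frac{d}{dt}(R_{g_u}e^{2u})\ge -R_{g_u}e^{2u}$, which is the infinitesimal version of your Duhamel--Jensen estimate. Your continuity argument is harmless but not needed, because the absolute value in \eqref{eq:4-dim flow} gives $P_{g_0}v+Q_{g_0}\ge 0$ at every time, and $\int Q_{g_v}\,dv_{g_v}=\int Q_{g_0}\,dv_{g_0}>0$ then yields semi-positivity without knowing the sign of $R_{g_u}$.
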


\begin{proof}
Denoting $f=r(t)|R_{g_{u}}|e^{4u}-Q_{g_0}$ and $v=P_{g_{0}}^{-1}f$,
we have 
\[
P_{g_{0}}v+Q_{g_0} =f+Q_{g_0}=r(t)|R_{g_{u}}|e^{4u} \ge 0.
\]
In view of \eqref{eq4.1_a}, we have $Q_{g_v} \ge 0$. Hence $Q_{g_v}$ is semi-positive, since $\int Q_{g_v}>0$.  Thus Theorem \ref{thm:positive scalar curvature} implies that  the scalar
curvature of $g_{v}$ is positive, which in turn implies 
\begin{equation}
-\Delta v-|\nabla v|^{2}+\frac{1}{6}R_{g_{0}}>0.\label{eq:positive scalar along flow}
\end{equation}
Together with the Cauchy inequality and the flow equation $u_t=-u+v$, it follows
\begin{align*}
 \frac 16 \frac d {dt}(R_{g_u}e^{2u})= \frac{d}{dt}(-\Delta u-|\nabla u|^{2}+\frac{1}{6}R_{g_{0}})
= & -\Delta(u_{t})-2\langle\nabla u,\nabla u_{t}\rangle\\
= & -\Delta(-u+v)-2\langle\nabla u,\nabla(-u+v)\rangle\\
= & -(-\Delta u-|\nabla u|^{2}+\frac{1}{6}R_{g_{0}})+|\nabla u|^{2}-2\langle\nabla u,\nabla v\rangle\ +|\n v|^2\\
 & +(-\Delta v-|\nabla v|^{2}+\frac{1}{6}R_{g_{0}})\\
\ge & -(-\Delta u-|\nabla u|^{2}+\frac{1}{6}R_{g_{0}})=-\frac{1}{6}R_{g_{u}}e^{2u}.
\end{align*}
Therefore we have 
\[
e^{t}R_{g_{u}}e^{2u}\ge R_{g_{u_{0}}}e^{2u_{0}}>0.
\]
\end{proof}
We remark that in the above proof, the non-negativity of $Q$-curvature of $e^{2u_0}g_0$ is not necessary.  The following fact will be used later
\begin{equation}\label{positivity}
P_{g_0}u+Q_{g_0}\ge e^{-t}(P_{g_0}u_0+Q_{g_0}).
\end{equation}
In fact, it follows from  
\[\frac{d }{dt}(P_{g_0}u+Q_{g_0})=P_{g_0}(\frac{\partial u}{\partial t})=-P_{g_0}u+r(t)|R_{g_{u}}|e^{4u}-Q_{g_0}> -P_{g_0}u-Q_{g_0}.\]

Since the scalar curvature is positive along the flow,  flow (\ref{eq:4-dim flow}) becomes 
\[
u_{t}=-u+P_{g_{0}}^{-1}(r(t)R_{g_{u}}e^{4u}-Q_{g_0}),
\]
where $r(t)=\frac{\int Q_{g_{0}}dv_{g_{0}}}{\int R_{g_{u}}dv_{g_{u}}}$. 

\begin{lem}
Along the flow, the functional is non-increasing and 
\begin{equation}\label{upper bound of Functional}
    F[u(x,t)]\le F[u_{0}].
\end{equation}

\end{lem}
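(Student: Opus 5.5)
The plan is to differentiate $F[u(\cdot,t)]$ in time and show that the derivative equals $-2\langle P_{g_0}u_t,u_t\rangle$. This is nonpositive by Gursky's result, quoted above, that $P_{g_0}\ge 0$ when $\int Q_{g_0}\ge 0$ and $Y(M,[g_0])\ge0$. Integrating from $0$ to $t$ then gives \eqref{upper bound of Functional}.

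First I check that $F$ is well defined along the flow. By Lemma \ref{Lemma6.2}, $R_{g_u}>0$, so $\int R_{g_u}dv_{g_u}>0$ and the logarithm makes sense. The flow is then $u_t=-u+P_{g_0}^{-1}f$ with $f=r(t)R_{g_u}e^{4u}-Q_{g_0}$. The choice of $r(t)$ gives $\int f\,dv_{g_0}=0$, so $f\in X$ and $P_{g_0}^{-1}f$ is legitimate. Since $u_0\in X$ and $\int P^{-1}_{g_0}f=0$, we also have $u,u_t\in X$.

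Next I compute the derivative of each term. By self-adjointness of $P_{g_0}$, $\frac{d}{dt}\langle P_{g_0}u,u\rangle=2\int u_tP_{g_0}u$. The second term gives $2\int Q_{g_0}u_t$. For the total scalar curvature I use the expression $\int R_{g_u}dv_{g_u}=6\int|\nabla u|^2e^{2u}+\int R_{g_0}e^{2u}$. Differentiating and integrating the term $12\int\langle\nabla u,\nabla u_t\rangle e^{2u}$ by parts gives
\begin{equation*}
\frac{d}{dt}\int R_{g_u}dv_{g_u}=2\int u_t e^{2u}\big(-6\Delta u-6|\nabla u|^2+R_{g_0}\big)dv_{g_0}=2\int u_tR_{g_u}e^{4u}dv_{g_0}.
\end{equation*}
This step is the only genuine computation; it is special to $n=4$, where the factor $\frac{n-2}{2}$ equals $1$. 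Consequently the logarithmic term contributes $-2r(t)\int u_tR_{g_u}e^{4u}$, and altogether
\begin{equation*}
\frac{d}{dt}F[u]=2\int u_t\big(P_{g_0}u+Q_{g_0}-r(t)R_{g_u}e^{4u}\big)dv_{g_0}=2\int u_t\,(P_{g_0}u-f)\,dv_{g_0}.
\end{equation*}

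Finally, $P_{g_0}u-f=P_{g_0}(u-P_{g_0}^{-1}f)=-P_{g_0}u_t$, so $\frac{d}{dt}F=-2\langle P_{g_0}u_t,u_t\rangle\le0$ by Gursky's nonnegativity of $P_{g_0}$. Integrating in $t$ yields $F[u(x,t)]\le F[u_0]$.

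I expect no real obstacle beyond the variation formula for $\int R\,dv$. The only points needing care are using positivity of $R_{g_u}$ to drop the absolute value in the flow, and checking the mean-zero condition, so that $P^{-1}_{g_0}$ is applied within $X$.
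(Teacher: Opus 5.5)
Your proposal is correct and follows the paper's proof. Like the paper, you differentiate $F$, use $\frac{d}{dt}\int R_{g_u}dv_{g_u}=2\int R_{g_u}u_t\,dv_{g_u}$, and collect terms into $2\int u_t\bigl(P_{g_0}u+Q_{g_0}-r(t)R_{g_u}e^{4u}\bigr)=-2\int u_tP_{g_0}u_t\le 0$. You also spell out three points the paper leaves implicit: the variation formula, the mean-zero condition, and Gursky's nonnegativity of $P_{g_0}$.
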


\begin{proof}
We have
\begin{align}
\frac{d}{dt}F[u] & =2\int u_{t}P_{g_{0}}udv_{g_{0}}+2\int Q_{g_{0}}u_{t}dv_{g_{0}}-\int Q_{g_{0}}dv_{g_{0}}\frac{2\int R_{g_{u}}e^{4u}u_{t}dv_{g_{0}}}{\int R_{g_{u}}dv_{g_{u}}}\nonumber\\
 & =2\int u_{t}(P_{g_{0}}u+Q_{g_{0}}-\frac{\int Q_{g_{0}}dv_{g_{0}}}{\int R_{g_{u}}dv_{g_{u}}}R_{g_{u}}e^{4u})\nonumber\\
 & =-2\int u_{t}P_{g_{0}}u_{t}\le0,\label{dF}
\end{align}
where  we have used $\frac{d}{dt}\int R_{g_u}dv_{g_u}
=2\int R_{g_u}u_tdv_{g_u}.$
\end{proof}

Now we carry out some basic computations for later use. 
From the definition of $P_{g_0}$, we have
\begin{align}
\langle P_{g_0} w, w\rangle &\ge\int(\triangle w)^2 d v_{g_{0}}-C_{g_0} \int |\nabla w|^2 d v_{g_{0}}\nonumber\\
&\ge (1-C_{g_0}\varepsilon)\int(\triangle w)^2 d v_{g_{0}}-C_{\varepsilon}\int w^2\nonumber\\
&\ge (1-C_{g_0}\varepsilon)C_0\left(\int|\nabla  w|^4 d v_{g_{0}}\right)^{\frac1 2}-C_{\varepsilon}\int w^2,\label{lower bound of <Puu>1}
\end{align}
where we have used $\int |\nabla w|^2 d v_{g_{0}}=-\int w\Delta w\le \varepsilon \int (\Delta w)^2+C_{\varepsilon}\int w^2$ and $C_0$ is a positive constant.
 Since the first non-trivial eigenvalue of $P_{g_0}$ is positive,  we have 
\begin{equation}\label{lower bound of <Puu>2}
    \langle P_{g_0} w, w\rangle \ge \lambda_1(P_{g_0})\int (w-\bar w)^2=\lambda_1(P_{g_0})\int w^2 , \qquad \forall w \hbox{ with } \bar w=0.
\end{equation}
Therefore, there exists a positive constant $C_1$ and a fixed small constant $\varepsilon$ such that for $\bar w=0$, 
\begin{equation}\label{lower bound of <Puu>3}
 C_1\langle P_{g_0} w, w\rangle{\ge
  (1-C_{g_0}\varepsilon)\int|\triangle  w|^2 d v_{g_{0}}
  }
 \ge (1-C_{g_0}\varepsilon)C_0\left(\int|\nabla  w|^4 d v_{g_{0}}\right)^{\frac1 2}. 
\end{equation}

\begin{lem}\label{decay inequality for convergence}
Along the flow, 
   \begin{equation*}
       \frac{d}{dt}\int u_tP_{g_0}u_t dv_{g_0}\le  Cr(t) \left((\|u\|_{W^{2,2}}+1)(\int e^{8u}dv_{g_0})^{\frac 1 4} +(\int e^{4u}dv_{g_0})^{\frac 1 2}\right)\int u_tP_{g_0}u_t dv_{g_0},
   \end{equation*}
   where $C$ depends only on $g_0$.
\end{lem}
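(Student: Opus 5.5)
Differentiate $\int u_tP_{g_0}u_t$ in time and use the flow equation to express $u_{tt}$. Set $w=u_t\in X$, which has zero mean since $\int u\,dv_{g_0}=0$ is preserved. Then $P_{g_0}u_t=-P_{g_0}u+r(t)R_{g_u}e^{4u}-Q_{g_0}$, so
\[
P_{g_0}u_{tt}=-P_{g_0}u_t+\frac{d}{dt}\big(r(t)R_{g_u}e^{4u}\big).
\]
By self-adjointness,
\[
\frac{d}{dt}\int u_tP_{g_0}u_t=2\int u_tP_{g_0}u_{tt}=-2\int u_tP_{g_0}u_t+2\int u_t\,\frac{d}{dt}\big(r(t)R_{g_u}e^{4u}\big).
\]
The first term is nonpositive by Gursky's result, so it can be dropped. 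Everything then reduces to bounding the second term by the right-hand side times $\langle P_{g_0}u_t,u_t\rangle$.

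Next I write $R_{g_u}e^{4u}=(6L_{g_0}u+R_{g_0})e^{2u}$, with $L_{g_0}u=-\Delta u-|\nabla u|^2$, and differentiate:
\[
\frac{d}{dt}\big(R_{g_u}e^{4u}\big)=6\big(-\Delta u_t-2\langle\nabla u,\nabla u_t\rangle\big)e^{2u}+2u_t R_{g_u}e^{4u}.
\]
Also $r'(t)=-r(t)^2\frac{d}{dt}\int R_{g_u}dv_{g_u}/\int Q_{g_0}$, where $\frac{d}{dt}\int R_{g_u}dv_{g_u}=2\int R_{g_u}u_t\,dv_{g_u}$. The resulting terms are handled one at a time.

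\begin{enumerate}
\item[(a)] For $\int u_t(-\Delta u_t)e^{2u}$, integrate by parts to get $\int|\nabla u_t|^2e^{2u}+2\int u_te^{2u}\langle\nabla u,\nabla u_t\rangle$. This combines with the cross term from $-2\langle\nabla u,\nabla u_t\rangle$, leaving $\int |\nabla u_t|^2e^{2u}$ plus one mixed term of the form $\int u_t e^{2u}\langle \nabla u,\nabla u_t\rangle$.
\item[(b)] For $\int |\nabla u_t|^2e^{2u}$, use Hölder: it is at most $\|\nabla u_t\|_{L^4}^2(\int e^{4u})^{1/2}$. By \eqref{lower bound of <Puu>3} this is bounded by $C\langle P_{g_0}u_t,u_t\rangle(\int e^{4u})^{1/2}$.
\item[(c)] For the mixed term, use Hölder with exponents $(4,4,4,4)$: it is at most $\|u_t\|_{L^4}\|\nabla u\|_{L^4}\|\nabla u_t\|_{L^4}(\int e^{8u})^{1/4}$. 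Sobolev embedding gives $\|\nabla u\|_{L^4}\le C\|u\|_{W^{2,2}}$. The zero-mean Sobolev/Poincaré inequality together with \eqref{lower bound of <Puu>2} and \eqref{lower bound of <Puu>3} gives $\|u_t\|_{L^4}\|\nabla u_t\|_{L^4}\le C\langle P_{g_0}u_t,u_t\rangle$.
\item[(d)] The term $\int u_t^2R_{g_u}e^{4u}$ becomes $\int u_t^2(6|\nabla u|^2+R_{g_0})e^{2u}$ after integrating $-\Delta u$ by parts. Expanding, one piece is bounded by $\|u_t\|_{L^8}^2\|\nabla u\|_{L^4}^2(\int e^{8u})^{1/4}$. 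I am not fully sure this fits the stated right-hand side, since it carries $\|u\|^2_{W^{2,2}}$ where the statement has $\|u\|_{W^{2,2}}+1$. Instead I would keep $-\Delta u$, bound $\int u_t^2|\Delta u|e^{2u}\le\|u_t\|_{L^8}^2\|\Delta u\|_{L^2}(\int e^{8u})^{1/4}$, and treat the $|\nabla u|^2$ part via $-\int \Delta u\, u_t^2e^{2u}$, so only one power of $\|u\|_{W^{2,2}}$ appears. The piece $\int R_{g_0}u_t^2e^{2u}$ goes into the $(\int e^{4u})^{1/2}$ term.
\item[(e)] The $r'(t)$ term has the form $r^2\,\frac{(\int R_{g_u}u_t\,dv_{g_u})^2}{\int Q_{g_0}}$ with a favourable sign. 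If the sign does not help, Cauchy--Schwarz gives $(\int R_{g_u}u_tdv_{g_u})^2\le\int R_{g_u}dv_{g_u}\int R_{g_u}u_t^2dv_{g_u}$, and $r\int R_{g_u}dv_{g_u}=\int Q_{g_0}$, which reduces it to term (d).
\end{enumerate}

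Throughout I use $\int u_t^2\le C\langle P_{g_0}u_t,u_t\rangle$ and the bound $\|u_t\|_{L^p}\le C\langle P_{g_0}u_t,u_t\rangle^{1/2}$ for every $p<\infty$, which holds in dimension 4 since $u_t$ has zero mean.

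The main obstacle is the bookkeeping in (c) and (d): each term must be linear in $\|u\|_{W^{2,2}}$ and involve only $\int e^{8u}$ or $\int e^{4u}$ to the stated powers. I would try to integrate by parts so that only $\Delta u$ appears, never $|\nabla u|^2$ multiplied against a $u_t^2$ weight.
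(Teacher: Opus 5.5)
Your proposal is correct and is essentially the paper's argument: differentiate using the flow, drop $-2\int u_tP_{g_0}u_t$ and the $r'(t)$-term by sign, and bound $4r\int R_{g_u}u_t^2\,dv_{g_u}$ and $12r\int|\nabla u_t|^2e^{2u}\,dv_{g_0}$ by H\"older against $(\int e^{8u})^{1/4}$ and $(\int e^{4u})^{1/2}$, using the zero-mean $W^{2,2}$ control of $u_t$ by $\int u_tP_{g_0}u_t$. Two simplifications apply: the mixed terms in (a) cancel exactly, since this is just $\int u_t(-\Delta_{g_u}u_t)\,dv_{g_u}=\int|\nabla u_t|^2e^{2u}\,dv_{g_0}$, so (c) is unnecessary; and in (d) the piece $-24r\int u_t^2|\nabla u|^2e^{2u}\,dv_{g_0}$ is nonpositive and can simply be discarded (equivalently $0<R_{g_u}e^{2u}\le R_{g_0}-6\Delta u$ along the flow), which is exactly what justifies the linear factor $\|u\|_{W^{2,2}}+1$ that the paper uses without comment.
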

\begin{proof}
Recalling $r(t)=\frac{\int Q_{g_{0}}dv_{g_{0}}}{\int|R_{g_{u}}|e^{4u}dv_{g_{0}}}
$, we have 
\begin{align}\label{derivatives of r}
    r'(t)
    =-2r(t)\frac{\int R_{g_u}u_tdv_{g_u}}{\int R_{g_u}dv_{g_u}}.  \qquad 
\end{align}
Now 
\begin{align}
\frac{d}{dt}\int u_tP_{g_0}u_t dv_{g_0}
=&2\int \left(-u_t+P_{g_{0}}^{-1}\left(\frac{d}{dt}(r(t)R_{g_{u}}e^{4u})\right)\right)P_{g_0}u_t\nonumber\\
=&-2\int  u_tP_{g_0}u_t+2\int \frac{d}{dt}(r(t)R_{g_{u}}e^{4u})u_t.
\end{align}
Since \[\frac{d}{dt}(R_{g_u}e^{4u})=(2R_{g_u}u_t-6\Delta_{g_u} u_t)e^{4u},\] we get
 \begin{align}
 \frac{d}{dt}(r(t)R_{g_{u}}e^{4u})=&r'(t)R_{g_{u}}e^{4u}+r(t)\frac{d}{dt}(R_{g_{u}}e^{4u})\nonumber\\
 =&-2r(t)\frac{\int R_{g_u}u_tdv_{g_u}}{\int R_{g_u}dv_{g_u}}R_{g_{u}}e^{4u}+r(t)(2R_{g_u}u_t-6\Delta_{g_u} u_t)e^{4u}
\end{align}
and 
{
\begin{align}
&\frac{d}{dt}\int u_tP_{g_0}u_t\\
=&-2\int  u_tP_{g_0}u_t-\frac{4r(t)}{\int R_{g_u}dv_{g_u}}(\int u_t R_{g_u}dv_{g_u})^2+4r(t)\int R_{g_u}u_t^2dv_{g_u}+12r(t)\int |\nabla u_t|_{g_0}^2 e^{2u}dv_{g_0}\nonumber\\
\le &4r(t)\int R_{g_u}u_t^2dv_{g_u}+12r(t)\int |\nabla_{g_0} u_t|_{g_0}^2 e^{2u}dv_{g_0}\nonumber\\
\le & 24r(t)\int (-\Delta u-|\nabla u|^{2}+\frac{1}{6}R_{g_{0}})e^{2u}u_t^2 dv_{g_0}+12r(t)\int |\nabla_{g_0} u_t|_{g_0}^2 e^{2u}dv_{g_0}\nonumber\\
\le &24r(t)\left(\int (-\Delta u-|\nabla u|^{2}+\frac{1}{6}R_{g_{0}})^2 dv_{g_0}\right)^{\frac 12}\left(\int e^{8u}dv_{g_0}\right)^{\frac 14}\left(\int u_t^8 dv_{g_0}\right)^{\frac 14} \\& +12r(t) \left(\int|\nabla_{g_0} u_t|^4 dv_{g_0}\right)^{\frac 12}(\int e^{4u}dv_{g_0})^{\frac 1 2}\nonumber\\
\le& Cr(t) \left( (\|u\|_{W^{2,2}}+1)(\int e^{8u})^{\frac 1 4}+ (\int e^{4u}dv_{g_0})^{\frac 1 2}\right)\int u_tP_{g_0}u_t dv_{g_0}\nonumber
\end{align}
}
where the last inequality holds due to the fact $\int u_t dv_{g_0}=0$ and \eqref{lower bound of <Puu>3}. 

\end{proof}

We recall an inequality which is a variant of Adams' inequality, established by Chang-Yang \cite{ChangYang1995} and  Fontana \cite{Fontana1993}. 

\begin{lem}[Chang-Yang, Fontana]\label{Adams'type inequality}
On $(M, g_0)$, assume $P_{g_0}$ is a nonnegative operator with $\operatorname{Ker} P_{g_0}=\{\text{Constants}\}$. Then there exists a constant $C_2=C_2(M, g_0)$ so that
$$
\int_M e^{32 \pi^2 \frac{(w-\bar{w})^2}{\langle P_{g_0} w, w\rangle}} d v_{g_0} \leq C_2 \operatorname{Vol}(g_0).
$$
\end{lem}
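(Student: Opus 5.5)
The plan is to derive the inequality from Adams' sharp exponential integrability theorem for Riesz-type potentials, in the form extended to compact manifolds by Fontana. The key point is to represent $w-\bar w$ as an integral operator applied to a function whose $L^2$ norm squared is exactly $\langle P_{g_0}w,w\rangle$.

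First, since $P_{g_0}\ge 0$ and $\operatorname{Ker}P_{g_0}=\{\text{Constants}\}$, the restriction of $P_{g_0}$ to the $L^2$-orthogonal complement $X$ of the constants is a positive self-adjoint elliptic operator. Its square root $P_{g_0}^{1/2}$ and the inverse $T:=P_{g_0}^{-1/2}\colon X\to X$ are therefore well defined. For $w$ with $\langle P_{g_0}w,w\rangle>0$, set $f:=P_{g_0}^{1/2}(w-\bar w)$. Then
\[
w-\bar w=Tf=\int_M k(x,y)f(y)\,dv_{g_0}(y),\qquad \|f\|_{L^2}^2=\langle P_{g_0}w,w\rangle .
\]
Because constants are in the kernel and $f\in X$, the kernel $k$ may be modified by constants without changing $Tf$. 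After normalizing $f$ to have unit $L^2$ norm, the inequality is equivalent to the uniform bound $\int_M e^{32\pi^2|Tf|^2}\,dv_{g_0}\le C_2\operatorname{Vol}(g_0)$ over all $f\in X$ with $\|f\|_2\le 1$.

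Second, I would determine the singularity of $k$. $P_{g_0}^{-1/2}$ is a classical pseudodifferential operator of order $-2$ on the $4$-manifold, and its principal symbol is $|\xi|_{g_0}^{-2}$, coming from the principal part $\Delta^2$. Consequently $k$ agrees to leading order with the Euclidean Riesz potential kernel $I_2$ in $\mathbb{R}^4$:
\[
k(x,y)=\frac{1}{4\pi^2}\,d_{g_0}(x,y)^{-2}\bigl(1+O(d_{g_0}(x,y)^{\epsilon})\bigr)
\]
for some $\epsilon>0$. The lower-order terms of the symbol contribute only $O(|\log d|)$, which is an $O(d^{-2+\epsilon})$ correction.

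Third, I would apply O'Neil's lemma and Adams' argument, as in Fontana \cite{Fontana1993}. Let $k_x^*$ denote the nonincreasing rearrangement of $k(x,\cdot)$ with respect to $dv_{g_0}$. The asymptotics give
\[
k_x^*(t)\le \bigl(\tfrac{1}{4\pi^2}\bigr)\bigl(\tfrac{\omega_3}{4t}\bigr)^{1/2}\bigl(1+O(t^{\epsilon/4})\bigr)
\]
uniformly in $x$, where $\omega_3=2\pi^2$ is the area of $\mathbb{S}^3$. This has the form $(\beta t)^{-1/2}(1+O(t^{\delta}))$ with exactly $\beta=32\pi^2$. Adams' lemma on $\int_0^\infty e^{-F(t)}\,dt$ then yields the uniform bound on $\int_M e^{\beta|Tf|^2}$, because the $(1+O(t^\delta))$ correction produces only a bounded factor. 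The same estimate was established by Chang–Yang \cite{ChangYang1995}, and one may simply cite it.

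The main obstacle is the second step. One needs the sharp leading coefficient of the kernel of $P_{g_0}^{-1/2}$ together with an error that is genuinely integrable, of order $O(d^{-2+\epsilon})$ uniformly in $x$, since any loss in the constant destroys the sharp exponent $32\pi^2$. I would obtain this from the symbol expansion via normal coordinates, checking that the subprincipal terms contribute only lower-order singularities. Should that prove delicate, an alternative is to work with the Green function of $P_{g_0}$, which behaves like $\frac{1}{8\pi^2}\log\frac{1}{d}$, and factor it as $P^{-1}=T\circ T$.
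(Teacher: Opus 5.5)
Your proposal is correct and is essentially the standard Chang--Yang/Fontana argument, which the paper invokes only by citation without giving a proof of its own: you write $w-\bar w=P_{g_0}^{-1/2}f$ with $\|f\|_{L^2}^2=\langle P_{g_0}w,w\rangle$, identify the leading kernel term $\frac{1}{4\pi^2}d^{-2}$, and feed the resulting rearrangement bound $(32\pi^2t)^{-1/2}(1+O(t^{\delta}))$ into Fontana's version of Adams' lemma. Two harmless corrections: the order $-3$ term of the symbol contributes an $O(d^{-1})$ error rather than a logarithmic one (still $O(d^{-2+\epsilon})$ with $\epsilon=1$), and for a non-convolution kernel the O'Neil-type lemma needs rearrangement bounds in both variables, which here follow from the symmetry $k(x,y)=k(y,x)$ of the self-adjoint operator $P_{g_0}^{-1/2}$.
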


Now we are going to prove Theorem \ref{4-dim}.

\begin{proof}[Proof of Theorem \ref{4-dim}] We prove it in several steps.

\

\noindent{\bf Step 1.} In this step, we aim to get some uniform estimates independent of time. 
    
    We first get a lower bound of the functional $F[u]$. Note that along the flow, $\bar u=0$.
By Lemma \ref{Adams'type inequality}, we have 
\begin{align*}
\int R_{g_{0}}e^{2u}dv_{g_{0}} & \le\max R_{g_{0}}\int e^{2u}dv_{g_{0}}\\
 & \le\max R_{g_{0}}\int e^{\frac{32\pi^{2}u^{2}}{\langle P_{g_{0}}u,u\rangle}+\frac{\langle P_{g_{0}}u,u\rangle}{32\pi^{2}}}dv_{g_{0}} \le CVol(g_{0})\max R_{g_{0}}\cdot e^{\frac{\langle P_{g_{0}}u,u\rangle}{32\pi^{2}}},
\end{align*}
and similarly,
\[
\int e^{4u}dv_{g_{0}}\le CVol(g_{0})\cdot e^{\frac{\langle P_{g_{0}}u,u\rangle}{8\pi^{2}}}, \quad 
\int e^{\alpha u}\le CVol(g_{0})\cdot e^{\frac{\alpha^2\langle P_{g_{0}}u,u\rangle}{128\pi^{2}}}, \quad \forall \alpha\in \R.\]
By the  H\"older inequality and \eqref{lower bound of <Puu>3}, there exists a positive constant $C$ such that
\begin{align*}
\int|\nabla u|^{2}e^{2u}dv_{g_{0}} & \le(\int|\nabla u|^{4}dv_{g_{0}})^{1/2}(\int e^{4u}dv_{g_{0}})^{1/2}\\
 & \le C\langle P_{g_{0}}u,u\rangle e^{\frac{\langle P_{g_{0}}u,u\rangle}{16\pi^{2}}}.
\end{align*}
Hence
\begin{equation}
F[u]\ge(1-\frac{\int Q_{g_{0}}dv_{g_{0}}}{16\pi^{2}})\left\langle P_{g_{0}}u,u\right\rangle+2\int Q_{g_0}udv_{g_{0}}-\int Q_{g_{0}}dv_{g_{0}}\log(\left\langle P_{g_{0}}u,u\right\rangle+1)-C.\label{eq:lower bound of F}
\end{equation}
By \eqref{upper bound of Functional},\eqref{lower bound of <Puu>2} and $\int Q_{g_{0}}dv_{g_{0}}<16\pi^{2}$, there exists a positive constant $C$ depending only on $g_0$ such that  
\[
\langle P_{g_0} u, u\rangle \le C,
\]
and hence by \eqref{lower bound of <Puu>3}
\begin{equation}\label{unform estimateforuW22} \int |\Delta u|^2+|\nabla u|^4\le C.\end{equation}
Therefore, for any $\alpha\in \mathbb R$, {it follows from Lemma \ref{Adams'type inequality} that }
\begin{equation}\label{unform estimatefore}
\int_M e^{\alpha u} d v_{g_0} \leq C(\alpha). \end{equation}
By H\"older's inequality, we have 
\begin{equation}\label{upperbound of integralR}
\int R_{g_u}dv_{g_u}\le C,
\end{equation}
where the constant $C$  is independent of time $t$.

\

\noindent{\bf Step 2:} In this step, we prove the long-time existence.

Now by \eqref{dF}, \eqref{unform estimateforuW22} and \eqref{unform estimatefore},  we have {
\eq{\label{bound}2\int_0^T\int u_{t}P_{g_{0}}u_{t}dv_{g_0}dt=F[u_0]-F[u(x,T)]\le C}
and \[\langle P_{g_0} u, u\rangle+2\int Q_{g_0}udv_{g_{0}}-\int Q_{g_{0}}dv_{g_{0}}\log\frac{\int R_{g_{u}}dv_{g_{u}}}{\int R_{g_{0}}dv_{g_{0}}}\le F[u_0],\]
from which, we obtain $\int R_{g_u}dv_{g_u}
\ge C>0$}. It,
together with \eqref{upperbound of integralR}, implies 
\begin{equation}\label{upper bound of r}
0<C_0(g_0, u_0)\le r(t)\le C(g_0, u_0).
\end{equation}
Due to the positivity of  $P_{g_0}$ in $X$, we have $\int_M\left(P_{g_0} u\right)^2 \ge C\|u\|^2_{W^{4,2}}$ for some positive constant $C$. With \eqref{upper bound of r}, we compute
\begin{align}
\frac{d}{d t}\left(\int_M\left(P_{g_0} u\right)^2 d v_{g_0}\right)&=2 \int_MP_{g_0} u P_{g_0} u_t d v_{g_0}\nonumber\\
&=2 \int_M P_{g_0} u P_{g_0}(-u+P_{g_{0}}^{-1}(r(t)R_{g_{u}}e^{4u}-Q_{g_0})) d v_{g_0}\nonumber\\
&=-2 \int_M (P_{g_0} u)^2 d v_{g_0}+2\int_M P_{g_0} u\cdot( r(t)R_{g_{u}}e^{4u}-Q_{g_0}) d v_{g_0}\nonumber\\
&\le  C_2\int_M (P_{g_0} u)^2 d v_{g_0}+C(g_0,u_0),
\end{align}
from which, we obtain $$e^{-C_2t} \int_M (P_{g_0} u)^2 d v_{g_0}\le Ct+C_1(g_0,u_0)$$
for some positive constant $C$. Now {it follows from Sobolev embedding theorem for any $\alpha\in (0,1)$
\begin{equation}\label{|u|dependent on time-fourdim}
    \|u\|_{C^{1,\alpha}}\le C(t).
\end{equation}
}

 By \eqref{|u|dependent on time-fourdim} \eqref{positivity} and  H\"older's inequality, for any $s\ge 2$,  we have {
 \begin{align*}
    &\frac{d}{d t}\int_M\left(P_{g_0} u+Q_{g_0}-e^{-t}(P_{g_0}u_0+Q_{g_0})\right)^s d v_{g_0}\nonumber\\
= &-s\int (P_{g_0} u+Q_{g_0}-e^{-t}(P_{g_0}u_0+Q_{g_0}))^sd v_{g_0}\nonumber\\
&+sr(t) \int (P_{g_0} u+Q_{g_0}-e^{-t}(P_{g_0}u_0+Q_{g_0}))^{s-1} R_{g_{u}}e^{4u}d v_{g_0}\nonumber \\
\le & Cs\int (P_{g_0} u+Q_{g_0}-e^{-t}(P_{g_0}u_0+Q_{g_0}))^sd v_{g_0}+C \left(\int R_{g_u}^s e^{4su}d v_{g_0}\right)\nonumber\\
\le & Cs\int (P_{g_0} u+Q_{g_0}-e^{-t}(P_{g_0}u_0+Q_{g_0}))^sd v_{g_0}+C(t) \left(\int (|\Delta u|^s+|\nabla u|^{2s}+1) d v_{g_0}\right)\nonumber\\
\le & C(t)s\int (P_{g_0} u+Q_{g_0}-e^{-t}(P_{g_0}u_0+Q_{g_0}))^sd v_{g_0}+C(t), 
\end{align*}
where we have used Young's inequality
\begin{align*}
&(P_{g_0} u+Q_{g_0}-e^{-t}(P_{g_0}u_0+Q_{g_0}))^{s-1} R_{g_{u}}e^{4u}\\
\le &\frac{s-1}{s}(P_{g_0} u+Q_{g_0}-e^{-t}(P_{g_0}u_0+Q_{g_0}))^{s}+ \frac{1}{s}R^s_{g_{u}}e^{4su}
\end{align*}
}
and
$$\int (P_{g_0} u+Q_{g_0}-e^{-t}(P_{g_0}u_0+Q_{g_0}))^sd v_{g_0}\ge \frac 1 2\int |P_{g_0} u|^s d v_{g_0}-C$$ and $\int |P_{g_0}u|^sdv_{g_0}\simeq \|u\|_{W^{4,s}}$ in $X$.

Therefore, we obtain $\|u(\cdot,t)\|_{W^{4,s}}\le C(t)$ for any $s\ge 2$ and then $\|u(\cdot,t)\|_{C^{3,\alpha}}\le C(t)$ for any $0<\alpha<1$, and high regularity follows from the equation. And we obtain  the long-time existence of  flow \eqref{eq:4-dim flow}.

 \ 
 
\noindent{\bf Step 3:} We begin to prove the convergence of the flow.

Let $y(t)=\int u_tP_{g_0}u_t dv_{g_0}$. 
By Lemma \ref{decay inequality for convergence}, together with \eqref{unform estimateforuW22}, \eqref{unform estimatefore} and \eqref{upper bound of r}, we obtain 
\begin{equation}\label{decay}
  y'(t)\le Cy(t).  
\end{equation}
Now we begin to show $y(t)\rightarrow 0$ as $t\rightarrow \infty$ by a similar argument as  in \cite{Brendle-Annofmath2003}. Let $\varepsilon$ be arbitrary small positive number. We can choose $t_0\ge 0$ large such that if $y(t_0)\le \varepsilon$, then $y(t)\le 3\varepsilon$ for all $t\ge t_0$ by a contradiction argument. Define $t_1=\inf\{t\ge t_0: y(t)\ge 3\varepsilon\}$. 
By \eqref{decay}, we have $$2\varepsilon\le y(t_1)-y(t_0)\le C\int_{t_0}^{t_1}y(t)dt<C\int_{t_0}^{\infty} y(t)dt.$$
{
It follows from Lemma \ref{Lemma6.2} and step 1 that $F(u(t))$ is non-increasing and bounded that $\int_{\bar t}^{+\infty} y(t) dt\to 0$ as $\bar t\to +\infty$}. Choosing  $t_0$ large enough such that $C\int_{t_0}^{\infty} y(t)dt\le \varepsilon$, then we obtain $2\varepsilon<\varepsilon$, a contradiction. By \eqref{bound} $\int_0^\infty y(t) dt \le C$, it is now easy to show
 $y(t)\rightarrow 0$ as $t\rightarrow \infty$.
It follows that $-u+P_{g_{0}}^{-1}(r(t)R_{g_{u}}e^{4u}-Q_{g_0})\rightarrow 0$ strongly in  $W^{2,2}$ as $t\rightarrow \infty$.
By \eqref{unform estimateforuW22}, there exists a subsequence of $t_j$ such that $r(t_j)\rightarrow r_{\infty}$, where $r_{\infty}$ is  a positive constant, such that  $u(x, t_j)$ converges to $u_{\infty}\in X$ weakly in $W^{2,2}$-sense {and $e^{4u(x, t_j)}$ converges to $e^{4u_{\infty}}$ strongly in $L^p$-sense for all $p>1$}, where $u_{\infty}\in W^{2,2}$ is a weak solution to $$P_{g_{0}}u_{\infty}+Q_{g_0}=r_{\infty}R_{g_{u_{\infty}}}e^{4u_{\infty }}.$$ 
Now by elliptic regularity theory, $u_{\infty}$ is smooth and $R_{g_{u_\infty}}\ge 0.$  
By Theorem \ref{thm:positive scalar curvature} and $Q_{g_{u_\infty}}\ge 0$, we know  $R_{g_{u_\infty}}> 0.$ We finish the proof.

\end{proof}

\section{Existence for dimension \texorpdfstring{$n\ge 5$}{}}\label{section:5+dim}
In this section, we study the general case, $n\ge 5$, under the assumption $Y_{4,2} (M, [g_0])<Y_{4,2} (\mathbb{S}^n, [g_{\mathbb{S}^n}])$, which was proved in Section \ref{section:various yamabe problems}, provided that $(M,g)$ is not conformally equivalent to the standard sphere.

As in the ordinary Yamabe problem, we divide the proof of the existence into two steps. First we obtain the existence of a metric satisfying the subcritical $Q/R$ equation \eqref{critical equation of subcrtical functional} below by  using a non-local flow, see Theorem \ref{subcritical flow convergence}  in Subsection \ref{subsection:Subcritical existence}.  Then we argue by contradiction to prove the existence of metrics with constant $Q/R$ curvature, under the assumption $Y_{4,2} (M, [g_0])<Y_{4,2} (\mathbb{S}^n, [g_{\mathbb{S}^n}])$,
see Subsection \ref{subsection:Critical Existence}.
It would be interesting to see if there are several  methods,  as in the ordinary Yamabe problem, to prove the main Theorem for $n\ge 5$.  Until now for us this is the only way.

\subsection{Subcritical 
case}\label{subsection:Subcritical existence}

On $(M,g_{0})$ with $Q_{g_{0}}\ge0$ and $R_{g_{0}}>0,$ let $g_{u}=u^{\frac{4}{n-4}}g_{0}$. 
Since $g_{u}=\big(u^{\frac{n-2}{n-4}}\big)^{\frac{4}{n-2}}g_{0}$, we have
\begin{align}
\frac{n-2}{4(n-1)}R_{g_{u}} & =\left(u^{\frac{n-2}{n-4}}\right)^{-\frac{n+2}{n-2}}\big(-{\Delta_{g_0}} u^{\frac{n-2}{n-4}}+\frac{n-2}{4(n-1)}R_{g_{0}}u^{\frac{n-2}{n-4}}\big)\label{eq:R transformation}\\
 & =\left(u^{\frac{n-2}{n-4}}\right)^{-\frac{n+2}{n-2}}L_{g_{0}}u^{\frac{n-2}{n-4}}.\nonumber 
\end{align}
Define  a $W^{2,2}$ inner product as in \cite{GM}: For any $\phi,\psi\in W^{2,2}(M)$,
we have 
\[
\begin{aligned} & \langle\phi,\psi\rangle_{W^{2,2}(g_{0})}=\int_{M}\left(P_{g_{0}}\phi\right)\psi dv_{g_{0}}\\
= & \int_{M}\left[\left(\Delta_{g_{0}}\phi\right)\left(\Delta_{g_{0}}\psi\right)-4A_{g_{0}}(\nabla_{g_0}\phi,\nabla_{g_0}\psi)+(n-2)\sigma_{1}\left(A_{g_{0}}\right)g_{0}(\nabla_{g_0}\phi,\nabla_{g_0}\psi)+\frac{n-4}{2}Q_{g_{0}}\phi\psi\right]dv_{g_{0}}.
\end{aligned}
\]
Since $Q_{g_{0}}$ is semi-positive and $R_{g_{0}}$ is positive,
the self-adjoint operator $P_{g_{0}}$ is positive, see Proposition 2.3 in \cite{GM}. By the positivity of $P_{g_{0}}$, see also \cite[Proposition 3.2]{Robert} or \cite[Proposition 2]{Frederic2011} for example,  we know that 
\[
\|\phi\|_{W^{2,2}}\simeq\int_{M}\phi P_{g_{0}}\phi dv_{g_{0}},
\]
and also 
\begin{equation}
\|\phi\|_{W^{4+k,p}}\simeq\|P_{g_{0}}\phi\|_{W^{k,p}}.\label{eq:normal equalven}
\end{equation}
 The Sobolev embedding theorem gives
\[
(\int_{M}\phi^{\frac{2n}{n-4}})^{\frac{n-4}{2n}}\lesssim\|\phi\|_{W^{2,2}},\quad(\int_{M}|\nabla_{g_0} \phi|^{\frac{2n}{n-2}})^{\frac{n-2}{2n}}\lesssim\|\phi\|_{W^{2,2}}.
\]

For small $0<\varepsilon<1$, we consider the following perturbed functional
\begin{align*}
I_{\varepsilon} (u)=\colon\frac{\int uP_{g_{0}}udv_{g_{0}}}{\left(\int R_{g_{u}}u^{-\varepsilon\frac{2(n-2)}{n-4}}dv_{g_{u}}\right)^{\frac{n-4}{n-2}\frac{1}{1-\varepsilon}}}=\frac{E(u)}{J_{\varepsilon}(u)^{\frac{n-4}{n-2}\frac{1}{1-\varepsilon}}}
\end{align*}
and define 
\[
Y_{\varepsilon}(M,[g_{0}])=\inf_{u\in W^{2,2}(M,g_{0})\cap \mathcal C_{1}[g_{0}]}I_{\varepsilon}(u).
\]
Here $J_\varepsilon$ is a perturbation of  the total scalar curvature  $\int R d v_g$, 
\begin{align*}
J_{\varepsilon}(u) & :=\int R_{g_{u}}u^{-\varepsilon\frac{2(n-2)}{n-4}}dv_{g_{u}}\\
 & =\frac{4(n-1)}{n-2}\int(1-2\varepsilon)u^{-\frac{2(n-2)}{n-4}\varepsilon}|\nabla_{g_{0}}u^{\frac{n-2}{n-4}}|^{2}+\int R_{g_{0}}u^{\frac{2(n-2)}{n-4}(1-\varepsilon)}\\
 & =\frac{4(n-1)(n-2)}{(n-4)^{2}}\int(1-2\varepsilon)|\nabla u|^{2}u^{\frac{2}{n-4}(2-(n-2)\varepsilon)}+\int R_{g_{0}}u^{\frac{2(n-2)}{n-4}(1-\varepsilon)}.
\end{align*}
 The Euler-Lagrange equation of $I_\ve$ is 
\begin{equation}\label{critical equation of subcrtical functional}
P_{g_{0}}u=r_{\varepsilon}u^{\frac{n+4}{n-4}-\frac{2(n-2)}{n-4}\varepsilon}R^{\varepsilon}_{g_{u}},
\end{equation}
where $g=g_u=u^{\frac4{n-4}}g_0$,
\begin{equation}\label{definition of R}
    R^{\varepsilon}_{g_u}:=(1-2\varepsilon)R_{g_u}+\frac{2(n-1)a_{\varepsilon}}{n-2}u^{\frac{-4}{n-4}-2}|\nabla_{g_0} u|^{2}+\frac{2(n-1)b_{\varepsilon}}{n-2}R_{g_{0}}u^{\frac{-4}{n-4}},
\end{equation}
{\begin{eqnarray}
    \label{ab}
    a_{\varepsilon}=\frac{2(n-2)^2\varepsilon}{(n-4)^2}(1-2\varepsilon),
    \quad b_{\varepsilon}=\frac{(n-2)}{2(n-1)}\varepsilon>0 
    \end{eqnarray}} and 
\[
r_{\varepsilon}=\frac{n-4}{2}\frac{\int_{M}Q_{g}dv_{g}}{\int_{M}u^{-\frac{2(n-2)}{n-4}\varepsilon}R^{\varepsilon}_{g}dv_{g}}.
\]

\subsubsection{A subcritical flow}
In order to find a solution to \eqref{critical equation of subcrtical functional} we consider a subcritical flow on $(M,g_0)$: Given an initial smooth data $u(x,0)=u_0$ with a property that  the metric $g_{u_0}=u_0^{\frac{4}{n-4}}g_0$ has positive scalar curvature, we introduce 
\begin{equation}
\frac{4}{n-4}u_{t}=-u+\frac{n-4}{2}rP_{g_{0}}^{-1}\left(|u^{\frac{n+4}{n-4}-\frac{2(n-2)}{n-4}\varepsilon}R^{\varepsilon}_{g_u}|\right),
\label{eq:real subcritcial flow}
\end{equation}
where {\[
r(t):= r_\varepsilon(t)\frac{2}{n-4}=\frac{\int_{M}Q_{g_u}dv_{g_u}}{\int_{M}|u^{-\frac{2(n-2)}{n-4}\varepsilon}R^{\varepsilon}_{g_u}|dv_{g_u}},
\]}
and the operator $P_{g_{0}}^{-1}$
is the inverse of $P_{g_0}$.
Note that \begin{equation}\label{integral of modified R}
\int R^{\varepsilon}_{g_u}u^{-\varepsilon\frac{2(n-2)}{n-4}}dv_{g_{u}}=h_{\varepsilon}\int|\nabla u|^{2}u^{\frac{2}{n-4}(2-(n-2)\varepsilon)}+d_{\varepsilon}\int R_{g_{0}}u^{\frac{2(n-2)}{n-4}(1-\varepsilon)}=(1-\varepsilon)\int R_{g_u}u^{-\varepsilon\frac{2(n-2)}{n-4}} dv_{g_u},
\end{equation}
where {$h_{\varepsilon}=(1-2\varepsilon)(1-\varepsilon)\frac{4(n-1)(n-2)}{(n-4)^{2}}$ and $d_{\varepsilon}=1-\varepsilon.$} 
It is easy to check that $$h_{\varepsilon}\rightarrow \frac{4(n-1)(n-2)}{(n-4)^{2}}, \quad d_{\varepsilon}\rightarrow 1, \hbox{ as } \varepsilon\to 0.$$
Moreover, for {all $\varepsilon\in (0,\frac{1}{2})$, we have
\begin{equation}\label{equvalence of R and tilde R}
2\int R_{g_{u}}u^{-\varepsilon\frac{2(n-2)}{n-4}}dv_{g_{u}}\ge\int R^{\varepsilon}_{g_u}u^{-\varepsilon\frac{2(n-2)}{n-4}}dv_{g_{u}}=(1-\varepsilon)\int R_{g_u}u^{-\varepsilon\frac{2(n-2)}{n-4}}dv_{g_{u}}\ge \frac{1}{2}\int R_{g_{u}}u^{-\varepsilon\frac{2(n-2)}{n-4}}dv_{g_{u}}.
\end{equation}
}
Note that $\int_M Q_{g_{u_0}} dv_{g_{u_0}}>0$ holds naturally due to the positivity of $P_{g_0}$.

\begin{thm}\label{subcritical flow convergence}
Let  $(M,g_{0})$ be a closed manifold with semi-positive $Q_{g_{0}}$ and positive  $R_{g_{0}}$. Assume $\varepsilon\in (0,\frac{2}{n-2})$ with $n\ge 5$. For  any initial smooth positive data $u_0$ such that the scalar curvature of  metric $g_{u_0}=u_0^{\frac{4}{n-4}}g_0$ is positive,  
 flow  (\ref{eq:real subcritcial flow}) has long-time existence and  there exists a subsequence of time $t_j$ going to infinity such that $u(\cdot,t_j)$  converges weakly  
 in  the $W^{2,2}$-sense to a non-trivial smooth positive solution $u_{\varepsilon}$ satisfying the following perturbed equation 
\[
P_{g_{0}}u=\frac{n-4}{2}\bar{r}u^{\frac{n+4}{n-4}-\frac{2(n-2)}{n-4}\varepsilon}R^{\varepsilon}_{g_u},
\]
where $\bar{r}$ is a positive constant.
Moreover, 
\begin{equation}\label{enerydecreasing}
    I_{\varepsilon}(u_{\varepsilon})\le  I_{\varepsilon}(u_{0}).
\end{equation}
 
\end{thm}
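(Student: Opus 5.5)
We follow the scheme of the four-dimensional argument (Lemma \ref{Lemma6.2} and the subsequent steps), adapted to the Gursky--Malchiodi flow \eqref{non-local-flow}.

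\textbf{Step 1: positivity along the flow.} Set $f=\frac{n-4}{2}r\,|u^{\frac{n+4}{n-4}-\frac{2(n-2)}{n-4}\varepsilon}R^\varepsilon_{g_u}|\ge 0$ and $v=P_{g_0}^{-1}f$. By Theorem \ref{thm:maximum principle}, $v>0$ (unless $f\equiv0$), $Q_{g_v}\ge 0$, and $R_{g_v}>0$. The flow $\frac{4}{n-4}u_t=-u+v$ is a linear ODE in $t$ pointwise, so $u(t)=e^{-\frac{n-4}{4}t}u_0+\int_0^t(\cdots)v$ stays positive.

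Next we want $R_{g_u}>0$ along the flow, i.e.\ $L_{g_0}u^{\frac{n-2}{n-4}}>0$. We compute $\partial_t$ of $w=L_{g_0}u^{\frac{n-2}{n-4}}$. Writing $p=\frac{n-2}{n-4}$, the map $u\mapsto L_{g_0}(u^p)$ is the restriction of the "concave along convex combinations" structure used by Gursky--Malchiodi. Since $u(t+dt)$ is, to first order, a convex combination $(1-\frac{n-4}{4}dt)u+\frac{n-4}{4}dt\,v$, we use the pointwise inequality that for positive $u,v$ with $L(u^p)>0$, $L(v^p)>0$,
\[
-\Delta\big((1-s)u+sv\big)^p+c R\big((1-s)u+sv\big)^p>0 .
\]
This follows from $p>1$ and a Cauchy--Schwarz computation exactly as in the proof of Lemma \ref{Lemma6.2}, since $-\Delta(u^p)=pu^{p-1}(-\Delta u-(p-1)|\nabla u|^2/u)$. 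From this we obtain a differential inequality $\partial_t(\text{positive weight}\cdot w)\ge -C w$, and hence $R_{g_u}>0$ for all $t$. Consequently $R^\varepsilon_{g_u}>0$ for $\varepsilon<1/2$, the absolute values in the flow may be dropped, and $r$ equals the constant-sign quotient.

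\textbf{Step 2: monotonicity and uniform bounds.} We verify that $\frac{d}{dt}I_\varepsilon(u)=-c\int u_tP_{g_0}u_t/J_\varepsilon^{(\cdot)}\le 0$. This holds because the flow is the $P_{g_0}$-gradient flow of $I_\varepsilon$ normalized as in \cite{GM}, and the $r$ normalization makes the extra terms cancel. We also check, via $E(u)$ and $J_\varepsilon$, that the flow preserves $J_\varepsilon$ or a suitable combination of $E$ and $J_\varepsilon$.

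Using Theorem \ref{thm:maximum principle}, the bound $u\ge e^{-ct}u_0$ only gives a time-dependent lower bound. For uniform bounds we use subcriticality: $J_\varepsilon(u)\lesssim \|u\|_{W^{2,2}}^{\,2(1-\varepsilon)\frac{n-2}{n-4}\cdot\frac{n-4}{n-2}}$ via Hölder and Sobolev with exponents strictly below critical. This is exactly where $\varepsilon\in(0,\frac{2}{n-2})$ enters, since we need $u^{\frac{2(n-2)}{n-4}(1-\varepsilon)}$ and $|\nabla u|^2u^{\frac{2}{n-4}(2-(n-2)\varepsilon)}$ to be subcritical. Combined with the energy decrease and the normalization, this yields uniform $W^{2,2}$ bounds and a uniform lower bound $J_\varepsilon\ge c>0$. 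Hence $0<c\le r(t)\le C$.

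Long-time existence then follows from a bootstrap, as in Step 2 of the 4D proof: the right-hand side is $L^s$-controlled with time-dependent constants, and \eqref{eq:normal equalven} converts this into $W^{4,s}$ bounds.

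\textbf{Step 3: convergence.} Integrating the monotonicity formula gives $\int_0^\infty\int u_tP_{g_0}u_t<\infty$. Choose $t_j\to\infty$ with $\|u_t(t_j)\|_{W^{2,2}}\to0$ and $r(t_j)\to\bar r$. Weak $W^{2,2}$ compactness together with strong subcritical convergence of the nonlinear terms gives a weak solution $u_\varepsilon$ of the perturbed equation, with $I_\varepsilon(u_\varepsilon)\le\liminf I_\varepsilon(u(t_j))\le I_\varepsilon(u_0)$; here $E$ is weakly lower semicontinuous and $J_\varepsilon$ converges.

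Nontriviality comes from $J_\varepsilon\ge c$. Positivity comes from Theorem \ref{thm:maximum principle}, since the right-hand side is $\ge0$. Smoothness comes from the $W^{2,2}$ regularity result in the Appendix plus bootstrapping; the strict lower bound for $u$ then makes the equation nondegenerate.

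The main obstacle is Step 1, the preservation of $R_{g_u}>0$. The power $p=\frac{n-2}{n-4}$ makes the convexity inequality delicate, and we expect to need the explicit $|\nabla u|^2/u$ terms to close it. A secondary obstacle is the uniform lower bound on $\inf u$ needed to pass the singular weight $u^{-\frac{4}{n-4}-2}|\nabla u|^2$ in $R^\varepsilon$ to the limit. We would handle this using $u\ge$ the Green-function representation $P_{g_0}^{-1}$ of a term bounded below in $L^1$.
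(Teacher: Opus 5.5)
Your skeleton matches the paper's: the maximum principle for $v=P_{g_0}^{-1}(\cdots)$, conservation and monotonicity, a time-dependent bootstrap, and a subsequence with $u_t(t_j)\to0$ in $W^{2,2}$. However, the two steps that carry the real content are left open, and the route you sketch for Step 1 does not work as stated.

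\textbf{Step 1.} You use positivity of $L_{g_0}\big(((1-s)u+sv)^{p}\big)$ for $s\in[0,1]$. This is in fact true, because $((1-s)U^{1/p}+sV^{1/p})^{p}$ is a concave power mean when $1/p<1$. But it cannot produce a differential inequality. At $s=0$ the quantity equals $L_{g_0}(u^p)>0$, so strict positivity for $s>0$ says nothing about the $s$-derivative, and the ``to first order'' iteration accumulates uncontrolled $O(dt^2)$ errors. Differentiating the flow directly gives
\[
\partial_t L_{g_0}u^{\frac{n-2}{n-4}}=\frac{n-2}{n-4}L_{g_0}\big(u^{\frac{2}{n-4}}u_t\big)=-\frac{n-2}{4}L_{g_0}u^{\frac{n-2}{n-4}}+\frac{n-2}{4}L_{g_0}\big(u^{\frac{2}{n-4}}v\big),
\]
so what you actually need is $L_{g_0}(u^{\frac{2}{n-4}}v)\ge0$.

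The missing observation is that $u^{\frac{2}{n-4}}v=U^{\frac{2}{n-2}}V^{\frac{n-4}{n-2}}$, with $U=u^{\frac{n-2}{n-4}}$ and $V=v^{\frac{n-2}{n-4}}$, is a weighted geometric mean. Then $LU\ge0$ and $LV\ge0$ imply $L(U^{t}V^{1-t})\ge t(1-t)|\nabla\log U-\nabla\log V|^2U^tV^{1-t}\ge0$. This is Lemma \ref{convex}, and it is the true analogue of the Cauchy--Schwarz step in Lemma \ref{Lemma6.2}. Equivalently, you can upgrade your concavity remark to the tangent form $L(F(U,V))\ge F_U\,LU+F_V\,LV$ for concave, increasing, $1$-homogeneous $F$, applied to the geometric mean. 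Either way you get $L_{g_0}u^{\frac{n-2}{n-4}}\ge e^{-\frac{n-2}{4}t}L_{g_0}u_0^{\frac{n-2}{n-4}}>0$, with no weight needed.

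\textbf{Long-time existence and the limit.} Both $R_{g_u}$ and the $a_\varepsilon$-term contribute $u^{\frac{8-n}{n-4}-\frac{2(n-2)}{n-4}\varepsilon}|\nabla u|^2$ to the nonlinearity. This quadratic gradient term is not subcritical relative to the conserved $W^{2,2}$ bound once $n\ge8$: from $|\nabla u|^2\in L^{\frac{n}{n-2}}$ you get $u\in W^{4,\frac{n}{n-2}}$, hence $\nabla u\in L^{\frac{n}{n-5}}$, which is no gain over $L^{\frac{2n}{n-2}}$. So the bootstrap ``as in 4D'' does not get started. The paper uses $R_{g_u}>0$ a second time, pointwise, to obtain \eqref{eq:upper bound of R}, i.e.
\[
0\le u^{\frac{n+4}{n-4}-\frac{2(n-2)}{n-4}\varepsilon}R^{\varepsilon}_{g_u}\le C\,u^{\frac{2(2-(n-2)\varepsilon)}{n-4}}|\Delta u|+C\,u^{\frac{n}{n-4}-\frac{2(n-2)}{n-4}\varepsilon}.
\]
This bound is linear in $\Delta u$ with a nonnegative power of $u$ precisely because $\varepsilon<\frac{2}{n-2}$. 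That is where the restriction enters; subcriticality of $J_\varepsilon$ holds for every $\varepsilon>0$. With this bound, $L^s$ estimates for $P_{g_0}(u-e^{-\frac{n-4}{4}t}u_0)\ge0$ close, first for $s<\frac n4$, then for $s<\frac n2$, then for all $s$. The same bound gives the uniform $L^{q_0}$ control of the nonlinearity along $t_j$, which is used to pass to the limit and to start the regularity bootstrap. The Appendix result concerns the critical equation, not this one.

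Two smaller points. First, $I_\varepsilon$ is scale invariant, so its monotonicity gives no $W^{2,2}$ bound by itself. What the choice of $r$ gives is exact conservation of $\int uP_{g_0}u=\frac{n-4}{2}\int Q_g\,dv_g$, together with $J_\varepsilon$ nondecreasing. Second, your Green-function lower bound cannot work along the flow. Indeed $u(t_j)=\frac{n-4}{2}r_jP_{g_0}^{-1}(\cdots)-\frac{4}{n-4}u_t(t_j)$, and the last term is small only in $W^{2,2}$, which does not control it pointwise. Nontriviality instead follows from $J_\varepsilon(u(t_j))\ge J_\varepsilon(u_0)$ and the convergence of $J_\varepsilon$, in which only nonnegative powers of $u$ appear. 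Positivity follows from the weak Harnack inequality for the supersolution $L_{g_0}u_\varepsilon^{\frac{n-2}{n-4}}\ge0$.
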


Modifying an argument of Gursky-Malchiodi in \cite{GM}, we first show that the scalar curvature along  flow (\ref{eq:real subcritcial flow}) remains positive, and therefore the flow reduces to (\ref{eq:subcritical flow}) below.  
We then prove the long-time existence and the convergence of flow (\ref{eq:real subcritcial flow}), yielding Theorem \ref{subcritical flow convergence}, and  
a nontrivial solution of equation \eqref{subcritical Yamabe equation} (see Theorem \ref{Exsistencesubcritical}).

We first show the short-time existence of (\ref{eq:real subcritcial flow}). 
\begin{lem}
There exists a  $T>0$ such that flow (\ref{eq:real subcritcial flow})
has smooth solution on $M\times[0,T)$.
\end{lem}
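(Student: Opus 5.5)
The plan is to treat flow \eqref{eq:real subcritcial flow} as an ODE in a Banach space and apply the Picard--Lindel\"of (contraction mapping) theorem. Fix $\alpha\in(0,1)$ and work in $X=C^{2,\alpha}(M)$. The initial datum $u_0$ is smooth and positive, so there are $\delta>0$ with $u_0\ge 2\delta$ and $\|u_0\|_{C^{2,\alpha}}\le K/2$. We work in the closed set
\[
\mathcal U=\{u\in C^{2,\alpha}(M):\ u\ge\delta,\ \|u\|_{C^{2,\alpha}}\le K\}.
\]
Rewrite the flow as $u_t=\Phi(u)$ with
\[
\Phi(u):=\frac{n-4}{4}\Big(-u+\frac{n-4}{2}\,r(u)\,P_{g_0}^{-1}\big(|F(u)|\big)\Big),\qquad F(u)=u^{\frac{n+4}{n-4}-\frac{2(n-2)}{n-4}\varepsilon}R^{\varepsilon}_{g_u}.
\]
By \eqref{definition of R} and \eqref{eq:R transformation}, $R^\varepsilon_{g_u}$ is a polynomial expression in $u$, $\nabla u$ and $\Delta u$, with coefficients that are smooth powers of $u$.

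The first step is to show that $\Phi$ is locally Lipschitz on $\mathcal U$. Because $u\ge\delta$, the map $u\mapsto F(u)$ is smooth and Lipschitz from $\mathcal U$ into $C^{0,\alpha}$. Taking the absolute value keeps it Lipschitz into $C^{0}$, and also into $C^{0,\alpha}$, since $\big||a|-|b|\big|\le|a-b|$ pointwise and the H\"older seminorm of $|f|-|h|$ can be handled on the same terms. At worst one obtains Lipschitz continuity into $L^p$ for every $p$.

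Next, $P_{g_0}$ is positive and invertible (Proposition 2.3 of \cite{GM}). By \eqref{eq:normal equalven}, $P_{g_0}^{-1}$ maps $L^p$ boundedly into $W^{4,p}$ for large $p$, and $W^{4,p}\hookrightarrow C^{2,\alpha}$. This gains regularity, so it more than compensates for the two derivatives lost in $F$. As a result $u\mapsto P_{g_0}^{-1}(|F(u)|)$ is Lipschitz from $\mathcal U$ into $C^{2,\alpha}$.

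For the scalar coefficient $r(u)$, the numerator $\int Q_{g_u}dv_{g_u}=\frac{2}{n-4}\int uP_{g_0}u$ is smooth in $u$. The denominator is $\int |u^{-\frac{2(n-2)}{n-4}\varepsilon}R^\varepsilon_{g_u}|\,dv_{g_u}$. At $t=0$ it is positive, since $R_{g_{u_0}}>0$ and \eqref{definition of R} then gives $R^\varepsilon_{g_{u_0}}>0$. By continuity it stays bounded below by a positive constant on a small $C^{2,\alpha}$-ball around $u_0$. Shrinking $\mathcal U$ to that ball, $r$ is Lipschitz there.

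With $\Phi$ Lipschitz and bounded on the ball, the standard Picard iteration in $C([0,T];C^{2,\alpha})$ gives a unique solution for $T$ small, depending on the Lipschitz constant and the bound. On this interval $u$ stays $\ge\delta$ and near $u_0$, so we remain in the region where all the estimates hold.

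The remaining step is smoothness. Differentiating in $t$ and bootstrapping gives it. If $u\in C^{k,\alpha}$, then $F(u)\in C^{k-2,\alpha}$. Near $t=0$ we have $R^\varepsilon_{g_u}>0$, so the absolute value can be dropped and $|F(u)|=F(u)$ is as smooth as $F(u)$. Then $P_{g_0}^{-1}F(u)\in C^{k+2,\alpha}$ by Schauder theory. Hence $u_t+\frac{n-4}{4}u\in C^{k+2,\alpha}$, and integrating the linear ODE $u_t=-\frac{n-4}{4}u+G(t)$ shows $u\in C^{k+2,\alpha}$ for $t>0$, since $u_0$ is smooth. Time regularity follows by differentiating the equation.

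The main obstacle I expect is the absolute value in $|F(u)|$, which is only Lipschitz, not smooth. I would handle it by noting two things. Picard iteration needs only Lipschitz continuity. And by positivity of $R^\varepsilon_{g_u}$ near $t=0$, the absolute value is inactive there, so the flow coincides locally with the smooth flow without the absolute value. It is therefore simplest to run the fixed point for that smooth version on the small ball and then observe that the solution solves \eqref{eq:real subcritcial flow}.
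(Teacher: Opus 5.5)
Your proposal is correct, but it is organized differently from the paper's proof. The paper never treats the nonlocal coefficient $r$ as a functional of $u$. It passes to the unnormalized flow $\partial_s v=\frac{n-4}{4}\big(-v+\frac{n-4}{2}P_{g_0}^{-1}(|F(v)|)\big)$, in which $r$ does not appear. It recovers the original flow through the time change $s(t)=\int_0^t\mu(\tau)\,d\tau$ and the rescaling $u(x,t)=e^{\frac{n-4}{4}(s(t)-t)}v(x,s(t))$. This works because $u\mapsto r(u)P_{g_0}^{-1}(|F(u)|)$ is positively homogeneous of degree one. The paper then asserts that the integrated form of the $v$-equation is a contraction near $u_0$ in $C^{4,\alpha}(M\times[0,\epsilon])$.

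You instead keep $r(u)$ and show it is locally Lipschitz on a small $C^{2,\alpha}$-ball around $u_0$: its numerator is a continuous quadratic form and its denominator stays bounded away from zero. You then apply Picard--Lindel\"of directly in the Banach space $C^{2,\alpha}(M)$. The key point is that $P_{g_0}^{-1}$ gains four derivatives while $F$ loses only two, so the right-hand side is a genuine locally Lipschitz vector field with no loss of derivatives. Your route is more self-contained: no auxiliary flow, no ODE for $s$, and no need to check that the two flows are equivalent. Your Duhamel bootstrap, combined with the observation that the absolute value is inactive on the small ball, also supplies the space-time smoothness that the paper leaves implicit. The paper's reduction, in exchange, exploits the scale invariance of the normalized flow to strip the fixed-point problem of its nonlocal scalar factor.

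There are two minor corrections. First, $f\mapsto|f|$ is bounded but not Lipschitz on $C^{0,\alpha}$. For $f(x)=x$ and $h(x)=x+c$ near $0$, the H\"older seminorm of $|f|-|h|$ is at least $2c^{1-\alpha}$, while $\|f-h\|_{C^{0,\alpha}}=c$. Drop that claim; what actually carries your argument is the route through $L^p$, $W^{4,p}$ and $C^{2,\alpha}$, or simply removing the absolute value on the small ball. Second, the pointwise positivity $R^\varepsilon_{g_{u_0}}>0$ uses $1-2\varepsilon>0$ (hence $a_\varepsilon\ge 0$), i.e.\ $\varepsilon<\frac12$. This is stronger than $\varepsilon<\frac{2}{n-2}$ when $n=5$. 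It is harmless, since only small $\varepsilon$ matters, and the paper relies on the same restriction (e.g.\ in \eqref{equvalence of R and tilde R}).
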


\begin{proof} First of all, we have $r(0)>0$ by assumption.
Consider the following normalized flow:
\eq{ \label{eq:subcritical flow0}
\begin{cases}
\frac{\partial v}{\partial t}=\frac{n-4}{4}(-v+\frac{n-4}{2}P_{g_{0}}^{-1}(|R^{\varepsilon}_{g_v}v^{\frac{n+4}{n-4}-\frac{2(n-2)}{n-4}\varepsilon}|))\\
v(\cdot,0)=u_0.
\end{cases}
}
Let $u(x,t)=e^{\frac{n-4}{4}(s(t)-t)}v(x,s(t)),$ where 
\[
\mu(t)=\frac{\int_{M}Q_{g_{v}}dv_{g}}{\int_{M}|v^{-\frac{2(n-2)}{n-4}\varepsilon}R^{\varepsilon}_{g_v}|dv_{g_{v}}},\quad s(t)=\int_{0}^{t}\mu(\tau)d\tau.
\]
Then $u$ satisfies equation (\ref{eq:real subcritcial flow}). The converse is also true. Hence,  both flows are equivalent.

Denote $X_{\epsilon}=C^{4,\alpha}\left(M^{n}\times[0,\epsilon]\right)$. 
 One can show that the mapping
\[
v\mapsto\Psi(v)(x,t)=u_0-\frac{n-4}{4}\int_{0}^{t}v(x,\tau)d\tau+\frac{(n-4)^{2}}{8}\int_{0}^{t}P_{g_{0}}^{-1}\left(|R^{\varepsilon}_{g_v}v^{\frac{n+4}{n-4}-\frac{2(n-2)}{n-4}\varepsilon}|\right)(x,\tau)d\tau
\]
is a contraction on a small neighborhood of $v_{0}\equiv u_0$ in $X_{\epsilon}$
for $\epsilon>0$ small. The short-time existence follows then from the fixed
point theorem.
\end{proof}
By  Theorem
\ref{thm:maximum principle}, 
 we have the following lemma. 
\begin{lem}
\label{lem:lower bound of u}As long as flow \eqref{eq:real subcritcial flow} exists,  we have
\[
u(x,t)>e^{-\frac{n-4}{4}t}u_0(x)>0,\,\,R_{g}>0,
\]
and \[P_{g_{0}}(u-e^{-\frac{n-4}{4}t}u_0)\ge 0.\]
\end{lem}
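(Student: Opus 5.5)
The plan is to mimic the Gursky--Malchiodi argument for \eqref{non-local-flow}, exactly as was done in four dimensions in Lemma \ref{Lemma6.2}. Set
\[
f:=\tfrac{n-4}{2}\,r\,\big|u^{\frac{n+4}{n-4}-\frac{2(n-2)}{n-4}\varepsilon}R^{\varepsilon}_{g_u}\big|\ \ge 0
\qquad\text{and}\qquad
w:=P_{g_0}^{-1}f .
\]
Along the flow we have $r(t)>0$. This holds at $t=0$ by assumption, and it persists because $\int Q_{g_u}dv_{g_u}=\frac{2}{n-4}\int uP_{g_0}u>0$ by positivity of $P_{g_0}$, as long as $u\not\equiv0$. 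Hence $P_{g_0}w=f\ge0$.

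Since $Q_{g_0}$ is semi-positive and $R_{g_0}>0$, Theorem \ref{thm:maximum principle} applies to $P_{g_0}$. It gives either $w\equiv0$ or $w>0$, and in the latter case $g_w=w^{\frac4{n-4}}g_0$ has positive scalar curvature. I would treat the degenerate case $f\equiv0$ separately. It is excluded for short times by continuity from $t=0$, and more generally it only makes the estimates below easier, since then $u$ decays like $e^{-\frac{n-4}{4}t}u_0$.

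The flow is $\frac4{n-4}u_t=-u+w$. Differentiating $e^{\frac{n-4}{4}t}u$ gives
\[
\partial_t\big(e^{\frac{n-4}{4}t}u\big)=\frac{n-4}{4}e^{\frac{n-4}{4}t}w\ge0 .
\]
Likewise $\partial_t\big(e^{\frac{n-4}{4}t}P_{g_0}u\big)=\frac{n-4}{4}e^{\frac{n-4}{4}t}f\ge0$. Integrating in time yields
\[
u\ge e^{-\frac{n-4}{4}t}u_0
\qquad\text{and}\qquad
P_{g_0}\big(u-e^{-\frac{n-4}{4}t}u_0\big)\ge0 .
\]
The strict inequality $u>e^{-\frac{n-4}{4}t}u_0$ for $t>0$ follows from $w>0$, or from the strong maximum principle applied to $u-e^{-\frac{n-4}{4}t}u_0$.

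The main step is the positivity of $R_{g_u}$. Put $\phi=u^{\frac{n-2}{n-4}}$, so that $R_{g_u}>0$ is equivalent to $L_{g_0}\phi>0$ by \eqref{eq:R transformation}. I would compute
\[
\partial_t\big(e^{\frac{n-4}{4}t}u\big)^{\frac{n-2}{n-4}}
\]
in the spirit of the four-dimensional computation. The aim is to show that $L_{g_0}$ applied to this quantity is non-decreasing in time. Heuristically, $e^{\frac{n-4}{4}t}u$ is, up to time reparametrization, a running average of the functions $w(\cdot,s)$ plus the initial datum, and each $w(\cdot,s)$ yields $L_{g_0}w^{\frac{n-2}{n-4}}>0$.

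Because $\phi\mapsto L_{g_0}\phi^{\frac{n-2}{n-4}}$ is not linear, I would use the concavity-type fact that the cone
\[
\{v>0:\ L_{g_0}v^{\frac{n-2}{n-4}}>0\}
\]
is convex. This is equivalent to convexity of $\{v>0: R_{g_v}>0\}$ in the $u^{4/(n-4)}$ parametrization. It should follow from an expansion of the form
\[
\Delta\big(v^{p}\big)=p\,v^{p-1}\Delta v+p(p-1)v^{p-2}|\nabla v|^2,
\qquad p=\tfrac{n-2}{n-4}>1,
\]
together with a Cauchy--Schwarz argument like the one in Lemma \ref{Lemma6.2}. Granting this convexity, write $e^{\frac{n-4}{4}t}u=u_0+\int_0^t\frac{n-4}{4}e^{\frac{n-4}{4}s}w(s)\,ds$. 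This is a positive combination of elements of the convex cone; the cone is closed under positive dilations and sums, so the combination stays in the cone, and therefore $R_{g_u}>0$.

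I expect this convexity / Cauchy--Schwarz computation to be the main obstacle. The fallback is to differentiate $L_{g_0}\phi$ along the flow directly and absorb the cross term $|\nabla u|\,|\nabla w|$ into $|\nabla w|^2$ and $|\nabla u|^2$ terms, exactly as in Lemma \ref{Lemma6.2}, obtaining $\partial_t(\cdot)\ge -c\,(\cdot)$ and then concluding by a Gronwall argument.
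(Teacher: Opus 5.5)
Your proposal is correct, but it is organized differently from the paper's proof.

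\textbf{The paper's route.} The paper differentiates $L_{g_0}u^{\frac{n-2}{n-4}}$ along the flow. It applies Lemma \ref{convex}, the geometric-mean convexity, to get $L_{g_0}(u^{\frac{2}{n-4}}v)\ge0$ for $v=P_{g_0}^{-1}f$. This yields $\partial_tL_{g_0}u^{\frac{n-2}{n-4}}\ge-\frac{n-2}{4}L_{g_0}u^{\frac{n-2}{n-4}}$. Both $R_{g_u}>0$ and $u>e^{-\frac{n-4}{4}t}u_0$ then follow by comparing $L_{g_0}u^{\frac{n-2}{n-4}}$ with $L_{g_0}(e^{-\frac{n-4}{4}t}u_0)^{\frac{n-2}{n-4}}$ and using the strong maximum principle. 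The $P_{g_0}$ bound is derived exactly as you do.

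\textbf{Your route.} You use that the flow is linear in $u$ once $w$ is given: $e^{\frac{n-4}{4}t}u=u_0+\frac{n-4}{4}\int_0^te^{\frac{n-4}{4}s}w(s)\,ds$. This gives the lower bound on $u$ without any maximum principle for $L_{g_0}$, and it reduces $R_{g_u}>0$ to additive convexity of the cone.

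The step you flagged as the main obstacle closes in one line. Set $p=\frac{n-2}{n-4}$ and $\mathcal N(a):=-\Delta a-(p-1)\frac{|\nabla a|^2}{a}+\frac{n-2}{4(n-1)p}R_{g_0}a$. Then $L_{g_0}a^p=pa^{p-1}\mathcal N(a)$, and $\mathcal N$ is positively $1$-homogeneous. Moreover $\mathcal N(a+b)\ge\mathcal N(a)+\mathcal N(b)$, because $\frac{|\nabla a+\nabla b|^2}{a+b}\le\frac{|\nabla a|^2}{a}+\frac{|\nabla b|^2}{b}$. For the time integral, use the integral Cauchy--Schwarz inequality $\big|\int\alpha\nabla w\,ds\big|^2\le\int\alpha w\,ds\int\alpha\frac{|\nabla w|^2}{w}\,ds$ rather than an appeal to ``positive combinations''.

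Your separate discussion of $f\equiv0$ is unnecessary. While the flow is defined, the denominator of $r$ is nonzero, so $f\not\equiv0$ and $w>0$ at every time by Theorem \ref{thm:maximum principle}.

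\textbf{What your route buys.} It needs no bootstrap. The paper's use of Lemma \ref{convex} presupposes $L_{g_0}u^{\frac{n-2}{n-4}}\ge0$ at the current time, i.e.\ an implicit continuity argument. In your route, each $w(s)$ lies in the cone regardless of the sign of $R_{g_u}$, because of the absolute value in \eqref{eq:real subcritcial flow}. Combining superadditivity with $u\ge e^{-\frac{n-4}{4}t}u_0$ even recovers the paper's quantitative bound $L_{g_0}u^{\frac{n-2}{n-4}}\ge e^{-\frac{n-2}{4}t}L_{g_0}u_0^{\frac{n-2}{n-4}}$.
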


\begin{proof}
Denote $v:=P_{g_0}^{-1}(\frac{n-4}{2}r|u^{\frac{n+4}{n-4}-\frac{2(n-2)}{n-4}\varepsilon}R^{\varepsilon}_{g_u}|)$. By Theorem \ref{thm:maximum principle}, we know that $v>0$ and the scalar curvature of $g_v=v^{\frac4{n-4}}g_0$ is positive. 
From Lemma \ref{convex} below, we know that $L_{g_0}(u^{\frac 2{n-4}}v)\ge 0$ due to $L_{g_0}(u^{\frac{n-2}{n-4}})\ge 0$ and $L_{g_0}(v^{\frac{n-2}{n-4}})\ge 0$.
We compute the evolution equation for $L_{g_0} u^{\frac {n-2}{n-4}}$
\begin{align}
\frac{\partial}{\partial t}L_{g_0}u^{\frac{n-2}{n-4}}& =\frac{n-2}{n-4}L_{g_{0}}\left(u^{\frac{2}{n-4}}\frac{\partial}{\partial t}u\right)\\
&=\frac{n-2}{4}L_{g_0}\left(u^{\frac{2}{n-4}}\big(-u+P_{g_0}^{-1}(\frac{n-4}{2}r|u^{\frac{n+4}{n-4}-\frac{2(n-2)}{n-4}\varepsilon}R^{\varepsilon}_{g_u}|)\big)\right)\label{eq:partial t P_g u_a}\\
 &= -\frac{n-2}{4}L_{g_0}u^{\frac{n-2}{n-4}} + \frac {n-2} 4 L_{g_0} u^{\frac 2 {n-4}} v
  \ge-\frac{n-2}{4}L_{g_0}u^{\frac{n-2}{n-4}}.\nonumber 
\end{align}
It follows
\[
L_{g_0}u^{\frac{n-2}{n-4}}\ge e^{-\frac{n-2}{4}t}L_{g_0}u^{\frac{n-2}{n-4}}(x,0)=L_{g_{0}}(e^{-\frac{n-2}{4}t}u^{\frac{n-2}{n-4}}(x,0))>0.
\]
By the strong maximum principle, we know that $u(x,t)>e^{-\frac{n-4}{4}t}u_0(x)>0$ and $R_{g}>0$, whenever the flow exists.
Moreover we have
\begin{align}
\frac{\partial}{\partial t}P_{g_{0}}u & =P_{g_{0}}\left(\frac{\partial}{\partial t}u\right)=\frac{n-4}{4}\big(-P_{g_{0}}u+\frac{n-4}{2}r|u^{\frac{n+4}{n-4}-\frac{2(n-2)}{n-4}\varepsilon}R^{\varepsilon}_{g_u}|\big)\label{eq:partial t P_g u}\\
 & \ge-\frac{n-4}{4}P_{g_{0}}u,\nonumber 
\end{align}
and thus 
\[
P_{g_{0}}u\ge e^{-\frac{n-4}{4}t}P_{g_{0}}u_0=P_{g_{0}}(e^{-\frac{n-4}{4}t}u_0),
\]
that is $P_{g_{0}}(u-e^{-\frac{n-4}{4}t}u_0)\ge 0.$
\end{proof}

Along the flow the previous Lemma implies that $R_g>0$. Hence by the   definition  of  $R_{g_u}^{\ve}$ 
\eqref{definition of R}, it holds that
$R_{g_u}^{\ve} >0$,  and 
flow \eqref{eq:subcritical flow0} becomes
\begin{equation}\label{eq:subcritical flow}
\begin{cases}
\frac{4}{n-4}u_{t}=-u+\frac{n-4}{2}rP_{g_{0}}^{-1}\left(u^{\frac{n+4}{n-4}-\frac{2(n-2)}{n-4}\varepsilon}R^{\varepsilon}_{g_u}\right)\\
u(\cdot,0)=u_0,
\end{cases}
\end{equation}
where $$r(t)=\frac{\int_{M}Q_{g}dv_{g}}{\int_{M}u^{-\frac{2(n-2)}{n-4}\varepsilon}R^{\varepsilon}_{g_u}dv_{g}}>0.$$

For the subcritical flow, we have the following monotonicity.
\begin{lem}
\label{thm: functional evolution of t}As long as  flow \eqref{eq:subcritical flow} exits, the flow has the following properties:
\begin{equation}
\frac{\partial}{\partial t}\int R_{g}\cdot u^{-\frac{2(n-2)}{n-4}\varepsilon}dv_{g}=\frac{16(n-2)}{(n-4)^{3}r}\int\frac{\partial u}{\partial t}P_{g_{0}}\frac{\partial u}{\partial t}dv_{g_{0}}\label{eq:W^2, 2 norm of u_t} \ge  0
\end{equation}
and 
\begin{equation}
\frac{d}{dt}\int Q_{g}dv_{g}=0,\,\,\text{and}\,\,\int Q_{g}dv_{g}=\int Q_{g_{u_0}}dv_{g_{u_0}}.\label{eq: intergral of Q constant under the flow}
\end{equation}
Moreover,
\begin{equation}
\frac{d}{dt}\frac{\int uP_{g_{0}}u}{\left(\int R_{g}u^{-\varepsilon\frac{2(n-2)}{n-4}}dv_{g}\right)^{\frac{n-4}{n-2}\frac{1}{1-\varepsilon}}}\le0.\label{eq:functional decreasing}
\end{equation}
\end{lem}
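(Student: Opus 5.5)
Three things must be verified: that $\int Q_g\,dv_g$ is constant, that $J_\varepsilon(u)=\int R_g u^{-\varepsilon\frac{2(n-2)}{n-4}}dv_g$ has the stated time derivative, and then monotonicity of $I_\varepsilon$, which follows from the first two. Write $w:=\frac{n-4}{2}rP_{g_0}^{-1}\big(u^{\frac{n+4}{n-4}-\frac{2(n-2)}{n-4}\varepsilon}R^\varepsilon_{g_u}\big)$, so that the flow reads $\frac{4}{n-4}u_t=-u+w$, and also $P_{g_0}w=\frac{n-4}{2}r\,u^{\frac{n+4}{n-4}-\frac{2(n-2)}{n-4}\varepsilon}R^\varepsilon_{g_u}$.

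\emph{Step 1: the first variation of $J_\varepsilon$.} Using the explicit formula for $J_\varepsilon$ from the subcritical subsection, I differentiate in $t$ and integrate by parts. This should give $\frac{d}{dt}J_\varepsilon=c_n\int u_t\,u^{\frac{n+4}{n-4}-\frac{2(n-2)}{n-4}\varepsilon}R^\varepsilon_{g_u}\,dv_{g_0}$ with an explicit constant $c_n$ (of order $\frac{2(n-2)}{n-4}$). This is the same computation that produced the Euler--Lagrange equation \eqref{critical equation of subcrtical functional}, so I take \eqref{definition of R} as already encoding it; I only need to track $c_n$. By the definition of $w$, this becomes $\frac{d}{dt}J_\varepsilon=\frac{2c_n}{(n-4)r}\int u_tP_{g_0}w$. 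Substituting $w=u+\frac4{n-4}u_t$ gives
\[\frac{d}{dt}J_\varepsilon=\frac{2c_n}{(n-4)r}\Big(\int u_tP_{g_0}u+\frac{4}{n-4}\int u_tP_{g_0}u_t\Big).\]

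\emph{Step 2: conservation of $\int Q$.} Since $\int Q_g\,dv_g=\frac{2}{n-4}\int uP_{g_0}u$, we get $\frac{d}{dt}\int Q_g\,dv_g=\frac4{n-4}\int u_tP_{g_0}u$. Pairing the flow with $P_{g_0}u$ gives
\[\frac4{n-4}\int u_tP_{g_0}u=-\int uP_{g_0}u+\frac{n-4}{2}r\int u\cdot u^{\frac{n+4}{n-4}-\frac{2(n-2)}{n-4}\varepsilon}R^\varepsilon_{g_u}\,dv_{g_0}.\]
The last integral equals $\int u^{-\frac{2(n-2)}{n-4}\varepsilon}R^\varepsilon_g\,dv_g$, since $dv_g=u^{\frac{2n}{n-4}}dv_{g_0}$. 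The definition of $r$ then makes the right side $-\int uP_{g_0}u+\frac{n-4}{2}\int Q_g\,dv_g=0$. Hence $\int u_tP_{g_0}u=0$, which proves \eqref{eq: intergral of Q constant under the flow}.

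\emph{Step 3: conclusion.} Feeding $\int u_tP_{g_0}u=0$ into Step 1 yields \eqref{eq:W^2, 2 norm of u_t}, with the constant $\frac{8c_n}{(n-4)^2 r}$, which should reduce to $\frac{16(n-2)}{(n-4)^3r}$. This quantity is nonnegative because $r>0$ (Lemma \ref{lem:lower bound of u} gives $R_g>0$, hence $R^\varepsilon_{g_u}>0$) and $P_{g_0}$ is positive. For \eqref{eq:functional decreasing}: the numerator $E(u)$ is constant, and the denominator is $J_\varepsilon^{\frac{n-4}{n-2}\frac{1}{1-\varepsilon}}$ with $J_\varepsilon>0$ nondecreasing. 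So the quotient is nonincreasing.

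The main obstacle is Step 1. The $\varepsilon$-perturbation makes the first-variation bookkeeping delicate: I must check that the weights $(1-2\varepsilon)$, $a_\varepsilon$ and $b_\varepsilon$ in $R^\varepsilon_{g_u}$ reproduce exactly the variation of $J_\varepsilon$, including the $(1-\varepsilon)$ factor in \eqref{integral of modified R}. I will do this by varying the two terms of $J_\varepsilon$ separately, integrating the gradient term by parts to expose $\Delta u$, and comparing with the expansion of $R_{g_u}$ from \eqref{eq:R transformation}.
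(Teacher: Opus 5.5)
Your proposal is correct and follows the paper's proof. Both arguments establish conservation of $\int Q_g\,dv_g$ by pairing the flow with $P_{g_0}u$, compute the first variation $\frac{d}{dt}J_\varepsilon=\frac{2(n-2)}{n-4}\int u_t\,u^{\frac{n+4}{n-4}-\frac{2(n-2)}{n-4}\varepsilon}R^\varepsilon_{g_u}\,dv_{g_0}$, substitute $P_{g_0}\big(u+\frac{4}{n-4}u_t\big)$, and use $\int u_tP_{g_0}u=0$. The only difference is cosmetic: the paper obtains the first variation from the variation formula for $R_g$ and the conformal covariance of $L_g$, while you differentiate the explicit expression for $J_\varepsilon$. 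Your route does give $c_n=\frac{2(n-2)}{n-4}$, which is also forced by the homogeneity of $J_\varepsilon$ (degree $\frac{2(n-2)(1-\varepsilon)}{n-4}$) together with \eqref{integral of modified R}.
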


\begin{proof}
A direct computation gives 
\begin{align} \label{derivative of integral Q=0}
\frac{d}{dt}\int Q_{g}dv_{g}
 &=\frac{4}{n-4}\int u_tP_{g_0}u dv_{g_0}\nonumber\\
 &=\int \left(-u+\frac{n-4}{2}rP_{g_{0}}^{-1}\left(u^{\frac{n+4}{n-4}-\frac{2(n-2)}{n-4}\varepsilon}R^{\varepsilon}_{g_u}\right)\right)P_{g_0}u dv_{g_0}\nonumber\\
 & =-\frac{n-4}{2}\int Q_{g}dv_{g}+\frac{n-4}{2}\int1\cdot ru^{-\frac{2(n-2)}{n-4}\varepsilon}R^{\varepsilon}_{g_u}dv_{g}
 =0,
\end{align}
due to the choice of $r=\frac{\int_{M}Q_{g}dv_{g}}{\int_{M}u^{-\frac{2(n-2)}{n-4}\varepsilon}R^{\varepsilon}_{g_u}dv_{g}}$
.  Therefore, $\int Q_{g}dv_{g}=\int Q_{g_{u_0}}dv_{g_{u_0}}$ and  $r>0.$ 
By $\frac{4}{n-4}\frac{u_{t}}{u}=g^{-1}\frac{dg}{dt}$, we have 
\begin{align}
&\frac{d}{dt}\int R_{g_{u}}u^{-\frac{2(n-2)\varepsilon}{n-4}}dv_{g_{u}}\\
= & \frac{(n-2)(1-\varepsilon)}{2}\int R_{g}u^{-\frac{2(n-2)\varepsilon}{n-4}}g^{-1}\frac{dg}{dt}dv_{g_{u}}\label{eq:derivative of integral of R}
 -(n-1)\int\Delta_{g}(g^{-1}\frac{dg}{dt})\cdot u^{-\frac{2(n-2)\varepsilon}{n-4}}dv_{g_{u}}\nonumber \\
= & \frac{2(n-2)(1-\varepsilon)}{(n-4)}\int R_{g}u^{-\frac{2(n-2)\varepsilon}{n-4}}\frac{u_{t}}{u}dv_{g_{u}}
 -\frac{4(n-1)}{n-4}\int\frac{u_{t}}{u}\cdot\Delta_{g}(u^{-\frac{2(n-2)\varepsilon}{n-4}})dv_{g_{u}}.\nonumber 
\end{align}
By \eqref{conformal laplacian transform}, we get
\begin{align*}
L_{g}(u^{-\frac{2(n-2)\varepsilon}{n-4}}) & =-\Delta_{g}u^{-\frac{2(n-2)\varepsilon}{n-4}}+\frac{n-2}{4(n-1)}R_{g}u^{-\frac{2(n-2)\varepsilon}{n-4}}\\
 & =(u^{\frac{n-2}{n-4}})^{-\frac{n+2}{n-2}}L_{g_{0}}(u^{\frac{n-2}{n-4}}u^{-\frac{2(n-2)\varepsilon}{n-4}})\\
 & =u^{-\frac{n+2}{n-4}}(-\Delta_{g_{0}}(u^{\frac{n-2}{n-4}}u^{-\frac{2(n-2)\varepsilon}{n-4}})+\frac{n-2}{4(n-1)}R_{g_{0}}u^{\frac{n-2}{n-4}}u^{-\frac{2(n-2)\varepsilon}{n-4}}).
\end{align*}
Moreover, from the expression of $R_g$ \eqref{eq:R transformation}, we have
\begin{align*}
&-\Delta_{g}(u^{-\frac{2(n-2)\varepsilon}{n-4}})\\
= & -\frac{n-2}{4(n-1)}R_{g}u^{-\frac{2(n-2)\varepsilon}{n-4}}
 +u^{-\frac{n+2}{n-4}}(-\Delta_{g_{0}}(u^{\frac{n-2}{n-4}}u^{-\frac{2(n-2)\varepsilon}{n-4}})+\frac{n-2}{4(n-1)}R_{g_{0}}u^{\frac{n-2}{n-4}}u^{-\frac{2(n-2)\varepsilon}{n-4}})\\
= & u^{-\frac{n+2}{n-4}}\left(-u^{\frac{n-2}{n-4}}\Delta_{g_{0}}u^{-\frac{2(n-2)\varepsilon}{n-4}}-2\langle\nabla u^{\frac{n-2}{n-4}},\nabla u^{-\frac{2(n-2)\varepsilon}{n-4}}\rangle_{g_0}\right)\\
= & u^{-\frac{n+2}{n-4}}\left(\frac{2(n-2)\varepsilon}{n-4}u^{\frac{2}{n-4}-\frac{2(n-2)\varepsilon}{n-4}}\Delta_{g_{0}}u
  -u^{\frac{n-2}{n-4}-\frac{2(n-2)\varepsilon}{n-4}-2}|\nabla u|_{g_0}^{2}\left((\varepsilon^{2}-\varepsilon)(\frac{2(n-2)}{n-4})^{2}+\frac{2(n-2)}{n-4}\varepsilon\right)\right)\\
= & -\frac{\varepsilon(n-2)}{2(n-1)}R_{g}u^{-\frac{2(n-2)\varepsilon}{n-4}} +u^{-\frac{n+2}{n-4}}(u^{\frac{n-2}{n-4}-\frac{2(n-2)\varepsilon}{n-4}-2}|\nabla u|_{g_{0}}^{2}a_{\varepsilon}+b_{\varepsilon}R_{g_{0}}u^{\frac{n-2}{n-4}-\frac{2(n-2)\varepsilon}{n-4}}),
\end{align*}
where $a_{\varepsilon}$ and $b_\varepsilon$ are defined in \eqref{ab}. 
 Hence, we have 
\begin{align*}
  \frac{d}{dt}\int R_{g_{u}}u^{-\frac{2(n-2)\varepsilon}{n-4}}dv_{g_{u}}
=& \frac{2(n-2)}{n-4}\int R^{\varepsilon}_{g_u}u^{\frac{n+4}{n-4}}u^{-\frac{2(n-2)\varepsilon}{n-4}}u_{t}dv_{g_{0}}\\
 =&\frac{4(n-2)}{(n-4)^{2}r}\int P_{g_{0}}(\frac{4}{n-4}u_{t}+u)u_{t}dv_{g_{0}}\\
 =& \frac{16(n-2)}{(n-4)^{3}r}\int u_{t}P_{g_{0}}u_{t}dv_{g_{0}},
\end{align*}
due to \eqref{derivative of integral Q=0}. 
This concludes the proof of the lemma.
\end{proof}

\begin{cor}\label{uniform est}
For any $0<T<\infty$ and $t\in (0,T)$, we have 
\textup{ 
\begin{equation}
r(t)\le C,\quad\int_{M}uP_{g_{0}}udv_{g_{0}}=\frac{n-4}{2}\int_{M}Q_{g_{u_0}}dv_{g_{u_0}},\label{eq:upper bound of r and W2,2 norm}
\end{equation}
\begin{equation}\label{lowerandupperbound of integral R}
\int R_{g_{u_0}}u_0^{-\frac{2(n-2)}{n-4}\varepsilon}dv_{g_{u_0}}\le\int_{M}R_{g}\cdot u^{-\frac{2(n-2)}{n-4}\varepsilon}dv_{g}<\bigg(\frac{\frac{n-4}{2}\int Q_{g_{u_0}}dv_{g_{u_0}}}{Y_{\varepsilon}(M,[g_{0}])}\bigg)^{\frac{(n-2)(1-\varepsilon)}{n-4}},
\end{equation}
\[
\int_{0}^{T}\|f\|_{W^{2,2}}^2dt\le C,\quad\int_{0}^{T}(\int_{M}|f|^{\frac{2n}{n-4}})^{\frac{n-4}{n}}dt\le C,
\]
}where $f\colon=u_{t}=\frac{n-4}{4}\bigg(-u+\frac{n-4}{2}rP_{g_{0}}^{-1}(u^{\frac{n+4}{n-4}-\frac{2(n-2)}{n-4}\varepsilon}R^{\varepsilon}_{g_u})\bigg)$
and $C$ depends only  on $g_{0}$, $\|u_0\|_{W^{2,2}}$, $J_{\varepsilon}(u_0)$ and $n.$
\end{cor}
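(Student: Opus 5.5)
The plan is to read everything off the two identities in Lemma \ref{thm: functional evolution of t}, together with the definition of $Y_\varepsilon$ and the coercivity $\|\phi\|^2_{W^{2,2}}\simeq \int \phi P_{g_0}\phi$.

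\emph{The energy.} Since $\frac{4}{n-4}\int u_tP_{g_0}u = \frac{d}{dt}\int Q_g dv_g\cdot$(const) $=0$ by \eqref{eq: intergral of Q constant under the flow}, and $\int Q_g dv_g = \frac{2}{n-4}\int uP_{g_0}u\,dv_{g_0}$ by \eqref{eq:Q-transformation}, we get $\int uP_{g_0}u = \frac{n-4}{2}\int Q_{g_{u_0}}dv_{g_{u_0}}$ for all $t$. This gives the identity in \eqref{eq:upper bound of r and W2,2 norm}, and hence a uniform $W^{2,2}$ bound on $u(t)$.

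\emph{Bounds on $J_\varepsilon$.} Monotonicity \eqref{eq:W^2, 2 norm of u_t} (with $r>0$) gives $J_\varepsilon(u(t))\ge J_\varepsilon(u_0)$. By Lemma \ref{lem:lower bound of u}, $u(t)$ is admissible in the infimum defining $Y_\varepsilon$, so $Y_\varepsilon\le E(u)/J_\varepsilon(u)^{\frac{n-4}{(n-2)(1-\varepsilon)}}$. Using $E(u)=\frac{n-4}{2}\int Q_{g_{u_0}}$ this yields the upper bound in \eqref{lowerandupperbound of integral R}. The strict inequality needs $Y_\varepsilon>0$, which I would get from H\"older/Sobolev: $J_\varepsilon(u)\lesssim \|u\|_{W^{2,2}}^{2(1-\varepsilon)(n-2)/(n-4)}$, since $|\nabla u|^2u^{\frac{2}{n-4}(2-(n-2)\varepsilon)}$ and $u^{\frac{2(n-2)}{n-4}(1-\varepsilon)}$ are controlled via the $L^{2n/(n-2)}$ norm of $\nabla u$ and the $L^{2n/(n-4)}$ norm of $u$; the exponents are subcritical for $\varepsilon>0$. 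Then the bound on $r$ follows: by \eqref{equvalence of R and tilde R}, $\int u^{-\frac{2(n-2)}{n-4}\varepsilon}R^\varepsilon_g dv_g\ge \frac12 J_\varepsilon(u)\ge \frac12 J_\varepsilon(u_0)>0$, while the numerator $\int Q_g$ is constant. This is the right inequality to use for a lower bound on the denominator.

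\emph{Time-integrated bounds on $f=u_t$.} Integrating \eqref{eq:W^2, 2 norm of u_t} over $[0,T]$ gives
\[
\int_0^T\int f P_{g_0}f\,dv_{g_0}\,dt\le \frac{(n-4)^3}{16(n-2)}\sup_t r\cdot\big(J_\varepsilon(u(T))-J_\varepsilon(u_0)\big),
\]
which is bounded by the two previous steps. Coercivity of $P_{g_0}$ (positive since $Q_{g_0}$ is semi-positive and $R_{g_0}>0$, Proposition 2.3 in \cite{GM}) converts this into the $W^{2,2}$ bound. The Sobolev embedding $W^{2,2}\hookrightarrow L^{2n/(n-4)}$ then gives the last estimate.

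\emph{Main obstacle.} The only delicate point is keeping the constants independent of $T$, i.e.\ the positivity of $Y_\varepsilon$ for fixed $\varepsilon$. I expect this to follow from the subcritical Sobolev estimate sketched above, combined with the uniform $W^{2,2}$ bound.
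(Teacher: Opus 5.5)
Your proposal is correct and follows the paper's proof essentially step by step: conservation of $\int Q_g\,dv_g$ gives the energy identity, and monotonicity of $J_\varepsilon$ from Lemma~\ref{thm: functional evolution of t} together with the definition of $Y_\varepsilon$ gives the two-sided bound on $J_\varepsilon$ and, via \eqref{equvalence of R and tilde R}, the bound on $r$; integrating the monotonicity identity in time and using coercivity of $P_{g_0}$ and Sobolev gives the estimates on $f$, and your H\"older--Sobolev argument for $Y_\varepsilon>0$ is the one the paper records later in \eqref{eq:positive lower bound of Yamabe constant}. One small correction: positivity of $Y_\varepsilon$ makes the upper bound finite but not strict, since both your argument and the paper's only give $\le$, with equality whenever $u(t)$ minimizes $I_\varepsilon$ (e.g.\ if $u_0$ is a minimizer, which is stationary for the flow).
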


\begin{proof}
By Lemma \ref{thm: functional evolution of t}, 
we know that 
$$\int R_{g}\cdot u^{-\frac{2(n-2)}{n-4}\varepsilon}dv_{g}\ge \int R_{g_{u_0}}\cdot u_0^{-\frac{2(n-2)}{n-4}\varepsilon}dv_{g_{u_0}}$$
and hence, {it follows from (\ref{equvalence of R and tilde R}) }
$$r\le \frac{1}{1-\varepsilon}\frac{\int Q_{g_{u_0}dv_{g_{u_0}}}}{\int R_{g_{u_0}}\cdot u_0^{-\frac{2(n-2)}{n-4}\varepsilon}dv_{g_{u_0}}}.$$
Since $P_{g_0}$ is positive,  
$Y_{\varepsilon}(M,[g_{0}])>0$, we have 
\begin{align*}
0  < Y_{\varepsilon}(M,[g_{0}])&\le\big(\int R_{g}\cdot u^{-\frac{2(n-2)}{n-4}\varepsilon}dv_{g}\big)^{-\frac{n-4}{n-2}\frac{1}{1-\varepsilon}}\int uP_{g_{0}}udv_{g_{0}}\\
 & =\frac{n-4}{2}\big(\int R_{g}\cdot u^{-\frac{2(n-2)}{n-4}\varepsilon}dv_{g}\big)^{-\frac{n-4}{n-2}\frac{1}{1-\varepsilon}}\int Q_{g}dv_{g},
\end{align*}
and 
\begin{equation}
Y_{\varepsilon}(M,[g_{0}])\big(\int R_{g}\cdot u^{-\frac{2(n-2)}{n-4}\varepsilon}dv_{g}\big)^{\frac{n-4}{n-2}\frac{1}{1-\varepsilon}}\le\frac{n-4}{2}\int Q_{g}dv_{g}=\frac{n-4}{2}\int Q_{g_{u_0}}dv_{g_{u_0}},\label{eq:upper bound of integral R}
\end{equation}
yielding \eqref{lowerandupperbound of integral R}.
Integrating \eqref{eq:W^2, 2 norm of u_t},  we get 
\[
\int_{M}R_{g}\cdot u^{-\frac{2(n-2)}{n-4}\varepsilon}dv_{g}(T)-\int R_{g_{u_0}}u_0^{-\frac{2(n-2)\varepsilon}{n-4}}dv_{g_{u_0}}=\frac{16(n-2)}{(n-4)^{3}}\int_{0}^{T}\frac{1}{r}\int_{M}\frac{\partial u}{\partial t}(P_{g_{0}}\frac{\partial u}{\partial t})dv_{g_{0}}dt.
\]
Together with (\ref{eq:upper bound of integral R}) and $r\le C$, it follows, 
\[
\int_{0}^{T}\|f\|_{W^{2,2}}^2dt<C(g_{0},n, u_0),
\]
{which implies by the Sobolev inequality
\[
\int_{0}^{T}(\int_{M}|f|^{\frac{2n}{n-4}})^{\frac{n-4}{n}}dt\le C(g_{0},n, u_0).
\]
}
\end{proof}
Now we show the long-time existence of conformal subcritical flow
(\ref{eq:subcritical flow}). 
\begin{thm}\label{long-time existence}
{Under the same assumptions as in Theorem \ref{subcritical flow convergence}}, flow  (\ref{eq:subcritical flow}) has  long-time existence. Moreover,  for  any $0<T<\infty $,  the solution $u$ to flow (\ref{eq:subcritical flow})
satisfies 
\[
\|u\|_{C^{\infty}(M)}\le C(T) 
\]
for any $t\in[0,T).$
\end{thm}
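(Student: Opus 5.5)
The strategy is a continuation argument: derive a priori bounds on $[0,T)$, depending only on $T$, $g_0$, $u_0$ and $\varepsilon$, that keep the $C^{4,\alpha}$ norm of $u$ bounded. The short-time existence lemma can then be restarted at any time before $T$, so the maximal existence time is infinite. The estimates will be bootstrapped from the equation $P_{g_0}u_t=\frac{n-4}{4}\big(-P_{g_0}u+\frac{n-4}{2}r\,u^{\frac{n+4}{n-4}-\frac{2(n-2)}{n-4}\varepsilon}R^{\varepsilon}_{g_u}\big)$, with the norm equivalence \eqref{eq:normal equalven} doing most of the work.

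\textbf{Step 1: bounds on $u$ and $r$.} Corollary \ref{uniform est} already gives $\|u\|_{W^{2,2}}\le C$ uniformly, together with $r\le C$. From \eqref{lowerandupperbound of integral R} and the constancy of $\int Q_g\,dv_g$ we also get $r\ge c>0$. Lemma \ref{lem:lower bound of u} gives the pointwise lower bound $u\ge e^{-\frac{n-4}{4}T}\min u_0>0$ on $[0,T)$. This lower bound is essential, because $R^\varepsilon_{g_u}$ contains the negative powers $u^{-\frac4{n-4}-2}$ and $u^{-\frac4{n-4}}$.

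\textbf{Step 2: bootstrap in $W^{4,s}$.} I write the right-hand side as $N(u)=u^{\frac{n+4}{n-4}-\frac{2(n-2)}{n-4}\varepsilon}R^{\varepsilon}_{g_u}$. By \eqref{eq:R transformation}, $N(u)$ is a combination of the terms $u^{a}\Delta u$, $u^{a-1}|\nabla u|^2$ and $u^{b}$, with explicit exponents $a$ and $b$. Its growth in $u$ is strictly subcritical. The homogeneity is $\frac{n+4}{n-4}-\frac{2(n-2)}{n-4}\varepsilon$, compared with the critical $\frac{n+4}{n-4}$ relative to $\Delta^2$. Just as in the proof of \eqref{positivity} in the four-dimensional case, I set $w=P_{g_0}u-e^{-\frac{n-4}{4}t}P_{g_0}u_0\ge0$ (Lemma \ref{lem:lower bound of u}). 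I then differentiate $\int w^s$ in time and apply Young's inequality:
\[
\frac{d}{dt}\int w^s\le C s\int w^s+C\int |N(u)|^s .
\]
The task is to control $\int|N(u)|^s$ by $\|P_{g_0}u\|_{L^s}^{\theta s}+C$ for some $\theta<1$, or at least linearly. For the $\Delta u$ and $|\nabla u|^2$ terms I use Gagliardo--Nirenberg interpolation between the uniform $W^{2,2}$ bound and $W^{4,s}$. Since the interpolation exponent at each level is strictly less than one, this should close, giving a Gronwall bound for $\|u\|_{W^{4,s}}$. I start at $s=2$ and raise $s$ iteratively until $s>n$.

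\textbf{Step 3: higher regularity.} Once $u\in W^{4,s}$ with $s>n$, $u$ is bounded in $C^{3,\alpha}$. Then $N(u)\in C^{1,\alpha}$, and the equation for $P_{g_0}u_t$ together with Schauder estimates for $P_{g_0}^{-1}$ gives $C^{k}$ bounds for every $k$ by induction, all depending on $T$. This yields the claimed $\|u\|_{C^\infty}\le C(T)$ and hence long-time existence.

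\textbf{Main obstacle.} The hard step is closing the first levels of Step 2, namely controlling $\int|u^a\Delta u|^s$ and $\int |u^{a-1}|\nabla u|^2|^s$ using only the $W^{2,2}$ bound. The sup of $u$ is not yet known, and at $s=2$ the term $|\nabla u|^4u^{\dots}$ is borderline. My first attempt is to exploit the exponential time-weighted linear structure, which tolerates growth like $e^{Ct}$. That is enough because the $W^{2,2}$ bound from Corollary \ref{uniform est} combined with interpolation keeps the nonlinearity at most linear in $\|P_{g_0}u\|_{L^s}$ with constants depending on $T$. If a sup bound turns out to be needed first, I would get it from the representation $u=e^{-\frac{n-4}{4}t}u_0+\int_0^t(\cdots)$ and a Moser-type iteration that uses the subcriticality gap $\varepsilon$.
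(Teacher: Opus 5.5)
There is a genuine gap in Step 2, exactly where you locate the main obstacle: the gradient part of $N(u)$. Put $a=\frac{4-2(n-2)\varepsilon}{n-4}$ and $b=\frac{n-2(n-2)\varepsilon}{n-4}$. Expanding $R_{g_u}$ by \eqref{eq:R transformation} and $R^{\varepsilon}_{g_u}$ by \eqref{definition of R}, every gradient term of $N(u)$ is a multiple of $u^{a-1}|\nabla u|^2$. Moreover $a-1=\frac{8-n-2(n-2)\varepsilon}{n-4}<0$ for $n\ge 8$.

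Your plan uses $u\ge c_T$ to discard this weight and then interpolates $|\nabla u|^2$ between the uniform $W^{2,2}$ bound and $W^{4,s}$. Without the weight, however, $|\nabla u|^2$ is supercritical for the $W^{2,2}$ scaling when $n>8$. At your starting level $s=2$, the best available bound is $\||\nabla u|^2\|_{L^2}=\|\nabla u\|_{L^4}^2\lesssim \|u\|_{W^{2,2}}^{\frac{12-n}{4}}\|u\|_{W^{4,2}}^{\frac{n-4}{4}}$, and $\frac{n-4}{4}>1$ once $n>8$. For $n>12$ there is no bound at all, since $W^{4,2}\not\hookrightarrow W^{1,4}$. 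Concentrating profiles $\lambda^{\frac{n-4}{2}}\phi(\lambda x)$, which stay bounded in $W^{2,2}$, show this is sharp. They also show that at every level $s\ge 2$ the exponent $\kappa=\frac{n-2-n/s}{2+n/2-n/s}$ exceeds $1$ exactly when $n>8$.

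Your differential inequality therefore becomes $y'\le Cy+Cy^{\kappa}$ with $\kappa>1$. This permits finite-time blow-up, so Gronwall gives no bound on $[0,T)$. The subcriticality you appeal to is carried by the homogeneous weight $u^{a-1}$, and that is exactly what the lower bound throws away. The Moser-iteration fallback is too unspecified to close the gap.

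The missing idea is to use the positivity of the scalar curvature along the flow (Lemma \ref{lem:lower bound of u}) rather than the lower bound on $u$. One has $u^{\frac{n+4}{n-4}-\frac{2(n-2)}{n-4}\varepsilon}R_{g_u}=-c_1u^{a}\Delta u-c_2u^{a-1}|\nabla u|^2+R_{g_0}u^{b}$ with $c_1,c_2>0$. Its positivity is the pointwise inequality \eqref{eq:upper bound of R}. This dominates both the $R_{g_u}$ term and the $a_\varepsilon$ term, giving $0<N(u)\le C u^{a}|\Delta u|+Cu^{b}$, where $a,b>0$ precisely because $\varepsilon<\frac{2}{n-2}$.

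H\"older's inequality with $\|u\|_{L^{2n/(n-4)}}\le C$, together with $W^{4,s}\hookrightarrow W^{2,\frac{ns}{n-2s}}$ and $W^{4,s}\hookrightarrow L^{\frac{ns}{n-4s}}$, then gives $\|N(u)\|_{L^s}\le C\|P_{g_0}u\|_{L^s}+C$ for $1<s<\frac n4$. This bound is linear, needs neither interpolation nor a sup bound, and yields $e^{Cst}$ growth. The bootstrap then proceeds as in the paper: $L^p$ bounds on $u$ for all $p$, then $s<\frac n2$, then $C^{1,\alpha}$, then all $s$, and finally your Step 3.

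Alternatively, you could keep the weight and integrate by parts: $\int u^{\gamma}|\nabla u|^{2s}\le C_{s,\gamma}\int u^{\gamma+s}|\nabla^2u|^{s}$ for $\gamma\ne-1$. This puts the gradient term in the same form as the Laplacian term. What cannot work for $n\ge 9$ is interpolation after the weight has been removed.
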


\begin{proof}
Denote $v=u-e^{-\frac{n-4}{4}t}u_0>0$.
Lemma \ref{lem:lower bound of u} implies
$P_{g_{0}}v>0$.  By $|P_{g_{0}}v|-C\le |P_{g_{0}}u|\le|P_{g_{0}}v|+C$, we have for any $s>0$
\begin{equation}\label{u-vnormal}
\left(\int |P_{g_{0}}u|^s\right)^{1/s}-C\le \left(\int (P_{g_{0}}v)^s\right)^{1/s} \le \left(\int |P_{g_{0}}u|^s\right)^{1/s}+C.
\end{equation}
A direct computation yields 
\begin{equation}
\begin{aligned}  \frac{d}{dt}\int\left(P_{g_{0}}v\right)^{s}dv_{g_0} &=s\int\left(P_{g_{0}}v\right)^{s-1}\frac{\partial}{\partial t}\left(P_{g_{0}}v\right)dv_{g_0}\\
 & =\frac{s}{4}(n-4)\int\left(P_{g_{0}}v\right)^{s-1}\left\{ -P_{g_{0}}v+\frac{n-4}{2}ru^{\frac{n+4}{n-4}-\frac{2(n-2)}{n-4}\varepsilon}R^{\varepsilon}_{g_u}\right\} dv_{g_0}\\
 & =-\frac{n-4}{4}s\int\left(P_{g_{0}}v\right)^{s}+sr\frac{(n-4)^{2}}{8}\int\left(P_{g_{0}}v\right)^{s-1}(u^{\frac{n+4}{n-4}-\frac{2(n-2)}{n-4}\varepsilon}R^{\varepsilon}_{g_u})dv_{g_0},
\end{aligned}
\label{eq:first derivative}
\end{equation}
where $s>1$ will be determined later. 
H\"older's inequality gives
\begin{equation}
\int(P_{g_{0}}v)^{s-1}u^{\frac{n+4}{n-4}-\frac{2(n-2)}{n-4}\varepsilon}R^{\varepsilon}_{g_u}dv_{g_0}\le  \left(\int(P_{g_{0}}v)^{s}dv_{g_0}\right)^{\frac{s-1}{s}}\left(\int(u^{\frac{n+4}{n-4}-\frac{2(n-2)}{n-4}\varepsilon}R^{\varepsilon}_{g_u})^{s}dv_{g_0}\right)^{\frac{1}{s}}.\label{eq:holder inequality}
\end{equation}
Along the flow, we have
\begin{align}
0 & <u^{\frac{n+4}{n-4}-\frac{2(n-2)}{n-4}\varepsilon}R_{g}\nonumber \\
 & =\frac{4(n-1)}{n-2}u^{\frac{2}{n-4}-\frac{2(n-2)}{n-4}\varepsilon}(-\frac{n-2}{n-4}u^{\frac{2}{n-4}}\Delta u-\frac{2(n-2)}{(n-4)^{2}}u^{\frac{n-2}{n-4}-2}|\nabla u|^{2}+\frac{n-2}{4(n-1)}R_{g_{0}}u^{\frac{n-2}{n-4}}),\nonumber 
 \end{align}
 which implies
 \begin{equation}\label{eq:upper bound of R}
    \frac{8(n-1)}{(n-4)^2}u^{\frac{2}{n-4}-\frac{2(n-2)}{n-4}\varepsilon}u^{\frac{n-2}{n-4}-2}|\nabla u|^{2} \le-\frac{4(n-1)}{n-4}u^{\frac{4}{n-4}-\frac{2(n-2)}{n-4}\varepsilon}\Delta u+R_{g_{0}}u^{\frac{n}{n-4}-\frac{2(n-2)}{n-4}\varepsilon}.
 \end{equation} 
Together with the definition of $R_{g_u}^{\varepsilon}$, \eqref{definition of R},  we have 
\begin{align}
\int(u^{\frac{n+4}{n-4}-\frac{2(n-2)}{n-4}\varepsilon}R^{\varepsilon}_{g_u})^{s}dv_{g_0} & \le C\int(u^{\frac{4}{n-4}-\frac{2(n-2)}{n-4}\varepsilon}|\Delta u|)^{s}+C\int u^{(\frac{n}{n-4}-\frac{2(n-2)}{n-4}\varepsilon)s}.\label{eq:fundamental caluc}
\end{align}
From (\ref{eq:upper bound of r and W2,2 norm}) and the Sobolev inequality,
we know that $\int u^{\frac{4}{n-4}}|\nabla_{g_{0}}u|^{2}+\int u^{\frac{2(n-2)}{n-4}}dv_{g_{0}}\le C$
and $\int u^{\frac{2n}{n-4}}\le C.$
Thus, from (\ref{eq:normal equalven}),  H\"older's inequality and  the Sobolev inequality, we find that for
any $s<\frac{n}{2},$
\begin{align*}
\int(u^{\frac{4}{n-4}-\frac{2(n-2)}{n-4}\varepsilon}|\Delta u|)^{s}dv_{g_0} & \le\big(\int u^{\frac{2n}{n-4}}\big)^{\frac{(2-(n-2)\varepsilon)s}{n}}\big(\int|\Delta u|^{\frac{sn}{n-(2-(n-2)\varepsilon)s}}\big)^{\frac{n-(2-(n-2)\varepsilon)s}{n}}\\
 & \le C\big(\int|\Delta u|^{\frac{sn}{n-2s}}\big)^{\frac{n-2s}{n}}\big(\int u^{\frac{2n}{n-4}}\big)^{\frac{(2-(n-2)\varepsilon)s}{n}}\\
 & \le C|P_{g_{0}}u|_{L^{s}}^{s}.
\end{align*}
{Here we have used the fact $0<\varepsilon<\frac{2}{n-2}$.} For $1<s<\frac{n}{4},$
\[
\int u^{(\frac{n}{n-4}-\frac{2(n-2)}{n-4}\varepsilon)s}dv_{g_0}\le C(\int u^{\frac{sn}{n-4s}})^{\frac{n-4s}{n}}(\int u^{\frac{n(4-2(n-2)\varepsilon)}{4(n-4)}})^{\frac{4s}{n}}\le C|P_{g_{0}}u|_{L^{s}}^{s}.
\]
Therefore, 
\[
\big(\int(u^{\frac{n+4}{n-4}-\frac{2(n-2)}{n-4}\varepsilon}R^{\varepsilon}_{g_u})^{s}dv_{g_0}\big)^{1/s}\le C|P_{g_{0}}u|_{L^{s}}\le C|P_{g_{0}}v|_{L^{s}}+C,
\]
which, together with  \eqref{eq:first derivative} and \eqref{eq:holder inequality},
implies \begin{align*}
\frac{d}{dt}\int(P_{g_{0}}v)^{s}dv_{g_0} & \le-s\frac{n-4}{4}\int(P_{g_{0}}v)^{s}+Cs\int(P_{g_{0}}v)^{s}+Cs\left(\int(P_{g_{0}}v)^{s}\right)^{\frac{s-1}{s}}\\
 & \le Cs\int(P_{g_{0}}v)^{s}+Cs.
\end{align*}
{In the sequel,  positive constants $C,c, C_s, c_s, C_p, c_p$ are independent of $t$. However, their values may vary from  line to line.} It follows that for any $1<s<\frac{n}{4}$, there exists a positive constant $C$ depending on $s$ such that $\int(P_{g_{0}}v)^{s}dv_{g_0}\le Ce^{Cst}.$
Then, thanks to \eqref{u-vnormal} and  the  Sobolev inequality, for any $p>1$, there exist positive constants $C_p$ and $c_p$  such that
\begin{equation}\label{uLpestimate}
    \|u\|_{L^{p}}\le C_pe^{c_{p}t}.
\end{equation}
For any $\frac{n}{4}<s<\frac{n}{2}$, we can choose $\frac{n}{6}\le s_{0}<\frac{n}{4}$ such that
$\frac{ns_{0}}{n-2s_{0}}>s$. 
With the help of \eqref{uLpestimate}, for such $s_0$, by  H\"older's inequality,  there exists $t_{0}<+\infty$ such that 
\[
\int(u^{\frac{4}{n-4}-\frac{2(n-2)}{n-4}\varepsilon}|\Delta u|)^{s}\le\big(\int|\Delta u|^{\frac{ns_{0}}{n-2s_{0}}}\big)^{\frac{s}{\frac{ns_{0}}{n-2s_{0}}}}\big(\int u^{t_{0}}\big)^{1-\frac{s}{\frac{ns_{0}}{n-2s_{0}}}}\le Ce^{ct}|P_{g_{0}}u|_{L^{s_{0}}}^{s}\le Ce^{ct}.
\]
Together with \eqref{eq:fundamental caluc} and \eqref{uLpestimate}, we have
\[
\int(u^{\frac{n+4}{n-4}-\frac{2(n-2)}{n-4}\varepsilon}R^{\varepsilon}_{g_u})^{s}dv_{g_0}\le Ce^{ct}.
\]
Now from (\ref{eq:first derivative}), (\ref{eq:holder inequality}) and (\ref{eq:fundamental caluc}),
we have 
\begin{align*}
\frac{d}{dt}\int(P_{g_{0}}v)^{s}dv_{g_0} & \le-s\frac{n-4}{4}\int\left(P_{g_{0}}v\right)^{s}+Ce^{c_{1}t}sr\big(\int(P_{g_{0}}v)^{s}dv_{g_0}\big)^{\frac{s-1}{s}}\\
 & \le Ce^{Ct}.
\end{align*}
Thus, $\frac{d}{dt}\int(P_{g_{0}}v)^{s}dv_{g_0}\le Ce^{Ct}$ for any
$s<\frac{n}{2}$. This yields $u\in W^{4,s}$ for $s<\frac{n}{2}$
and $\|u\|_{C^{1,\alpha}}\le Ce^{ct}$ for any $0\le\alpha<1$.  

Since  $u\in W^{4,p}$ for any $p<\frac{n}{2}$, we get that
 for any $s>0$, there exists a positive constant $p<\frac{n}{2}$ such that
\[
\big(\int|\Delta u|^{s}\big)^{\frac{1}{s}}\le C|P_{g_{0}}u|_{L^{p}}<C_s e^{c_st}.
\]
Now due to $|u|_{C^{1,\alpha}}\le C$, \eqref{eq:first derivative} and   \eqref{eq:upper bound of R}, for any $s>1$, we have
{
\begin{align*}
\frac{d}{dt}\int\left(P_{g_{0}}v\right)^{s}dv_{g_0} & \le-\frac{n-4}{4}s\int\left(P_{g_{0}}v\right)^{s}+srC\frac{(n-4)^{2}}{8}e^{ct}\int\left(P_{g_{0}}v\right)^{s-1}(|\Delta u|+1)dv_{g_0}\\
 & \le-\frac{n-4}{4}s\int\left(P_{g_{0}}v\right)^{s}+Ce^{ct}\big(\int(P_{g_{0}}v)^{s}dv_{g_0}\big)^{\frac{s-1}{s}}\left(\int|\Delta u|^{s}dv_{g_0}+1\right)^{\frac{1}{s}}\\
 & \le-\frac{n-4}{4}s\int\left(P_{g_{0}}v\right)^{s}+Ce^{ct}\big(\int(P_{g_{0}}v)^{s}dv_{g_0}+1\big)^{\frac{s-1}{s}}\\
 &\le Ce^{ct}\big(\int(P_{g_{0}}v)^{s}dv_{g_0}+1\big)^{\frac{s-1}{s}}
\end{align*}
}
Thus, $\int|P_{g_{0}}u|^{s}dv_{g_0}\le Ce^{Ct}$ for any $s>1$. The
solution $u$ is at least $C^{3,\alpha}$ for any $0<\alpha<1$,  and due to \eqref{eq:partial t P_g u},  we know that $u$ is at least $C^{5}$  and higher regularity follows.
\end{proof}

Recall the definition of a weak solution in Section 1.3 of \cite{Robert} as follows:
 $u \in W^{2,2}(M)$ is a weak solution of $P_g u=f$, with $f \in L^1(M)$, if
$$
\langle\varphi,u\rangle_{W^{2,2}(g_{0})}=\int_M f \varphi d v_{g_0}
$$
for all $\varphi \in C^{\infty}(M)$. We also refer the reader to the regularity theory in Section 1.3 of \cite{Robert} or \cite{robert2024localizationbubblinghighorder}. 

\begin{proof}[Proof of Theorem \ref{subcritical flow convergence}]
From  Corollary \ref{uniform est} and the Rellich-Kondrachov Theorem,
there exists a subsequence
of $t_{j}\uparrow \infty$  and a function $u_{\varepsilon}\in W^{2,2}(M^n)$ such that $u_{j}=u(t_{j},\cdot)$, $r_{j}=r(t_{j})$
satisfies
{
\[
\begin{array}{cl}
r_{j}\rightarrow\bar{r},\\
u_{j}\rightharpoonup u_{\varepsilon} & \text{weakly in }W^{2,2}(M^{n}),\\
 u_{j}\rightarrow u_{\varepsilon} & \text{strongly in }W^{1,p}(M^{n})\quad \forall p<\frac{2n}{n-2}, \text{ and strongly in }L^{q}(M^{n})\quad \forall q<\frac{2n}{n-4}\\
f_{j}\rightarrow0 & \text{strongly in }W^{2,2}(M^{n}),
\end{array}
\]
}
where $$f_{j}\colon=\frac{n-4}{4}\bigg(-u_{j}+\frac{n-4}{2}r_{j}P_{g_{0}}^{-1}|u_{j}^{\frac{n+4}{n-4}-\frac{2(n-2)}{n-4}\varepsilon}R^{\varepsilon}_{g_{u_j}}|\bigg).$$
By \eqref{equvalence of R and tilde R}, \eqref{eq:upper bound of r and W2,2 norm} and  \eqref{lowerandupperbound of integral R}, we have
\[
\bar r=\lim_{j\rightarrow\infty}r_{j}=\lim_{j\rightarrow\infty}\frac{\int Q_{g_{j}}dv_{g_{j}}}{\int_{M}R^{\varepsilon}_{g_{j}}u_{j}^{-\frac{2(n-2)}{n-4}\varepsilon}dv_{g_{j}}}\ge C>0.
\]
It holds that $P_{g_{0}}^{-1}(u_{j}^{\frac{n+4}{n-4}-\frac{2(n-2)}{n-4}\varepsilon}R^{\varepsilon}_{g_{u_{j}}})=\frac{8}{(n-4)^{2}}(f_{j}+\frac{n-4}{4}u_{j})/r_{j}$, that is, $$u_{j}^{\frac{n+4}{n-4}-\frac{2(n-2)}{n-4}\varepsilon}R^{\varepsilon}_{g_{u_{j}}}=\frac{8}{(n-4)^{2}}P_{g_{0}}(f_{j}+\frac{n-4}{4}u_{j})/r_{j}.$$ 
Note that 
$u_{\varepsilon}^{\frac{n-2}{n-4}}\in W^{1,2}$
 is a weak non-negative supersolution to $L_{g_0} u_{\varepsilon}^{\frac{n-2}{n-4}}\ge 0 $. 
 By the Harnack inequality \cite[Theorem 8.18]{GT}, either $u_{\varepsilon}>0$ or $u_{\varepsilon}\equiv 0$. Since we have 
\begin{align*}
\int_{M}R_{g_{j}}u_{j}^{-\frac{2(n-2)}{n-4}\varepsilon}dv_{g_{j}}\rightarrow&\int_{M}R_{g_{u_{\varepsilon}}}u_{\varepsilon}^{-\frac{2(n-2)}{n-4}\varepsilon}dv_{g_{u_{\varepsilon}}}\\
&=\frac{4(n-1)(n-2)}{(n-4)^{2}}\int(1-2\varepsilon)|\nabla u_{\varepsilon}|^{2}u_{\varepsilon}^{\frac{2}{n-4}(2-(n-2)\varepsilon)}+\int R_{g_{0}}u_{\varepsilon}^{\frac{2(n-2)}{n-4}(1-\varepsilon)},
\end{align*}
by Lemma \ref{thm: functional evolution of t}
we conclude that the function
$u_{\varepsilon}$ is non-zero  and positive.

By the lower semi-continuous, we have  $$\liminf_j\int (\Delta u_j)^2 dv_{g_0}\ge \int (\Delta u_{\varepsilon})^2 dv_{g_0}$$
and then 
\begin{equation}\label{energy decrease}
I_{\varepsilon}(u_{\varepsilon})\le \liminf_j
I_{\varepsilon}(u_{j})\le I_{\varepsilon}(u_{0}) .
\end{equation}
From the boundness of $\|u_{j}\|_{W^{2,2}}$, there exists a $q_0$ such that $\frac{2n}{n-4}> q_{0}=\frac{2n}{n+4-2(n-2)\varepsilon}>1$
such that $u_{j}^{\frac{n+4}{n-4}-\frac{2(n-2)}{n-4}\varepsilon}R^{\varepsilon}_{g_{u_{j}}}$ 
is   $L^{q_{0}}$ bounded  by (\ref{eq:upper bound of R}). Hence 
we get that $(f_{j}+\frac{n-4}{2}u_{j})/r_{j}$ is bounded in $W^{4,q_{0}}$
norm. 
It is clear that  $u_{\varepsilon}$ is a weak solution to the following equation
\begin{equation}
P_{g_{0}}u_{\varepsilon}=\frac{n-4}{2}\bar{r}u_{\varepsilon}^{\frac{n+4}{n-4}-\frac{2(n-2)}{n-4}\varepsilon}R^{\varepsilon}_{g_{u_{\varepsilon}}}. \nonumber
\end{equation}

Starting from $q_0$ we find a sequence $\{q_i\}$ by induction as follows.
Once $u_{\varepsilon}^{\frac{n+4}{n-4}-\frac{2(n-2)}{n-4}\varepsilon}R^{\varepsilon}_{g_{u_{\varepsilon}}}$
is $L^{q_{i}}$, we have $u_{\varepsilon}\in W^{4,q_{i}}$ and $\Delta u_{\varepsilon}\in W^{2,q_{i}}$. Then $u_{\varepsilon}^{\frac{n+4}{n-4}-\frac{2(n-2)}{n-4}\varepsilon}R^{\varepsilon}_{g_{u_{\varepsilon}}}$
belongs to $L^{q_{i+1}}$.  To see this, 
we take $q_{i+1}$ such that \begin{align*}
(\frac{4}{n-4}-\frac{2(n-2)}{n-4}\varepsilon)\frac{q_{i+1}}{1-\frac{q_{i+1}(n-2q_{i})}{nq_{i}}} 
 & =\frac{nq_{i}}{n-4q_{i}}\quad\text{for}\quad i\ge 0,
\end{align*}
that is, 
\[
q_{i+1}=\frac{nq_{i}}{(4-2(n-2)\varepsilon)\frac{n-4q_{i}}{n-4}+n-2q_{i}}.
\]
Note that $q_{i}>\frac{(2-(n-2)\varepsilon)n}{n+4-4(n-2)\varepsilon}$ and $q_{i+1}>q_i$, {provided $q_i<\frac{n}{4}$.}
We find that 
\begin{align}
&\int(u_{\varepsilon}^{\frac{4}{n-4}-\frac{2(n-2)}{n-4}\varepsilon}|\Delta u_{\varepsilon}|)^{q_{i+1}} \label{inductionimprovement}\\
&\le\left(\int|\Delta u_{\varepsilon}|^{\frac{nq_{i}}{n-2q_{i}}}\right)^{\frac{q_{i+1}(n-2q_{i})}{nq_{i}}}
  \times\bigg(\int u_{\varepsilon}^{(\frac{4}{n-4}-\frac{2(n-2)}{n-4}\varepsilon)q_{i+1}\frac{1}{1-\frac{q_{i+1}(n-2q_{i})}{nq_{i}}}}\bigg)^{1-\frac{q_{i+1}(n-2q_{i})}{nq_{i}}}.\nonumber
\end{align}
{When $\frac{n}{2}>q_i>\frac{n}{4}$, $u_\varepsilon\in C^{0,\alpha}$ for some $\alpha\in(0,1)$ and $u_{\varepsilon}^{\frac{n+4}{n-4}-\frac{2(n-2)}{n-4}\varepsilon}R^{\varepsilon}_{g_{u_{\varepsilon}}}\in L^{q_{i+1}}$, where $q_{i+1}:=\frac{nq_i}{n-2q_i}>q_i$. }By iterating this argument,
we have $u_{\varepsilon}\in W^{4,q}$ for any $q>0$ and  the smoothness of $u_{\varepsilon}$  follows from the elliptic regularity theory. We conclude  Theorem \ref{subcritical flow convergence}.

\end{proof}

\subsubsection{The achievement of $Y_{\varepsilon}(M,[g_0])$}
Now we begin to prove that $Y_{\varepsilon}(M,[g_0])$ can be achieved by a smooth positive function belonging to $\mathcal C_{Q}.$
\begin{thm}\label{Exsistencesubcritical}
Let $(M, g_0)$ be a closed manifold such that $Q_{g_{0}}$ is semi-positive and $R_{g_{0}}$ is non-negative.
There exists a smooth positive function $\bar{u}_{\varepsilon}$ achieving
the infimum of the functional $I_{\varepsilon}(u)$ in $\mathcal C_{1}[g_{0}]$, i.e., 
\[
I_{\varepsilon}(\bar{u}_{\varepsilon})=Y_{\varepsilon}(M,[g_{0}]),
\]
with the normalization $\int_{M}R_{g_{\bar u_{\varepsilon}}}\bar u_{\varepsilon}^{-\frac{2(n-2)}{n-4}\varepsilon}dv_{g_{\bar u_{\varepsilon}}}=1,$
and satisfying
\begin{equation}\label{subcritical Yamabe equation}
P_{g_{0}}u=r_{\varepsilon}u^{\frac{n+4}{n-4}-\frac{2(n-2)}{n-4}\varepsilon}R^{\varepsilon}_{g_{u}},
\end{equation} 
for some positive constant $r_\varepsilon$.
Moreover, $Q_{g_{\bar{u}_{\varepsilon}}}>0$ and $r_{\varepsilon}=  \frac{1}{1-\varepsilon} Y_{\varepsilon}(M,[g_{0}]).$
\end{thm}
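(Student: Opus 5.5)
Take a minimizing sequence for $Y_{\varepsilon}(M,[g_0])$ and run the subcritical flow of Theorem \ref{subcritical flow convergence} from each element. Concretely, choose smooth positive $u_{0,k}$ with $R_{g_{u_{0,k}}}>0$ and $I_{\varepsilon}(u_{0,k})\to Y_{\varepsilon}(M,[g_0])$. Smooth functions are dense in $W^{2,2}\cap\mathcal C_1[g_0]$ in a way that keeps positivity of $L_{g_0}u^{\frac{n-2}{n-4}}$ after a small perturbation, because $R_{g_0}>0$ by Theorem \ref{thm:maximum principle}. Since $I_\varepsilon$ is $0$-homogeneous in $u$ up to the exponent structure (check: numerator degree $2$, denominator degree $\frac{2(n-2)(1-\varepsilon)}{n-4}\cdot\frac{n-4}{(n-2)(1-\varepsilon)}=2$), we may normalize $J_\varepsilon(u_{0,k})=1$. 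Theorem \ref{subcritical flow convergence} then produces smooth positive solutions $u_k$ of $P_{g_0}u_k=\frac{n-4}{2}\bar r_k u_k^{\frac{n+4}{n-4}-\frac{2(n-2)}{n-4}\varepsilon}R^\varepsilon_{g_{u_k}}$ with $I_\varepsilon(u_k)\le I_\varepsilon(u_{0,k})$, hence $I_\varepsilon(u_k)\to Y_\varepsilon$. Each $u_k$ has $P_{g_0}u_k>0$ (since $R^\varepsilon>0$ by \eqref{definition of R}), so by \eqref{eq:Q-transformation} $Q_{g_{u_k}}>0$, and $R_{g_{u_k}}>0$ by Lemma \ref{lem:lower bound of u} passed to the limit together with Theorem \ref{thm:maximum principle}. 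Thus each $u_k\in\mathcal C_Q[g_0]$.

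Next, normalize $J_\varepsilon(u_k)=1$ and pass to a limit. Then $\int u_kP_{g_0}u_k\to Y_\varepsilon$, so $u_k$ is bounded in $W^{2,2}$. Testing the equation against $u_k$ and using \eqref{integral of modified R} gives $\frac{n-4}{2}\bar r_k(1-\varepsilon)J_\varepsilon(u_k)=\int u_kP_{g_0}u_k$, so $\bar r_k$ is bounded above and below. The key point is subcriticality. The right-hand side has growth $u^{\frac{4}{n-4}-\frac{2(n-2)}{n-4}\varepsilon}|\Delta u|+u^{\frac{n}{n-4}-\frac{2(n-2)}{n-4}\varepsilon}$ by \eqref{eq:upper bound of R}. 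I would rerun the bootstrap from the proof of Theorem \ref{subcritical flow convergence}, using the exponents $q_i$, but now uniformly in $k$. Starting from the uniform $W^{2,2}$ bound, each step should improve integrability with constants depending only on $\|u_k\|_{W^{2,2}}$ and $\varepsilon$, because the exponent deficit $\frac{2(n-2)}{n-4}\varepsilon>0$ makes every H\"older step strictly subcritical. This yields uniform $C^{4,\alpha}$ bounds on $u_k$.

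\textbf{Main obstacle.} I expect the uniformity of this iteration to be the main obstacle. In particular, the step $q_i<\frac n4\to q_i>\frac n4$ needs care, since the $L^\infty$ bound must come out independent of $k$. If the direct iteration proves delicate, I would first get $u_k\in W^{4,q}$ for some $q>\frac{n}{4}$ uniformly via a Moser-type argument on the integral formulation $u_k=\frac{n-4}{2}\bar r_k\int G\,(\cdots)$, using positivity of the Green function from Theorem \ref{thm:maximum principle}.

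Finally, a uniform lower bound is needed to rule out $u_k\to0$. The limit $\bar u_\varepsilon$ is nonzero because $J_\varepsilon(u_k)=1$ and $J_\varepsilon$ is continuous under $C^1$ convergence. It is positive by the Harnack argument already used, applied to the supersolution $L_{g_0}\bar u_\varepsilon^{\frac{n-2}{n-4}}\ge0$. By $C^4$ convergence, $\bar u_\varepsilon$ solves \eqref{subcritical Yamabe equation} with $r_\varepsilon=\lim\frac{n-4}{2}\bar r_k$, satisfies $J_\varepsilon(\bar u_\varepsilon)=1$, and has $I_\varepsilon(\bar u_\varepsilon)=Y_\varepsilon$. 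Moreover $R_{g_{\bar u_\varepsilon}}>0$ (the limit of $R\ge0$ metrics with $P\bar u_\varepsilon>0$, upgraded by Theorem \ref{thm:maximum principle}), so $\bar u_\varepsilon\in\mathcal C_1[g_0]$. Then $P_{g_0}\bar u_\varepsilon>0$ gives $Q_{g_{\bar u_\varepsilon}}>0$. Testing the equation with $\bar u_\varepsilon$ and using \eqref{integral of modified R} gives $Y_\varepsilon=\int\bar u_\varepsilon P_{g_0}\bar u_\varepsilon=r_\varepsilon(1-\varepsilon)J_\varepsilon=r_\varepsilon(1-\varepsilon)$, i.e.\ $r_\varepsilon=\frac{1}{1-\varepsilon}Y_\varepsilon(M,[g_0])$.
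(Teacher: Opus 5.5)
Your proposal is correct and follows the paper's proof. You take a smooth minimizing sequence normalized by $J_\varepsilon=1$, replace each element by the limit of the subcritical flow of Theorem~\ref{subcritical flow convergence} (a critical point in $\mathcal C_Q[g_0]$ with no larger $I_\varepsilon$), extract a limit by subcritical elliptic estimates, and read off $r_\varepsilon=\frac{1}{1-\varepsilon}Y_\varepsilon(M,[g_0])$ by testing the equation with $\bar u_\varepsilon$.

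The uniform bootstrap you flag as the main obstacle is exactly the step the paper compresses into ``elliptic regularity theory''. It does go through uniformly in $k$, because each H\"older step gains an amount depending only on $n$ and $\varepsilon$. The one point needing care is the final $C^{4,\alpha}$ step: for $n\ge 8$ the gradient term carries a negative power of $u$. There you can use the uniform positive lower bound on $u_k$ that follows from the positivity of the $C^{3,\alpha}$-limit you establish at the end.
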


\begin{proof}
By the definition of $Y_{\varepsilon}(M,[g_{0}])$, we know that there exists a sequence
of positive smooth functions $u_{i}$ with the properties that $R_{g_{u_{i}}}>0$, $\lim_{i\rightarrow\infty}I_{\varepsilon}(u_{i})=Y_{\varepsilon}(M,[g_{0}])$, $J_{\varepsilon}(u_i)=1$ and $\|u_i\|_{W^{2,2}}\le C.$
By Theorem
\ref{subcritical flow convergence} and Lemma \ref{thm: functional evolution of t} and taking $u_{i}$ as  initial data of flow \eqref{eq:real subcritcial flow}, one can find a subsequence of  $t_j\rightarrow\infty$ such that $u_i(\cdot, t_j)$ of  flow (\ref{eq:real subcritcial flow})
converges to smooth positive function $u_{i,\infty}$ with $g_{u_{i,\infty}}=u_{i,\infty}^{\frac 4{n-4}} g_0 \in \mathcal C_Q$  satisfying
\begin{equation}
P_{g_{0}}u_{i,\infty}=\frac{n-4}{2}r_{i}u_{i,\infty}^{\frac{n+4}{n-4}-\frac{2(n-2)}{n-4}\varepsilon}R^{\varepsilon}_{g_{u_{i,\infty}}},\label{eq:paroblic limit process}
\end{equation}
 $\|u_{i,\infty}\|_{W^{2,2}}\le C \|u_i\|_{W^{2,2}}$ with a uniform $C$ and $\int_{M}R_{g_{u_{i,\infty}}}u_{i,\infty}^{-\frac{2(n-2)}{n-4}\varepsilon}dv_{g_{u_{i,\infty}}}\ge1$.
By the elliptic regularity theory, we obtain $\|u_{i,\infty}\|_{C^{4, \alpha}}\le C.$
Therefore, there exists a positive smooth function $\bar{u}$
such that $u_{i,\infty}\rightarrow\bar{u}$ in  $C^{4,\alpha}$-sense. Now 
by \eqref{enerydecreasing},  we have 
\[
Y_{\varepsilon}(M,[g_{0}])=\lim_{i\rightarrow\infty}I_{\varepsilon}(u_{i})\ge\lim_{i\rightarrow\infty}I_{\varepsilon}(u_{i,\infty})=I_{\varepsilon}(\bar{u})\ge Y_{\varepsilon}(M,[g_{0}]).
\]
Hence $I_{\varepsilon}(\bar u)=Y_{\varepsilon}(M,[g_{0}]).$
With (\ref{eq:paroblic limit process}), we conclude the theorem.
\end{proof}

\begin{cor}
Let $(M, g_0)$ be a closed manifold such that $Q_{g_{0}}$ is semi-positive and $R_{g_{0}}$ is non-negative. We have
    $$Y_{\varepsilon}(M, [g_0])=\inf_{u\in W^{2,2}(M,g_{0})\cap \mathcal C_{Q}}I_{\varepsilon}(u).$$
\end{cor}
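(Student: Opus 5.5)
The corollary asserts $Y_{\varepsilon}(M,[g_0])=\inf_{u\in W^{2,2}\cap\mathcal C_Q}I_\varepsilon(u)$. The plan is to prove two inequalities. Since $\mathcal C_Q[g_0]\subset\mathcal C_1[g_0]$, the infimum of $I_\varepsilon$ over the smaller class $W^{2,2}\cap\mathcal C_Q$ is at least the infimum over $W^{2,2}\cap\mathcal C_1$, which is $Y_\varepsilon(M,[g_0])$. So we get
\[
Y_\varepsilon(M,[g_0])\le \inf_{u\in W^{2,2}(M,g_0)\cap \mathcal C_Q}I_\varepsilon(u)
\]
with no work.

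For the reverse inequality we use Theorem \ref{Exsistencesubcritical}. It gives a smooth positive $\bar u_\varepsilon$ with $I_\varepsilon(\bar u_\varepsilon)=Y_\varepsilon(M,[g_0])$ and $R_{g_{\bar u_\varepsilon}}>0$, and it satisfies \eqref{subcritical Yamabe equation} with $r_\varepsilon>0$. It therefore suffices to show that $g_{\bar u_\varepsilon}\in\mathcal C_Q[g_0]$, i.e.\ that $Q_{g_{\bar u_\varepsilon}}$ is semi-positive. Then
\[
\inf_{\mathcal C_Q}I_\varepsilon\le I_\varepsilon(\bar u_\varepsilon)=Y_\varepsilon(M,[g_0]).
\]

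To verify semi-positivity, first note that $R_{g_{\bar u_\varepsilon}}>0$ and the definition \eqref{definition of R} with $a_\varepsilon,b_\varepsilon>0$ (for $\varepsilon<1/2$) give $R^{\varepsilon}_{g_{\bar u_\varepsilon}}>0$. Hence the right-hand side of \eqref{subcritical Yamabe equation} is strictly positive, so $P_{g_0}\bar u_\varepsilon>0$. Then \eqref{eq:Q-transformation} gives $Q_{g_{\bar u_\varepsilon}}=\frac{2}{n-4}\bar u_\varepsilon^{-\frac{n+4}{n-4}}P_{g_0}\bar u_\varepsilon>0$. This is exactly the statement $Q_{g_{\bar u_\varepsilon}}>0$ already recorded in Theorem \ref{Exsistencesubcritical}. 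One could alternatively invoke Theorem \ref{thm:maximum principle}, but it is not needed.

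The only delicate point is that the minimizer actually lies in the admissible class used to define $Y_\varepsilon$, namely $\mathcal C_1$ with positive scalar curvature. This is guaranteed by the flow preserving $R_g>0$ (Lemma \ref{lem:lower bound of u}) together with the positivity of the limit established in Theorem \ref{subcritical flow convergence}. Given those results, the corollary follows immediately.
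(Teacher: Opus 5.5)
Your proposal is correct and follows the paper's implicit reasoning (the corollary is stated right after Theorem \ref{Exsistencesubcritical} without a separate proof): $\mathcal C_Q[g_0]\subset\mathcal C_1[g_0]$ gives $Y_\varepsilon(M,[g_0])\le\inf_{\mathcal C_Q}I_\varepsilon$. The reverse inequality holds because the minimizer $\bar u_\varepsilon$ from that theorem has $R>0$ and $Q>0$, so it lies in $\mathcal C_Q$.

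Your last paragraph is slightly loose. Preserving $R_g>0$ along the flow only gives $R\ge 0$ for the limits; strict positivity comes from $P_{g_0}\bar u_\varepsilon>0$ together with Theorem \ref{thm:maximum principle}. Since Theorem \ref{Exsistencesubcritical} already places $\bar u_\varepsilon$ in $\mathcal C_1[g_0]$, your argument does not depend on this point.
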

\subsection{Critical case}\label{subsection:Critical Existence}

Now we consider the critical case.
First of all we prove a lemma, which will be used in the blow-up argument.
\begin{lem}
\textup{\label{lem:Sobolev inequality in R^n}Let $v$ be the positive
smooth solution to 
\eq{
\label{entire solution}
\Delta^{2}_{g_{\R^n}}v(x)=-\lambda\frac{4(n-1)}{n-2}v^{\frac{2}{n-4}}\Delta_{g_{\R^n}} v^{\frac{n-2}{n-4}} \quad \hbox{ 
in } \mathbb{R}^{n}} for non-negative constant $\lambda$, and $R_{g_{v}}\ge 0$,  where $g_{v}=v^{\frac 4{n-4}}g_{\mathbb E}$
in $\mathbb{R}^{n}$. Assume that $v$ satisfies $\int_{\mathbb{R}^{n}}|\nabla_{g_{\R^n}} v^{\frac{n-2}{n-4}}|^2dx\le C$, $\int_{\mathbb{R}^{n}} v^{\frac{2n}{n-4}}dx\le C$
and $\int_{\mathbb{R}^{n}}(\Delta_{g_{\R^n}} v)^{2}dx\le C$. Then we have
\[
\int_{\mathbb{R}^{n}}(\Delta_{g_{\R^n}} v)^{2}dx\ge Y_{4,2}(\mathbb{S}^{n})\big(\frac{4(n-1)}{n-2}\int_{\mathbb{R}^{n}}|\nabla_{g_{\R^n}} v^{\frac{n-2}{n-4}}|^{2}dx\big)^{\frac{n-4}{n-2}}.
\]
}
\end{lem}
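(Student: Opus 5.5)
The plan is to transplant the Euclidean solution to the sphere by stereographic projection and then apply the Sobolev inequality of Theorem \ref{thm1.1}. Let $\pi:\Sn\setminus\{N\}\to\Rn$ be stereographic projection, so that $(\pi^{-1})^*\gSn=\phi^{4/(n-4)}g_{\R^n}$ with $\phi(x)=\big(\tfrac{2}{1+|x|^2}\big)^{(n-4)/2}$. Set $w=(v/\phi)\circ\pi$ on $\Sn\setminus\{N\}$, so that $g_v=w^{4/(n-4)}\gSn$ away from $N$.

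By the conformal covariance \eqref{eq:conformal P transformation} of the Paneitz operator and \eqref{conformal laplacian transform} of the conformal Laplacian, both applied to the flat metric, we have
\[
\int_{\Rn}(\Delta v)^2dx=\int_{\Sn}wP_{\gSn}w\,dv_{\gSn},
\qquad
\frac{4(n-1)}{n-2}\int_{\Rn}|\nabla v^{\frac{n-2}{n-4}}|^2dx=\int_{\Sn}R_{g_w}\,dv_{g_w}.
\]
These identities hold provided $w$ extends across $N$ in $W^{2,2}$, so that the integrations by parts produce no boundary term at infinity. The finiteness hypotheses $\int v^{2n/(n-4)}<\infty$, $\int(\Delta v)^2<\infty$ and $\int|\nabla v^{(n-2)/(n-4)}|^2<\infty$ say exactly that $w\in L^{2n/(n-4)}(\Sn)$, $w\in W^{2,2}$ on $\Sn\setminus\{N\}$, and $w^{(n-2)/(n-4)}\in W^{1,2}$. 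A point has zero capacity for $W^{2,2}$ when $n\ge5$, so the weak identities extend across $N$.

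The equation \eqref{entire solution} says that $Q_{g_v}$ is a nonnegative multiple $\lambda$ of $R_{g_v}$ (up to the normalization $\frac{n-4}{2}$). Hence $w$ is a weak $W^{2,2}$ solution on all of $\Sn$ of
\[
P_{\gSn}w=c\,\lambda\, w^{\frac{2}{n-4}}L_{\gSn}w^{\frac{n-2}{n-4}}.
\]
The removable singularity at $N$ follows from the capacity argument above together with the regularity result for $W^{2,2}$ weak solutions in the Appendix, and I expect $w$ to be smooth and positive with $R_{g_w}\ge0$.

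It then remains to apply \eqref{inq1.11}, whose left side is $\int wPw$ and whose right side is $(\int R_{g_w}dv_{g_w})^{(n-4)/(n-2)}$ with the constant corresponding to $Y_{4,2}(\Sn)$ via Corollary \ref{Y4,2Sphere}. Theorem \ref{thm1.1} requires $R_{g_w}>0$, whereas we only know $R_{g_w}\ge0$. I see two routes around this:
\begin{enumerate}
\item If $R_{g_v}\equiv0$, then $\lambda$ plays no role and the right side is zero, so the inequality is trivial. Otherwise the strong maximum principle for $L_{\gSn}w^{(n-2)/(n-4)}\ge0$ should upgrade this to $R_{g_w}>0$, provided $\lambda>0$ forces enough positivity.
\item Alternatively, approximate $w$ by $w_\delta$ solving $L w_\delta^{(n-2)/(n-4)}=Lw^{(n-2)/(n-4)}+\delta$, which have positive scalar curvature, and pass to the limit in \eqref{inq1.11}.
\end{enumerate}

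I also note that the proof of Theorem \ref{thm1.1} only uses the Newton inequality together with \eqref{Sobolev_21}, and the latter holds in the closure of $\C_1$. So the weak inequality extends directly to $R\ge0$, and that is the route I would take.

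The main obstacle is the behavior at infinity: justifying that $w$ extends regularly across the north pole, with no mass lost in the integrations by parts, using only the given integral bounds rather than pointwise decay.
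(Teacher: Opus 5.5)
Your overall route (stereographic projection, removing the singularity at $N$, regularity via Theorem \ref{thm8.1}, then Theorem \ref{thm1.1}) is the paper's. However, the step you call the main obstacle is left open, and the paper closes it with a pointwise bound you did not use.

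\textbf{The gap.} Nothing in your argument shows that $w$ stays bounded away from $0$ near $N$, and zero $W^{2,2}$-capacity of a point does not give this. Capacity only lets you replace $\eta_\varepsilon\varphi$ by $\varphi$ in identities whose integrands are already integrable up to $N$. The weak form of the right-hand side contains
\[
\varphi\,\langle\nabla w^{\frac{2}{n-4}},\nabla w^{\frac{n-2}{n-4}}\rangle=\frac{2}{n-2}\,\varphi\,w^{-1}\,|\nabla w^{\frac{n-2}{n-4}}|^2 ,
\]
and this is not controlled by $\int|\nabla w^{\frac{n-2}{n-4}}|^2<\infty$ unless $w\ge c>0$. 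Two further points depend on positivity:
\begin{enumerate}
\item Theorem \ref{thm8.1} is stated only for positive weak solutions.
\item Theorem \ref{thm1.1} needs $w^{\frac{4}{n-4}}\gSn$ to be a genuine metric on all of $\Sn$, so a regular extension with $w(N)=0$ would be useless.
\end{enumerate}
So ``I expect $w$ to be smooth and positive'' is exactly what has to be proved.

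\textbf{The missing idea.} Use $R_{g_v}\ge0$ pointwise, not only at the end. It says $-\Delta v^{\frac{n-2}{n-4}}\ge0$ on $\Rn$. Compare the positive superharmonic function $v^{\frac{n-2}{n-4}}$ with $m|x|^{2-n}$, where $m=\min_{\partial B_1}v^{\frac{n-2}{n-4}}$, on $\{|x|\ge1\}$, using the minimum principle and $v>0$. This gives $v\ge C_0|x|^{4-n}$ there, hence $w=(v/\phi)\circ\pi\ge C_1>0$ on $\Sn\setminus\{N\}$. With this lower bound the cutoff argument near $N$ works for every term, including the one displayed above, and the Appendix applies.

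An alternative would be to first get $P_{\gSn}w=f\ge0$ weakly on all of $\Sn$ with $f\in L^1$ by monotone convergence, and then invoke the positive Green function of $P_{\gSn}$. Either way, some such argument must be supplied.

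\textbf{On $R_{g_w}\ge0$ versus $R_{g_w}>0$.} Your approximation argument (the second route, or passing to the closure of $\C_1$) is fine once $w$ is known to be smooth and positive, and the paper's proof of this lemma passes over this point. Your first route does not work, though: the strong maximum principle for $L_{\gSn}w^{\frac{n-2}{n-4}}\ge0$ gives positivity of $w$, not of $R_{g_w}$.
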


\begin{proof}
Let $\Phi:\mathbb{S}^{n}\backslash\{N\}\rightarrow\mathbb{R}^{n}$ the stereographic projection,
where $N$ is the north pole. It is well-known that $\Phi$ is a conformal transformation with  $(\Phi^{-1})^{*}(g_{\mathbb{S}^{n}})=\psi^{\frac{4}{n-4}}g_{\mathbb E}$,
where $\psi=(\frac{2}{1+|x|^{2}})^{\frac{n-4}{2}}$.
For $x\in\mathbb{S}^{n}\backslash\{N\}$, denote $\bar{v}(x)=v(\Phi(x))$
and $\bar{\psi}^{-1}(x)=\psi^{-1}(\Phi(x))$.
It holds that
\[
\Phi^{*}(g_{v})=\Phi^{*}(v^{\frac{4}{n-4}}g_{\mathbb E})=(v\psi^{-1}\circ\Phi)^{\frac{4}{n-4}}g_{\mathbb{S}^{n}}
\]
 and 
\begin{equation}
P_{g_{\mathbb{S}^{n}}}(\bar{v}\bar{\psi}^{-1})=\lambda(\bar{v}\bar{\psi}^{-1})^{\frac{n+4}{n-4}}R_{g_{\bar{v}\bar{\psi}^{-1}}}\quad\text{on}\quad\mathbb{S}^{n}\backslash\{N\}.\label{eq:solution on punctured sphere}
\end{equation}
Since $R_{g_{v}}\ge0$ in $\mathbb{R}^{n}$ is equivalent to
$-\Delta_{g_{\R^n}} v^{\frac{n-2}{n-4}}\ge0$
in $\mathbb{R}^{n}$, {we have from fact $v>0$ that}
\[
v^{\frac{n-2}{n-4}}(x)\ge(\min_{\partial B_{1}}v^{\frac{n-2}{n-4}})|x|^{2-n}
\]
for any $x\in\mathbb{R}^{n}\backslash B_{1}.$ That is $v\ge C_{0}|x|^{4-n}$
for some positive constant $C_{0}$. Therefore, $v\psi^{-1}\ge C_{0}|x|^{4-n}(\frac{1+|x|^{2}}{2})^{\frac{n-4}{2}} \ge C_0$
for any $x\in\mathbb{R}^{n}\backslash B_{1}$, and then $v\psi^{-1}\ge C_1$ in $\mathbb{R}^{n}$ for some positive constant $C_1$.

For simplicity, denote $u=\bar{v}\bar{\psi}^{-1}=(v\psi^{-1})\circ\Phi$ and then $u\ge C_{1}$.
It is easy to check  by the conformal invariance 
that on $\mathbb{S}^{n}\backslash\{N\}$,
\begin{equation}
P_{g_{\mathbb{S}^{n}}}u=\lambda\frac{4(n-1)}{n-2}u^{\frac{2}{n-4}}(-\Delta_{\mathbb{S}^{n}}u^{\frac{n-2}{n-4}}+\frac{n-2}{4(n-1)}R_{g_{\mathbb{S}^{n}}}u^{\frac{n-2}{n-4}})=\lambda u^{\frac{n+4}{n-4}}R_{g_{u}},\label{eq:expression on punctured sphere}
\end{equation}
with the following properties: 
\begin{equation}
\int_{\mathbb{R}^{n}}(\Delta_{g_{\R^n}} v)^{2}=\int_{\mathbb{S}^{n}\backslash\{N\}}(\Delta_{g_{\Sn}} u)^{2}+\frac{n^2-2n-4}{2}|\nabla_{g_{\Sn}} u|^2+\frac{n-4}{2}Q_{g_{\mathbb{S}^{n}}}u^{2}dv_{g_{\mathbb{S}^{n}}}<\infty\label{eq:identity relationship 1}
\end{equation}
and 
\begin{equation}
\int_{\mathbb{R}^{n}}|\nabla_{g_{\R^n}} v^{\frac{n-2}{n-4}}|^{2}=\int_{\mathbb{S}^{n}\backslash\{N\}}\langle\nabla_{g_{\Sn}} u^{\frac{n-2}{n-4}},\nabla_{g_{\Sn}} u^{\frac{n-2}{n-4}}\rangle+\frac{n-2}{4(n-1)}R_{g_{\mathbb{S}^{n}}}u^{2\frac{n-2}{n-4}}dv_{g_{\mathbb{S}^{n}}}\le C.\label{eq:identity relationship 2}
\end{equation}
We now prove that $N$ is a removable singularity of $u$ to equation
(\ref{eq:expression on punctured sphere}).

Recall 
\[
P_{g_{\mathbb{S}^{n}}}u=\Delta^{2}_{g_{\Sn}}u-\frac{n^2-2n-4}{2}\Delta_{g_{\Sn}} u+\frac{n-4}{2}Q_{g_{\mathbb{S}^{n}}}u.
\]
We will prove that for any $\varphi\in C^{\infty}(\mathbb{S}^{n})$
with $\|\varphi\|_{W^{2,2}(\mathbb{S}^{n})}\le C,$
\begin{align}
 & \int_{\mathbb{S}^{n}}\Delta_{g_{\Sn}} u\Delta\varphi+\frac{n^2-2n-4}{2}\langle\nabla_{g_{\Sn}} u,\nabla_{g_{\Sn}}\varphi\rangle+\frac{n-4}{2}Q_{g_{\mathbb{S}^{n}}}u\varphi dv_{g_{\mathbb{S}^{n}}}\nonumber \\
= & \lambda\frac{4(n-1)}{n-2}\int_{\mathbb{S}^{n}}\langle\nabla_{g_{\Sn}}\varphi,\nabla_{g_{\Sn}} u^{\frac{n-2}{n-4}}\rangle u^{\frac{2}{n-4}}+\varphi\langle\nabla_{g_{\Sn}} u^{\frac{2}{n-4}},\nabla_{g_{\Sn}} u^{\frac{n-2}{n-4}}\rangle+\frac{n-2}{4(n-1)}R_{g_{\mathbb{S}^{n}}}u^{\frac{n}{n-4}}\varphi,\label{eq:integral sense}
\end{align}
i.e, $u$ is a weak solution on the whole sphere.
Take a cut-off function $\eta_{\varepsilon}$ such that $\eta_{\varepsilon}=0$
in $B_{\varepsilon}(N)$ and $\eta_{\varepsilon}=1$ on $\mathbb{S}^{n}\backslash B_{2\varepsilon}(N)$
with $|\nabla_{g_{\Sn}}\eta_{\varepsilon}|^{2}+|\nabla^{2}_{g_{\Sn}}\eta_{\varepsilon}|\le\frac{C}{\varepsilon^{2}}$
in $B_{2\varepsilon}(N).$ 
Then by \eqref{eq:expression on punctured sphere} we have
\begin{align}
 & \int_{\mathbb{S}^{n}}\Delta_{g_{\Sn}} u\Delta_{g_{\Sn}}(\varphi\eta_{\varepsilon})+\frac{n^2-2n-4}{2}(\nabla_{g_{\Sn}} u,\nabla_{g_{\Sn}}\langle\varphi\eta_{\varepsilon})\rangle+\frac{n-4}{2}Q_{g_{\mathbb{S}^{n}}}u\eta_{\varepsilon}\varphi dv_{g_{\mathbb{S}^{n}}}\nonumber \\
= & \lambda\frac{4(n-1)}{n-2}\int_{\mathbb{S}^{n}}\langle\nabla_{g_{\Sn}}(\varphi\eta_{\varepsilon}u^{\frac{2}{n-4}}),\nabla_{g_{\Sn}} u^{\frac{n-2}{n-4}}\rangle+\frac{n-2}{4(n-1)}R_{g_{\mathbb{S}^{n}}}\varphi\eta_{\varepsilon}u^{\frac{n}{n-4}}.\label{eq:solution entie}
\end{align}
We can show that
\begin{align}
\int_{\mathbb{S}^{n}}\Delta_{g_{\Sn}} u\Delta_{g_{\Sn}}(\varphi\eta_{\varepsilon}) & =\int_{\mathbb{S}^{n}}\eta_{\varepsilon}\Delta_{g_{\Sn}} u\Delta_{g_{\Sn}}\varphi+\int_{\mathbb{S}^{n}}\varphi\Delta_{g_{\Sn}} u\Delta_{g_{\Sn}}\eta_{\varepsilon}+2\int_{\mathbb{S}^{n}}\Delta_{g_{\Sn}} u\langle\nabla_{g_{\Sn}}\varphi,\nabla_{g_{\Sn}}\eta_{\varepsilon}\rangle\label{eq:(1)}\\
 & \rightarrow\int_{\mathbb{S}^{n}}\Delta_{g_{\Sn}} u\Delta_{g_{\Sn}}\varphi.\nonumber 
\end{align}
In fact, since 
\[
\int_{B_{2\varepsilon}}\varphi^{2}\le(\int_{B_{2\varepsilon}}\varphi^{\frac{2n}{n-4}})^{\frac{n-4}{n}}(\int_{B_{2\varepsilon}}1)^{\frac{4}{n}},
\]
 we have 
\begin{align*}
|\int_{\mathbb{S}^{n}}\varphi\Delta_{g_{\Sn}} u\Delta_{g_{\Sn}}\eta_{\varepsilon}| & \le\frac{1}{\varepsilon^{2}}\int_{B_{2\varepsilon}(N)\backslash B_{\varepsilon}(N)}|\varphi\Delta_{g_{\Sn}} u|\\
 & \le\frac{1}{\varepsilon^{2}}(\int_{B_{2\varepsilon}(N)\backslash B_{\varepsilon}(N)}|\Delta_{g_{\Sn}} u|^{2})^{\frac{1}{2}}(\int_{B_{2\varepsilon}\backslash B_{\varepsilon}(N)}\varphi^{2})^{\frac{1}{2}}\\
 & \le C(\int_{B_{2\varepsilon}(N)\backslash B_{\varepsilon}(N)}|\Delta_{g_{\Sn}} u|^{2})^{\frac{1}{2}}(\int_{B_{2\varepsilon}\backslash B_{\varepsilon}(N)}\varphi^{\frac{2n}{n-4}})^{\frac{n-4}{2n}}\rightarrow0.
\end{align*}
Similarly, we have 
\[
|\int_{\mathbb{S}^{n}}\Delta_{g_{\Sn}} u\langle\nabla_{g_{\Sn}}\varphi,\nabla_{g_{\Sn}}\eta_{\varepsilon}\rangle|\le C(\int_{B_{2\varepsilon}(N)\backslash B_{\varepsilon}(N)}|\Delta_{g_{\Sn}} u|^{2})^{\frac{1}{2}}(\int_{B_{2\varepsilon}\backslash B_{\varepsilon}(N)}|\nabla_{g_{\Sn}}\varphi|^{\frac{2n}{n-2}})^{\frac{n-2}{2n}}\rightarrow0.
\]
Moreover, we can prove 
\begin{align}\label{eq:part2}
\int_{\mathbb{S}^{n}}\langle \nabla_{g_{\Sn}} u,\nabla_{g_{\Sn}}(\varphi\eta_{\varepsilon})\rangle 
\rightarrow\int_{\mathbb{S}^{n}}\langle \nabla_{g_{\Sn}} u,\nabla_{g_{\Sn}}\varphi\rangle,\nonumber 
\end{align}
due to 
\[
|\int_{\mathbb{S}^{n}}\langle\nabla_{g_{\Sn}} u,\nabla_{g_{\Sn}}\eta_{\varepsilon}
\rangle \varphi|\le C\varepsilon\rightarrow0,
\]
and 
\begin{equation}
\int_{\mathbb{S}^{n}}\langle\nabla_{g_{\Sn}}(\varphi\eta_{\varepsilon}),\nabla_{g_{\Sn}} u^{\frac{n-2}{n-4}}\rangle u^{\frac{2}{n-4}}\rightarrow\int_{\mathbb{S}^{n}}\langle\nabla_{g_{\Sn}}\varphi,\nabla_{g_{\Sn}} u^{\frac{n-2}{n-4}}\rangle u^{\frac{2}{n-4}},\label{eq:part 3}
\end{equation}
since
\begin{align*}
 & |\int_{\mathbb{S}^{n}}\varphi\langle\nabla_{g_{\Sn}}\eta_{\varepsilon},\nabla_{g_{\Sn}} u^{\frac{n-2}{n-4}}\rangle u^{\frac{2}{n-4}}|\\
 & \le\frac{1}{\varepsilon}(\int_{B_{2\varepsilon}(N)\backslash B_{\varepsilon}(N)}|\nabla_{g_{\Sn}} u^{\frac{n-2}{n-4}}|^{2})^{\frac{1}{2}}(\int_{B_{2\varepsilon}(N)\backslash B_{\varepsilon}(N)}\varphi^{2}u^{\frac{4}{n-4}})^{\frac{1}{2}}\\
 & \le\frac{1}{\varepsilon}(\int_{B_{2\varepsilon}(N)\backslash B_{\varepsilon}(N)}|\nabla_{g_{\Sn}} u^{\frac{n-2}{n-4}}|^{2})^{\frac{1}{2}}(\int_{B_{2\varepsilon}(N)\backslash B_{\varepsilon}(N)}u^{\frac{2n}{n-4}})^{\frac{1}{n}}\big(\int_{B_{2\varepsilon}(N)\backslash B_{\varepsilon}(N)}\varphi^{\frac{2n}{n-4}}\big)^{\frac{n-4}{2n}}\varepsilon\\
 & \le(\int_{B_{2\varepsilon}(N)\backslash B_{\varepsilon}(N)}|\nabla_{g_{\Sn}} u^{\frac{n-2}{n-4}}|^{2})^{\frac{1}{2}}(\int_{B_{2\varepsilon}(N)\backslash B_{\varepsilon}(N)}u^{\frac{2n}{n-4}})^{\frac{1}{n}}\big(\int_{B_{2\varepsilon}(N)\backslash B_{\varepsilon}(N)}\varphi^{\frac{2n}{n-4}}\big)^{\frac{n-4}{2n}}\rightarrow0.
\end{align*}
Recalling that $u$ has a positive lower bound, we have 
\begin{align}
\int_{\mathbb{S}^{n}}\eta_{\varepsilon}\varphi\langle\nabla_{g_{\Sn}} u^{\frac{2}{n-4}},\nabla_{g_{\Sn}} u^{\frac{n-2}{n-4}}\rangle & =\int_{\mathbb{S}^{n}}\eta_{\varepsilon}\varphi\frac{n-2}{2}u\langle\nabla_{g_{\Sn}} u^{\frac{2}{n-4}},\nabla_{g_{\Sn}} u^{\frac{2}{n-4}}\rangle\label{eq:part 4}\\
 & \rightarrow\int_{\mathbb{S}^{n}}\varphi\frac{n-2}{2}u\langle\nabla_{g_{\Sn}} u^{\frac{2}{n-4}},\nabla_{g_{\Sn}} u^{\frac{2}{n-4}}\rangle.\nonumber 
\end{align}
Therefore, by (\ref{eq:(1)}), (\ref{eq:part 3}), (\ref{eq:part 4})
and (\ref{eq:solution entie}), we obtain (\ref{eq:integral sense}). 
Namely $u$ is a weak solution on $\Sn$ and hence it is a smooth solution by a regularity result proved in Appendix, Theorem \ref{thm8.1}.  By Theorem \ref{thm1.1} we have inequality \eqref{inq1.11} for $u$, i.e.,
$$\int_{\mathbb{S}^{n}}u P_{g_{\mathbb{S}^{n}}}udv_{g_{\mathbb{S}^{n}}}\ge Y_{4,2}(\mathbb S^n) \left(\int R_{g_{u}}dv_{g_{u}}\right)^{\frac{n-4}{n-2}} .$$

Now we transform this inequality from $\Sn$ back to $\Rn$ to finish the proof of the Lemma. In fact,
by (\ref{eq:identity relationship 1})
and (\ref{eq:identity relationship 2}), it holds that 
\[
\int_{\mathbb{R}^{n}}(\Delta v)^{2}=\int_{\mathbb{S}^{n}}uP_{g_{\mathbb{S}^{n}}}udv_{g_{\mathbb{S}^{n}}}
\]
and 
\[
\int_{\mathbb{R}^{n}}|\nabla_{g_{\Rn}} v^{\frac{n-2}{n-4}}|^{2}=\int_{\mathbb{S}^{n}}\langle\nabla_{g_{\Sn}} u^{\frac{n-2}{n-4}},\nabla_{g_{\Sn}} u^{\frac{n-2}{n-4}}\rangle+\frac{n-2}{4(n-1)}R_{g_{\mathbb{S}^{n}}}u^{2\frac{n-2}{n-4}}dv_{g_{\mathbb{S}^{n}}}.
\]
Therefore, we complete
the proof of this lemma.
\end{proof}

It is an open question to  classify all positive solutions of
\eqref{entire solution} as mentioned  in Section 3.

Now
 we recall the definition of $Y_{4,2}$ and have the following equivalent definition 
\[
Y_{4,2}(M,[g_{0}])=\inf_{u\in W^{2,2}(M,g_{0})\cap \mathcal C_Q[g_{0}]}\frac{\int uP_{g_{0}}udv_{g_{0}}}{\left(\int R_{g_{u}}dv_{g_{u}}\right)^{\frac{n-4}{n-2}}}>0.
\]
Let $w_{i}\in W^{2,2}\cap \mathcal C_Q[g_{0}]$ such that 
\[
I(w_{i})\rightarrow Y_{4,2}(M,[g_{0}])
\]
and 
\[
\lim_\varepsilon Y_{\varepsilon}(M,[g_{0}])\le \lim_\varepsilon I_{\varepsilon}(w_{i})= I(w_{i}).
\]
Taking $i\to\infty$, we have
\begin{equation}
\lim_{\varepsilon\rightarrow0}Y_{\varepsilon}(M,[g_{0}])\le Y_{4,2}(M,[g_{0}]).\label{eq:Yamabe constant relationship}
\end{equation}

Note that by the Sobolev inequality and the positivity of $P_{g_0}$,  there exists a positive constant $C_0$ such that  for any small $\varepsilon>0$
\begin{equation}
Y_{\varepsilon}(M,[g_{0}])\ge C_{0}>0.\label{eq:positive lower bound of Yamabe constant}
\end{equation}
{
Indeed, we have
\begin{align*}
& \int R_{g_{u}}^\varepsilon u^{-\varepsilon\frac{2(n-2)}{n-4}}dv_{g_{u}}
=(1-\varepsilon)\int R_{g_{u}}^\varepsilon u^{-\varepsilon\frac{2(n-2)}{n-4}}dv_{g_{u}}\\
= & (1-\varepsilon)\frac{4(n-1)(n-2)}{(n-4)^{2}}\int(1-2\varepsilon)|\nabla u|^{2}u^{\frac{2}{n-4}(2-(n-2)\varepsilon)}+(1-\varepsilon)\int R_{g_{0}}u^{\frac{2(n-2)}{n-4}(1-\varepsilon)}\\
\le & C\|u\|_{W^{2,2}}^{\frac{2(n-2)}{n-4}(1-\varepsilon)}\le C_1 \left(\int uP_{g_0}u\right)^{\frac{(n-2)}{n-4}(1-\varepsilon)}
\end{align*}
where $C, C_1$ are positive constants independent of $\varepsilon$. Thus, the above claim is proved. Now, we want to prove the existence result.
}

\begin{thm}
\label{thm:existence under Yamabe restriction}Let $\left(M^{n},g_{0}\right)$
be a closed Riemannian manifold of dimension $n\geq5$, with properties that $Q_{g_{0}}$ is semi-positive and $R_{g_{0}}\geq0$.
If  \[Y_{4,2}(M,[g_{0}])<Y_{4,2}(\mathbb{S}^{n},[g_{\mathbb{S}^{n}}])\]
then there exists a positive smooth function $u$ and positive constant
$\lambda_{0}$ such that $g_{u}=u^{\frac{4}{n-4}}g_{0}\in \mathcal C_Q[g_{0}]$
satisfies
\[
P_{g_{0}}u=\lambda_{0}u^{\frac{n+4}{n-4}}R_{g_{u}}.
\]
Moreover, the solution $u$ achieves $Y_{4,2}(M,[g_{0}])$.
\end{thm}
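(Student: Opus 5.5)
We plan to obtain the solution as a limit of the subcritical minimizers $\bar u_\varepsilon$ from Theorem \ref{Exsistencesubcritical}. These are normalized by $J_\varepsilon(\bar u_\varepsilon)=1$, satisfy
\[
P_{g_0}\bar u_\varepsilon=r_\varepsilon \bar u_\varepsilon^{\frac{n+4}{n-4}-\frac{2(n-2)}{n-4}\varepsilon}R^\varepsilon_{g_{\bar u_\varepsilon}},
\qquad r_\varepsilon=\tfrac{1}{1-\varepsilon}Y_\varepsilon(M,[g_0]),
\]
and have $Q_{g_{\bar u_\varepsilon}}>0$ and $R_{g_{\bar u_\varepsilon}}>0$. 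By \eqref{eq:Yamabe constant relationship} and \eqref{eq:positive lower bound of Yamabe constant} we have $C_0\le Y_\varepsilon\le Y_{4,2}(M,[g_0])+o(1)$. Hence $\int \bar u_\varepsilon P_{g_0}\bar u_\varepsilon$ is bounded, so $\|\bar u_\varepsilon\|_{W^{2,2}}\le C$. The plan has two parts: show that $\bar u_\varepsilon$ stays uniformly bounded, then pass to the limit.

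\textbf{Step 1 (no blow-up).} Suppose $m_\varepsilon:=\max \bar u_\varepsilon=\bar u_\varepsilon(x_\varepsilon)\to\infty$ along a subsequence. Rescale in normal coordinates at $x_\varepsilon$:
\[
v_\varepsilon(y)=m_\varepsilon^{-1}\bar u_\varepsilon\big(\exp_{x_\varepsilon}(\delta_\varepsilon y)\big),
\]
with $\delta_\varepsilon$ chosen so that the equation is invariant up to a factor tending to a constant. The perturbed exponent $-\frac{2(n-2)}{n-4}\varepsilon$ produces a factor $m_\varepsilon^{-c\varepsilon}$, and we expect $m_\varepsilon^{-c\varepsilon}\to\lambda'\in(0,1]$ after passing to a subsequence. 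The lower-order terms in $R^\varepsilon$ carry $a_\varepsilon,b_\varepsilon\to0$ and vanish in the limit. Then $0<v_\varepsilon\le1$ and $v_\varepsilon(0)=1$, and the rescaled metrics have positive scalar curvature, so $L(v_\varepsilon^{\frac{n-2}{n-4}})\ge0$. Local $W^{4,p}$ estimates for the rescaled fourth-order equation should give $C^{4,\alpha}_{loc}$ convergence to a positive solution $v$ of \eqref{entire solution} on $\mathbb R^n$ with $\lambda\ge0$ and $R_{g_v}\ge0$. The energy bounds required in Lemma \ref{lem:Sobolev inequality in R^n} follow from scale invariance of $\int(\Delta u)^2$, $\int|\nabla u^{\frac{n-2}{n-4}}|^2$ and $\int u^{2n/(n-4)}$. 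Testing the limit equation with $v$ gives
\[
\int(\Delta v)^2=\lambda\tfrac{4(n-1)}{n-2}\int|\nabla v^{\frac{n-2}{n-4}}|^2,
\]
with $\lambda=\lim r_\varepsilon\cdot(\text{scaling factor})\le Y_{4,2}(M)$. Combining this with the lemma and with the local energy bound $\int_{B_R}|\nabla v^{\frac{n-2}{n-4}}|^2\lesssim J_\varepsilon(\bar u_\varepsilon)=1$ should give $Y_{4,2}(\Sn)\le Y_{4,2}(M,[g_0])$. This contradicts the hypothesis $Y_{4,2}(M,[g_0])<Y_{4,2}(\Sn)$.

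The main obstacle is this exponent and normalization bookkeeping: the powers of $m_\varepsilon$, the factor $m_\varepsilon^{-c\varepsilon}$, and the relation $\lambda\le Y_{4,2}$ must combine exactly into that contradiction. This is the familiar Yamabe-type step, and I would follow Gursky--Malchiodi's treatment of the $Q$ flow. A second delicate point is using the Gursky--Malchiodi maximum principle (Theorem \ref{thm:maximum principle}) to control $\bar u_\varepsilon$ from below away from the blow-up point.

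\textbf{Step 2 (limit).} With $\bar u_\varepsilon\le C$, the integrability bootstrap from the proof of Theorem \ref{subcritical flow convergence} gives uniform $C^{4,\alpha}$ bounds. A uniform positive lower bound comes from the Harnack inequality for $L_{g_0}\bar u_\varepsilon^{\frac{n-2}{n-4}}\ge0$ together with the normalization $J_\varepsilon=1$, which rules out $\bar u_\varepsilon\to0$. Subsequences then converge to a positive $u$ with
\[
P_{g_0}u=\lambda_0u^{\frac{n+4}{n-4}}R_{g_u},\qquad \lambda_0=\lim r_\varepsilon>0 .
\]
Positivity of $P_{g_0}u$ gives $Q_{g_u}\ge0$, and Theorem \ref{thm:maximum principle} gives $R_{g_u}>0$, so $g_u\in\mathcal C_Q[g_0]$. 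Finally $I(u)=\lim I_\varepsilon(\bar u_\varepsilon)=\lim Y_\varepsilon\le Y_{4,2}(M,[g_0])\le I(u)$, so $u$ achieves $Y_{4,2}(M,[g_0])$.
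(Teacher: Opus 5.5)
Your outline is the paper's proof: subcritical minimizers from Theorem \ref{Exsistencesubcritical}, blow-up rescaling to a positive entire solution of \eqref{entire solution}, Lemma \ref{lem:Sobolev inequality in R^n}, and a contradiction with $Y_{4,2}(M,[g_0])<Y_{4,2}(\Sn)$. The gap is the step you defer as ``bookkeeping''. That step is exactly where the contradiction is produced, and as written it does not close.

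Set $X=\frac{4(n-1)}{n-2}\int_{\R^n}|\nabla v^{\frac{n-2}{n-4}}|^2$. The testing identity and the lemma give $Y_{4,2}(\Sn)\le \lambda X^{\frac{2}{n-2}}$. So you need exactly $\lambda X^{\frac{2}{n-2}}\le Y_{4,2}(M,[g_0])$, and a bound $X\lesssim 1$ is useless. Your expectation $m_\varepsilon^{-c\varepsilon}\to\lambda'\in(0,1]$ is also unjustified, since nothing prevents $\varepsilon\log m_\varepsilon\to\infty$.

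The paper avoids both issues by rescaling with $\lambda_i=N_i^{\frac{2}{n-4}-\frac{n-2}{n-4}\varepsilon_i}$.
\begin{itemize}
\item This makes the rescaled equation exactly invariant, with the same constant $r_{\varepsilon_i}$.
\item The $\varepsilon$-defect then appears only as a factor $N_i^{-(n-2)\varepsilon_i}\le 1$ in the rescaled energies. No limit of this factor is needed.
\item Fatou's lemma, applied to the rescaled $J_{\varepsilon_i}$ with $J_{\varepsilon_i}(u_i)=1$, gives exactly $X\le 1$. Together with $\lambda=\lim r_{\varepsilon_i}\le Y_{4,2}(M,[g_0])$ this closes the argument.
\end{itemize}

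If you keep $\delta_\varepsilon=m_\varepsilon^{-2/(n-4)}$, the same factor $\mu=\lim m_\varepsilon^{-\frac{2(n-2)}{n-4}\varepsilon}$ enters both the equation and the energy. You then only get $\mu X\le 1$. You must write $Y_{4,2}(\Sn)\le \bar r\mu X^{\frac{2}{n-2}}=\bar r\mu^{\frac{n-4}{n-2}}(\mu X)^{\frac{2}{n-2}}\le \bar r$. The case $\mu=0$ needs a separate Liouville argument: $\Delta^2 v=0$ with $\Delta v\in L^2$ forces $v\equiv v(0)=1$, contradicting $\int v^{\frac{2n}{n-4}}<\infty$. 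In either version you also need $X>0$. This holds because a positive constant is not in $L^{\frac{2n}{n-4}}(\R^n)$.

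There are three smaller points.
\begin{itemize}
\item You normalize by $\max u_\varepsilon$ alone. The paper instead takes $N_i$ to be the maximum of $u_i$ plus a power of the right-hand side $u_i^{\frac{n+4}{n-4}-\frac{2(n-2)}{n-4}\varepsilon_i}R^{\varepsilon_i}_{g_{u_i}}$, which bounds the rescaled right-hand side by $1$. With your choice, the rescaled right-hand side still contains $v^{\frac{4}{n-4}-\frac{2(n-2)}{n-4}\varepsilon}\Delta v$ and gradient terms with negative powers of $v$. Local $W^{4,p}$ bounds then need the pointwise estimate that this right-hand side is at most $C(|\Delta v|+1)$. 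That estimate comes from $R>0$, as in \eqref{eq:upper bound of R}, and must be combined with an interior absorption argument.
\item The same pointwise estimate, with $\|u\|_{W^{4,p}}\simeq\|P_{g_0}u\|_{L^p}$ and interpolation, is what gives uniform bounds in your Step 2 once $u_\varepsilon\le C$. The bootstrap in the proof of Theorem \ref{subcritical flow convergence} is not uniform in $\varepsilon$: at $\varepsilon=0$ its starting exponent $\frac{2n}{n+4}$ is a fixed point of the iteration.
\item No lower bound on $u_\varepsilon$ away from the blow-up point is needed. The appeal to the Gursky--Malchiodi maximum principle there is superfluous.
\end{itemize}
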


\begin{proof}
By Theorem \ref{Exsistencesubcritical}, {for any $\varepsilon\in (0,\frac{2}{n-2})$}, there exists a positive smooth solution $u_{\varepsilon}$ and $g_{u_\ve}=u_{\ve}^{\frac 4{n-4}}g_0\in \mathcal C_{Q}$
satisfying 
\begin{equation}\label{approximation}
P_{g_{0}}u_{\varepsilon}=r_{\varepsilon}u_{\varepsilon}^{\frac{n+4}{n-4}-\frac{2(n-2)}{n-4}\varepsilon}R^{\varepsilon}_{g_{u_{\varepsilon}}}
\end{equation}
with 
\begin{equation}\label{normaliztion}
    \int_{M}R_{g_{u_{\varepsilon}}}u_{\varepsilon}^{-\frac{2(n-2)}{n-4}\varepsilon}dv_{g_{u_{\varepsilon}}}=1
\end{equation}
and $r_{\varepsilon}=  \frac{1}{1-\varepsilon} Y_{\varepsilon}(M,[g_{0}]).$ 
By \eqref{eq:Yamabe constant relationship}, \eqref{equvalence of R and tilde R} and \eqref{eq:positive lower bound of Yamabe constant}, we know that 
\begin{equation}\label{limit of r}
   \frac{1}{2}C_0 \le \lim_{\varepsilon\rightarrow0}r_{\varepsilon}\le Y_{4,2}(M,[g_{0}]).
\end{equation} 
Denote $u_{i}\colon=u_{\varepsilon_{i}}$
and   
\begin{align}N_i:=&\max_M{\left(\left(u_{{i}}^{\frac{n+4}{n-4}-\frac{2(n-2)}{n-4}\varepsilon_i}R^{\varepsilon_i}_{g_{u_{{i}}}}\right)^\frac{1}{\frac{n+4}{n-4}-\frac{4(n-2)}{n-4}\varepsilon_i}+u_i\right)}\nonumber\\
=&\left(\left(u_{{i}}^{\frac{n+4}{n-4}-\frac{2(n-2)}{n-4}\varepsilon_i}R^{\varepsilon_i}_{g_{u_{{i}}}}\right)^\frac{1}{\frac{n+4}{n-4}-\frac{4(n-2)}{n-4}\varepsilon_i}+u_i\right)(x_i).
\end{align}
If $N_{i}$ is bounded, then by $L^{p}$ theory, we obtain $\|u_{i}\|_{W^{4,p}}<C$ for any $1<p<\infty$,
and there exists a positive function $u_{\infty}\in C^{3,\alpha}$
such that $u_{i}\rightarrow u_{\infty}$ in $C^{3,\alpha}$-sense
and the positivity of $u_{\infty}$ follows from the Harnack inequality and $L_{g_0}u_{\infty}^{\frac{n-2}{n-4}}\ge 0$.
 Moreover, by Schauder theory, $\|u_{i}\|_{C^{4,\alpha}}<C$. For the  regularity theory, we refer to a lecture note by  Robert \cite{Robert} or \cite{robert2024localizationbubblinghighorder}.  Thus,
$u_{i}$ smoothly converges to $u_{\infty}$  in $M$ and $u_{\infty}$ satisfies
\[
P_{g_{0}}u=\lambda_{0}u^{\frac{n+4}{n-4}}R_{g_{u}}.
\]
{From the positivity of the operator $P_{g_{0}}$, we infer that the scalar curvature and the $Q$-curvature of $g_{u_\infty}$ are positive. Hence,
$$
\lim_i Y_{\varepsilon}(M,[g_{0}])=\lim_i I_{\varepsilon_i}(u_i)=I(u_\infty)\ge  Y_{4,2}(M,[g_{0}]).
$$
Together with} \eqref{eq:Yamabe constant relationship}, it holds that $$\lim_{\varepsilon\rightarrow0}Y_{\varepsilon}(M,[g_{0}])= Y_{4,2}(M,[g_{0}]).$$
Therefore, we remain to exclude the following case
\[
N_{i}\rightarrow\infty \quad \text{as}\quad \varepsilon_i\rightarrow 0.
\]

For this case, 
define the rescaled function 
\[
v_{i}=\frac{1}{N_{i}}u_{i}(\exp_{x_{i}}(\lambda_{i}^{-1}x))=\colon\frac{1}{N_{i}}u_{i}(\Psi_{i}(x))\quad\text{in }B_{\lambda_{i}\delta_{0}}(0)
\]
where $\delta_{0}$ is the injective radius of $M$, $\lambda_{i}\colon=N_{i}^{\frac{2}{n-4}-\frac{(n-2)}{n-4}\varepsilon_i}\rightarrow\infty$.
It is clear that $v_{i}\le1$ in $B_{\lambda_{i}\delta_{0}}(0)$. 
Note that
\[
\Psi_{i}(g_{u_{i}})=u_{i}(\Psi_{i}(x))^{\frac{4}{n-4}}\Psi_{i}(g_{0}):=u_{i}(\Psi_{i}(x))^{\frac{4}{n-4}}\lambda_{i}^{-2}g_{i},
\]
where $\Psi_{i}(g_{0})=\lambda_{i}^{-2}g_{i}$ and that $g_{i}$ converges
to $g_{\mathbb E}$ smoothly in any compact set as $\lambda_{i}\rightarrow\infty.$
It holds that \begin{align}\label{pullback of gradient}
\Psi_i\left(|\nabla u_i|_{g_0}^{2}\right)
=|\nabla_{\Psi_i(g_0)}u_{i}\circ\Psi_{i}|^2   =\lambda_i^2|\nabla_{g_i}(u_i\circ \Psi)|^2.
\end{align}
By (\ref{eq:conformal P transformation}),
we have
\begin{align}\label{pullback of Paneizte}
\Psi_{i}(P_{g_{0}}u_{i})  =P_{\Psi_{i}(g_{0})}u_{i}\circ\Psi_{i}=\lambda_{i}^{\frac{n+4}{2}}P_{g_{i}}(\lambda_{i}^{-\frac{n-4}{2}}u_{i}\circ\Psi_{i})=\lambda_{i}^{4}P_{g_{i}}(u_{i}\circ\Psi_{i}),
\end{align}
and by (\ref{conformal laplacian transform}),
\begin{align}
R_{\Psi_{i}(g_{u_{i}})} 
 & =\frac{4(n-1)}{n-2}\left((\lambda_{i}^{-\frac{n-4}{2}}u_{i}\circ\Psi_{i})^{\frac{n-2}{n-4}}\right)^{-\frac{n+2}{n-2}}L_{g_{i}}(\lambda_{i}^{-\frac{n-4}{2}}u_{i}\circ\Psi_{i})^{\frac{n-2}{n-4}}\nonumber \\
 & =\frac{4(n-1)}{n-2}\lambda_{i}^{2}(u_{i}\circ\Psi_{i})^{-\frac{n+2}{n-4}}L_{g_{i}}(u_{i}\circ\Psi_{i})^{\frac{n-2}{n-4}}.\label{eq:transformation} 
\end{align}
Moreover, together with \eqref{pullback of gradient},
we have  \begin{align}
 &R^{\varepsilon_i}_{\Psi_{i}(g_{u_{i}})}=R^{\varepsilon_i}_{(\lambda_i^{-\frac{n-4}{2}}u_{i}\circ\Psi_{i})^{\frac{4}{n-4}}g_i}\nonumber\\
 =&(1-2\varepsilon_i)R_{\Psi_{i}(g_{u_{i}})}+\frac{2(n-1)b_{\varepsilon_i}}{(n-2)}R_{g_i}u_i^{-\frac{4}{n-4}}\lambda_i^2+\lambda_i^2u_i^{\frac{-4}{n-4}-2}|\nabla_{g_i} u_i|^{2}\frac{2(n-1)a_{\varepsilon_i}}{(n-2)}\nonumber\\
 =&\lambda_i^2 R^{\varepsilon_i}_{(u_{i}\circ\Psi_{i})^{\frac{4}{n-4}}g_i}.
 \end{align}
 Denote  $g_{v_{i}} =v_{i}^{\frac{4}{n-4}}g_{i}$.  It is clear that
\begin{equation}\label{relationship of metric}
g_{v_{i}}=\frac{\lambda_{i}^{2}}{N_{i}^{\frac{4}{n-4}}}\Psi_{i}(g_{u_{i}}).
\end{equation}
One can check 
\begin{align}
 v_{i}{}^{\frac{n+4}{n-4}-\frac{2(n-2)}{n-4}\varepsilon_i}R_{g_{v_{i}}}
= & \frac{4(n-1)}{n-2}v_{i}{}^{\frac{2}{n-4}-\frac{2(n-2)}{n-4}\varepsilon_i}L_{g_{i}}v_{i}{}^{\frac{n-2}{n-4}}\nonumber \\
= & \frac{4(n-1)}{n-2}N_{i}^{-(\frac{2}{n-4}-\frac{2(n-2)}{n-4}\varepsilon_i)-\frac{n-2}{n-4}}u_{i}{}^{\frac{2}{n-4}-\frac{2(n-2)}{n-4}\varepsilon_i}L_{g_{i}}u_{i}{}^{\frac{n-2}{n-4}}\nonumber \\
= & (u_{i}\circ\Psi_{i})^{\frac{n+4}{n-4}-\frac{2(n-2)}{n-4}\varepsilon_i}R_{\Psi_{i}(g_{u_{i}})}\lambda_{i}^{-2}N_{i}^{-(\frac{2}{n-4}-\frac{2(n-2)}{n-4}\varepsilon_i)-\frac{n-2}{n-4}}\nonumber \\
= & N_{i}^{-\frac{n+4}{n-4}+\frac{4(n-2)}{n-4}\varepsilon_i}(u_{i}\circ\Psi_{i})^{\frac{n+4}{n-4}-\frac{2(n-2)}{n-4}\varepsilon_i}R_{\Psi_{i}(g_{u_{i}})}
\end{align}
where the third equality holds due to (\ref{eq:transformation}) and
the fourth equality  due to the choice of $\lambda_{i}$.

Similarly, we have 
\begin{equation}\label{relationshipwithviandui}
v_{i}{}^{\frac{n+4}{n-4}-\frac{2(n-2)}{n-4}\varepsilon_i}R_{g_{v_{i}}}^{\varepsilon_i}=N_{i}^{-\frac{n+4}{n-4}+\frac{4(n-2)}{n-4}\varepsilon_i}(u_{i}\circ\Psi_{i})^{\frac{n+4}{n-4}-\frac{2(n-2)}{n-4}\varepsilon_i}R^{\varepsilon_i}_{\Psi_{i}(g_{u_{i}})}.
\end{equation}
By  \eqref{pullback of Paneizte} and \eqref{approximation}, we have
\begin{align*}
\lambda_{i}^{4}P_{g_{i}}(u_{i}\circ\Psi_{i}) & =\Psi_{i}(P_{g_{0}}u_{i})=r_{\varepsilon_i}\Psi_{i}(u_{i}^{\frac{n+4}{n-4}-\frac{2(n-2)}{n-4}\varepsilon}R^{\varepsilon_i}_{g_{u_{i}}})\\
 &=r_{\varepsilon_i}(u_{i}\circ\Psi_{i})^{\frac{n+4}{n-4}-\frac{2(n-2)}{n-4}\varepsilon}R^{\varepsilon_i}_{\Psi_{i}(g_{u_{i}})},
\end{align*}
and hence,
\[
P_{g_{i}}(u_{i}\circ\Psi_{i})=\lambda_{i}^{-4}r_{\varepsilon_i}(u_{i}\circ\Psi_{i})^{\frac{n+4}{n-4}-\frac{2(n-2)}{n-4}\varepsilon}R^{\varepsilon_i}_{\Psi_{i}(g_{u_{i}})}.
\]
Together with \eqref{relationshipwithviandui}, due to the choice of $v_{i}$, $N_i$ and $\lambda_{i}$, we find
\begin{align}
P_{g_{i}}v_{i} & =r_{\varepsilon_i}v_i^{\frac{n+4}{n-4}-\frac{2(n-2)}{n-4}\varepsilon}R^{\varepsilon_i}_{v_i^{\frac{4}{n-4}}g_i}.
 \label{eq:rescaled equation}
\end{align}
By \eqref{relationshipwithviandui}, we know
\begin{align}
   v_{i}{}^{\frac{n+4}{n-4}-\frac{2(n-2)}{n-4}\varepsilon_i}R^{\varepsilon_i}_{g_{v_{i}}}
   &\le \left(\max_{M} u_{\varepsilon_{i}}^{\frac{n+4}{n-4}-\frac{2(n-2)}{n-4}\varepsilon_i}R^{\varepsilon_i}_{g_{u_{\varepsilon_{i}}}}\right)^{-1}(u_{i}\circ\Psi_{i})^{\frac{n+4}{n-4}-\frac{2(n-2)}{n-4}\varepsilon_i}R^{\varepsilon_i}_{\Psi_{i}(g_{u_{i}})}\le1.\label{eq:upper bound}
\end{align}
where the
last inequality is due to the choice of $N_{i}.$
By the definition of $N_i$ and \eqref{relationshipwithviandui} again,
\begin{equation}\label{normalize at a point}
\left(v_i^{\frac{n+4}{n-4}-\frac{2(n-2)}{n-4}\varepsilon_i}R^{\varepsilon_i}_{g_{v_i}}\right)^\frac{1}{\frac{n+4}{n-4}-\frac{4(n-2)}{n-4}\varepsilon_i}+v_i(0)=1.
\end{equation}
By (\ref{eq:rescaled equation}) and (\ref{eq:upper bound}), with
$0\le v_{i}\le1$, in any compact set $K\subset\mathbb{R}^{n}$, we
know that for any $1<p<\infty$
\[
\|v_{i}\|_{W^{4,p}(K)}\le C_{K,p},
\]
and thus,  there exists a non-negative function $v_{\infty}\in C^{3,1}_{loc}(\mathbb{R}^{n})$
 such that $v_{i}$ converges to $v_{\infty}$
in $C_{loc}^{3,\alpha}$-sense for any $0<\alpha<1$. It holds that $v_{\infty}^{\frac{n-2}{n-4}}$
is at least $C^{1}$ globally in $\mathbb{R}^{n}$.  
Since $R_{g_{v_{\infty}}}(x)\ge0$,
we have $-\Delta v_{\infty}^{\frac{n-2}{n-4}}\ge0$ in weak sense. 
By the Harnack inequality, we know $v_{\infty}^{\frac{n-2}{n-4}}>0$
 or $v_{\infty}=0$ in $\mathbb{R}^{n}$. The latter   is impossible. 
In fact, 
in view of  \eqref{normalize at a point} and \eqref{definition of R},
we have {
\begin{align*}
1=&\left(v_i^{\frac{n+4}{n-4}-\frac{2(n-2)}{n-4}\varepsilon_i}R^{\varepsilon_i}_{g_{v_i}}\right)^\frac{1}{\frac{n+4}{n-4}-\frac{4(n-2)}{n-4}\varepsilon_i}+v_i(0)\\
\le & \left(-\frac{4(n-1)}{n-4}(1-2\varepsilon_i)v_i^{\frac{4}{n-4}-\frac{2(n-2)}{n-4}\varepsilon_i}\Delta v_i+(\frac{2(n-1)b_{\varepsilon_i}}{n-2}+1-2\varepsilon_i)R_{g_{i}}v_i^{\frac{n}{n-4}-\frac{2(n-2)}{n-4}\varepsilon_i}\right)^\frac{1}{\frac{n+4}{n-4}-\frac{4(n-2)}{n-4}\varepsilon_i}+v_{i}(0) \\
\rightarrow &\left(-\frac{4(n-1)}{n-4}v_{\infty}^{\frac{4}{n-4}}\Delta v_{\infty}\right)^{\frac{n-4}{n+4}}+v_{\infty}(0).
\end{align*}
Here we have used (\ref{eq_scalar}), (\ref{ab}) and the fact that
$$
\frac{8(n-1)(1-2\varepsilon_i)}{(n-4)^2}-\frac{2(n-1)a_{\varepsilon_i}}{n-2}>0
$$
provided $\varepsilon_i$ is sufficiently small. It is known that $v_\infty\ge 0$ and $R_{g_{v_{\infty}}}\ge 0$. By the strong maximum principle,  either $v_\infty>0$ or $v_\infty\equiv 0$. By the previous property,  we know that $v_\infty$ is a positive smooth function. On the other hand,   since $\frac{1}{2(n-1)} \Delta_{g_{v_\infty}} R_{g_{v_\infty}} \leq d(n)R_{g_{v_\infty}}^2$ with $d(n)=\frac{n^2-4}{8n(n-1)^2}$,   by the strong maximum principle, we infer that either $R_{g_{v_\infty}} >0$ or $
R_{g_{v_\infty}} \equiv 0$. In the later case, we have $v_\infty^{\frac{n-2}{n-4}}$ is bounded harmonic function, and then, it is a positive constant function on $\Rn$, which contradicts the following fact which will be proved below
$$
\int_{\Rn} v_\infty^{\frac{2n}{n-4}}\le C.
$$
Hence, the second case does not occur. }

By \eqref{limit of r}, denoting $\lambda=\lim_{i\rightarrow \infty}r_{\varepsilon_{i}}$, we have
$$ \lambda\le Y_{4,2}(M,[g_0]).$$
Hence $v_\infty$ satisfies   
\begin{equation}
\Delta^{2}_{g_{\Rn}}v_{\infty}(x)=-\lambda\frac{4(n-1)}{n-2}v_{\infty}^{\frac{2}{n-4}}\Delta_{g_{\Rn}} v_{\infty}^{\frac{n-2}{n-4}}, \quad \hbox{ in }\Rn\label{eq:limit equation}
\end{equation}
 with $\left(-\frac{4(n-1)}{n-2}v_{\infty}^{\frac{2}{n-4}}\Delta v_{\infty}^{\frac{n-2}{n-4}}\right)^{\frac{n-4}{n+4}}+v_\infty(0)=1$
and {$R_{g_{v_{\infty}}}> 0.$}
By (\ref{relationshipwithviandui}) and \eqref{relationship of metric}, we have 
\begin{align*}
 & \int_{B_{\frac{\delta_{0}}{2}\lambda_{i}}(0)}R^{\varepsilon_i}_{g_{v_{i}}}v_{i}^{-\frac{2(n-2)}{n-4}\varepsilon_{i}}dv_{g_{v_{i}}}\\
= & \int_{B_{\frac{\delta_{0}}{2}\lambda_{i}}}N_{i}^{-\frac{n+4}{n-4}+\frac{4(n-2)}{n-4}\varepsilon_{i}}(u_{i}\circ\Psi_{i})^{\frac{n+4}{n-4}-\frac{2(n-2)}{n-4}\varepsilon_i}R^{\varepsilon_i}_{\Psi_{i}(g_{u_{i}})}v_{i}^{-\frac{n+4}{n-4}}\frac{\lambda_{i}^{n}}{N_{i}^{\frac{2n}{n-4}}}\Psi_{i}(dv_{g_{u_{i}}})\\
= & N_{i}^{-(n-2)\varepsilon_{i}}\int_{B_{\frac{\delta_{0}}{2}\lambda_{i}}}(u_{i}\circ\Psi_{i})^{\frac{n+4}{n-4}-\frac{2(n-2)}{n-4}\varepsilon_i}R^{\varepsilon_i}_{\Psi_{i}(g_{u_{i}})}u_{i}^{-\frac{n+4}{n-4}}\Psi_{i}(dv_{g_{u_{i}}})\\
= & N_{i}^{-(n-2)\varepsilon_{i}}\int_{\Psi_{i}(B_{\frac{\delta_{0}}{2}\lambda_{i}})}u_{i}{}^{\frac{n+4}{n-4}-\frac{2(n-2)}{n-4}\varepsilon_i}R^{\varepsilon_i}_{g_{u_{i}}}u_{i}^{-\frac{n+4}{n-4}}dv_{g_{u_{i}}}\\
\le & N_{i}^{-(n-2)\varepsilon_{i}}\int_{M}u_{i}{}^{-\frac{2(n-2)}{n-4}\varepsilon_i}R^{\varepsilon_i}_{g_{u_{i}}}dv_{g_{u_{i}}}.
\end{align*}

Combining  Fatou's lemma, \eqref{equvalence of R and tilde R} and (\ref{normaliztion}), we obtain that 
\[
\int_{\mathbb{R}^{n}}R_{g_{v_{\infty}}}dv_{g_{v_{\infty}}}\le1.
\]

Note that

\begin{align*}
 & \frac{4(n-1)(n-2)(1-2\varepsilon_{i})}{(n-4)^{2}}\int_{B_{\frac{\delta_{0}\lambda_{i}}{2}}}|\nabla_{g_{i}}v_{i}|_{g_{i}}^{2}v_{i}^{\frac{2}{n-4}(2-(n-2)\varepsilon_{i})}dv_{g_{i}}+\int_{B_{\frac{\delta_{0}\lambda_{i}}{2}}}R_{g_{i}}v_{i}^{\frac{2(n-2)}{n-4}(1-\varepsilon_{i})}dv_{g_{i}}\\
= & \frac{4(n-1)(n-2)(1-2\varepsilon_{i})}{(n-4)^{2}}\int_{B_{\frac{\delta_0}{2}\lambda_{i}}(0)}|\nabla_{\Psi_{i}(g_{0})\lambda_{i}^{2}}\frac{1}{N_{i}}u_{i}(\Psi_{i}(x))|_{\Psi_{i}(g_{0})\lambda_{i}^{2}}^{2}N_{i}^{-\frac{2(2-(n-2)\varepsilon_{i})}{n-4}}u_{i}^{\frac{2(2-(n-2)\varepsilon_{i})}{n-4}}dv_{\Psi_{i}(g_{0})\lambda_{i}^{2}}\\
 & +\int R_{\Psi_{i}(g_{0})\lambda_{i}^{2}}u_{i}^{\frac{2(n-2)}{n-4}(1-\varepsilon_{i})}N_{i}^{-\frac{2(n-2)}{n-4}(1-\varepsilon_{i})}dv_{\Psi_{i}(g_{0})\lambda_{i}^{2}}\\
= & N_{i}^{-(n-2)\varepsilon_{i}}\frac{4(n-1)(n-2)(1-2\varepsilon_{i})}{(n-4)^{2}}\int_{B_{\frac{\delta_0}{2}r_{i}(0)}}|\nabla_{\Psi_{i}(g_{0})}u_{i}(\Psi_{i}(x))|^{2}u_{i}^{\frac{2(2-(n-2)\varepsilon_{i})}{n-4}}dv_{\Psi_{i}(g_{0})}\\
 & +N_{i}^{-(n-2)\varepsilon_{i}}\int_{B_{\frac{\delta_0}{2}r_{i}(0)}}R_{\Psi_{i}(g_{0})}u_{i}^{\frac{2(n-2)}{n-4}(1-\varepsilon_{i})}dv_{\Psi_{i}(g_{0})}\\
\le & N_{i}^{-(n-2)\varepsilon_{i}}\left(\frac{4(n-1)(n-2)(1-2\varepsilon_{i})}{(n-4)^{2}}\int_{M}|\nabla_{g_{0}}u_{i}|^{2}u_{i}^{\frac{2}{n-4}(2-(n-2)\varepsilon_{i})}dv_{g_{0}}+\int R_{g_{0}}u_{i}^{\frac{2(n-2)}{n-4}(1-\varepsilon_{i})}dv_{g_{0}}\right)\\
= & N_{i}^{-(n-2)\varepsilon_{i}}\int_{M}u_{i}{}^{-\frac{2(n-2)}{n-4}\varepsilon_i}R_{g_{u_{i}}}dv_{g_{u_{i}}}.
\end{align*}
Using Fatou's lemma again, we obtain that 
\begin{equation}
\frac{4(n-1)(n-2)}{(n-4)^{2}}\int_{\mathbb{R}^{n}}|\nabla v_{\infty}|^{2}v_{\infty}^{\frac{4}{n-4}}dx\le1.\label{eq:int R=00005Cle1}
\end{equation}

Similarly, one can find that
\begin{align*}
\int_{B_{\frac{\delta_{0}\lambda_{i}}{2}}}v_{i}P_{g_{i}}v_{i}(x)dv_{g_{i}} & =N_{i}^{-(n-2)\varepsilon_i}\int_{\Psi_{i}(B_{\frac{\delta_{0}\lambda_{i}}{2}}(0))}u_{i}(P_{g_{0}}u_{i})(y)dv_{g_{0}}\\
 & \le N_{i}^{-(n-2)\varepsilon_i}\int_{M}u_{i}(P_{g_{0}}u_{i})(y)dv_{g_{0}},
\end{align*}
where the last inequality holds due to $P_{g_{0}}u_{i}>0$ and $u_{i}>0$
in $M$. By the assumption of this theorem and (\ref{normaliztion}) and \eqref{equvalence of R and tilde R},
we have
\[
\int_{M}u_{i}(P_{g_{0}}u_{i})dv_{g_{0}}=\int_{M}r_{\varepsilon_i}u_{i}^{-\frac{2(n-2)}{n-4}\varepsilon_i}R^{\varepsilon_i}_{g_{u_{i}}}dv_{g_{u_{i}}}<\infty.
\]
And now Fatou's lemma implies
\begin{equation}
\int_{\mathbb{R}^{n}}v_{\infty}\Delta^{2}v_{\infty}\le C.\label{eq:integral}
\end{equation}
Using an argument similar to (\ref{eq:integral}) and (\ref{eq:int R=00005Cle1}),
we obtain 
\[
\int_{\mathbb{R}^{n}}(\Delta v_{\infty})^{2}\le C.
\]
Similarly, 
due to the positivity of $P_{g_0}$,
\begin{align*}
 & \int_{B_{\frac{\delta_{0}}{2}\lambda_{i}}(0)}1dv_{g_{v_{i}}}= \int_{B_{\frac{\delta_{0}}{2}\lambda_{i}}}\frac{\lambda_{i}^{n}}{N_{i}^{\frac{2n}{n-4}}}\Psi_{i}(dv_{g_{u_{i}}})= N_{i}^{\frac{-(n-2)n\varepsilon_{i}}{n-4}}\int_{B_{\frac{\delta_{0}}{2}\lambda_{i}}}\Psi_{i}(dv_{g_{u_{i}}})\\
\le  & \int_{M} 1dv_{g_{u_{i}}}\le C(\int_{M} u_iP_{g_0} u_idv_{g_{0}})^{\frac{n}{n-4}}\le C, 
\end{align*}
and it yields from Fatou's lemma again that 
\[
\int_{\mathbb{R}^{n}} v_{\infty}^{\frac{2n}{n-4}} dx\le C.
\]

Take the cut-off function $\eta\in C_{0}^{\infty}(\mathbb{R}^{n})$
such that $0\le\eta\le1$, $\eta=1$ on $B_{1}$ and $\eta=0$ on
$\mathbb{R}^{n}\backslash B_{2}$ and let $v_{\infty,R}(x)=\eta(\frac{x}{R})v_{\infty}(x)$.
Then we  have
\[
\int_{\mathbb{R}^{n}}|\Delta(v_{\infty}-v_{\infty,R})|^{2}+|\nabla(\eta (\frac{x}{R})v_{\infty}^{\frac{n-2}{n-4}}-v_{\infty}^{\frac{n-2}{n-4}})|^{2}\rightarrow0,\quad\text{as}\ R\rightarrow\infty.
\]
Now by \eqref{eq:limit equation} one can prove
\[
\int_{\mathbb{R}^{n}}(\Delta_{g_{\Rn}} v_{\infty})^{2}dx=\lambda\frac{4(n-1)}{n-2}\int_{\mathbb{R}^{n}}|\nabla_{g_{\Rn}}  v_{\infty}^{\frac{n-2}{n-4}}|^{2}dx.
\]

Since $0<\lambda\le Y_{4,2}(M,[g_{0}])$, we obtain
\begin{equation}\label{upper bound of delta v}
\int_{\mathbb{R}^{n}}(\Delta_{g_{\Rn}}  v_{\infty})^{2}dx\le Y_{4,2}(M,[g_{0}])\frac{4(n-1)}{n-2}\int_{\mathbb{R}^{n}}|\nabla_{g_{\Rn}}  v_{\infty}^{\frac{n-2}{n-4}}|^{2}dx.
\end{equation}
Since $v_{\infty}$ is the solution to (\ref{eq:limit equation})
satisfying $\int_{\mathbb{R}^{n}}R_{g_{v_{\infty}}}dv_{g_{\infty}}\le1$
and $\int_{\mathbb{R}^{n}}(\Delta v_{\infty})^{2}dv_{g}\le C$, by
Lemma \ref{lem:Sobolev inequality in R^n}, we have
\[
\int_{\mathbb{R}^{n}}(\Delta_{g_{\Rn}}  v_{\infty})^{2}dx\ge Y_{4,2}(\mathbb{S}^{n},[g_{\mathbb{S}^{n}}])\big(\frac{4(n-1)}{n-2}\int_{\mathbb{R}^{n}}|\nabla_{g_{\Rn}}  v_{\infty}^{\frac{n-2}{n-4}}|^{2}dx\big)^{\frac{n-4}{n-2}}.
\]
It, together with \eqref{upper bound of delta v}, implies 
\[
Y_{4,2}(M,[g_{0}])\frac{4(n-1)}{n-2}\int_{\mathbb{R}^{n}}|\nabla v_{\infty}^{\frac{n-2}{n-4}}|^{2}dx\ge Y_{4,2}(\mathbb{S}^{n},[g_{\mathbb{S}^{n}}])\big(\frac{4(n-1)}{n-2}\int_{\mathbb{R}^{n}}|\nabla_{g_{\Rn}}  v_{\infty}^{\frac{n-2}{n-4}}|^{2}dx\big)^{\frac{n-4}{n-2}}.
\]
Hence we obtain 
\[
Y_{4,2}(\mathbb{S}^{n},[g_{\mathbb{S}^{n}}])\le Y_{4,2}(M,[g_{0}])\big(\frac{4(n-1)}{n-2}\int_{\mathbb{R}^{n}}|\nabla v_{\infty}^{\frac{n-2}{n-4}}|^{2}dx\big)^{\frac{2}{n-2}}\le Y_{4,2}(M,[g_{0}]),
\]
where the last inequality holds due to (\ref{eq:int R=00005Cle1}). 
This contradicts  the assumption $Y_{4,2}(\mathbb{S}^{n},[g_{\mathbb{S}^{n}}])>Y_{4,2}(M,[g_{0}])$, and hence $N_{i}$ is bounded. We finish the proof.
\end{proof}

{We define  $\tilde Y_{4,2}$ as follows 
\[
\tilde Y_{4,2}(M,[g_{0}])=\inf_{\mathcal C_1[g_{0}]}\frac{\int uP_{g_{0}}udv_{g_{0}}}{\left(\int R_{g_{u}}dv_{g_{u}}\right)^{\frac{n-4}{n-2}}}.
\]
It is clear that $\tilde Y_{4,2}(M,[g_{0}])\le Y_{4,2}(M,[g_{0}])$.  We establish the following result.

\begin{thm}
\label{thm:existence in 1 cone}Let $\left(M^{n},g_{0}\right)$
be a closed Riemannian manifold of dimension $n\geq5$, with properties that $Q_{g_{0}}$ is semi-positive and $R_{g_{0}}\geq0$. Then
$$
\tilde Y_{4,2}(M,[g_{0}])=  Y_{4,2}(M,[g_{0}]).
$$
Moreover, $\tilde Y_{4,2}(M,[g_{0}])$ is achieved by some $g_{u}=u^{\frac{4}{n-4}}g_{0}\in \mathcal C_Q[g_{0}]$ with a positive smooth function $u$ which
satisfies
\[
P_{g_{0}}u=\lambda_{0}u^{\frac{n+4}{n-4}}R_{g_{u}}
\]
for some positive constant $\lambda_{0}$.
\end{thm}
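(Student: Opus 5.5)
The plan is to show that the minimization over the larger cone $\mathcal C_1[g_0]$ produces nothing below $Y_{4,2}$, by rerunning the existence machinery of this section with $\mathcal C_1[g_0]$ in place of $\mathcal C_Q[g_0]$. The key observation is that the subcritical flow \eqref{eq:real subcritcial flow} only needs an initial datum $u_0$ with $R_{g_{u_0}}>0$. It does not need $Q_{g_{u_0}}\ge 0$, and by Lemma \ref{lem:lower bound of u} it immediately lands in the region where $P_{g_0}u$ is controlled. Also, the functional $I_\varepsilon$ is defined on $\mathcal C_1[g_0]$ from the outset. So Theorem \ref{Exsistencesubcritical} already says that $Y_\varepsilon(M,[g_0])$, an infimum over $\mathcal C_1[g_0]$, is attained by some $\bar u_\varepsilon$ with $g_{\bar u_\varepsilon}\in\mathcal C_Q[g_0]$. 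The corollary after it already records that the infimum over $\mathcal C_1$ equals the infimum over $\mathcal C_Q$ at the subcritical level.

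The first step is the inequality $\lim_{\varepsilon\to0}Y_\varepsilon\le\tilde Y_{4,2}$. I would prove it exactly as in \eqref{eq:Yamabe constant relationship}: take $w_i\in\mathcal C_1[g_0]$ with $I(w_i)\to\tilde Y_{4,2}$, and use $I_\varepsilon(w_i)\to I(w_i)$ as $\varepsilon\to0$ for each fixed smooth $w_i$. Positivity of $\tilde Y_{4,2}$ follows from positivity of $P_{g_0}$ together with the Sobolev estimate behind \eqref{eq:positive lower bound of Yamabe constant}.

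The second step repeats the proof of Theorem \ref{thm:existence under Yamabe restriction} with $\tilde Y_{4,2}$ in place of $Y_{4,2}$. Take the minimizers $u_{\varepsilon_i}$ with the normalization \eqref{normaliztion}; by the first step, $\lambda=\lim r_{\varepsilon_i}\le\tilde Y_{4,2}$. If the $N_i$ stay bounded, the $u_i$ converge smoothly to a solution $u_\infty$ of $P_{g_0}u=\lambda_0u^{\frac{n+4}{n-4}}R_{g_u}$ with $g_{u_\infty}\in\mathcal C_Q[g_0]$. This solution satisfies $I(u_\infty)=\lim I_{\varepsilon_i}(u_i)\le\tilde Y_{4,2}\le Y_{4,2}\le I(u_\infty)$, which forces all these quantities to be equal and proves both claims at once. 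If instead $N_i\to\infty$, the blow-up argument together with Lemma \ref{lem:Sobolev inequality in R^n} gives $Y_{4,2}(\mathbb S^n)\le\lambda\le\tilde Y_{4,2}(M,[g_0])$.

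The third step, which I expect to be the main obstacle, is excluding this blow-up case without assuming a strict inequality in the hypothesis. Combining Theorem \ref{thm_Yamabe_Constants} and Corollary \ref{main_cor} gives $\tilde Y_{4,2}\le Y_{4,2}\le Y_{4,2}(\mathbb S^n)$. If $(M,g_0)$ is not conformal to the round sphere, the last inequality is strict, so blow-up is impossible. If $(M,g_0)$ is conformal to $\mathbb S^n$, Theorem \ref{thm1.1} gives $\tilde Y_{4,2}(\mathbb S^n)=Y_{4,2}(\mathbb S^n)$ directly, because \eqref{new_ineq} holds on all of $\mathcal C_1$. In that case the round metric, which lies in $\mathcal C_Q$, is a minimizer and solves the equation with $\lambda_0=Q/R$. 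The delicate point is to check that the blow-up argument in Theorem \ref{thm:existence under Yamabe restriction} used only the subcritical minimizers and not the hypothesis $w_i\in\mathcal C_Q$. I believe this holds, since $P_{g_0}u_i>0$ comes from the equation.
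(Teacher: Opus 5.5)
Your proposal is correct and is the paper's argument made explicit: the paper's one-line proof via Corollary~\ref{main_cor} and Theorem~\ref{thm:existence under Yamabe restriction} tacitly relies on exactly your observation that $Y_\varepsilon$ is an infimum over $\mathcal C_1[g_0]$, so that $\lim_{\varepsilon\to0}Y_\varepsilon\le\tilde Y_{4,2}\le Y_{4,2}$, with the non-blow-up alternative giving $Y_{4,2}\le I(u_\infty)=\lim Y_{\varepsilon_i}$ and the blow-up alternative giving $Y_{4,2}(\mathbb S^n)\le\lim Y_{\varepsilon_i}$. Your separate treatment of the conformally round case through Theorem~\ref{thm1.1} fills in a case the paper's citation passes over (there the strict inequality required by Theorem~\ref{thm:existence under Yamabe restriction} fails); the only slip is that the round metric $g_u$ solves the equation with $\lambda_0=\frac{n-4}{2}\,Q_{g_u}/R_{g_u}$, not $Q_{g_u}/R_{g_u}$.
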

\begin{proof}
The desired results follow from Corollary \ref{main_cor} and Theorem \ref{thm:existence under Yamabe restriction}. Finally, we finish the proof.

\end{proof}
}

\appendix
\section{Appendix}
\label{appendix}

\subsection{The regularity of $W^{2,2}$ weak solutions}

We adopt the argument of Uhlenbeck and Viaclovsky in \cite{UV} to prove
the smoothness of a weak solution. The key lemma is the following
$\varepsilon$-regularity.
\begin{lem}\label{e-regularity}
Let $u\in W^{2,2}$ be a positive weak solution to 
\[
P_{g_0}u=\lambda u^{\frac{2}{n-4}}L_{g_0}u^{\frac{n-2}{n-4}}.
\]
Let $x_{0}\in M$ and  $B_{1}$  a unit ball in a Riemannian
normal coordinate system $\{x\}$ around $x_{0}$. Then there exists
an $\epsilon>0$ such that

\[
\|\Delta u\|_{L^{2}\left(B_{1}\right)}+\|\nabla u\|_{L^{\frac{2n}{n-2}}\left(B_{1}\right)}+\|u\|_{L^{\frac{2n}{n-4}}(B_1)}<\epsilon
\]
and
\begin{equation}
\max_{i,j}\left\Vert g_0^{ij}-\delta^{ij}\right\Vert _{L^{\infty}\left(B_{1}\right)}<\epsilon\label{eq:almost Euc}
\end{equation}
imply that $u\in W^{2,q}\left(B_{1/2}\right)$ for some $2<q<\frac{2n}{n-2}$.
\end{lem}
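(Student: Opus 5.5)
We follow the $\varepsilon$-regularity scheme of Uhlenbeck--Viaclovsky: localize, write the localized function as the Euclidean Bi-Laplacian inverse of a right-hand side, and show that the nonlinear term is a small perturbation in $W^{2,q}$. Fix a cut-off $\eta\in C_0^\infty(B_1)$ with $\eta=1$ on $B_{3/4}$ and set $w=\eta u$. In the normal coordinates,
\[
\Delta_{\R^n}^2 w = \eta\lambda u^{\frac{2}{n-4}}L_{g_0}u^{\frac{n-2}{n-4}} + (\Delta_{\R^n}^2-P_{g_0})(\eta u) + [P_{g_0},\eta]u ,
\]
and we read this identity in the weak (distributional) sense. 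We expand the right-hand side as
\[
\lambda u^{\frac{2}{n-4}}L_{g_0}u^{\frac{n-2}{n-4}}=c_1\,u^{\frac{4}{n-4}}(-\Delta u)-c_2\,u^{\frac{4}{n-4}-1}|\nabla u|^2+c_3R_{g_0}u^{\frac{n}{n-4}},
\]
which has the structure ``(small coefficient)$\times$(top-order quantity)''. We invert with the Newtonian Bi-Laplacian kernel $\Gamma$, writing $w=\Gamma*(\cdots)+h$ with $h$ biharmonic, and then estimate in the scale $W^{2,p}$ with $p$ slightly above $2$.

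The key step is a linear estimate of the form $\|\Gamma * F\|_{W^{2,q}}\lesssim \|F\|_{W^{-2,q}}$. The operator $\Delta_{\R^n}^2-P_{g_0}$ has coefficients $g^{ij}-\delta^{ij}$ in its top order, which are small by \eqref{eq:almost Euc}, together with lower-order terms. We put the nonlinear terms in divergence-type or $L^{s}$ form: $u^{\frac4{n-4}}\Delta u$ is controlled by $\|u\|_{L^{2n/(n-4)}}^{4/(n-4)}$ times $\Delta w$ in $L^q$. By Hölder, $u^{\frac4{n-4}}\in L^{n/2}$ with small norm, and multiplication by it maps $L^q\to L^{nq/(n+2q)}$, which embeds into $W^{-2,q}$ by Sobolev. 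The gradient term $u^{\frac{8-n}{n-4}}|\nabla u|^2$ we rewrite as $\nabla\cdot(u^{\frac4{n-4}}\nabla u)-u^{\frac4{n-4}}\Delta u$ up to constants, which is possible for the exponent combination used here. This yields
\[
\|w\|_{W^{2,q}}\le C\epsilon^{\frac{4}{n-4}}\|w\|_{W^{2,q}}+C\epsilon\|w\|_{W^{2,q}}+C\|u\|_{W^{2,2}(B_1)}+(\text{lower order}),
\]
with the lower-order commutator terms absorbed by interpolation.

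The main obstacle is that this is an a priori estimate, and $w$ is only known to lie in $W^{2,2}$. We resolve it by a fixed-point (or difference-quotient) argument. We consider the linear map $T(v)=\Gamma*\bigl(a\,\Delta v+ \nabla\cdot(b\nabla v)+\ldots\bigr)$ with the frozen coefficients $a=c_1u^{\frac4{n-4}}\eta$ and $b$ coming from $u$. By the estimate above, $T$ is a contraction on both $W^{2,2}$ and $W^{2,q}$ for $q\in(2,\frac{2n}{n-2})$ once $\epsilon$ is small, since the constants in the Calderón--Zygmund and Sobolev steps are uniform in this range of $q$. The unique fixed point in $W^{2,2}$ then coincides with the one in $W^{2,q}$, and it equals $w$ up to a smooth biharmonic term. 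Hence $u=w\in W^{2,q}(B_{1/2})$. The restriction $q<\frac{2n}{n-2}$ is what keeps the Hölder exponents admissible, namely $nq/(n+2q)>1$ and the Sobolev duality for $W^{-2,q}$.
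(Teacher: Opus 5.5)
This is essentially the paper's argument. You localize, and you treat $u^{\frac{4}{n-4}}\Delta$ and the gradient term as perturbations of $\Delta_E^2$ whose coefficient $u^{\frac4{n-4}}$ is small in $L^{n/2}$; your divergence rewrite is equivalent to the paper's $\langle\nabla h,\nabla u^{\frac4{n-4}}\rangle$ integrated by parts against the test function. You then estimate everything in $L^q_{-2}$ and conclude by the Uhlenbeck--Viaclovsky invertibility/uniqueness argument for $q=2$ and some $q>2$. That step is better done on compactly supported functions in $B_1$ than with $\Gamma*$ on all of $\R^n$, where $\Gamma*F$ need not lie in $L^q$.

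One correction concerns the upper bound $q<\frac{2n}{n-2}$. It does not come from $nq/(n+2q)>1$, which only constrains $q$ from below. It comes from the cutoff terms such as $\operatorname{div}(\nabla\eta\,\Delta u)$, which cannot be absorbed by interpolation and must be kept as fixed data. These lie in $L^q_{-2}$ only because $\Delta u\in L^2\subset L^{\frac{nq}{n+q}}(B_1)$, which holds exactly when $q\le\frac{2n}{n-2}$.
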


\begin{proof}
Let $\phi\in C_{0}^{\infty}(B_{1})$, $\phi=1$ in $B_{\frac{1}{2}}$.
Denote $h=\phi u$.

We first \emph{claim:} For any $2<q<\frac{2n}{n-2}$,

\[
P_1 h\colon=\Delta_{E}^{2}h+L_1 h+\frac{n-2}{n-4}\lambda u^{\frac{4}{n-4}}\Delta h+\frac{2(n-2)}{(n-4)^{2}}\lambda\frac{n-4}{4}\langle\nabla h,\nabla u^{\frac{4}{n-4}}\rangle=f\in L_{-2}^{q}
\]
where 
$\Delta_{E}$ denotes the Euclidean Laplacian operator,
\[
L_1 h=\sum_{i,j,k,l}\partial_{i}\partial_{j}\left(a^{ij,kl}\partial_{k}\partial_{l}h\right),
\]
and
$\ensuremath{a^{ij,kl}(x)=g_0^{ij}(x)g_0^{kl}(x)-\delta^{ij}\delta^{kl}}$
and $f$ is the remaining term. 

By computations, we have
\begin{align}
\Delta^{2}(\phi u)= & \phi\Delta^{2}u+u\Delta^{2}\phi+2\langle\nabla u,\nabla(\Delta\phi)\rangle+2\Delta\langle\nabla\phi,\nabla u\rangle+2\div(\nabla\phi\cdot\Delta u)\nonumber \\
= & \phi(P_{g_0}u-\div(B\nabla u)-\frac{n-4}{2}Q_{g_0}u)\nonumber \\
 & +u\Delta^{2}\phi+2\langle\nabla u,\nabla(\Delta\phi)\rangle+2\Delta\langle\nabla\phi,\nabla u\rangle+2\div(\nabla\phi\cdot\Delta u)\nonumber \\
= & \phi\lambda u^{\frac{2}{n-4}}L_{g_0}u^{\frac{n-2}{n-4}}+u\Delta^{2}\phi+2\langle\nabla u,\nabla(\Delta\phi)\rangle+2\Delta\langle\nabla\phi,\nabla u\rangle+2\div(\nabla\phi\cdot\Delta u)\label{eq:fourth order equation}\\
&-\phi\div(B\nabla u)-\phi\frac{n-4}{2}Q_{g_0}u\nonumber
\end{align}
 where $\ensuremath{B:=4A_{g_{0}}-(n-2)\sigma_{1}\left(g_{0}\right)g_{0}}.$ Moreover, we have
\begin{align*}
\phi\lambda u^{\frac{2}{n-4}}L_{g_0}u^{\frac{n-2}{n-4}}= & \phi\lambda u^{\frac{2}{n-4}}(-\Delta u^{\frac{n-2}{n-4}}+\frac{n-2}{4(n-1)}R_{g_{0}}u^{\frac{n-2}{n-4}})\\
= & \phi\lambda u^{\frac{2}{n-4}}(-\frac{n-2}{n-4}u^{\frac{n-2}{n-4}-1}\Delta u \ensuremath{-\frac{2(n-2)}{(n-4)^{2}}u^{\frac{n-2}{n-4}-2}|\nabla u|^{2}+\frac{n-2}{4(n-1)}R_{g_{0}}u^{\frac{n-2}{n-4}})}\\
= & -\frac{n-2}{n-4}\lambda u^{\frac{4}{n-4}}\left(\Delta h-\Delta\phi\cdot u-2\langle\nabla u,\nabla\phi\rangle\right) 
\\&-\frac{2(n-2)}{(n-4)^{2}}\lambda(\frac{n-4}{4}\langle\nabla h,\nabla u^{\frac{4}{n-4}}\rangle-\langle\nabla u,\nabla\phi\rangle u^{\frac{4}{n-4}})\\
 & +\frac{n-2}{4(n-1)}R_{g_{0}}\lambda hu^{\frac{4}{n-4}}.
\end{align*}
Together with \eqref{eq:fourth order equation}, we obtain
\[
\Delta^{2}h+\frac{n-2}{n-4}\lambda u^{\frac{4}{n-4}}\Delta h+\frac{2(n-2)}{(n-4)^{2}}\lambda\frac{n-4}{4}\langle\nabla h,\nabla u^{\frac{4}{n-4}}\rangle=R,
\]
where we denote all the remaining terms as $R.$
As Uhlenbeck and Viaclovsky's argument, we have
\[
\Delta^{2}h=\Delta_{E}^{2}h+\sum_{i,j,k,l}\partial_{i}\partial_{j}\left(a^{ij,kl}\partial_{k}\partial_{l}w\right)+\text{lower order terms.}
\]
To prove the claim, we need to show that $R\in L_{-2}^{q}$ for
some $q>2$.

For any $q\le\frac{2n}{n-2}$, 
\begin{align*}
\|\Delta\langle\nabla\phi,\nabla u\rangle\|_{L_{-2}^{q}} & \le\|\langle\nabla\phi,\nabla u\rangle\|_{L^{q}}\\
 & \le(\int|\nabla\phi|^{q}|\nabla u|^{q})^{1/q} <\infty
\end{align*}
and for $q<\frac{2n}{n-2}$, 
\begin{align*}
\|\div(\nabla\phi\cdot\Delta u)\|_{L_{-2}^{q}} & \le\|\nabla\phi\cdot\Delta u\|_{L_{-1}^{q}}\\
 & \le(\int|\Delta u|^{\frac{nq}{n+q}})^{\frac{n+q}{nq}}<\infty,
\end{align*}
where in the last inequality we have used the fact that $q<\frac{2n}{n-2}\Leftrightarrow\frac{nq}{n+q}<2$.

Let $\frac{1}{q'}+\frac{1}{q}=1$. By the definition, we have
\begin{align*}
\|\langle\nabla u,\nabla\phi\rangle u^{\frac{4}{n-4}}\|_{L_{-2}^{q}} & =\sup_{\|\varphi\|_{W^{2,q'}}\le1}|\int\langle\nabla u,\nabla\phi\rangle u^{\frac{4}{n-4}}\varphi|\\
 & =\sup_{\|\varphi\|_{W^{2,q'}}\le1}\frac{n-4}{n}|\int\langle\nabla u^{\frac{n}{n-4}},\nabla\phi\rangle\varphi|\\
 & =\sup_{\|\varphi\|_{W^{2,q'}}\le1}\frac{n-4}{n}|\int u^{\frac{n}{n-4}}\langle\nabla\phi,\nabla\varphi\rangle+u^{\frac{n}{n-4}}\Delta\phi\cdot\varphi|\\
 & \le(\int u^{\frac{2n}{n-4}})^{\frac{1}{2}}(\int\langle\nabla\phi,\nabla\varphi\rangle^{2})^{\frac{1}{2}}+(\int u^{\frac{2n}{n-4}})^{\frac{1}{2}}(\int|\Delta\phi\cdot\varphi|^{2})^{\frac{1}{2}}\\
 & \le C(\int u^{\frac{2n}{n-4}})^{\frac{1}{2}}
\end{align*}
where in the last inequality we have used $q<\frac{2n}{n-2}$ and
$\int|\nabla\varphi|^{2}\le1$. 
Other lower order terms can be handled similarly. Thus, we obtain
the \emph{Claim}.

Now we begin to prove that 
\begin{equation}
\|L_1 h\|_{L_{-2}^{q}}+\frac{n-2}{n-4}\lambda\|u^{\frac{4}{n-4}}\Delta h\|_{L_{-2}^{q}}+\frac{2(n-2)}{(n-4)^{2}}\lambda\frac{n-4}{4}\|\langle\nabla h,\nabla u^{\frac{4}{n-4}}\rangle\|_{L_{-2}^{q}}\le C\epsilon\|h\|_{W^{2,q}}.\label{eq:small bound}
\end{equation}
The bound of $\|L_1 h\|_{L_{-2}^{q}}$ is obtained due to (\ref{eq:almost Euc}). For any $q>2$, $n-2q'>0$, 

\begin{align*}
\|u^{\frac{4}{n-4}}\Delta h\|_{L_{-2}^{q}} & =\sup_{\|\varphi\|_{W^{2,q'}}\le1}|\int u^{\frac{4}{n-4}}\Delta h\varphi|\\
 & \le(\int|\Delta h|^{q})^{\frac{1}{q}}(\int u^{\frac{4q'}{n-4}}\varphi^{q'})^{\frac{1}{q'}}\\
 & \le(\int|\Delta h|^{q})^{\frac{1}{q}}\left((\int u^{\frac{4q'}{n-4}\frac{2n}{4q'}})^{\frac{2q'}{n}}(\int\varphi^{q'\frac{n}{n-2q'}})^{\frac{n-2q'}{n}}\right)^{\frac{1}{q'}}\\
 & \le(\int u^{\frac{2n}{n-4}})^{\frac{2}{n}}(\int\varphi^{q'\frac{n}{n-2q'}})^{\frac{n-2q'}{nq'}}\|h\|_{W^{2,q}}\\
 & \le(\int u^{\frac{2n}{n-4}})^{\frac{2}{n}}\|h\|_{W^{2,q}},
\end{align*}
and 
\begin{align*}
 & \|\langle\nabla h,\nabla u^{\frac{4}{n-4}}\rangle\|_{L_{-2}^{q}}\\
  =&\sup_{\|\varphi\|_{W^{2,q'}}\le1}|\int\langle\nabla h,\nabla u^{\frac{4}{n-4}}\rangle\varphi|\\ =&\sup_{\|\varphi\|_{W^{2,q'}}\le1}|\int\Delta h\varphi u^{\frac{4}{n-4}}+\langle\nabla h,\nabla\varphi\rangle u^{\frac{4}{n-4}}|\\
  \le &(\int u^{\frac{2n}{n-4}})^{\frac{2}{n}}(\int|\Delta h|^{q})^{\frac{1}{q}}+(\int|\nabla h|^{\frac{nq}{n-q}})^{\frac{n-q}{nq}}\left(\int(u^{\frac{4}{n-4}}|\nabla\varphi|)^{\frac{nq}{nq-n+q}}\right)^{\frac{nq-n+q}{nq}}\\
\le &(\int u^{\frac{2n}{n-4}})^{\frac{2}{n}}(\int|\Delta h|^{q})^{\frac{1}{q}}\\
 & +(\int|\nabla h|^{\frac{nq}{n-q}})^{\frac{n-q}{nq}}\left((\int|\nabla\varphi|^{\frac{nq}{nq-n+q}\frac{nq-n+q}{nq-n-q}})^{\frac{nq-n-q}{nq-n+q}}(\int u^{\frac{4nq}{(n-4)(nq-n+q)}\frac{nq-n+q}{2q}})^{\frac{2q}{nq-n+q}}\right)^{\frac{nq-n+q}{nq}}\\
  \le &(\int u^{\frac{2n}{n-4}})^{\frac{2}{n}}\|h\|_{W^{2,q}}.
\end{align*}
By the assumptions, we get (\ref{eq:small bound}). The left argument
is the same as Uhlenbeck and Viaclovsky did in \cite{UV}.

\end{proof}

\begin{thm} \label{thm8.1} 
    Let $u\in W^{2,2}$ be a positive weak solution to 
\[
P_{g_0}u=\lambda u^{\frac{2}{n-4}}L_{g_0}u^{\frac{n-2}{n-4}}.
\]
Then $u\in C^{\infty}(M)$.
\end{thm}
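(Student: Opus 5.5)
Our plan is to combine the $\varepsilon$-regularity of Lemma \ref{e-regularity} with a bootstrap, following Uhlenbeck--Viaclovsky. The first step is \emph{localization and scaling}. Fix $x_0\in M$ and work in normal coordinates on a small ball $B_\rho(x_0)$. The equation $P_{g_0}u=\lambda u^{\frac{2}{n-4}}L_{g_0}u^{\frac{n-2}{n-4}}$ is invariant under the conformal rescaling $u_\rho(x)=\rho^{\frac{n-4}{2}}u(\exp_{x_0}(\rho x))$, with the metric $g_\rho=\rho^{-2}\exp_{x_0}^*g_0$. This follows from \eqref{eq:conformal P transformation} and \eqref{conformal laplacian transform}, exactly as in the blow-up computation \eqref{pullback of Paneizte}--\eqref{eq:transformation}. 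The quantities $\|\Delta u\|_{L^2}$, $\|\nabla u\|_{L^{\frac{2n}{n-2}}}$ and $\|u\|_{L^{\frac{2n}{n-4}}}$ are scale invariant up to lower-order terms that vanish as $\rho\to0$. Since $u\in W^{2,2}\subset W^{1,\frac{2n}{n-2}}\cap L^{\frac{2n}{n-4}}$, absolute continuity of the integral gives, for $\rho$ small, that these norms on $B_\rho(x_0)$ are below the $\epsilon$ of Lemma \ref{e-regularity}. Moreover $g_\rho^{ij}\to\delta^{ij}$ uniformly on $B_1$. Lemma \ref{e-regularity} then yields $u\in W^{2,q}(B_{\rho/2}(x_0))$ for some $q>2$. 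Covering $M$ by finitely many such balls gives $u\in W^{2,q}(M)$.

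Next comes the \emph{bootstrap}. We expand the right-hand side as
$-\frac{n-2}{n-4}\lambda u^{\frac{4}{n-4}}\Delta u-\frac{2(n-2)}{(n-4)^2}\lambda u^{\frac{4}{n-4}-1}|\nabla u|^2+c\,R_{g_0}u^{\frac{n}{n-4}}$.
We treat the equation as $P_{g_0}u=f$ in the weak sense and use the $L^p$ theory \eqref{eq:normal equalven} for the invertible positive operator $P_{g_0}$. Two situations arise.
\begin{itemize}
\item If the terms are in $L^{s}$, then $u\in W^{4,s}$.
\item If they are only in a negative Sobolev space, we rewrite $u^{\frac{4}{n-4}-1}|\nabla u|^2$ as a divergence: $\frac{n-4}{n}\mathrm{div}(\dots)$-type identities, as in the proof of the Claim, turn $u^{\frac{4}{n-4}}\nabla u\cdot\nabla$ into $\nabla u^{\frac{n}{n-4}}$. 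This gives $u\in W^{3,s}$ or $W^{2,s'}$ with an improved exponent.
\end{itemize}
Hölder's inequality with $u\in W^{2,q}$, $q>2$, gives $u^{\frac{4}{n-4}}\Delta u\in L^{s}$ with $\frac1s=\frac{4}{n-4}\cdot\frac1{p_u}+\frac1q$, where $p_u$ is the Sobolev exponent of $W^{2,q}$. Since $q>2$ strictly, each step gains a fixed amount in the integrability exponent, mirroring the iteration \eqref{inductionimprovement} used for $u_\varepsilon$. After finitely many steps $u\in W^{4,s}$ with $s>n/4$, so $u$ is continuous and bounded. One more step gives $u\in C^{1,\alpha}$, and then the right-hand side lies in $L^p$ for all $p$.

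The last step is \emph{positivity and smoothness}. Once $u\in C^{3,\alpha}$, note that $u^{\frac{n-2}{n-4}}$ is a weak supersolution of $L_{g_0}$, since the right-hand side is nonnegative whenever $P_{g_0}u\ge0$. Alternatively, because $u>0$ and is continuous on compact $M$, the Harnack inequality (as used in the proof of Theorem \ref{subcritical flow convergence}) gives $\min u>0$. Then $u^{\frac{4}{n-4}}$ and the other powers are smooth functions of $u$, and Schauder theory applied to $P_{g_0}u=f(u,\nabla u,\nabla^2u)$ gives $C^{k,\alpha}$ for all $k$, so $u\in C^\infty(M)$.

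The main obstacle is the first step of the bootstrap. For $q$ only slightly above $2$, the quadratic gradient term $u^{\frac{4}{n-4}-1}|\nabla u|^2$ has a negative power of $u$ when $n>8$, and this must be controlled before positivity is quantitatively known. We plan to always pair it as $\langle\nabla u^{\frac{4}{n-4}},\nabla u\rangle$ and integrate by parts against test functions in $W^{2,q'}$, as in the Claim of Lemma \ref{e-regularity}. This avoids any lower bound on $u$ until continuity is reached.
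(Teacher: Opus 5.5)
Your proposal follows the paper's proof: rescale so the smallness hypotheses of Lemma \ref{e-regularity} hold, obtain $u\in W^{2,q}$ for some $q>2$, then bootstrap. The paper only says ``the higher regularity follows,'' and your iteration fills this in correctly. H\"older handles $u^{\frac{4}{n-4}}\Delta u$, integration by parts handles $\langle\nabla u^{\frac{4}{n-4}},\nabla u\rangle$, and together they give $\frac{1}{q_{\mathrm{new}}}=\frac{n-2q}{(n-4)q}<\frac1q$.

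In the final step, rely on your second route: continuity plus pointwise positivity on compact $M$, which matches the application where $u\ge C_1>0$. The supersolution argument would need $\lambda\ge0$ and $P_{g_0}u\ge0$, and neither is a hypothesis of the theorem.
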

\begin{proof}
Since the equation is scale invariant, the conditions in Lemma \ref{e-regularity}
can be obtained by rescaling. Now  applying  Lemma \ref{e-regularity}, we know that the rescaled solution belongs to $W^{2,q}$ for some $q>2$, and so does the original solution. Therefore, the higher regularity follows. 
\end{proof}

\subsection{Convexity}The following properties, used in Lemma \ref{lem:lower bound of u},  should be well known to experts. For completeness we provide the proof.

\begin{lem}\label{convex} {Let  $Lu=-\Delta u +au$, where $a$ is a function.}
If $Lu\ge 0$ and $Lv\ge 0$, then for $0\le s=1-t\le 1$, $L(u^tv^s)\ge 0$.
\end{lem}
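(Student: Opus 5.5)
Set $w=u^t v^s$ with $t+s=1$. Where $u,v>0$ I will compute $\Delta w$ directly (if $u$ or $v$ vanishes somewhere, I will approximate by $u+\delta$, $v+\delta$ where possible, or interpret $L w\ge 0$ weakly). Write $w=e^{t\log u+s\log v}$ and put $f=\log u$, $h=\log v$. Then
\[
\frac{\Delta w}{w}=t\Delta f+s\Delta h+|t\nabla f+s\nabla h|^2 .
\]
Using $\Delta f=\frac{\Delta u}{u}-|\nabla f|^2$ and $\Delta h=\frac{\Delta v}{v}-|\nabla h|^2$, this becomes
\[
\frac{\Delta w}{w}=t\frac{\Delta u}{u}+s\frac{\Delta v}{v}-\big(t|\nabla f|^2+s|\nabla h|^2-|t\nabla f+s\nabla h|^2\big).
\]

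The bracket equals $ts|\nabla f-\nabla h|^2\ge 0$. This identity follows from $t+s=1$: indeed $t|a|^2+s|b|^2-|ta+sb|^2=t(1-t)|a|^2+s(1-s)|b|^2-2ts\,a\cdot b=ts|a-b|^2$. It is the convexity of $|\cdot|^2$ and is the key step.

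It follows that
\[
\frac{Lw}{w}=-\frac{\Delta w}{w}+a = t\frac{Lu}{u}+s\frac{Lv}{v}+ts|\nabla\log u-\nabla\log v|^2\ge 0 ,
\]
since $a=ta+sa$. Multiplying by $w>0$ gives $L(u^tv^s)\ge 0$.

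The endpoint cases $s=0$ or $t=0$ are trivial. The main obstacle is only the positivity needed to take logarithms. In the application in Lemma~\ref{lem:lower bound of u} both $u$ and $v$ are positive (the latter by Theorem~\ref{thm:maximum principle}), so there the argument applies directly with $u^{\frac{n-2}{n-4}}$ and $v^{\frac{n-2}{n-4}}$. Specifically, take $t=\frac{2}{n-2}$ and $s=\frac{n-4}{n-2}$; then $(u^{\frac{n-2}{n-4}})^t(v^{\frac{n-2}{n-4}})^s=u^{\frac{2}{n-4}}v$.
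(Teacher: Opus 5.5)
Your proof is correct and is essentially the paper's computation: the paper also writes $L(u^tv^s)=\{t\,Lu/u+s\,Lv/v\}u^tv^s+\big(t|\nabla u|^2/u^2+s|\nabla v|^2/v^2-|t\nabla u/u+s\nabla v/v|^2\big)u^tv^s$ and identifies the bracket as $t(1-t)|\nabla u/u-\nabla v/v|^2\ge 0$, so your logarithmic variables $f=\log u$, $h=\log v$ are only a change of notation. As in the paper, positivity of $u,v$ is implicitly assumed, and it holds in the application in Lemma~\ref{lem:lower bound of u}; your fallback of replacing $u$ by $u+\delta$ would keep $L(u+\delta)\ge 0$ only when $a\ge 0$.
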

 \begin{proof}

We compute the Laplacian of $u^tv^s$.
Using
\[\nabla (u^tv^s)=\{t \frac {\nabla u }u + s \frac {\nabla v} v \}u^tv^s,\]
and 
\begin{equation*}\begin{array}{rcl}\vspace{2mm}
	-\Delta (u^t v^s) &=& \ds \{ t \frac{ -\Delta u} u + t \frac {|\nabla u|^2} {u^2}
	 +s\frac{-\Delta v} v +s\frac{|\nabla v|^2 }{v^2}  \}	u^tv^s
     -|t\frac {\n u} u +s\frac {\n v} v |^2 u^t v^s,\\
     
     \end{array}
\end{equation*}
we have
\begin{equation*}\begin{array}{rcl}\vspace{2mm}
	L (u^t v^s) &=& \ds \{ t \frac{ L u} u + t \frac {|\nabla u|^2} {u^2}
	 +s\frac{L v} v +s\frac{|\nabla v|^2 }{v^2}  \}	u^tv^s 
     -|t\frac {\n u} u +s\frac {\n v} v |^2 u^t v^s\\
     \vspace{2mm}
	 &=& \ds \{ t \frac{ L u} u 
	 +s\frac{L v} v  \}	u^tv^s
     + \{t \frac {|\nabla u|^2} {u^2} + s \frac {|\nabla v|^2} {v^2}\} u^t v^s
     -|t\frac {\n u} u +s\frac {\n v} v |^2 u^t v^s\\
     \vspace{2mm}
     &\ge & \ds \{t(1-t) \frac {|\n u|^2} {u^2} +t(1-t) \frac{|\n v|^2} {v^2} -2t(1-t) \frac {\n u}u\cdot \frac{\n v} v  \}
u^tv^s
\\ &=& \ds t(1-t) \left|\frac{ \n u}u -\frac{\n v}{v} \right|^2
\ge 0.    \end{array}
\end{equation*}
 \end{proof}
 
\begin{cor}
     Let $g_u:=u^{\frac 4 {n-2}}g_0$. If
 $g_u$ and $g_v$ are two metrics of positive scalar curvature, then so is  $g_{u^sv^{1-s}}$ $(s\in (0,1))$.
\end{cor}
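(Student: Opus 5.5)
The plan is a direct computation, in the spirit of the proof of Lemma \ref{convex}, applied to the product $w:=u^sv^{1-s}$. Here $g_u=u^{\frac4{n-2}}g_0$, so positivity of the scalar curvature of $g_u$ is equivalent, via \eqref{eq_scalar}, to $L_{g_0}u>0$ with $L_{g_0}=-\Delta_{g_0}+\frac{n-2}{4(n-1)}R_{g_0}$. The same holds for $v$. Since $g_w=w^{\frac4{n-2}}g_0=u^{\frac{4s}{n-2}}v^{\frac{4(1-s)}{n-2}}g_0$, the corollary reduces to showing $L_{g_0}(u^sv^{1-s})>0$ pointwise whenever $L_{g_0}u>0$ and $L_{g_0}v>0$.

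First apply Lemma \ref{convex} with $a=\frac{n-2}{4(n-1)}R_{g_0}$, $t=s$, and $1-t=1-s$. This gives $L_{g_0}(u^sv^{1-s})\ge 0$. The lemma as stated only yields a non-strict inequality, so the one point needing care is strictness. Its proof actually gives the identity
\[
L_{g_0}(u^sv^{1-s})=\Big(s\frac{L_{g_0}u}{u}+(1-s)\frac{L_{g_0}v}{v}\Big)u^sv^{1-s}+s(1-s)\Big|\frac{\nabla u}{u}-\frac{\nabla v}{v}\Big|^2u^sv^{1-s},
\]
which I would re-derive from the displayed computation there. Because $u,v>0$, $s\in(0,1)$, and both $L_{g_0}u$ and $L_{g_0}v$ are strictly positive, the first bracket is strictly positive and the second term is nonnegative. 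Hence $L_{g_0}w>0$ everywhere, and by \eqref{eq_scalar},
\[
\frac{n-2}{4(n-1)}R_{g_w}=w^{-\frac{n+2}{n-2}}L_{g_0}w>0.
\]

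The main (minor) obstacle is just this upgrade from $\ge 0$ to $>0$. Reading off the explicit identity, rather than only the inequality in the lemma, settles it with no further argument.
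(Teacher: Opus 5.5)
Your argument is correct and is exactly the route the paper intends: the corollary is read off from Lemma \ref{convex} with $a=\frac{n-2}{4(n-1)}R_{g_0}$, using \eqref{eq_scalar} to translate positive scalar curvature into $L_{g_0}u>0$. Your use of the exact identity, i.e.\ the lemma's computation before the term $\big(s\frac{L_{g_0}u}{u}+(1-s)\frac{L_{g_0}v}{v}\big)u^sv^{1-s}$ is dropped, gives the strict inequality $L_{g_0}(u^sv^{1-s})>0$ and so supplies a detail the paper leaves implicit.
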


\medskip

\noindent{\bf Acknowledgement:} This work was carried out while
W. Wei was visiting University of Freiburg supported  by the Alexander von Humboldt research fellowship. She would like to thank Institute of Mathematics, University of Freiburg for its  hospitality. 
She was also
partly supported by NSFC (No.12571218, No. 12201288). Y. Ge is supported
in part by the grant ANR-23-CE40-0010-02 of the French National Research Agency (ANR): Einstein
constraints: past, present, and future (EINSTEIN-PPF). The second named author would like to thank Rupert Frank for helpful discussions.

\bibliography{bibYamabe}
\bibliographystyle{amsplain}

\end{document}